\documentclass[12pt]{amsart}%
\usepackage{amsfonts}
\usepackage{graphicx}
\usepackage{amscd}
\usepackage{amsmath}
\usepackage{amssymb}%
\setcounter{MaxMatrixCols}{30}
\newtheorem{theorem}{Theorem}[section]
\theoremstyle{plain}

\newtheorem{corollary}[theorem]{Corollary}

\newtheorem{definition}[theorem]{Definition}
\newtheorem{lemma}[theorem]{Lemma}
\newtheorem{proposition}[theorem]{Proposition}
\theoremstyle{definition}
\newtheorem{example}[theorem]{Example}

\newtheorem{remark}[theorem]{Remark}

\numberwithin{equation}{section}

\begin{document}
\title[ ]{A General Integral}
\author{Ricardo Estrada}
\address{Department of Mathematics\\
Louisiana State University\\
Baton Rouge\\
LA 70803\\
USA}
\email{restrada@math.lsu.edu}
\author{Jasson Vindas}
\address{Department of Mathematics\\
Ghent University\\
Krijgslaan 281 Gebouw S22\\
B 9000 Gent\\
Belgium}
\email{jvindas@cage.Ugent.be}
\thanks{R. Estrada gratefully acknowledges support from NSF, through grant number 0968448.}
\thanks{J. Vindas gratefully acknowledges support by a Postdoctoral Fellowship of the
Research Foundation--Flanders (FWO, Belgium)}
\subjclass[2000]{Primary 26A39, 46F10. Secondary 26A24, 26A36}
\keywords{Distributions, \L ojasiewicz point values, distributional integration, general
integral, non-absolute integrals}

\begin{abstract}
We define an integral, the distributional integral of functions of one real
variable, that is more general than the Lebesgue and the
Denjoy-Perron-Henstock-Kurzweil integrals, and which allows the integration of
functions with distributional values everywhere or nearly everywhere.

Our integral has the property that if $f$ is locally distributionally
integrable over the real line and $\psi\in\mathcal{D}\left(  \mathbb{R}%
\right)  $ is a test function, then $f\psi$ is distributionally integrable,
and the formula%
\[
\left\langle \mathsf{f},\psi\right\rangle =\left(  \mathfrak{dist}\right)
\int_{-\infty}^{\infty}f\left(  x\right)  \psi\left(  x\right)  \,\mathrm{d}%
x\,,
\]
defines a distribution $\mathsf{f}\in\mathcal{D}^{\prime}\left(
\mathbb{R}\right)  $ that has distributional point values almost everywhere
and actually $\mathsf{f}\left(  x\right)  =f\left(  x\right)  $ almost everywhere.

The indefinite distributional integral $F\left(  x\right)  =\left(
\mathfrak{dist}\right)  \int_{a}^{x}f\left(  t\right)  \,\mathrm{d}t$
corresponds to a distribution with point values everywhere and whose
distributional derivative has point values almost everywhere equal to
$f\left(  x\right)  .$

The distributional integral is more general than the standard integrals, but
it still has many of the useful properties of those standard ones, including
integration by parts formulas, substitution formulas, even for infinite
intervals --in the Ces\`{a}ro sense--, mean value theorems, and convergence
theorems. The distributional integral satisfies a version of Hake's theorem.
Unlike general distributions, locally distributionally integrable functions
can be restricted to closed sets and can be multiplied by power functions with
real positive exponents.

\end{abstract}
\maketitle

\newpage

\tableofcontents

\newpage
\section{Introduction}\label{Sc Introduction}

In this article we construct and study the properties of a general integration
operator that can be applied to functions of one variable, $f:\left[
a,b\right]  \rightarrow\overline{\mathbb{R}}=\mathbb{R\cup}\left\{
-\infty,\infty\right\}  .$ We denote this integral as%
\begin{equation}
\left(  \mathfrak{dist}\right)  \int_{a}^{b}f\left(  x\right)  \,\mathrm{d}%
x\,, \label{1.1}%
\end{equation}
and call it the \emph{distributional integral} of $f.$ The space of
distributionally integrable functions is a vector space and the operator
(\ref{1.1}) is a linear functional in this space.

The construction gives an integral with the following properties:\smallskip

\noindent\textbf{1.} Any Denjoy-Perron-Henstock-Kurzweil integrable function is also distributionally
integrable and the integrals coincide.  In
particular any Lebesgue integrable function is distributionally
integrable and the integrals coincide. If the Denjoy-Perron-Henstock-Kurzweil integral
can be assigned the value $+\infty$ (or $-\infty$) then the distributional
integral can also be assigned the value $+\infty$ (or $-\infty$).\smallskip

\noindent\textbf{2.} If a distribution $\mathsf{f}\in\mathcal{D}^{\prime
}\left(  \mathbb{R}\right)  $ has distributional point values (as defined in
Subsection \ref{Sbs Point values}) at all points of $\left[  a,b\right]  $ and
if $f\left(  x\right)  =\mathsf{f}\left(  x\right)  $ is the function given by
those point values, then $f$ is distributionally integrable over $\left[
a,b\right]  .$\smallskip

\noindent\textbf{3.} If $f:\mathbb{R}\rightarrow\overline{\mathbb{R}}$ is
function that is distributionally integrable over any compact interval, and if
$\psi\in\mathcal{D}\left(  \mathbb{R}\right)  $ is a test function, then the
formula%
\begin{equation}
\left\langle \mathsf{f}\left(  x\right)  ,\psi\left(  x\right)  \right\rangle
=\left(  \mathfrak{dist}\right)  \int_{-\infty}^{\infty}f\left(  x\right)
\psi\left(  x\right)  \,\mathrm{d}x\,, \label{1.2}%
\end{equation}
where the integral on the right is meant as the distributional integral on any compact interval that contains the support of $\psi$, defines a distribution $\mathsf{f}\in\mathcal{D}^{\prime}\left(
\mathbb{R}\right)$. This distribution $\mathsf{f}$ has distributional point
values almost everywhere and%
\begin{equation}
\mathsf{f}\left(  x\right)  =f\left(  x\right)  \ \ \ \ \left(  \text{a.e.}%
\right)  \,. \label{1.3}%
\end{equation}
If we start with a distribution $\mathsf{f}_{0}\in\mathcal{D}^{\prime}\left(
\mathbb{R}\right)  $\ that has values everywhere, then construct the function
$f$ given by those values, and then define a distribution $\mathsf{f}%
\in\mathcal{D}^{\prime}\left(  \mathbb{R}\right)  $ by formula (\ref{1.2})
then we recover the initial distribution: $\mathsf{f=f}_{0}.$\smallskip

We call the integral a \emph{general} integral because of property \textbf{1},
which says that it is more general than the standard integrals. We call it the
\emph{distributional} integral because of \textbf{2 }and \textbf{3}, since
these properties say that functions integrable in this sense are related to
corresponding distributions in a very precise fashion.

In the same way that locally integrable Lebesgue functions $f$ give rise to
associated \textquotedblleft regular\textquotedblright\ distributions
$\mathsf{f},$ $f\leftrightarrow\mathsf{f,}$ locally integrable
distributionally functions have associated \textquotedblleft locally
integrable distributions.\textquotedblright\ Actually Denjoy-Perron-Henstock-Kurzweil
integrable functions also have canonically associated distributions
\cite{Mikusinski-Ostas}. Observe, however, that for the purposes of this
article is better to say that $f$ and $\mathsf{f}$ are \emph{associated} and
employ different notations for the function and the distribution, instead of
the standard practice of saying that $\mathsf{f}$ \textquotedblleft
is\textquotedblright\ $f.$ The question of whether a distribution can be
associated to a function or not was considered in the lecture \cite{estPol};
understanding that distributions, in general, are regularizations of
functions, and usually \emph{not} uniquely determined \cite{EF2001} allows one
to avoid common misunderstandings in the formulas used in Mathematical Physics
\cite{Ford-Oconnell}.

Our construction of the integral is based upon a characterization of positive
measures in terms of the properties of the $\phi-$transform \cite{
VEmeasures, EVraex,DroZav03,VPwavelet}, introduced in Section
\ref{Sc The phi transform}. Indeed, in the Theorems \ref{Theorem M3},
\ref{Theorem M3p}, and \ref{Theorem M4} we give conditions on the pointwise
extreme values of a distribution that guarantee that it is a positive measure,
and this allows us to consider the notions of major and minor distributional
pairs and then, in Definition \ref{Def IN3}, define the distributional
integral. In Section \ref{Sc Integral} we show that the integral is a linear
functional, that distributionally integrable functions are finite almost
everywhere and measurable, and that the integrals of functions that are equal $\left(
\text{a.e}\right)  $ coincide.

In Section \ref{Section: Indefinite integral} we study the indefinite integral%
\begin{equation}
F\left(  x\right)  =\left(  \mathfrak{dist}\right)  \int_{a}^{x}f\left(
t\right)  \,\mathrm{d}t\,, \label{1.4}%
\end{equation}
of a distributionally integrable function $f.$ We prove that $F$ is a
\L ojasiewicz function (Definition \ref{Def Loj func}), that is, it has point
values everywhere. In general $F$ will not be continuous but it will be
\textquotedblleft continuous in an average sense.\textquotedblright\ Other
integration processes have discontinuous indefinite integrals \cite[Sections
479--482]{Hobson}, but they are not even linear operations. Any \L ojasiewicz
function has associated a unique distribution $\mathsf{F,}$ $F\leftrightarrow
\mathsf{F,}$ and thus we may consider its derivative, $\mathsf{f=F}^{\prime}.$
We show that $\mathsf{F}^{\prime}$ has distributional values almost everywhere
and that actually $\mathsf{F}^{\prime}\left(  x\right)  =f\left(  x\right)  $
$\left(  \text{a.e}\right)  .$ This is a precise statement of the idea that $f$
is the derivative of $F$ almost everywhere. Later on, in Section
\ref{Section Distributions and Integration}, we are able to show that
$\mathsf{f=F}^{\prime}$ is the same distribution given by (\ref{1.2}).

In Section \ref{Section Comparison} we show that our integral is more general
than the Lebesgue integral and than the Denjoy-Perron-Henstock-Kurzweil integral. In
fact, more generally, our integral is capable of recovering a function from its
higher order differential quotients, a problem originally considered by Denjoy
in \cite{den}. We also show that \L ojasiewicz functions and distributionally
regulated functions \cite{VEStudia} are distributionally integrable, as are
the distributional derivatives of \L ojasiewicz distributions whose point values exist nearly everywhere.
The relationship between locally distributionally integrable functions and
distributions is studied in Section
\ref{Section Distributions and Integration}, not only in the space
$\mathcal{D}^{\prime}\left(  \mathbb{R}\right)  ,$ but in other spaces such as
$\mathcal{E}^{\prime}\left(  \mathbb{R}\right)  ,$ $\mathcal{S}^{\prime
}\left(  \mathbb{R}\right)  ,$ or $\mathcal{K}^{\prime}\left(  \mathbb{R}%
\right)  $ as well.

According to Hake's theorem \cite{Hake}, there are no improper
Denjoy-Perron-Henstock-Kurzweil integrals over finite intervals, since such integrals
are actually ordinary Denjoy-Perron-Henstock-Kurzweil integrals. We prove a
corresponding result, namely, if $f$ is distributionally integrable over
$\left[  a,x\right]  $ for any $x<b,$ and if $\left(  \mathfrak{dist}\right)
\int_{a}^{x}f\left(  t\right)  \,\mathrm{d}t$ has a distributional limit
$L$\ as $x\rightarrow b,$ then $f$ is integrable over $\left[  a,b\right]  $
and the integral is equal to $L.$ We apply this result to show that if $f$ is
distributionally integrable over $\left[  a,b\right]  $ then so are the
functions $\left(  x-a\right)  ^{\alpha}\left(  b-x\right)  ^{\beta}f\left(
x\right)  $ for any real numbers $\alpha>0$ and $\beta>0.$

We prove a bounded convergence theorem, a monotone convergence theorem, and a
version of Fatou's lemma in Section \ref{Section Convergence Thms}. We
examine changes of variables in Section \ref{Section Change of Variables},
showing, in particular, that distributional integrals become Ces\`{a}ro type
integrals when the change sends a finite interval to an infinite one. The
three mean value theorems of integral calculus are proved in Section
\ref{Section MVT}.

In the last section, Section \ref{Section Examples}, we provide several
examples that illustrate our ideas. We give examples of functions that are
distributionally integrable but not Denjoy-Perron-Henstock-Kurzweil integrable,
examples of distributionally integrable functions that are not \L oja\-siewicz
functions, and examples of \L ojasiewicz functions which are not indefinite
integrals. We consider the boundary values of the Poisson integral of a
distributionally integrable function. Moreover, we consider the Fourier series
of periodic locally distributionally integrable functions and the Fourier
transform of tempered locally distributionally integrable functions. We also
explain why the Cauchy representation formula
\begin{equation}
F\left(  z\right)  =\frac{1}{2\pi i}\left(  \mathfrak{dist}\right)
\int_{-\infty}^{\infty}\frac{f\left(  \xi\right)  }{\xi-z}\,\mathrm{d}\xi\,,
\label{1.5}%
\end{equation}
holds for certain functions $F$ analytic in $\Im m\,z>0$ whose boundary values
on $\mathbb{R}$ come from locally integrable distributions (as $f\left(
\xi\right)  =\xi^{-1}e^{-i/\xi},$ for instance), and why such a formula does
not hold, even in the principal value sense, for non distributionally
integrable functions (as $f\left(  \xi\right)  =\xi^{-1},$ for instance).

There have been several studies that involve distributions and integration. Let
us emphasize that our integral is a method to find the integral of
\emph{functions} as are, let us say, the Riemann or the Denjoy integrals. A
completely different question is the integration of \emph{distributions}.
Indeed, observe, first of all, that the fact that any distribution
$\mathsf{f}\in\mathcal{D}^{\prime}\left(  \mathbb{R}\right)  $ has a primitive
$\mathsf{F}\in\mathcal{D}^{\prime}\left(  \mathbb{R}\right)  ,$ $\mathsf{F}%
^{\prime}=\mathsf{f,}$ is trivial. If $\mathsf{F}$ has values at $x=a$ and at
$x=b$ then we say that $\mathsf{f}$ is integrable over $\left[  a,b\right]  $
and write \cite{CF}
\begin{equation}
\int_{a}^{b}\mathsf{f}\left(  x\right)  \,\mathrm{d}x=\mathsf{F}\left(
b\right)  -\mathsf{F}\left(  a\right)  \,. \label{1.6}%
\end{equation}
Hence $\int_{a}^{b}\mathsf{f}\left(  x\right)  \,\mathrm{d}x$ is a
\emph{number.} This notion is due to the Polish school \cite{AMS,loj} and has
several applications, as in the theory of sampling theorems \cite{Walter88}.
On the other hand, Silva and Sikorski, independently, used their definitions of
the integral of distributions to write Fourier transforms and convolutions of
distributions as integrals \cite{Silva, Sikorski}. Moreover, several authors
\cite{Baumer et al, Talvila}\ have considered the class of continuously
integrable distributions, that is, those distributions with a continuous
primitive; observe, however, that continuously integrable distributions may
not have values at any point, and thus are not really \emph{functions,} in general.

We should point out that one can devise a simple procedure for the
construction of primitives of \emph{functions} by using the fact that
\emph{distributions} are known to have primitives. Indeed, start with a
function $f,$ associate to it a distribution $\mathsf{f,}$ construct the
distributional primitive $\mathsf{F,}$ that is, $\mathsf{F}^{\prime
}=\mathsf{f,}$ and then construct the function $F$ associated to $\mathsf{F.}$
Then $F$ would be a primitive of $f.$ Unfortunately, this procedure fails, in
general, because there is no unique way to assign a distribution $\mathsf{f}$
to a given function $f,$ as follows from the Theorem \ref{Theorem DI 1}.
Interestingly, however, it does work sometimes, as we show, for instance, for
\L ojasiewicz\ functions, because in this case all the associations are unique
\cite{loj}.

\section{Preliminaries}
\label{Sc Prelim}
In this section we have collected several important ideas that will play a
role in our construction of a general distributional integral.

\subsection{Spaces}\label{Sbs spaces}
We use the term smooth function to mean a $C^{\infty}$ function.
The Schwartz spaces of test functions $\mathcal{D},$ $\mathcal{E},$ and $\mathcal{S}$
and the corresponding spaces of distributions are well known \cite{AMS, kan,
sch, Strichartz, Vladimirovbook}. Recall that $\mathcal{E}$ consists of all smooth functions, while $\mathcal{D}$ and $\mathcal{S}$ stand, respectively, for the spaces of smooth compactly supported and rapidly decreasing test functions. In general \cite{zem}, we call a topological
vector space $\mathcal{A}$ a space of test functions if $\mathcal{D}%
\subseteq\mathcal{A}\subseteq\mathcal{E},$ where the inclusions are
continuous and dense, and if $\frac{d}{dx}$ is a continuous operator on $\mathcal{A}$.
A useful space, particularly in the study of distributional asymptotic
expansions \cite{r43, est-kan, pil, vla} is $\mathcal{K}^{\prime}(\mathbb{R})$, the dual
of $\mathcal{K}(\mathbb{R})$. The test function space $\mathcal{K}(\mathbb{R})$ is given by $\mathcal{K}(\mathbb{R})=\bigcup_{\alpha\in\mathbb{R}}\mathcal{K}_{\alpha}(\mathbb{R})$, the union having topological meaning, where each $\mathcal{K}_{\alpha}(\mathbb{R})$ consists of those smooth functions $\phi$ that satisfy 
\begin{equation}
\label{strongasympt}
\phi^{(m)}(t)=O(\left|t\right|^{\alpha-m})\ \ \ \mbox{as} \left|t\right|\to\infty\: , \ \ \forall m\in\mathbb{N}\ ,
\end{equation}
and is provided with the topology generated by the family of seminorms
\begin{equation}
\max\{\sup_{\left|t\right|\leq1}|\phi^{(m)}(t)|, \sup_{\left|t\right|\geq1}\left|t\right|^{m-\alpha}|\phi^{(m)}(t)|\}\ .
\end{equation}
The space $\mathcal{K}^{\prime}(\mathbb{R})$
plays a fundamental role in the theory of summability of distributional
evaluations \cite{est2}.

We shall use the notation $\mathsf{f,}$ $\mathsf{g,}$ $\mathsf{F,}$ etc. to
denote distributions, while $f,$ $g,$ $F,$ etc. will denote functions. If $f$
is a locally Lebesgue integrable function and $\mathsf{f}$ is the
corresponding \emph{regular} distribution, given by $\left\langle
\mathsf{f},\phi\right\rangle =\int_{-\infty}^{\infty}f\left(  x\right)
\phi\left(  x\right)  \,\mathrm{d}x$ for $\phi\in\mathcal{D}(\mathbb{R}),$
then we shall use the notation $f\leftrightarrow\mathsf{f;}$ naturally $f$ is
not really a function but an equivalence class of functions equal almost
everywhere.

\subsection{Point values}\label{Sbs Point values}

In \cite{loj,loj2} \L ojasiewicz defined the value of a distribution
$\mathsf{f}\in\mathcal{D}^{\prime}(\mathbb{R})$ at the point $x_{0}$ as the
limit
\begin{equation}
\mathsf{f}(x_{0})=\lim_{\varepsilon\rightarrow0}\mathsf{f}(x_{0}+\varepsilon
x)\,,
\end{equation}
if the limit exists in $\mathcal{D}^{\prime}(\mathbb{R})$, that is if
\begin{equation}
\lim_{\varepsilon\rightarrow0}\left\langle \mathsf{f}(x_{0}+\varepsilon
x),\phi(x)\right\rangle =\mathsf{f}(x_{0})\int_{-\infty}^{\infty}%
\phi(x)\,\mathrm{d}x\,, \label{pe1}%
\end{equation}
for each $\phi\in\mathcal{D}(\mathbb{R})$. It was shown by \L ojasiewicz that
the existence of the distributional point value $\mathsf{f}(x_{0})=\gamma$ is
equivalent to the existence of $n\in\mathbb{N}$, and a primitive of order $n$
of $\mathsf{f}$, that is $\mathsf{F}^{(n)}=\mathsf{f}$, which corresponds,
near $x_{0}$, to a continuous function $F$ that satisfies
\begin{equation}
\lim_{x\rightarrow x_{0}}\frac{n!F(x)}{\left(  x-x_{0}\right)  ^{n}}=\gamma\,.
\label{2.3}%
\end{equation}
One can also define point values by using the operator
\begin{equation}
\partial_{x_{0}}\left(  \mathsf{f}\right)  =\left(  \left(  x-x_{0}\right)
\mathsf{f}\left(  x\right)  \right)  ^{\prime}\,, \label{2.3p}%
\end{equation}
since $\mathsf{f}_{1}(x_{0})=\gamma$ if and only if $\mathsf{f}(x_{0}%
)=\gamma,$ where $\mathsf{f}=\partial_{x_{0}}\left(  \mathsf{f}_{1}\right)  .$
Therefore \cite{CF} $\mathsf{f}$ has a distributional value equal to $\gamma$
at $x=x_{0}$ if and only if there exists $n\in\mathbb{N}$ and a function
$f_{n},$ continuous at $x=x_{0},$ with $f_{n}\left(  x_{0}\right)  =\gamma,$
such that $\mathsf{f}=\partial_{x_{0}}^{n}\left(  \mathsf{f}_{n}\right)  ,$ near $x_{0}$,
where $f_{n}\leftrightarrow\mathsf{f}_{n}\mathsf{.}$

Suppose that $\mathsf{f}\in\mathcal{S}^{\prime}(\mathbb{R})$ has the
\L ojasiewicz point value $\mathsf{f}(x_{0})=\gamma$. Initially, (\ref{pe1})
is only supposed to hold for $\phi\in\mathcal{D}(\mathbb{R})$; however, it is
shown in \cite{est3,VP09} that (\ref{pe1}) will remain true for all $\phi
\in\mathcal{S}(\mathbb{R})$. Actually using the notion of the Ces\`{a}ro
behavior of a distribution at infinity \cite{est2} explained below,
(\ref{pe1}) will hold \cite{est3,vindasthesis,VEmeasures,VEEdin} if $f\left(
x\right)  =O(\left\vert x\right\vert ^{\beta})$ $\left(  \text{C}\right)  ,$
as $\left\vert x\right\vert \rightarrow\infty,$ $\phi\left(  x\right)
=O(\left\vert x\right\vert ^{\alpha}),$ strongly $\left\vert x\right\vert
\rightarrow\infty,$ and $\alpha<-1,$ $\alpha+\beta<-1.$ An asymptotic estimate
is \emph{strong} if it remains valid after differentiation of any order, namely, if (\ref{strongasympt}) is satisfied.

The notion of distributional point value introduced by \L ojasiewicz has been
shown to be of fundamental importance in analysis \cite{CF, est1, ML, Peetre,
VEStudia, VEFourier, wal, Walter95, Walter01}. It seems to have originated in
the idea of generalized differentials studied by Denjoy in \cite{den}. There
are other notions of distributional point values as that of Campos Ferreira
\cite{CF,CF2}, who also introduced the very useful concept of bounded
distributions (see also \cite{Zie}). A distribution $\mathsf{f}$ is said to be
\emph{distributionally bounded} at $x_{0}$ if $\mathsf{f}(x_{0}+\varepsilon
x)=O(1)$ as $\varepsilon\to0$ in $\mathcal{D}^{\prime}(\mathbb{R})$, i.e., for
each test function $\left\langle \mathsf{f}(x_{0}+\varepsilon x),\phi
(x)\right\rangle =O(1).$ Distributional boundedness admits a characterization
\cite{vindas2} similar to that of \L ojasiewicz point values, but this time
one replaces (\ref{2.3}) by $F(x)=O(\left|  x-x_{0}\right|  ^{n})$.

The distributional limit $\lim_{x\rightarrow x_{0}%
}\mathsf{f}(x)$ exists and equals $L$ if 
\begin{equation}
\lim_{\varepsilon\rightarrow0}\left\langle \mathsf{f}(x_{0}+\varepsilon
x),\phi(x)\right\rangle =L\int_{-\infty}^{\infty}%
\phi(x)\,\mathrm{d}x\,,%
\end{equation}
for all test functions with support contained in $\mathbb{R}\setminus\{0\}.$ If the point value $\mathsf{f}(x_{0})$ exists
distributionally then the distributional limit $\lim_{x\rightarrow x_{0}%
}\mathsf{f}(x)$ exists and equals $\mathsf{f}(x_{0}).$ On the other hand, if
$\lim_{x\rightarrow x_{0}}\mathsf{f}(x)=L$ distributionally then there exist
constants $a_{0},\ldots,a_{n}$ such that $\mathsf{f}(x)=\mathsf{f}_{0}%
(x)+\sum_{j=0}^{n}a_{j}\delta^{\left(  j\right)  }(x-x_{0}),$ where the
distributional point value $\mathsf{f}_{0}(x_{0})$ exists and equals $L.$  Notice that the distributional limit  $\lim_{x\rightarrow x_{0}}\mathsf{f}(x)$
can actually be defined for distributions $\mathsf{f}\in\mathcal{D}^{\prime}(\mathbb{R}%
\setminus\{x_{0}\}).$ 

We may also consider lateral limits. We say that the distributional lateral
value $\mathsf{f}(x_{0}^{+})$ exists if $\mathsf{f}(x_{0}^{+})=\lim
_{\varepsilon\rightarrow0^{+}}\mathsf{f}(x_{0}+\varepsilon x)$ in
$\mathcal{D}^{\prime}(0,\infty),$ that is,
\begin{equation}
\lim_{\varepsilon\rightarrow0^{+}}\left\langle \mathsf{f}(x_{0}+\varepsilon
x),\phi(x)\right\rangle =\mathsf{f}(x_{0}^{+})\int_{0}^{\infty}\phi
(x)\,\mathrm{d}x\;\;,\;\;\phi\in\mathcal{D}(0,\infty)\,. \label{3.9.48}%
\end{equation}
Similar definitions apply to $\mathsf{f}(x_{0}^{-}).$ Notice that the
distributional limit $\lim_{x\rightarrow x_{0}}\mathsf{f}(x)$ exists if and
only if the distributional lateral limits $\mathsf{f}(x_{0}^{-})$ and
$\mathsf{f}(x_{0}^{+})$ exist and coincide.

\subsection{The Ces\`{a}ro behavior of distributions at
infinity}\label{Sbs Cesaro}

The Ces\`{a}ro behavior \cite{est2,est-kan} of a distribution at infinity is
studied by using the order symbols $O\left(  x^{\alpha}\right)  $ and
$o\left(  x^{\alpha}\right)  $ in the Ces\`{a}ro sense. If $\mathsf{f}%
\in\mathcal{D}^{\prime}(\mathbb{R})$ and $\alpha\in\mathbb{R\setminus}\left\{
-1,-2,-3,...\right\}  $, we say that $\mathsf{f}(x)=O\left(  x^{\alpha
}\right)  $ as $x\rightarrow\infty$ in the Ces\`{a}ro sense and write%
\begin{equation}
\mathsf{f}(x)=O\left(  x^{\alpha}\right)  \ (\mathrm{C})\,,\ \text{as}%
\ x\rightarrow\infty\,,
\end{equation}
if there exists $N\in\mathbb{N}$ such that every primitive $\mathsf{F}$ of
order $N$, i.e., $\mathsf{F}^{(N)}=\mathsf{f}$, corresponds for large
arguments to a locally integrable function, $\mathsf{F}\leftrightarrow F$,
that satisfies the ordinary order relation
\begin{equation}
F(x)=p(x)+O\left(  x^{\alpha+N}\right)  \,,\ \text{as}\ x\rightarrow\infty\,,
\end{equation}
for a suitable polynomial $p$ of degree at most $N-1$. Note that if
$\alpha>-1$, then the polynomial $p$ is irrelevant. If we want to specify the
value $N$, we write $(\mathrm{C},N)$ instead of just $(\mathrm{C})$. A similar
definition applies to the little $o$ symbol. The definitions when
$x\rightarrow-\infty$ are clear.

The elements of $\mathcal{S}^{\prime}(\mathbb{R})$ can be characterized by
their Ces\`{a}ro behavior at $\pm\infty$, in fact, $\mathsf{f}\in
\mathcal{S}^{\prime}(\mathbb{R})$ if and only if there exists $\alpha
\in\mathbb{R}$ such that $\mathsf{f}(x)=O\left(  x^{\alpha}\right)
\ (\mathrm{C})$, as $x\rightarrow\infty$, and $\mathsf{f}(x)=O\left(
\left\vert x\right\vert ^{\alpha}\right)  \ (\mathrm{C})$, as $x\rightarrow
-\infty$. On the other hand, this is true for all $\alpha\in\mathbb{R}$ if and
only if $\mathsf{f}\in\mathcal{K}^{\prime}(\mathbb{R}).$

Using these ideas, one can define the limit of a distribution at $\infty$ in
the Ces\`{a}ro sense. We say that $\mathsf{f}\in\mathcal{D}^{\prime
}(\mathbb{R})$ has a limit $L$ at infinity in the Ces\`{a}ro sense and write
\begin{equation}
\lim_{x\rightarrow\infty}\mathsf{f}(x)=L\ \ \ (\mathrm{C})\,,
\end{equation}
if $\mathsf{f}(x)=L+o(1)\ (\mathrm{C})$, as $x\rightarrow\infty$.

The Ces\`{a}ro behavior of a distribution $\mathsf{f}$ at infinity is related
to the parametric behavior of $\mathsf{f}(\lambda x)$ as $\lambda
\rightarrow\infty$. In fact, one can show \cite{est-kan,vindas1,vindas2} that
if $\alpha>-1$, then $\mathsf{f}(x)=O\left(  x^{\alpha}\right)  \ (\mathrm{C}%
)$ as $x\rightarrow\infty$ and $\mathsf{f}(x)=O\left(  \left\vert x\right\vert
^{\alpha}\right)  \ (\mathrm{C})$ as $x\rightarrow-\infty$ if and only if
\begin{equation}
\mathsf{f}(\lambda x)=O\left(  \lambda^{\alpha}\right)  \ \text{as}%
\ \lambda\rightarrow\infty\,,
\end{equation}
where the last relation holds weakly in $\mathcal{D}^{\prime}(\mathbb{R})$,
i.e., for all $\phi\in\mathcal{D}(\mathbb{R})$ fixed, $\left\langle
\mathsf{f}(\lambda x),\phi(x)\right\rangle =O\left(  \lambda^{\alpha}\right)
,$ $\lambda\rightarrow\infty$. A distribution $\mathsf{f}$ belongs to the
space $\mathcal{K}^{\prime}(\mathbb{R})$ if and only if it satisfies the
moment asymptotic expansion \cite{r43,est-kan},%
\begin{equation}
\mathsf{f}(\lambda x)\sim\sum\limits_{n=0}^{\infty}\frac{\left(  -1\right)
^{n}\mu_{n}\delta^{\left(  n\right)  }\left(  x\right)  }{n!\lambda^{n+1}%
}\,,\ \ \text{as \ }\lambda\rightarrow\infty\,,
\end{equation}
where the $\mu_{n}=\left\langle \mathsf{f}\left(  x\right)  ,x^{n}%
\right\rangle $ are the moments of $\mathsf{f.}$

\subsection{Evaluations}\label{Sbs Evaluations}

Let $\mathsf{f}\in\mathcal{D}^{\prime}\left(  \mathbb{R}\right)  $ with
support bounded on the left. If $\phi\in\mathcal{E}\left(  \mathbb{R}\right)
$ then the evaluation $\left\langle \mathsf{f}\left(  x\right)  ,\phi\left(
x\right)  \right\rangle $ will not be defined, in general. We say that the
evaluation exists in the Ces\`{a}ro sense and equals $L,$ written as%
\begin{equation}
\left\langle \mathsf{f}\left(  x\right)  ,\phi\left(  x\right)  \right\rangle
=L\ \ \ \left(  \mathrm{C}\right)  \,, \label{T.1}%
\end{equation}
if $\mathsf{g}\left(  x\right)  =L+o\left(  1\right)  $ $\left(
\text{C}\right)  $\ as $x\rightarrow\infty,$ where $\mathsf{g}$ is the
primitive of $\mathsf{f}\phi$ with support bounded on the left. A similar
definition applies if $\operatorname*{supp}\mathsf{f}$ is bounded on the
right. Observe that if $\mathsf{f}$ corresponds to a locally integrable
function $f$ with $\operatorname*{supp}\mathsf{f}\subset\lbrack a,\infty)$
then (\ref{T.1}) means that%
\begin{equation}
\int_{a}^{\infty}f\left(  x\right)  \phi\left(  x\right)  \,\mathrm{d}%
x=L\ \ \ \left(  \mathrm{C}\right)  \,. \label{T.2}%
\end{equation}
Naturally, this will hold for \emph{any} integration method we use. If
$\mathsf{f}\left(  x\right)  =\sum_{n=0}^{\infty}a_{n}\delta\left(
x-n\right)  $ then (\ref{T.1}) tells us that%
\begin{equation}
\sum_{n=0}^{\infty}a_{n}\phi\left(  n\right)  =L\ \ \ \left(  \mathrm{C}%
\right)  \,. \label{T.3}%
\end{equation}

In the general case when the support of $\mathsf{f}$ extends to both $-\infty$
and $+\infty,$ there are various different but related notions of evaluations
in the Ces\`{a}ro sense (or in any other summability sense, in fact). If
$\mathsf{f}$ admits a representation of the form $\mathsf{f}=\mathsf{f}%
_{1}+\mathsf{f}_{2},$ with $\operatorname*{supp}\mathsf{f}_{1}$ bounded on the
left and $\operatorname*{supp}\mathsf{f}_{2}$ bounded on the right, such that
$\left\langle \mathsf{f}_{j}\left(  x\right)  ,\phi\left(  x\right)
\right\rangle =L_{j}$ $\left(  \text{C}\right)  $ exist, then we say that the
$\left(  \text{C}\right)  $ evaluation $\left\langle \mathsf{f}\left(
x\right)  ,\phi\left(  x\right)  \right\rangle $ $\left(  \text{C}\right)  $
exists and equals $L=L_{1}+L_{2}.$ This is clearly independent of the
decomposition. The notation (\ref{T.1}) is used in this situation as well.

It happens many times that $\left\langle \mathsf{f}\left(  x\right)
,\phi\left(  x\right)  \right\rangle $ $\left(  \text{C}\right)  $ does not
exist, but the symmetric limit, $\lim_{x\rightarrow\infty}\left\{
\mathsf{g}\left(  x\right)  -\mathsf{g}\left(  -x\right)  \right\}  =L,$ where
$\mathsf{g}$ is any primitive of $\mathsf{f}\phi$, exists in the $\left(
\text{C}\right)  $ sense. Then we say that the evaluation $\left\langle
\mathsf{f}\left(  x\right)  ,\phi\left(  x\right)  \right\rangle $ exists in
the principal value Ces\`{a}ro sense \cite{est-kan,VEChina}, and write%
\begin{equation}
\mathrm{p.v.}\left\langle \mathsf{f}\left(  x\right)  ,\phi\left(  x\right)
\right\rangle =L\ \ \ \left(  \mathrm{C}\right)  \,. \label{T.4}%
\end{equation}
Observe that $\mathrm{p.v.}\sum_{n=-\infty}^{\infty}a_{n}\phi\left(  n\right)
=L$ $\left(  \text{C}\right)  $ if and only if $\sum_{-N}^{N}a_{n}\phi\left(
n\right)  $ $\rightarrow L$ $\left(  \text{C}\right)  $ as $N\rightarrow
\infty$ while $\mathrm{p.v.}\int_{-\infty}^{\infty}f\left(  x\right)
\phi\left(  x\right)  \,\mathrm{d}x=L$ $\left(  \text{C}\right)  $\ if and
only if $\int_{-A}^{A}f\left(  x\right)  \phi\left(  x\right)  \,\mathrm{d}x$
$\rightarrow L$ $\left(  \text{C}\right)  $ as $A\rightarrow\infty.$

A very useful intermediate notion is the following
\cite{VEStudia,VEFourier,VEChina}. If there exists $k$ such that
\begin{equation}
\lim_{x\rightarrow\infty}\left\{  \mathsf{g}\left(  ax\right)  -\mathsf{g}%
\left(  -x\right)  \right\}  =L\ \ \ \left(  \mathrm{C},k\right)
\,,\ \ \ \forall a>0\,, \label{T.5}%
\end{equation}
we say that the distributional evaluation exists in the e.v. Ces\`{a}ro sense
and write%
\begin{equation}
\mathrm{e.v.}\left\langle \mathsf{f}\left(  x\right)  ,\phi\left(  x\right)
\right\rangle =L\ \ \ \left(  \mathrm{C},k\right)  \,, \label{T.6}%
\end{equation}
or just $\mathrm{e.v.}\left\langle \mathsf{f}\left(  x\right)  ,\phi\left(
x\right)  \right\rangle =L$ $\ \left(  \mathrm{C}\right)  $ if there is no
need to call the attention to the value of $k.$

\subsection{\L ojasiewicz distributions}\label{Sbs Lojasiewics dist}

There is a class of distributions that correspond to ordinary functions, the
class of \L ojasiewicz distributions. In general \L ojasiewicz distributions
are not regular distributions, that is, they correspond to ordinary functions
that are not locally Lebesgue integrable functions.

The simplest class of distributions that correspond to functions are those
that come from continuous functions. If $f\leftrightarrow\mathsf{f}$ and $f$
is continuous then it is an ordinary function: We can always say what
$f\left(  x_{0}\right)  $ is for any $x_{0}.$ The function $f$ is not just
defined almost everywhere but it is actually defined \emph{everywhere.}%
\smallskip

\begin{definition}
\label{Def Loj dist}A distribution $\mathsf{f}$ is a \textit{\L ojasiewicz
distribution} if the distributional point value $\mathsf{f}\left(
x_{0}\right)  $ exists for every $x_{0}\in\mathbb{R}.$\smallskip
\end{definition}

\begin{definition}
\label{Def Loj func}A function $f$ defined in $\mathbb{R}$ is called a
\textit{\L ojasiewicz function} if there exists a \L ojasiewicz distribution
$\mathsf{f}$ such that
\begin{equation}
\mathsf{f}\left(  x\right)  =f\left(  x\right)  \ \ \ \ \ \forall
x\in\mathbb{R}\,.
\end{equation}

\end{definition}

The correspondence $f\leftrightarrow\mathsf{f}$ is clearly and uniquely defined in the case
of \L ojasiewicz functions and distributions \cite{loj}. The \L ojasiewicz functions can
be considered as a distributional generalization of continuous functions. They
are defined at all points, and furthermore the value at each given point is
not arbitrary but the (distributional) limit of the function as one approaches
the given point. The \L ojasiewicz functions and distributions were introduced
in \cite{loj}.

If $\mathsf{f}$ is a \L ojasiewicz distribution, and $\mathsf{F}$ is a
primitive, $\mathsf{F}^{\prime}=\mathsf{f},$ then $\mathsf{F}$ is also a
\L ojasiewicz distribution. If $\mathsf{f}$\textsf{ }is a \L ojasiewicz
distribution and $\psi$ is a smooth function, then $\psi\mathsf{f}$ is a
\L ojasiewicz distribution and%
\begin{equation}
(\psi\mathsf{f)}\left(  x\right)  =\psi\left(  x\right)  \mathsf{f}\left(
x\right)  \,.
\end{equation}

If $f$ is a \L ojasiewicz function, $f\leftrightarrow\mathsf{f},$ then we can
\emph{define} its definite integral \cite{AMS,loj} as
\begin{equation}
\int_{a}^{b}f\left(  x\right)  \,\mathrm{d}x=\mathsf{F}\left(  b\right)
-\mathsf{F}\left(  a\right)  \,, \label{INT}%
\end{equation}
where $\mathsf{F}^{\prime}=\mathsf{f}.$ The evaluation of $\mathsf{f}$ on a
test function $\phi,$ $\left\langle \mathsf{f},\phi\right\rangle ,$ can
actually be given as an integral, namely,
\begin{align}
\left\langle \mathsf{f},\phi\right\rangle  &  =\int_{-\infty}^{\infty}f\left(
x\right)  \phi\left(  x\right)  \,\mathrm{d}x\label{INTeval}\\
&  =\int_{a}^{b}f\left(  x\right)  \phi\left(  x\right)  \,\mathrm{d}%
x\,,\ \ \ \ \ \ \ \ \phi\in\mathcal{D}\left(  \mathbb{R}\right)  \,,\nonumber
\end{align}
where $\operatorname*{supp}\phi\subset\left[  a,b\right]  .$ We will give a
rather constructive procedure below (Sections \ref{Sc Integral} and
\ref{Section Comparison}) to calculate (\ref{INT}).

If $f_{0}$ is a \L ojasiewicz function, $f_{0}\leftrightarrow\mathsf{f}_{0},$
defined for $x<a,$ and $f_{1}$ is a \L ojasiewicz function, $f_{1}%
\leftrightarrow\mathsf{f}_{1},$ defined for $x>a,$ and if the distributional
lateral limits $\mathsf{f}_{0}\left(  a-0\right)  $ and $\mathsf{f}_{1}\left(
a+0\right)  $ exist and coincide, then there is a \L ojasiewicz function $f$
whose restriction to $\left(  -\infty,a\right)  $ is $f_{0}$ and whose
restriction to $\left(  a,\infty\right)  $ is $f_{1}.$

A typical example of a \L ojasiewicz function is
\begin{equation}
s_{\alpha,\beta}\left(  x\right)  =\left\{
\begin{array}
[c]{c}%
\left\vert x\right\vert ^{\alpha}\sin\left\vert x\right\vert ^{-\beta
},\ \ \ x\neq0\,,\\
0\ \ ,\ \ x=0\,,
\end{array}
\right.
\end{equation}
for $\alpha\in\mathbb{C}$ and $\beta>0.$ If $H$ is the Heaviside function,
then the functions $H\left(  \pm x\right)  s_{\alpha,\beta}\left(  x\right)  $
and their linear combinations are also \L ojasiewicz functions. It is not hard
to see that this implies that derivatives of arbitrary order of $\mathsf{s}%
_{\alpha,\beta},$ where $s_{\alpha,\beta}\leftrightarrow\mathsf{s}%
_{\alpha,\beta},$ are also \L ojasiewicz distributions. These are rapidly
oscillating functions. However, not all fast oscillating functions are
\L ojasiewicz functions. Curiously, the regular distribution $\sin\left(
\ln\left\vert x\right\vert \right)  $ is not a \L ojasiewicz distribution
since the distributional value at $x=0$ does not exist in the \L ojasiewicz
sense, even though it exists and equals $0$ in the Campos Ferreira sense
\cite{CF}.

\subsection{Distributionally regulated functions}\label{Sbs Regulated}

Another case when a distribution corresponds to a function\ is the case of
regulated distributions, introduced and studied in \cite{VEStudia}. They are
generalizations of the ordinary regulated functions \cite{Dido}, which are
functions whose lateral limits exist at all points, although they may be
different. They are related to the recently introduced \textquotedblleft
thick\textquotedblright\ points \cite{EF2007}.\smallskip

\begin{definition}
\label{Def Reg}A distribution $\mathsf{f}$ is called a \textit{regulated
distribution} if the distributional lateral limits
\begin{equation}
\mathsf{f}\left(  x_{0}^{+}\right)  \text{ \ \ and \ \ }\mathsf{f}\left(
x_{0}^{-}\right)  \,,
\end{equation}
exist $\forall x_{0}\in\mathbb{R},$ and there are no delta functions at any
point.\smallskip
\end{definition}
The statement that ``there are no delta functions'' at any point explicitly means that for each $\phi\in\mathcal{D}(\mathbb{R})$ and any $x_{0}\in\mathbb{R}$
\begin{equation}\label{jbeq}
\lim_{\varepsilon\to0^{+}}\left\langle \mathsf{f}(x_{0}+\varepsilon x),\phi(x)\right\rangle=\mathsf{f}(x_{0}^{-})\int_{-\infty}^{0}\phi(x)\mathrm{d}x+\mathsf{f}(x_{0}^{+})\int_{0}^{\infty}\phi(x)\mathrm{d}x\ .
\end{equation}
The relation (\ref{jbeq}) is known as (pointwise) distributional jump behavior and has interesting applications in the theory of Fourier series \cite{EVDiffMeans,VEJumpLog,VEChina}.

If $\mathsf{f}\left(  x_{0}^+\right)  =\mathsf{f}\left(  x_{0}^-\right)  $
then $\mathsf{f}\left(  x_{0}\right)  $ exists, since these distributions do
not have delta functions, and therefore we can define the function%
\begin{equation}
f\left(  x_{0}\right)  =\mathsf{f}\left(  x_{0}\right)  \,,
\end{equation}
for these $x_{0}.$ Then $f$ is called a distributionally regulated function. The
function $f$ is defined in the set $\mathbb{R}\setminus\mathfrak{S},$ where
$\mathfrak{S}$ is the set of points $x_{0}$ where $\mathsf{f}\left(
x_{0}^+\right)  \neq\mathsf{f}\left(  x_{0}^-\right)  .$ The set
$\mathfrak{S}$ has measure zero since in fact it is countable at the most
\cite{VEStudia}. One can actually define%
\begin{equation}
f\left(  x_{0}\right)  =\frac{\mathsf{f}\left(  x_{0}^+\right)  +\mathsf{f}%
\left(  x_{0}^-\right)  }{2}\,,
\end{equation}
and this is defined everywhere.

The basic properties of the distributionally regulated functions and the
corresponding regulated distributions are the following. If $\mathsf{f}$ is a
regulated distribution, and $\mathsf{F}$ is a primitive, $\mathsf{F}^{\prime
}=\mathsf{f},$ then $\mathsf{F}$ is a \L ojasiewicz distribution. If
$\mathsf{f}$ is a regulated distribution and $\psi$ is a smooth function, then
$\psi\mathsf{f}$ is a regulated distribution too. If $f$ is a regulated
function, $f\leftrightarrow\mathsf{f},$ then we can \emph{define} its definite
integral as%
\begin{equation}
\int_{a}^{b}f\left(  x\right)  \,\mathrm{d}x=\mathsf{F}\left(  b\right)
-\mathsf{F}\left(  a\right)  \,, \label{Int}%
\end{equation}
where $\mathsf{F}^{\prime}=\mathsf{f}.$ Then%
\begin{equation}
\left\langle \mathsf{f},\phi\right\rangle =\int_{-\infty}^{\infty}f\left(
x\right)  \phi\left(  x\right)  \,\mathrm{d}x\ ,\ \ \ \ \ \ \ \ \phi
\in\mathcal{D}\left(  \mathbb{R}\right)  \,.
\end{equation}
As in the case of \L ojasiewicz functions, the integral that we will define in
Section \ref{Sc Integral} coincides with (\ref{Int}) for distributionally
regulated functions (Theorem \ref{Theorem COM 5}).

\subsection{Romanovski's lemma}\label{Sbs Roma}

We shall use the following useful result \cite{Roma}, the Romanovski's lemma,
in some of our proofs. See \cite{Gordon} for many interesting applications of
this result, and \cite{EVraex} for generalizations to several variables.

\begin{theorem}
\label{TheoremRomanovskiLemma}\textrm{(Romanovski's lemma)} Let $\mathfrak{F}$
be a family of open intervals in $\left(  a,b\right)  $ with the following
four properties:

I. If $\left(  \alpha,\beta\right)  \in\mathfrak{F}$ and $\left(  \beta
,\gamma\right)  \in\mathfrak{F},$ then $\left(  \alpha,\gamma\right)
\in\mathfrak{F}.$

II. If $\left(  \alpha,\beta\right)  \in\mathfrak{F}$ and $\left(
\gamma,\delta\right)  \subset\left(  \alpha,\beta\right)  $ then $\left(
\gamma,\delta\right)  \in\mathfrak{F}.$

III. If $\left(  \alpha,\beta\right)  \in\mathfrak{F}$ for all $\left[
\alpha,\beta\right]  \subset\left(  c,d\right)  $ then $\left(  c,d\right)
\in\mathfrak{F}.$

IV. If all the intervals contiguous to a perfect closed set $K\subset\left[
a,b\right]  $ belong to $\mathfrak{F}$ then there exists an interval
$I\in\mathfrak{F}$ with $I\cap K\neq\emptyset.$

Then $\left(  a,b\right)  \in\mathfrak{F}.$
\end{theorem}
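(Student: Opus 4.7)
The plan is to argue by contradiction: suppose $(a,b)\notin\mathfrak{F}$, set $U=\bigcup\{J:J\in\mathfrak{F}\}$, and let $K=[a,b]\setminus U$. Since each member of $\mathfrak{F}$ is an open subinterval of $(a,b)$, the points $a$ and $b$ lie in $K$. I will show that $K$ is a nonempty perfect closed subset of $[a,b]$ all of whose contiguous intervals belong to $\mathfrak{F}$; then property IV yields an $I\in\mathfrak{F}$ with $I\cap K\neq\emptyset$, contradicting $I\subset U$.

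The technical heart is the following \emph{chaining lemma}: every open subinterval $V\subset(a,b)$ satisfying $V\subset U$ belongs to $\mathfrak{F}$. To prove it, fix $[\alpha',\beta']\subset V$. Each point of $[\alpha',\beta']$ lies in some $J\in\mathfrak{F}$; intersecting $J$ with $V$ keeps it in $\mathfrak{F}$ by property II, and compactness then yields a finite cover $(a_1,b_1),\dots,(a_n,b_n)\in\mathfrak{F}$ of $[\alpha',\beta']$ by intervals contained in $V$. Ordering the intervals by left endpoint and discarding redundant members produces a minimal cover satisfying $a_i<a_{i+1}<b_i<b_{i+1}$. Now fuse inductively: given $(a_1,b_i)\in\mathfrak{F}$, pick $c\in(a_{i+1},b_i)$, invoke property II to place $(a_1,c)$ and $(c,b_{i+1})$ in $\mathfrak{F}$, and invoke property I to fuse them into $(a_1,b_{i+1})\in\mathfrak{F}$. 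After $n-1$ steps one has $(a_1,b_n)\in\mathfrak{F}$ containing $[\alpha',\beta']$, so $(\alpha',\beta')\in\mathfrak{F}$ by II. Since $[\alpha',\beta']\subset V$ was arbitrary, property III gives $V\in\mathfrak{F}$.

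With the chaining lemma in hand the rest is quick. If $K$ were empty, the lemma applied to $V=(a,b)$ would force $(a,b)\in\mathfrak{F}$, a contradiction. If some $x_0\in K$ were isolated, say $(x_0-\delta,x_0+\delta)\cap K=\{x_0\}$, the lemma would place both $(x_0-\delta,x_0)$ and $(x_0,x_0+\delta)$ in $\mathfrak{F}$, property I would fuse them into $(x_0-\delta,x_0+\delta)\in\mathfrak{F}$, and $x_0$ would lie in $U$, again a contradiction. Hence $K$ is nonempty and perfect. Every interval contiguous to $K$ is a connected component of $U=[a,b]\setminus K$, and hence lies in $\mathfrak{F}$ by the chaining lemma; property IV therefore applies and produces the desired contradiction.

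The main obstacle is the chaining lemma. Its subtlety is that a generic finite cover of a compact interval by members of $\mathfrak{F}$ consists of overlapping rather than abutting open intervals, while property I only glues intervals sharing a single common endpoint. Overcoming this requires interleaving property II (to split each overlap into two abutting pieces at a freely chosen interior point) with property I (to reassemble), and invoking property III exactly once at the end to pass from every proper compact subinterval of $V$ to $V$ itself.
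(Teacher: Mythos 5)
Your strategy is the classical one for this lemma (which the paper itself states without proof, referring to \cite{Roma} and \cite{EVraex}), and your chaining lemma is correct and well executed: splitting each overlap of a minimal finite subcover at an interior point via II, reassembling via I, and invoking III once at the end is exactly the right mechanism. The gap lies in the assertion ``Hence $K$ is nonempty and perfect.'' With $K=[a,b]\setminus U$ the endpoints $a$ and $b$ always belong to $K$, as you yourself observe, and your isolated-point argument cannot be applied to them: for $x_{0}=a$ it would need $(a-\delta,a)\subset U$ and would end with the contradiction ``$a\in U$,'' but members of $\mathfrak{F}$ lie in $(a,b)$, so $(a-\delta,a)$ is never available and $a\notin U$ by definition. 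Thus $a$ or $b$ may perfectly well be isolated points of $K$, in which case $K$ is not perfect and IV cannot be applied to it. The extreme instance is $U=(a,b)$, i.e.\ $K=\{a,b\}$: your case analysis only treats ``$K=\emptyset$,'' which never occurs, whereas the case that genuinely requires a direct appeal to the chaining lemma is $K\cap(a,b)=\emptyset$.

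The repair is short. Work instead with $K^{*}$, the set of non-isolated points of $K$ (its derived set, i.e.\ the closure of $(a,b)\setminus U$). If $K^{*}=\emptyset$, then by your interior isolated-point argument $K\subset\{a,b\}$, so $(a,b)\subset U$ and the chaining lemma gives $(a,b)\in\mathfrak{F}$, a contradiction. Otherwise $K^{*}$ is closed, nonempty, and perfect: interior points of $K$ are never isolated by your fusing argument, and any point of $K^{*}$ is a limit of points of $K\cap(a,b)\subset K^{*}$. Since $K^{*}$ and $K$ differ at most by the points $a,b$, one has $(a,b)\setminus K^{*}=(a,b)\setminus K=U$, so every interval contiguous to $K^{*}$ is contained in $U$ and hence belongs to $\mathfrak{F}$ by the chaining lemma. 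Applying IV to $K^{*}$ yields $I\in\mathfrak{F}$ with $I\cap K^{*}\neq\emptyset$, contradicting $I\subset U$ and $U\cap K^{*}=\emptyset$. With this adjustment your proof is complete.
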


Observe that if we take $K=\left[  a,b\right]  $ in IV we obtain that
$\mathfrak{F}\neq\left\{  \emptyset\right\}  ,$ but it may be easier to show
this separately.

\subsection{Measures}\label{Sbs Measures}

We shall use the following nomenclature. A (Radon) measure would mean a
\emph{positive} functional on the space of compactly supported continuous
functions, which would be denoted by integral notation such as $\mathrm{d}%
\mu,$ or by distributional notation, $\mathsf{f}=\mathsf{f}_{\mu},$ so that%
\begin{equation}
\left\langle \mathsf{f},\phi\right\rangle =\int_{\mathbb{R}}\phi\left(
x\right)  \,\mathrm{d}\mu(x)\,, \label{M.1}%
\end{equation}
and $\left\langle \mathsf{f},\phi\right\rangle \geq0$ if $\phi\geq0.$ A signed
measure is a real bounded functional on the space of compactly supported
continuous functions, denoted as, say $\mathrm{d}\nu,$ or as $\mathsf{g}%
=\mathsf{g}_{\nu}.$ Observe that any signed measure can be written as $\nu
=\nu_{+}-\nu_{-},$ where $\nu_{\pm}$ are measures concentrated on disjoint
sets. We shall also use the Lebesgue decomposition, according to which any
signed measure $\nu$ can be written as $\nu=\nu_{\mathrm{abs}}+\nu
_{\mathrm{sig}},$ where $\nu_{\mathrm{abs}}$ is absolutely continuous with
respect to the Lebesgue measure, so that it corresponds to a regular
distribution, while $\nu_{\mathrm{sig}}$ is a signed measure concentrated on a
set of Lebesgue measure zero. We shall also need to consider the measures
$(\nu_{\mathrm{sig}})_{\pm}=(\nu_{\pm})_{\mathrm{sig}},$ the positive and
negative singular parts of $\nu.$

\section{The $\phi-$transform}\label{Sc The phi transform}

A very important tool in our definition of a general distributional integral
is the $\phi-$trans\-form. The $\phi-$transform \cite{DroZav03, VPwavelet,VEStudia,
VEmeasures} in one variable is defined as follows. Let $\phi
\in\mathcal{D}\left(  \mathbb{R}\right)  $ be a fixed \emph{normalized} test
function, that is, one that satisfies%
\begin{equation}
\int_{\mathbb{-\infty}}^{\infty}\phi\left(  x\right)  \,\mathrm{d}x=1\,.
\label{fi.1}%
\end{equation}
If $\mathsf{f}\in\mathcal{D}^{\prime}\left(  \mathbb{R}\right)  $ we introduce
the function of two variables $F=F_{\phi}\left\{  \mathsf{f}\right\}  $ by the
formula%
\begin{equation}
F\left(  x,t\right)  =\left\langle \mathsf{f}\left(  x+ty\right)  ,\phi\left(
y\right)  \right\rangle \,, \label{fi.2}%
\end{equation}
where $\left(  x,t\right)  \in\mathbb{H},$ the half plane $\mathbb{R}%
\times\left(  0,\infty\right)  .$ Naturally the evaluation in (\ref{fi.2}) is
with respect to the variable $y.$ We call $F$ the $\phi-$transform of
$\mathsf{f}.$ Whenever we consider $\phi-$trans\-forms we assume that $\phi$
satisfies (\ref{fi.1}).

The $\phi-$transform converges to the distribution as $t\rightarrow0^{+}$\cite{VEmeasures,VEEdin}: If
$\phi\in\mathcal{D}\left(  \mathbb{R}\right)  $ and $\mathsf{f}\in
\mathcal{D}^{\prime}\left(  \mathbb{R}\right)  ,$ then%
\begin{equation}
\lim_{t\rightarrow0^{+}}F\left(  x,t\right)  =\mathsf{f}\left(  x\right)  \,,
\label{fi.3}%
\end{equation}
distributionally in the space $\mathcal{D}^{\prime}\left(  \mathbb{R}\right)
,$ that is, if $\rho\in\mathcal{D}\left(  \mathbb{R}\right)  $ then%
\begin{equation}
\lim_{t\rightarrow0^{+}}\left\langle F\left(  x,t\right)  ,\rho\left(
x\right)  \right\rangle =\left\langle \mathsf{f}\left(  x\right)  ,\rho\left(
x\right)  \right\rangle \,. \label{fi.4}%
\end{equation}

The definition of the $\phi-$transform tells us that if the distributional
point value \cite{loj} $\mathsf{f}\left(  x_{0}\right)  $ exists and equals
$\gamma$ then $F\left(  x_{0},t\right)  \rightarrow\gamma$ as $t\rightarrow
0^{+},$ but actually $F\left(  x,t\right)  \rightarrow\gamma$ as $\left(
x,t\right)  \rightarrow\left(  x_{0},0\right)  $ in an \emph{angular} or
\emph{non-tangential} fashion, that is if $\left\vert x-x_{0}\right\vert \leq
Mt$ for some $M>0$ (just replace $\phi(y)$ by the compact set $\left\{
\phi\left(  y+ \tau\right)  :\: \left|  \tau\right|  \leq M \right\}  $).

The angular behavior of the $\phi-$transform at a point $\left(
x_{0},0\right)  $ gives us important information \cite{DroZav03,
VPwavelet, VEmeasures} about the nature of the distribution at $x=x_{0},$ even if the
angular limit does not exist.

If $x_{0}\in\mathbb{R}$ we shall denote by $C_{x_{0},\theta}$ the cone in
$\mathbb{H}$ starting at $x_{0}$ of angle $\theta,$
\begin{equation}
C_{x_{0},\theta}=\left\{  (x,t)\in\mathbb{H}:\left\vert x-x_{0}\right\vert
\leq(\tan\theta)t\right\}  \,. \label{M.2}%
\end{equation}
If $\mathsf{f}\in\mathcal{D}^{\prime}\left(  \mathbb{R}\right)  $ and
$x_{0}\in\mathbb{R}$ then we consider the upper and lower angular values of
its $\phi-$transform,%
\begin{equation}
\mathsf{f}_{\phi,\theta}^{+}\left(  x_{0}\right)  =\limsup
_{_{\substack{\left(  x,t\right)  \rightarrow\left(  x_{0},0\right)  \\\left(
x,t\right)  \in C_{x_{0},\theta}}}}F\left(  x,t\right)  \,, \label{M.3}%
\end{equation}%
\begin{equation}
\mathsf{f}_{\phi,\theta}^{-}\left(  x_{0}\right)  =\liminf
_{_{\substack{\left(  x,t\right)  \rightarrow\left(  x_{0},0\right)  \\\left(
x,t\right)  \in C_{x_{0},\theta}}}}F\left(  x,t\right)  \,. \label{M.4}%
\end{equation}
The quantities $\mathsf{f}_{\phi,\theta}^{\pm}\left(  x_{0}\right)  $ are well
defined at all points $x_{0},$ but, of course, they could be infinite. For $\theta=0$, we obtain the upper and lower \emph{radial} limits of the $\phi-$transform.

The following simple result would be useful.\smallskip

\begin{lemma}
\label{Lemma M.1}Let $\mathsf{f}\in\mathcal{D}^{\prime}\left(  \mathbb{R}%
\right)  $ and $x_{0}\in\mathbb{R}.$ If%
\begin{equation}
\mathsf{f}_{\phi,0}^{+}\left(  x_{0}\right)  =\mathsf{f}_{\phi,0
}^{-}\left(  x_{0}\right)  =\gamma\,, \label{M.4p1}%
\end{equation}
for all normalized positive test functions $\phi\in\mathcal{D}(\mathbb{R})$, then the distributional
point value $\mathsf{f}\left(  x_{0}\right)  $ exists and equals $\gamma.$
\end{lemma}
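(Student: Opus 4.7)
The plan is to upgrade the hypothesis step by step: first from normalized positive test functions to arbitrary positive test functions, then to real-valued test functions, and finally to all of $\mathcal{D}(\mathbb{R})$. Throughout, set $F(x,t)=\langle \mathsf{f}(x+ty),\phi(y)\rangle$ for the choice of $\phi$ at hand; the hypothesis gives that $\lim_{t\to 0^{+}}F(x_{0},t)=\gamma=\gamma\int\phi$ whenever $\phi\in\mathcal{D}(\mathbb{R})$ is positive with $\int\phi=1$.

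First I would remove the normalization. If $\phi\in\mathcal{D}(\mathbb{R})$ is positive and $\int\phi=c>0$, then $\phi/c$ is normalized positive, so by linearity of the evaluation in $\phi$,
\begin{equation*}
\lim_{t\to 0^{+}}\langle \mathsf{f}(x_{0}+ty),\phi(y)\rangle=c\,\gamma=\gamma\int_{-\infty}^{\infty}\phi(y)\,\mathrm{d}y.
\end{equation*}
The case $c=0$ forces $\phi\equiv 0$ and is trivial. Hence the conclusion holds for every positive $\phi\in\mathcal{D}(\mathbb{R})$.

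Next I would handle real-valued $\phi\in\mathcal{D}(\mathbb{R})$ by a dominating-function decomposition. Pick $\chi\in\mathcal{D}(\mathbb{R})$ with $\chi\geq 0$ and $\chi\equiv 1$ on $\operatorname{supp}\phi$, and set $\psi=M\chi$ with $M\geq\sup|\phi|$; then $\psi\pm\phi\geq 0$ and both lie in $\mathcal{D}(\mathbb{R})$. Writing
\begin{equation*}
\phi=\tfrac{1}{2}\bigl((\psi+\phi)-(\psi-\phi)\bigr),
\end{equation*}
the previous step applies to each nonnegative piece, and the limit for $\phi$ follows by linearity, giving $\lim_{t\to 0^{+}}\langle \mathsf{f}(x_{0}+ty),\phi(y)\rangle=\gamma\int\phi$. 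For complex $\phi$, split into real and imaginary parts. This is exactly the statement that $\lim_{\varepsilon\to 0^{+}}\langle \mathsf{f}(x_{0}+\varepsilon x),\phi(x)\rangle=\gamma\int\phi$ for all $\phi\in\mathcal{D}(\mathbb{R})$.

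Finally, for the full \L ojasiewicz point value one also needs the limit as $\varepsilon\to 0^{-}$. This is handled identically: the radial limits $\mathsf{f}_{\phi,0}^{\pm}(x_{0})$ tacitly include approach with $t>0$, but if $\phi$ is replaced by $\check\phi(y)=\phi(-y)$ (still normalized positive when $\phi$ is), then $\langle \mathsf{f}(x_{0}-ty),\phi(y)\rangle=\langle \mathsf{f}(x_{0}+ty),\check\phi(y)\rangle$, so the same hypothesis covers the left-sided scaling as well, and the combined two-sided scaling limit characterizing $\mathsf{f}(x_{0})=\gamma$ in the sense of (\ref{pe1}) is established. The only mildly delicate point is the smooth dominating decomposition in step two, but the cutoff $M\chi$ above handles it cleanly without leaving $\mathcal{D}(\mathbb{R})$.
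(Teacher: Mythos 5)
Your proposal is correct and follows essentially the same route as the paper's own proof: first rescale to remove the normalization, then write an arbitrary test function as a difference of nonnegative ones by means of a dominating cutoff ($M$ times a plateau function equal to $1$ on $\operatorname{supp}\phi$). The only extra content is your explicit reflection argument $\check\phi(y)=\phi(-y)$ for the $\varepsilon\to0^{-}$ side of the \L ojasiewicz limit, a detail the paper leaves implicit; it is a valid and harmless addition.
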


\begin{proof}
Indeed, (\ref{M.4p1}) yields that $\lim_{\varepsilon\rightarrow0}\left\langle
\mathsf{f}\left(  x_{0}+\varepsilon x\right)  ,\phi\left(  x\right)
\right\rangle $ exists and equals $\gamma$ for any positive normalized test
function. If we multiply by a constant, we obtain that the limit exists and
equals $\gamma\int_{-\infty}^{\infty}\phi\left(  x\right)  \,\mathrm{d}x$ for
any positive test function. The result now follows because any test function
is the difference of two positive test functions. Indeed, given an arbitrary test function $\phi\in\mathcal{D}(\mathbb{R})$, let $M=\max_{x\in\mathbb{R}}\left|\phi(x)\right|$. Find a positive $\varphi\in\mathcal{D}(\mathbb{R})$ so that $\varphi(x)=1$ for $x\in\operatorname*{supp}\:\phi$, then $\phi_{1}=M\varphi$ and $\phi_{2}=\phi+\phi_{1}$ are positive test functions with $\phi=\phi_{2}-\phi_1$.\smallskip
\end{proof}

We shall need several characterizations of positive measures in terms of the
extreme values $\mathsf{f}_{\phi,\theta}^{\pm}\left(  x\right)  $ of a
distribution $\mathsf{f}.$ The following result was proved in
\cite{VEmeasures}.\smallskip

\begin{theorem}
\label{Theorem M.2}Let $\mathsf{f}\in\mathcal{D}^{\prime}\left(
\mathbb{R}\right)  .$ Let $U$ be an open set. Then $\mathsf{f}$ is a measure
in $U$ if and only if its $\phi-$transform $F=F_{\phi}\left\{  \mathsf{f}%
\right\}  $ with respect to a given normalized, positive test function
$\phi\in\mathcal{D}\left(  \mathbb{R}\right)  $\ satisfies%
\begin{equation}
\mathsf{f}_{\phi,\theta}^{-}\left(  x\right)  \geq0\ \ \ \ \ \forall x\in U\,,
\label{M.5}%
\end{equation}
for all angles $\theta.$ Moreover, if the support of $\phi$ is contained in
$\left[  -R,R\right]  $ and if (\ref{M.5}) holds for a single value of
$\theta>\arctan R,$ then $\mathsf{f}$ is a measure in $U.$\smallskip
\end{theorem}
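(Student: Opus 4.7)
\emph{Necessity.} If $\mathsf{f}$ corresponds to a positive Radon measure $\mu$ on $U$, then for $x\in U$ and $t>0$ small enough that $x + t\operatorname*{supp}\phi \subset U$, the substitution $z=x+ty$ gives
\begin{equation*}
F(x,t)=\left\langle \mathsf{f}(x+ty),\phi(y)\right\rangle = \frac{1}{t}\int \phi\!\left(\frac{z-x}{t}\right)\mathrm{d}\mu(z)\geq 0,
\end{equation*}
since $\phi\geq 0$. Hence $\mathsf{f}_{\phi,\theta}^{\pm}(x)\geq 0$ for every $x\in U$ and every angle $\theta$.

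\emph{Sufficiency.} Fix a compact subinterval $[c,d]\subset U$; it suffices to prove $\mathsf{f}$ is a positive Radon measure on $(c,d)$. My plan is to apply Romanovski's lemma (Theorem~\ref{TheoremRomanovskiLemma}) to the family $\mathfrak{F}$ of open subintervals $(\alpha,\beta)\subset (c,d)$ on which $\mathsf{f}$ restricts to a positive Radon measure.

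Conditions (II) (subinterval) and (III) (compact exhaustion) are immediate, since the testing of a distribution against a nonnegative $\rho\in\mathcal{D}$ depends only on an open neighborhood of $\operatorname*{supp}\rho$. For (I), gluing positive measures on $(\alpha,\beta)$ and $(\beta,\gamma)$, the restriction $\mathsf{f}|_{(\alpha,\gamma)}$ differs from a positive measure by a distribution supported at $\{\beta\}$, hence of the form $\sum_{j=0}^{N}a_{j}\delta^{(j)}(x-\beta)$. The term $a_{j}\delta^{(j)}_{\beta}$ contributes to $F(y,t)$ a summand of order $a_{j}t^{-j-1}\phi^{(j)}((\beta-y)/t)$; along a ray $y=\beta+ct$ lying in $C_{\beta,\theta}$ for $\theta$ large and $c$ chosen so that the sign of $\phi^{(j)}(-c)$ makes this summand negative, the quantity $F(y,t)$ is driven to $-\infty$ whenever $a_{j}\neq 0$ for $j\geq 1$ or $a_{0}<0$. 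The hypothesis $\mathsf{f}^{-}_{\phi,\theta}(\beta)\geq 0$ (for all $\theta$, or a single sufficiently large $\theta$ in the ``moreover'' version) forbids this, so (I) holds.

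The substantive step is (IV). Given a perfect closed $K\subset [c,d]$ whose contiguous intervals all belong to $\mathfrak{F}$, I need to exhibit an interval $I\in\mathfrak{F}$ with $I\cap K\neq \emptyset$. The plan is a Baire-category argument: locate an open subinterval $(p,q)$ meeting $K$ on which $\mathsf{f}$ has uniformly bounded order $N$ and is representable as $g^{(N)}$ for a continuous function $g$; then combine the pointwise angular-liminf condition at each $x\in K\cap(p,q)$ with the already known positive-measure structure on the contiguous intervals to translate the hypothesis into a monotonicity-of-order-$N$ statement (convexity of an appropriate primitive) for $g$, which implies $\mathsf{f}$ is a positive measure on $(p,q)$. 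The ``moreover'' refinement, requiring only a single $\theta>\arctan R$, is accommodated because such a cone $C_{x,\theta}$ already contains $\{(y,t):y\in x+t\operatorname*{supp}\phi\}$, which is precisely the data feeding every computation above. The principal obstacle is exactly this last bootstrapping step: promoting pointwise angular nonnegativity on $K\cap(p,q)$ into genuine positivity of $\mathsf{f}$ as a distribution on the full open interval $(p,q)$, bridging a purely local hypothesis to a uniform conclusion.
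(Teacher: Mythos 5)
There is a genuine gap. The necessity half is fine, but the sufficiency half is only a plan, not a proof: the entire Tauberian content of Theorem \ref{Theorem M.2} is the passage from the pointwise condition $\mathsf{f}_{\phi,\theta}^{-}(x)\geq 0$ (for a single cone of aperture $\theta>\arctan R$) to the distributional inequality $\left\langle \mathsf{f},\rho\right\rangle \geq 0$ for nonnegative $\rho$, and this is exactly the step you defer in condition IV (``translate the hypothesis into a monotonicity-of-order-$N$ statement \ldots which implies $\mathsf{f}$ is a positive measure'') and then candidly label ``the principal obstacle.'' Note that pointwise liminf information cannot be fed into the convergence $\left\langle F(\cdot,t),\rho\right\rangle \rightarrow\left\langle \mathsf{f},\rho\right\rangle$ by Fatou-type reasoning without some uniform lower bound, so an actual argument (a local structure/Tauberian theorem for the $\phi$-transform, which is what \cite{VEmeasures} supplies) is indispensable here; the paper itself does not reprove Theorem \ref{Theorem M.2} but cites \cite{VEmeasures}, and it reserves the Romanovski machinery for the \emph{stronger} results (Theorems \ref{TheoremMeasures} and \ref{Theorem M3}), where Theorem \ref{Theorem M.2} is used as the engine that converts uniform lower bounds on a subinterval into the measure property. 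Your scaffolding has no such engine: both the base case $\mathfrak{F}\neq\{\emptyset\}$ and step IV still require precisely the implication being proved on some interval, so as written the argument is circular.

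A secondary but real problem is your treatment of condition I. If $\mathsf{f}$ restricts to positive measures on $(\alpha,\beta)$ and $(\beta,\gamma)$, it does \emph{not} follow that $\mathsf{f}|_{(\alpha,\gamma)}$ is a positive measure plus a distribution supported at $\beta$: mass may accumulate at $\beta$, as for a finite-part regularization of $H(\beta-x)(\beta-x)^{-1}$, whose restrictions to both sides are positive measures while the regularization on $(\alpha,\gamma)$ is neither a measure nor measure-plus-deltas. In the paper's proof of Theorem \ref{Theorem M3} this gluing is handled by the extra hypothesis that the primitive is a \L ojasiewicz distribution (hence continuous at $\beta$); in Theorem \ref{Theorem M.2} you have no such hypothesis, so ruling out such accumulation must again come from the behavior of the $\phi$-transform in the cone at $\beta$ itself, i.e.\ from the very quantitative ingredient your proposal is missing.
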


We should also point out that if there exists a constant $M>0$ such that
$\mathsf{f}_{\phi,\theta}^{-}\left(  x\right)  \geq-M,$ $\forall x\in U,$
where $\theta>\arctan R,$ then $\mathsf{f}$ is a signed measure in $U,$ whose
singular part is \emph{positive} \cite{VEmeasures}. It is easy to see that these results are not true if we use \emph{radial} limits instead of angular ones. An example is provided by taking
$\mathsf{f}\left(  x\right)  =-\delta^{\prime}\left(  x\right)  $ and $\phi
\in\mathcal{D}\left(  \mathbb{R}\right)  $ with $\phi^{\prime}\left(
0\right)  >0.$ Actually this example shows that if (\ref{M.5}) holds for a
value of $\theta<\arctan R,$ then $\mathsf{f}$ might not be a measure. 

Using the Romanovski's
lemma we were able to prove the ensuing stronger result in \cite{EVraex}%
.\smallskip

\begin{theorem}
\label{TheoremMeasures}Let $\mathsf{f}\in\mathcal{D}^{\prime}\left(
\mathbb{R}\right)  .$ Let $U$ be an open set. Suppose its $\phi-$transform
$F=F_{\phi}\left\{  \mathsf{f}\right\}  $ with respect to a given normalized,
positive test function $\phi\in\mathcal{D}\left(  \mathbb{R}\right)  $\ with
$\operatorname*{supp}\phi\subset\left[  -R,R\right]  $ satisfies
\begin{equation}
\mathsf{f}_{\phi,0}^{+}\left(  x\right)  \geq0\ \ \ \ \ \text{almost
everywhere in }U\,, \label{Mea.1}%
\end{equation}
while for each $x\in U$ there is a constant $M_{x}>0$
such that
\begin{equation}
\mathsf{f}_{\phi,\theta}^{-}\left(  x\right)  \geq-M_{x}\,, \label{Mea.2}%
\end{equation}
where $\theta>\arctan R.$ Then $\mathsf{f}$ is a measure in $U.$\smallskip
\end{theorem}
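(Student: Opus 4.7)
My plan is to apply Romanovski's lemma (Theorem \ref{TheoremRomanovskiLemma}) to the family $\mathfrak{F}$ of open subintervals $(\alpha,\beta)\subset U$ on which $\mathsf{f}$ is a positive measure. Because being a measure is a local property, I first reduce to the case $U=(a,b)$ by covering $U$ with compactly contained open intervals. Properties (II) and (III) of Romanovski are routine: restrictions of positive measures are positive measures, and if $\mathsf{f}$ is a positive measure on every $(\alpha,\beta)\subset\subset(c,d)$, then any $\varphi\in\mathcal{D}(c,d)$ with $\varphi\geq 0$ has compact support in some $[\alpha,\beta]\subset(c,d)$, forcing $\langle\mathsf{f},\varphi\rangle\geq 0$, so $\mathsf{f}$ is a positive distribution, hence a positive Radon measure, on $(c,d)$.

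For property (I), if $(\alpha,\beta),(\beta,\gamma)\in\mathfrak{F}$, I work near the junction. For $x$ in a deleted neighborhood of $\beta$ the distribution $\mathsf{f}$ is a positive measure in an open set about $x$; since $\phi\geq 0$ is supported in $[-R,R]$, once $t$ is small enough the kernel $\phi((\cdot-y)/t)$ is supported inside the measure set, so $F(y,t)\geq 0$ throughout a truncated cone $C_{x,\theta}$, whence $\mathsf{f}_{\phi,\theta}^{-}(x)\geq 0$. At $x=\beta$ the hypothesis gives $\mathsf{f}_{\phi,\theta}^{-}(\beta)\geq-M_{\beta}$. The uniform lower bound $-M_{\beta}$ on $(\beta-\varepsilon,\beta+\varepsilon)$ then triggers the signed-measure variant of Theorem \ref{Theorem M.2}, so $\mathsf{f}$ is a signed measure on this neighborhood with positive singular part. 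Because its restrictions off $\{\beta\}$ are positive measures, the contribution at $\{\beta\}$ is of the form $c\delta_{\beta}$, and positivity of the singular part forces $c\geq 0$; gluing gives $(\alpha,\gamma)\in\mathfrak{F}$.

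The heart of the proof is property (IV). Let $K\subset[a,b]$ be a perfect closed set all of whose contiguous intervals lie in $\mathfrak{F}$. For each $n\in\mathbb{N}$ define
\[
E_{n}=\{x\in K:\ F(y,t)\geq -n\ \text{for all}\ (y,t)\in C_{x,\theta}\ \text{with}\ 0<t\leq 1/n\}.
\]
Each $E_{n}$ is closed in $K$ by the joint continuity of $F$ on $\mathbb{H}$, and $K=\bigcup_{n}E_{n}$ since $\mathsf{f}_{\phi,\theta}^{-}(x)\geq-M_{x}>-\infty$ at every $x\in K$. Baire's theorem (applied in the complete metric space $K$) produces an open interval $J\subset(a,b)$ with $\emptyset\neq J\cap K\subset E_{n_{0}}$. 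For $x\in J\setminus K$ the argument from (I) gives $\mathsf{f}_{\phi,\theta}^{-}(x)\geq 0$, because $x$ lies in a contiguous interval on which $\mathsf{f}$ is already a positive measure. Hence the uniform bound $\mathsf{f}_{\phi,\theta}^{-}(x)\geq -n_{0}$ holds throughout $J$, and the signed-measure version of Theorem \ref{Theorem M.2} yields that $\mathsf{f}$ is a signed measure on $J$ whose singular part is positive.

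Finally I invoke the hypothesis $\mathsf{f}_{\phi,0}^{+}(x)\geq 0$ a.e.\ to upgrade this to a positive measure. Writing $\mathsf{f}|_{J}=h(x)\,dx+\mathsf{f}_{\mathrm{sig}}$ with $\mathsf{f}_{\mathrm{sig}}\geq 0$, Lebesgue differentiation (valid for signed Radon measures against a normalized positive kernel) gives $\lim_{t\to 0^{+}}F(x,t)=h(x)$ almost everywhere, hence $h(x)=\mathsf{f}_{\phi,0}^{+}(x)\geq 0$ a.e. Both parts of $\mathsf{f}|_{J}$ are now nonnegative, so $J\in\mathfrak{F}$, verifying (IV). Romanovski then delivers $(a,b)\in\mathfrak{F}$, finishing the proof. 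The chief obstacle is (IV): the bound $M_{x}$ is only pointwise, and it is the Baire argument on $K$, combined with the observation that off $K$ the $\phi$-transform is automatically nonnegative, that manufactures the uniform lower bound required by the signed-measure variant of Theorem \ref{Theorem M.2}.
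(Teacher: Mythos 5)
Your proof is correct. The paper does not actually include a proof of this theorem --- it only cites \cite{EVraex}, noting that the argument there rests on Romanovski's lemma --- and your Romanovski-plus-Baire argument (family of subintervals where $\mathsf{f}$ is a measure, closed sets $E_{n}$ and Baire category on $K$ to manufacture a uniform lower bound, the signed-measure refinement of Theorem \ref{Theorem M.2}, and Lebesgue differentiation of the decomposition $h\,\mathrm{d}x+\mathsf{f}_{\mathrm{sig}}$ to exploit the a.e.\ radial hypothesis) is exactly the strategy indicated there, paralleling the paper's own proof of Theorem \ref{Theorem M3}.
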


Furthermore, one needs the inequality (\ref{Mea.2}) to be true at \emph{all
}points of $U,$ as the example $\mathsf{f}\left(  x\right)  =-\delta\left(
x-a\right)  ,$ where $a\in U,$ shows. However, in our construction of the
general distributional integral we shall need to consider the case when
$\mathsf{f}_{\phi,\theta}^{-}\left(  x\right)  =-\infty$ for $x\in E$ where
$E$ is a small set in the sense that $\left\vert E\right\vert \leq\aleph_{0}.$
We have a corresponding result in this case if we ask that any primitive of
$\mathsf{f}$ be a \L ojasiewicz distribution.\smallskip

\begin{theorem}
\label{Theorem M3}Let $\mathsf{f}\in\mathcal{D}^{\prime}\left(  \mathbb{R}%
\right)  .$ Suppose that $\mathsf{f=F}^{\prime},$ where $\mathsf{F}$ is a
\L ojasiewicz distribution. Let $U$ be an open set. Suppose the $\phi
-$transform $F=F_{\phi}\left\{  \mathsf{f}\right\}  $ with respect to a given
normalized, positive test function $\phi\in\mathcal{D}\left(  \mathbb{R}%
\right)  $\ with $\operatorname*{supp}\phi\subset\left[  -R,R\right]  $
satisfies
\begin{equation}
\mathsf{f}_{\phi,0}^{+}\left(  x\right)  \geq0\ \ \ \ \ \text{almost
everywhere in }U\,, \label{Mea.1p}%
\end{equation}
while there exist a countable set $E$ such that for each
$x\in U\setminus E$ there is a constant $M_{x}>0$ such that
\begin{equation}
\mathsf{f}_{\phi,\theta}^{-}\left(  x\right)  \geq-M_{x}\,,\ \ \ \ \ x\in
U\setminus E\,, \label{Mea.2p}%
\end{equation}
where $\theta>\arctan R.$ Then $\mathsf{f}$ is a measure in $U.$
\end{theorem}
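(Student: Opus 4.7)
The natural approach is via Romanovski's lemma. Let $\mathfrak{F}$ denote the family of open subintervals $(\alpha,\beta)$ of (a connected component of) $U$ on which $\mathsf{f}|_{(\alpha,\beta)}$ is a positive Radon measure; the goal is to show $\mathfrak{F}$ exhausts $U$. Properties I, II, and III of Romanovski's lemma are standard for families of intervals defined by a local measure condition. The real work is in verifying property IV.

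Fix a perfect closed $K \subset [a,b] \subset U$ with every interval contiguous to $K$ in $\mathfrak{F}$. I apply a Baire category argument to the compact metric space $K$. For integers $n, m \geq 1$, set
\[
K_{n,m} = \left\{ x \in K : F(y,t) \geq -n \text{ for every } (y,t) \in C_{x,\theta} \text{ with } 0 < t \leq 1/m \right\}.
\]
Each $K_{n,m}$ is closed in $K$ by continuity of $F$ on $\mathbb{H}$. Any $x \in K \setminus E$ satisfies $\mathsf{f}_{\phi,\theta}^{-}(x) \geq -M_{x}$, so for some $n,m$ we have $x \in K_{n,m}$; hence $K \setminus E \subset \bigcup_{n,m} K_{n,m}$. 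Since $E$ is countable and $K$ perfect, $K \setminus E$ is dense in $K$, and Baire's theorem yields integers $n_0, m_0$ and an open interval $(c,d)$ with $\emptyset \neq (c,d) \cap K \subset K_{n_0,m_0}$.

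On $(c,d)$, every point of $(c,d) \setminus K$ lies in a contiguous interval belonging to $\mathfrak{F}$, so $\mathsf{f}_{\phi,\theta}^{-}(x) \geq 0$ there by the forward direction of Theorem \ref{Theorem M.2}; on $(c,d) \cap K \setminus E$ the Baire conclusion gives $\mathsf{f}_{\phi,\theta}^{-}(x) \geq -n_0$. Combining these with the a.e.\ hypothesis $\mathsf{f}_{\phi,0}^{+}(x) \geq 0$ inherited from $U$, Theorem \ref{TheoremMeasures} applied componentwise to the open set $(c,d) \setminus E$ shows that $\mathsf{f}$ is a positive Radon measure on each connected component of $(c,d) \setminus E$.

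The main obstacle is patching across the countably many points of $E \cap (c,d)$, and this is the step that actually uses the hypothesis on $\mathsf{F}$. At $x_0 \in E \cap (c,d)$, on $(x_0-\varepsilon, x_0) \cup (x_0, x_0+\varepsilon)$ the distribution $\mathsf{f}$ is a positive Radon measure $\mu$, so $\mathsf{F}$ restricted to each side corresponds to a monotone cumulative $G_{\pm}$ of $\mu$ plus a constant. The fact that $\mathsf{F}$ has a \L ojasiewicz point value at $x_0$ forces three things: (i) each $G_{\pm}$ has a finite one-sided limit at $x_0$, for otherwise pairing $\mathsf{F}(x_0 + \varepsilon\, \cdot\,)$ against a positive test function supported on one side would diverge by monotone convergence, contradicting the existence of the \L ojasiewicz value; hence $\mu$ is locally finite at $x_0$; (ii) the two one-sided limits of $\mathsf{F}$ at $x_0$ must agree, since a jump is incompatible with a \L ojasiewicz value (which would instead be only of Campos Ferreira type); and (iii) no $\delta(x-x_{0})$ or derivative-of-delta term can appear in $\mathsf{F}$ near $x_0$, as these distributions lack \L ojasiewicz point values. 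Consequently $\mathsf{F}$ is the continuous cumulative distribution function of $\mu$ (up to a constant) in a full neighborhood of $x_0$, and $\mathsf{f} = \mu$ extends across $x_0$ as a positive Radon measure. Covering $(c,d)$ by such neighborhoods together with the components of $(c,d) \setminus E$ yields $(c,d) \in \mathfrak{F}$, establishing property IV and completing the proof.
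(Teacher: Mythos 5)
Your overall strategy (Romanovski's lemma applied to the family of subintervals on which $\mathsf{f}$ is a measure, with a Baire category argument on $K$ for condition IV) is the same as the paper's, and your Baire step is essentially the paper's. But there is a genuine gap: condition I of Theorem \ref{TheoremRomanovskiLemma} is \emph{not} ``standard'' for this family, and it is precisely the place where the hypothesis that $\mathsf{F}$ is a \L ojasiewicz distribution must be used. If $\mathsf{f}$ restricted to $(\alpha,\beta)$ and to $(\beta,\gamma)$ is a positive measure, it does not follow in general that $\mathsf{f}$ is a measure on $(\alpha,\gamma)$: for instance $\mathsf{f}=-\delta(x-\beta)$, or $\mathsf{f}=\mathrm{Pf}\left((x-\beta)^{-2}\right)$, restricts to a positive measure on both open subintervals but is not a measure across $\beta$. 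The paper proves I by exactly the argument you deploy later for the $E$-points: on each side the primitive corresponds to an increasing continuous function, and the existence of the \L ojasiewicz value $\mathsf{F}(\beta)$ forces the two one-sided limits to be finite and equal, so $F$ is continuous and increasing across $\beta$ and $\mathsf{f}$ is a measure on $(\alpha,\gamma)$. As written, your proof leaves the one step that genuinely needs the hypothesis unjustified, so the appeal to Romanovski's lemma is unsupported; the repair is to move your patching argument from IV to I.

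Your treatment of IV also needs correction, although here the fix simplifies the proof. The Baire conclusion $(c,d)\cap K\subset K_{n_0,m_0}$ gives $F(y,t)\geq -n_0$ on the truncated cones over \emph{every} $x\in (c,d)\cap K$, including the points of $E$ (membership in the closed set $K_{n_0,m_0}$ is not restricted to $K\setminus E$), and Theorem \ref{Theorem M.2} gives $\mathsf{f}_{\phi,\theta}^{-}(x)\geq 0$ at \emph{every} point of $(c,d)\setminus K$, again including points of $E$, since a measure satisfies the angular lower bound everywhere in the open set where it lives. Hence hypothesis (\ref{Mea.2}) of Theorem \ref{TheoremMeasures} holds at all points of $(c,d)$ and that theorem applies directly to $(c,d)$; no patching across $E$ is needed in IV, and this is what the paper does. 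This matters because your patching step, as written, would fail: $E$ is merely countable, not closed, so $(c,d)\setminus E$ need not be open (if $E$ is dense its connected components are single points), and a point $x_0\in E$ need not possess a punctured neighborhood on which $\mathsf{f}$ is already known to be a measure, so the argument (i)--(iii) has nothing to start from. With the patching argument relocated to condition I and IV handled as above, your proof becomes the paper's proof; the paper also verifies $\mathfrak{U}\neq\{\emptyset\}$ by a separate Baire argument on $U$, which in your setup is covered by taking $K=[a,b]$ in IV.
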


\begin{proof}
Suppose that $U$ is an open interval. Let $\mathfrak{U}$ be the family of open
subintervals $V$ of $U$ such that the restriction $\displaystyle\left.
\mathsf{f}\right\vert _{V}$ is a measure. We shall use the Theorem
\ref{TheoremRomanovskiLemma} to prove that $U\in\mathfrak{U}.$ Let us first
show that $\mathfrak{U}\neq\left\{  \emptyset\right\}  .$ Suppose that
$E\subseteq\left\{  x_{n}:1\leq n<\infty\right\}  .$ Let $t_{0}\geq1$ be fixed
and put
\begin{equation}
g_{n}\left(  x\right)  =\min\left\{  F\left(  y,t\right)  :\left\vert
y-x\right\vert \leq(\tan\theta)t,~n^{-1}\leq t\leq t_{0}\right\}  \,.
\label{Mea.3}%
\end{equation}
The functions $g_{n}$ are continuous and because of (\ref{Mea.2p}), for each
$x\in U\setminus E$ there exists a constant $M_{x}^{^{\prime}}>0$ such that
$g_{n}\left(  x\right)  \geq-M_{x}^{^{\prime}},$ for all $n$. Hence if%
\begin{equation}
W_{k}=\left\{  x\in U:g_{n}\left(  x\right)  \geq-k\ \ \forall n\in
\mathbb{N}\right\}  \cup\left\{  x_{1},\ldots,x_{k}\right\}  \,, \label{Mea.4}%
\end{equation}
then $U=\bigcup_{k=1}^{\infty}W_{k}.$ If we now employ the Baire theorem we
obtain the existence of $k\in\mathbb{N},$ such that $W_{k}$ has non-empty
interior, and thus the interior of the set 
\begin{equation}\left\{  x\in U:g_{n}\left(  x\right)
\geq-k\ \ \forall n\in\mathbb{N}\right\} 
\end{equation}
is also non-empty. Hence there is
a non-empty open interval $V\subset U$ and a constant $M>0$ such that
$F\left(  x,t\right)  \geq-M$ for all $\left(  y,t\right)  \in C_{x,\theta}$
with $x\in V$ and $0<t\leq t_{0},$ and hence $\mathsf{f}_{\phi,\theta}%
^{-}\left(  x\right)  \geq-M$ for $x\in V.$ The Theorem \ref{TheoremMeasures}
then yields that $\displaystyle\left.  \mathsf{f}\right\vert _{V}$ is a
measure. Therefore $V\in\mathfrak{U},$ and so $\mathfrak{U}\neq\left\{
\emptyset\right\}  .$

Condition I of the Theorem \ref{TheoremRomanovskiLemma} follows from the fact
that if $\displaystyle\left.  \mathsf{f}\right\vert _{\left(  \alpha
,\beta\right)  }$ and $\displaystyle\left.  \mathsf{f}\right\vert _{\left(
\beta,\gamma\right)  }$ are measures, then $\displaystyle\left.
\mathsf{F}\right\vert _{\left(  \alpha,\beta\right)  }$ and
$\displaystyle\left.  \mathsf{F}\right\vert _{\left(  \beta,\gamma\right)  }$
are distributions corresponding to increasing continuous functions, and since
$\mathsf{F}$ is a \L ojasiewicz distribution it follows that $F,$
$F\leftrightarrow\mathsf{F,}$ must also be continuous at $x=\beta,$ so that
$F$ is a continuous increasing function in $\left(  \alpha,\gamma\right)  $
and consequently $\displaystyle\left.  \mathsf{f}\right\vert _{\left(
\alpha,\gamma\right)  }$ is a measure.

It is clear that II and III are satisfied.

In order to prove IV$,$ let $K\subset\overline{U}$ be a perfect closed set
such that all the intervals contiguous to $K$ belong to $\mathfrak{U}.$ Then
using the Baire theorem again, there exists an open interval $V\subset U$ and
a constant $M>0$ such that $\mathsf{f}_{\phi,\theta}^{-}\left(  x\right)
\geq-M$ for all $x\in K\cap V\neq\emptyset.$ But $\mathsf{f}$ is a measure in $V\setminus
K,$ and thus $\mathsf{f}_{\phi,\theta}^{-}\left(  x\right)  \geq0$ for $x\in
V\setminus K.$ The Theorem \ref{TheoremMeasures} allows us to conclude that
$\displaystyle\left.  \mathsf{f}\right\vert _{V}$ is a measure, and thus
$V\in\mathfrak{U};$ this proves IV.\smallskip
\end{proof}

Observe that if the hypotheses of the Theorem \ref{Theorem M3} are satisfied
then $\mathsf{f}$ is a measure in $U,$ and thus $\mathsf{f}_{\phi,\theta
}^{-}\left(  x\right)  \geq0$ at all points of $U$ and for all angles, not just radially almost
everywhere, and similarly the set $E$ where $\mathsf{f}_{\phi,\theta}%
^{-}\left(  x\right)  =-\infty$ is actually empty.

We shall also employ characterizations merely in terms of \emph{radial} limits of the
$\phi-$trans\-form. The following is such a result for the lower radial limits
of a harmonic function.\smallskip

\begin{theorem}
\label{Theorem M3p}Let $H\left(  x,y\right)  $ be a harmonic function defined
in the upper half plane $\mathbb{H}.$ Suppose that $\lim_{\left(  x,y\right)
\rightarrow\infty}H\left(  x,y\right)  =0$. Also suppose that the
distributional limit of $H\left(  x,y\right)  $ as $y\rightarrow0^{+}$ exists
and equals $\mathsf{f}\in\mathcal{E}^{\prime}\left(  \mathbb{R}\right)  ;$
suppose that $\mathsf{f=F}^{\prime},$ where $\mathsf{F}$ is a \L ojasiewicz
distribution. If%
\begin{equation}
\limsup_{y\rightarrow0^{+}}H\left(  x,y\right)  \geq0\ \ \ \ \ \text{almost
everywhere in }\mathbb{R}\,, \label{Mea.3p1}%
\end{equation}
and there exists a countable set $E$ and constants $M_{x}<\infty$ for
$x\in\mathbb{R}\setminus E$ such that%
\begin{equation}
\liminf_{y\rightarrow0^{+}}H\left(  x,y\right)  \geq-M_{x}\,,\ \ \ \ \ x\in
\mathbb{R}\setminus E\,, \label{Mea.3p2}%
\end{equation}
then $\mathsf{f}$ is a measure and $H\left(  x,y\right)  \geq0$ for all
$\left(  x,y\right)  \in\mathbb{H}.$
\end{theorem}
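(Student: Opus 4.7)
The plan is to reduce to Theorem~\ref{Theorem M3}. Since $\mathsf{f}\in\mathcal{E}^{\prime}(\mathbb{R})$ is the distributional boundary value of $H$ and $H\to 0$ at infinity, uniqueness for the Dirichlet problem on $\mathbb{H}$ with decay at infinity forces $H$ to coincide with the Poisson integral of $\mathsf{f}$:
\[
H(x,y)=\langle \mathsf{f}(t),P_{y}(x-t)\rangle,\qquad P_{y}(x)=\frac{y}{\pi(x^{2}+y^{2})}.
\]
Equivalently, $H$ is the $\phi$-transform of $\mathsf{f}$ with the Schwartz test function $P(z)=1/(\pi(1+z^{2}))$. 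Since Theorem~\ref{Theorem M3} demands a compactly supported $\phi$, the strategy is to pass from $H$ to $G=F_{\phi}\{\mathsf{f}\}$ for some positive normalized $\phi\in\mathcal{D}(\mathbb{R})$ with $\operatorname*{supp}\phi\subset[-R,R]$, and to transfer the hypotheses on $H$ into hypotheses on $G$.

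The bridge from $H$ to $G$ comes from a Fubini computation using the Poisson representation: writing $\psi_{x,y}(u)=\frac{1}{y}\phi\!\left(\frac{u-x}{y}\right)$, one has
\[
\int_{-\infty}^{\infty}H(u,s)\,\psi_{x,y}(u)\,\mathrm{d}u=\langle \mathsf{f}(t),(P_{s}\ast\psi_{x,y})(t)\rangle\longrightarrow G(x,y)\quad\text{as }s\to 0^{+},
\]
since $P_{s}\ast\psi_{x,y}\to\psi_{x,y}$ in $\mathcal{E}(\mathbb{R})$ and $\mathsf{f}\in\mathcal{E}^{\prime}(\mathbb{R})$. To transfer the liminf hypothesis, introduce $g_{n}(x)=\inf\{H(x,y):0<y<1/n\}$, which is upper semicontinuous by continuity of $H$ and satisfies $\sup_{n}g_{n}(x)=\liminf_{y\to 0^{+}}H(x,y)\geq -M_{x}$ for $x\notin E$. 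Hence the closed sets $W_{k}=\{x:g_{k}(x)\geq -k\}\cup\{x_{1},\dots,x_{k}\}$ (with $\{x_{n}\}$ enumerating $E$) cover $\mathbb{R}$, so by the Baire category theorem some $W_{k}$ contains an open interval $V$, and continuity of $H$ extends the uniform bound to $H\geq -k$ on all of $V\times(0,1/k)$. Because $\phi\geq 0$ and $\operatorname*{supp}\phi\subset[-R,R]$, the displayed identity yields $G(x,y)\geq -k$ whenever $[x-Ry,x+Ry]\subset V$; hence $\mathsf{f}_{\phi,\theta}^{-}(x)\geq -k$ for all angles $\theta$ and all $x$ in a slightly smaller open interval $V^{\prime}\subset V$.

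An analogous transfer of the limsup hypothesis gives $\mathsf{f}_{\phi,0}^{+}(x)\geq 0$ a.e.\ on $V^{\prime}$. Theorem~\ref{Theorem M3} then applies and produces that $\mathsf{f}$ restricted to $V^{\prime}$ is a positive measure; the Romanovski-lemma argument used in the proof of Theorem~\ref{Theorem M3} (which relies precisely on the \L ojasiewicz character of $\mathsf{F}$ to glue at junction points) then extends the conclusion from all such $V^{\prime}$ to the whole line, so $\mathsf{f}$ is a measure on $\mathbb{R}$. Finally, with $\mathsf{f}\geq 0$ and $P_{y}>0$, the Poisson representation immediately yields $H(x,y)\geq 0$ throughout $\mathbb{H}$.

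The principal obstacle is the limsup transfer: since $G(x,y)$ averages the boundary-layer values of $H$ over $u\in[x-Ry,x+Ry]$, the pointwise almost-everywhere bound $\limsup_{y\to 0^{+}}H(x,y)\geq 0$ does not automatically give $\limsup_{y\to 0^{+}}G(x,y)\geq 0$ at the same $x$. Resolving this requires combining the fact that the ``bad'' set $\{x:\limsup_{y\to 0^{+}}H(x,y)<0\}$ has measure zero with a Lebesgue density argument together with the positivity and normalization of $\phi$, so that $G$ inherits the one-sided bound at almost every $x$.
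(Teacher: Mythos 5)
Your reduction-to-Theorem \ref{Theorem M3} strategy has two genuine gaps, and they sit exactly where the real difficulty of the theorem lies. First, the limsup transfer that you yourself flag is not a routine Lebesgue-density argument: there is no pointwise identity relating $G(x,y)=\left\langle \mathsf{f},\psi_{x,y}\right\rangle$ to the values $H(x,s)$ at the \emph{same} $x$, so knowing $\limsup_{s\rightarrow0^{+}}H(u,s)\geq0$ for almost every $u$ does not by itself control $\limsup_{y\rightarrow0^{+}}G(x,y)$ anywhere; making this work essentially requires the local Fatou/Herglotz theory for harmonic functions bounded below on a rectangle (nontangential limits a.e.\ plus nonnegativity of the singular part), which is the substance of what is being proved, not a technical footnote. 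Second, the closing step ``the Romanovski-lemma argument from Theorem \ref{Theorem M3} then extends the conclusion to the whole line'' is not available to you: your Baire argument produces the transferred hypotheses of Theorem \ref{Theorem M3} only on special intervals $V'$, whereas Theorem \ref{Theorem M3} needs angular lower bounds for the compactly supported transform at \emph{all but countably many} points of the set in question. In particular, condition IV of Theorem \ref{TheoremRomanovskiLemma} forces you to bound $H$ (or $G$) from below on entire rectangles $[a_{n},b_{n}]\times(0,1]$ over the intervals contiguous to a perfect set $K$, while Baire only gives a bound on $\overline{I}\cap K$; the paper bridges this gap with a harmonic-function argument at the corners, using the \L ojasiewicz character of $\mathsf{F}$ to get $H(x,y)=o\left(\left(\left(x-x_{0}\right)^{2}+y^{2}\right)^{-1/2}\right)$ at each boundary point and a Phragm\'{e}n--Lindel\"{o}f-type result (from the reference cited there) to conclude $H\geq-M$ on the whole rectangle. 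That use of the \L ojasiewicz hypothesis inside condition IV is absent from your proposal, which only invokes it for the gluing in condition I.

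For comparison, the paper never transfers to a compactly supported $\phi$ at all: it runs Romanovski's lemma directly on $H$, using Baire to seed the family, the corner growth estimate just described to verify IV, and only then concludes that $\mathsf{f}$ is a measure (after which $H\geq0$ follows from the Poisson representation, the identification you also need). Your bridge formula $\int H(u,s)\psi_{x,y}(u)\,\mathrm{d}u\rightarrow G(x,y)$ and the liminf transfer on a Baire interval are sound as far as they go, but to complete the proof along your lines you would still have to supply the two missing ingredients above, at which point you would essentially be reproducing the paper's harmonic-function argument rather than bypassing it.
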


\begin{proof}
We shall employ Romanovski's lemma, Theorem \ref{TheoremRomanovskiLemma}
to prove that $\mathsf{f}$ is a measure in $\mathbb{R}.$ Let $\left(
a,b\right)  $ be an open interval with $\operatorname*{supp}\mathsf{f}%
\subset\left(  a,b\right)  .$ Let $\mathfrak{U}$ be the family of open
subintervals of $\left(  a,b\right)  $ where the restriction of $\mathsf{f}$
is a measure; clearly $\mathfrak{U}$ contains non empty intervals. Observe
that if $\left(  c,d\right)  \in\mathfrak{U},$ then $F$ is an increasing
continuous function in $\left[  c,d\right]  ,$ where $F\leftrightarrow
\mathsf{F;}$ condition I follows from this observation. Conditions II and III
are easy. For condition IV, suppose that $K$ is a perfect compact subset of
$\left(  a,b\right)  $ such that $\left(  a,b\right)  \setminus K=\bigcup
_{n=1}^{\infty}\left(  a_{n},b_{n}\right)  ,$ with $\left(  a_{n}%
,b_{n}\right)  \in\mathfrak{U}.$ Let $m=\min_{x\in\mathbb{R}}H\left(
x,1\right)  .$ By the Baire theorem, there exists a constant $M,$ with $M>0$ and $M>-m,$ and an open
interval $I,$ such that $I\cap K\neq\emptyset$ and $H\left(
x,y\right)  \geq-M$ for $x\in\overline{I}\cap K$ and for $0<y\leq1.$ If
$\left(  a_{n},b_{n}\right)  \subset I,$ then the harmonic function $H$ is
bounded below by $-M$ in the boundary of the rectangle $\left(  a_{n}%
,b_{n}\right)  \times\left(  0,1\right)  \subset\mathbb{H},$ except perhaps at
the corners $a_{n}$ and $b_{n},$ but since $\mathsf{f}$ is the derivative of a
\L ojasiewicz distribution we obtain the bound $H\left(  x,y\right)  =o\left(
\left(  \left(  x-x_{0}\right)  ^{2}+y^{2}\right)  ^{-1/2}\right)  $ as
$\left(  x,y\right)  \rightarrow x_{0},$ for any $x_{0}\in\mathbb{R},$\ and
this allows to conclude that $H$ is bounded below by $-M$ in the rectangle
$\left[  a_{n},b_{n}\right]  \times(0,1].$ Actually if $H$ were not bounded
below in the rectangle then at one of the corners, $x_{0}=a_{n}$ or
$x_{0}=b_{n},$ $H$ would grow as fast as or faster than $\left(  \left(
x-x_{0}\right)  ^{2}+y^{2}\right)  ^{-1},$ as follows from the results of \cite[Section 4]{est3} when applied to the harmonic function $\tilde{H}(\xi)=H(\sqrt{\xi-x_{0}})$. Therefore $H\left(  x,y\right)
\geq-M$ for all $x\in\overline{I}$ and all $0<y\leq1,$ and the fact that
$I\in\mathfrak{U}$ is obtained.\smallskip
\end{proof}

It is convenient to define some classes of test functions.\smallskip

\begin{definition}
\label{Def M1}The class $\mathcal{T}_{0}$ consists of all positive normalized
functions $\phi\in\mathcal{E}\left(  \mathbb{R}\right)  $ that satisfy the
following condition:%
\begin{equation}
\exists\alpha<-1~~\text{such that}~~~~~\phi\left(  x\right)  =O\left(
\left\vert x\right\vert ^{\alpha}\right)  \ \ \ \ \text{strongly as
}\left\vert x\right\vert \rightarrow\infty\,. \label{Mea.4p}%
\end{equation}
The class $\mathcal{T}_{1}$\ is the subclass of $\mathcal{T}_{0}$\ consisting
of those functions that also satisfy%
\begin{equation}
x\phi^{\prime}\left(  x\right)  \leq0\ \ \ \ \text{for all}\ \ \ \ x\in
\mathbb{R}\,. \label{Mea.4p2}%
\end{equation}
\smallskip
\end{definition}

Observe that the $\phi-$transform is well defined when $f\in\mathcal{E}'(\mathbb{R})$ and $\phi\in\mathcal{T}_{0}$. Since the Poisson kernel $\varphi\left(  x\right)  =\pi^{-1}\left(  1+x^{2}\right)  ^{-1}$
belongs to $\mathcal{T}_{1}$ and the $\phi-$transform $H=F_{\varphi}\left\{
\mathsf{f}\right\}  $ with respect to this function $\varphi$ is the harmonic
function $H\left(  x,y\right)  $ defined for $(x,y)\in\mathbb{H},$ that
vanishes at infinity, and that satisfies $H\left(  x,0^{+}\right)
=\mathsf{f}\left(  x\right)  $ distributionally, we then have the following
result, corollary of the Theorem \ref{Theorem M3p}.\smallskip

\begin{theorem}
\label{Theorem M4}Let $\mathsf{f}\in\mathcal{E}^{\prime}\left(  \mathbb{R}%
\right)  .$ Suppose that $\mathsf{f=F}^{\prime},$ where $\mathsf{F}$ is a
\L ojasiewicz distribution. Suppose that the $\phi-$transform $F=F_{\phi
}\left\{  \mathsf{f}\right\}  $ with respect any $\phi\in\mathcal{T}_{1}$
satisfies%
\begin{equation}
\mathsf{f}_{\phi,0}^{+}\left(  x\right)  \geq0\ \ \ \ \ \text{almost
everywhere in }\mathbb{R}\,, \label{Mea.5e}%
\end{equation}%
\begin{equation}
\mathsf{f}_{\phi,0}^{-}\left(  x\right)  \geq-M_{x}>-\infty\,,\ \ \ \ \ x\in
\mathbb{R}\setminus E\,, \label{Mea.5}%
\end{equation}
where $E$ is a countable set. Then $\mathsf{f}$ is a measure in $\mathbb{R}.$
\end{theorem}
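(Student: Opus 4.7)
The plan is to reduce Theorem M4 directly to Theorem M3p by specializing the test function to the Poisson kernel. The key observation—already pointed to in the statement of the Definition immediately preceding the theorem—is that the Poisson kernel $\varphi(x)=\pi^{-1}(1+x^{2})^{-1}$ belongs to the class $\mathcal{T}_{1}$, so the hypotheses of M4 apply to $\varphi$ in particular.

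First I would verify that $\varphi\in\mathcal{T}_{1}$. It is positive, smooth, normalized, decays like $|x|^{-2}$ with the decay preserved under all derivatives (so the strong order estimate of Definition \ref{Def M1} holds with $\alpha=-2$), and
\begin{equation}
x\varphi^{\prime}(x)=-\frac{2x^{2}}{\pi(1+x^{2})^{2}}\leq 0\, ,
\end{equation}
so (\ref{Mea.4p2}) holds. Next, for $\mathsf{f}\in\mathcal{E}^{\prime}(\mathbb{R})$ the $\varphi$-transform
\begin{equation}
H(x,y)=F_{\varphi}\{\mathsf{f}\}(x,y)=\left\langle \mathsf{f}(x+yt),\varphi(t)\right\rangle
\end{equation}
is well defined (since $\varphi\in\mathcal{E}(\mathbb{R})$ and $\mathsf{f}$ has compact support), and by a change of variable it coincides with the Poisson integral of $\mathsf{f}$. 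It is therefore harmonic in $\mathbb{H}$, its distributional boundary value at $y=0^{+}$ equals $\mathsf{f}$ by (\ref{fi.3}), and because $\mathsf{f}\in\mathcal{E}^{\prime}(\mathbb{R})$ one has $H(x,y)\to 0$ as $(x,y)\to\infty$.

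Now I would translate the hypotheses of the theorem into those of Theorem \ref{Theorem M3p}. By definition of the radial extreme values,
\begin{equation}
\limsup_{y\rightarrow 0^{+}}H(x,y)=\mathsf{f}_{\varphi,0}^{+}(x)\, ,\qquad \liminf_{y\rightarrow 0^{+}}H(x,y)=\mathsf{f}_{\varphi,0}^{-}(x)\, ,
\end{equation}
so (\ref{Mea.5e}) and (\ref{Mea.5}) applied to $\varphi$ give exactly the assumptions (\ref{Mea.3p1}) and (\ref{Mea.3p2}) of Theorem \ref{Theorem M3p}. Since we also assume $\mathsf{f}=\mathsf{F}^{\prime}$ with $\mathsf{F}$ a \L ojasiewicz distribution, all hypotheses of Theorem \ref{Theorem M3p} are satisfied; the conclusion there is that $\mathsf{f}$ is a measure, which is what we want.

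I do not anticipate a substantial obstacle: once Theorem \ref{Theorem M3p} is in hand, the only real content of Theorem M4 is the identification of $\varphi$ with an element of $\mathcal{T}_{1}$ and the observation that the $\varphi$-transform is precisely the Poisson extension, after which the result is immediate. The only mild point worth checking carefully is that evaluating $\mathsf{f}\in\mathcal{E}^{\prime}(\mathbb{R})$ against the non-compactly supported test function $\varphi(t)$ (translated and dilated) indeed yields the standard Poisson integral and inherits the required global behavior on $\mathbb{H}$; this follows from the structure theorem for compactly supported distributions together with the strong decay (\ref{Mea.4p}) of $\varphi$.
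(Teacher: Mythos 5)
Your proposal is correct and follows exactly the paper's route: the paper presents Theorem \ref{Theorem M4} as an immediate corollary of Theorem \ref{Theorem M3p}, obtained by noting that the Poisson kernel $\varphi(x)=\pi^{-1}(1+x^{2})^{-1}$ lies in $\mathcal{T}_{1}$ and that its $\varphi$-transform is the Poisson (harmonic) extension of $\mathsf{f}$, vanishing at infinity with distributional boundary value $\mathsf{f}$. Your extra verifications of $\varphi\in\mathcal{T}_{1}$ and of the identification with the Poisson integral are fine and just make explicit what the paper leaves implicit.
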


\section{The definite integral}\label{Sc Integral}

Let $f$ be a function defined in $\left[  a,b\right]  $ with values in
$\overline{\mathbb{R}}=\mathbb{R\cup}\left\{  -\infty,\infty\right\}  .$ We
now proceed to define its integral. We start with the concepts of major and
minor pairs.\smallskip

\begin{definition}
\label{Def IN1}A pair $\left(  \mathsf{u},\mathsf{U}\right)  $ is called a
\emph{major distributional pair} for the function $f$ if:

1) $\mathsf{u}\in\mathcal{E}^{\prime}\left[  a,b\right]  ,$ $\mathsf{U}%
\in\mathcal{D}^{\prime}\left(  \mathbb{R}\right)  ,$ and
\begin{equation}
\mathsf{U}^{\prime}=\mathsf{u}\,. \label{IN.1}%
\end{equation}

2) $\mathsf{U}$ is a \L ojasiewicz distribution, with $\mathsf{U}\left(
a\right)  =0.$

3) There exists a set $E,$ with $\left\vert E\right\vert \leq\aleph_{0},$ and
a set of null Lebesgue measure $Z,$ $m\left(  Z\right)  =0,$
such that for all $x\in\left[  a,b\right]  \setminus Z$ and all $\phi
\in\mathcal{T}_{0}$ we have
\begin{equation}
\left(  \mathsf{u}\right)  _{\phi,0}^{-}\left(  x\right)  \geq f\left(
x\right)  \,, \label{IN.2}%
\end{equation}
while for $x\in\left[  a,b\right]  \setminus E$ and all $\phi\in
\mathcal{T}_{1}$\
\begin{equation}
\left(  \mathsf{u}\right)  _{\phi,0}^{-}\left(  x\right)  >-\infty\,.
\label{IN.3}%
\end{equation}
\smallskip
\end{definition}

The definition of a minor distributional pair is similar.\smallskip

\begin{definition}
\label{Def IN2}A pair $\left(  \mathsf{v},\mathsf{V}\right)  $ is called a
\emph{minor distributional pair} for the function $f$ if:

1) $\mathsf{v}\in\mathcal{E}^{\prime}\left[  a,b\right]  ,$ $\mathsf{V}%
\in\mathcal{D}^{\prime}\left(  \mathbb{R}\right)  ,$ and
\begin{equation}
\mathsf{V}^{\prime}=\mathsf{v}\,. \label{IN.4}%
\end{equation}

2) $\mathsf{V}$ is a \L ojasiewicz distribution, with $\mathsf{V}\left(
a\right)  =0.$

3) There exists a set $E,$ with $\left\vert E\right\vert \leq\aleph_{0},$ and
a set of null Lebesgue measure $Z,$ $m\left(  Z\right)  =0,$
such that for all $x\in\left[  a,b\right]  \setminus Z$ and all $\phi
\in\mathcal{T}_{0}$ we have
\begin{equation}
\left(  \mathsf{v}\right)  _{\phi,0}^{+}\left(  x\right)  \leq f\left(
x\right)  \,, \label{IN.5}%
\end{equation}
while for $x\in\left[  a,b\right]  \setminus E$ and all $\phi\in
\mathcal{T}_{1}$
\begin{equation}
\left(  \mathsf{v}\right)  _{\phi,0}^{+}\left(  x\right)  <\infty\,.
\label{IN.6}%
\end{equation}
\smallskip
\end{definition}
Naturally, we may always assume in the Definitions \ref{Def IN1} and \ref{Def IN2} that the countable set satisfies $E\subset Z.$

Employing the results of the Theorem \ref{Theorem M4}, we immediately obtain
the following useful result.\smallskip

\begin{lemma}
\label{Lemma IN1}If $\left(  \mathsf{u},\mathsf{U}\right)  $ is a major
distributional pair and $\left(  \mathsf{v},\mathsf{V}\right)  $ is a minor
distributional pair for $f,$ then $\mathsf{u}-\mathsf{v}$ is a positive
measure and $U-V$ is a continuous increasing function, where $U\leftrightarrow
\mathsf{U}$ and $V\leftrightarrow\mathsf{V}$.\smallskip
\end{lemma}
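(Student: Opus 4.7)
The plan is to apply Theorem \ref{Theorem M4} to $\mathsf{w}:=\mathsf{u}-\mathsf{v}\in\mathcal{E}^\prime[a,b]\subseteq\mathcal{E}^\prime(\mathbb{R})$. Its primitive $\mathsf{W}:=\mathsf{U}-\mathsf{V}$ satisfies $\mathsf{W}^\prime=\mathsf{w}$ and is itself a \L ojasiewicz distribution, since distributional point values are linear and so the difference of two \L ojasiewicz distributions is again \L ojasiewicz. Fixing any $\phi\in\mathcal{T}_1\subseteq\mathcal{T}_0$ and writing $F_u=F_\phi\{\mathsf{u}\}$, $F_v=F_\phi\{\mathsf{v}\}$, the linearity of the $\phi$-transform gives $F_\phi\{\mathsf{w}\}=F_u-F_v$, so it remains to verify the pointwise hypotheses (\ref{Mea.5e}) and (\ref{Mea.5}).

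Let $E:=E_u\cup E_v$ (countable) and $Z:=Z_u\cup Z_v$ (Lebesgue null) be the exceptional sets provided by Definitions \ref{Def IN1} and \ref{Def IN2}. The heart of the argument is this: for $x\in[a,b]\setminus E$, the finiteness clauses (\ref{IN.3}) and (\ref{IN.6}) ensure that the difference $(\mathsf{u})_{\phi,0}^-(x)-(\mathsf{v})_{\phi,0}^+(x)$ is not an indeterminate form, and the standard inequality $\liminf(A-B)\geq\liminf A-\limsup B$ yields
\[
(\mathsf{w})_{\phi,0}^-(x)\geq(\mathsf{u})_{\phi,0}^-(x)-(\mathsf{v})_{\phi,0}^+(x)>-\infty,
\]
which furnishes (\ref{Mea.5}). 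If in addition $x\notin Z$, then (\ref{IN.2}) and (\ref{IN.5}) extend this chain to $(\mathsf{u})_{\phi,0}^-(x)-(\mathsf{v})_{\phi,0}^+(x)\geq f(x)-f(x)=0$ (legitimate by the same finiteness), so $(\mathsf{w})_{\phi,0}^+(x)\geq(\mathsf{w})_{\phi,0}^-(x)\geq 0$, giving (\ref{Mea.5e}) a.e. For $x\in\mathbb{R}\setminus[a,b]$ the compact support of $\mathsf{w}$ together with the strong decay of $\phi$ makes $F_\phi\{\mathsf{w}\}(x,t)\to 0$ as $t\to0^+$, so both conditions hold trivially there.

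Theorem \ref{Theorem M4} then certifies that $\mathsf{u}-\mathsf{v}$ is a positive Radon measure on $\mathbb{R}$. Consequently $W=U-V$ is an increasing function; continuity of $W$ follows from the \L ojasiewicz property of $\mathsf{W}$, because a jump discontinuity of an increasing function would produce unequal one-sided distributional limits at the jump point and thereby obstruct the existence of the distributional point value there.

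The main obstacle, and the step that requires genuine care, is the treatment of points where $f$ or the angular extreme values may be infinite: the subtraction $(\mathsf{u})_{\phi,0}^-(x)-(\mathsf{v})_{\phi,0}^+(x)$ must avoid the form $\infty-\infty$, and the intermediate step $f(x)-f(x)$ must likewise make sense. The finiteness clauses (\ref{IN.3}) and (\ref{IN.6}) built into the definitions of major and minor pairs are engineered exactly to eliminate these pathologies off the countable set $E$, which is precisely why the strong version Theorem \ref{Theorem M4} (rather than the simpler Theorem \ref{TheoremMeasures}) is the correct tool here.
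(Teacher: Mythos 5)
Your proof is correct and follows essentially the paper's own route: the paper obtains this lemma directly from Theorem \ref{Theorem M4}, exactly as you do, and your verification of the hypotheses (linearity of the $\phi$-transform, the union of the exceptional sets, the $\liminf$--$\limsup$ inequality, vanishing of the transform off $[a,b]$, and continuity of $U-V$ via the \L ojasiewicz property) is the intended argument. One cosmetic point: since $f$ may take the values $\pm\infty$ (finiteness a.e.\ is only proved afterwards, in Proposition \ref{Prop. IN 5}), the step $f(x)-f(x)=0$ is not literally legitimate at such points; there one should instead note that (\ref{IN.2}), (\ref{IN.5}) together with (\ref{IN.3}), (\ref{IN.6}) force one of $(\mathsf{u})_{\phi,0}^{-}(x)$ or $-(\mathsf{v})_{\phi,0}^{+}(x)$ to equal $+\infty$ while the other remains $>-\infty$, so the difference is still $\geq0$ and the conclusion is unaffected.
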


If $\left(  \mathsf{u},\mathsf{U}\right)  $ is a major distributional pair and
$\left(  \mathsf{v},\mathsf{V}\right)  $ is a minor distributional pair for
$f,$ then $\mathsf{U}$ and $\mathsf{V}$ are constant in the interval
$[b,\infty),$ and $\mathsf{V}\left(  b\right)  \leq\mathsf{U}\left(  b\right)
.$\smallskip

\begin{definition}
\label{Def IN3}A function $f:\left[  a,b\right]  \rightarrow\overline
{\mathbb{R}}$ is called distributionally integrable if it has both major and
minor distributional pairs and if%
\begin{equation}
\sup_{\left(  \mathsf{v},\mathsf{V}\right)  \text{ minor pair}}\mathsf{V}%
\left(  b\right)  =\inf_{\left(  \mathsf{u},\mathsf{U}\right)  \text{ major
pair}}\mathsf{U}\left(  b\right)  \,. \label{IN.7}%
\end{equation}
When this is the case this common value is the integral of $f$ over $\left[
a,b\right]  $ and is denoted as
\begin{equation}
\left(  \mathfrak{dist}\right)  \int_{a}^{b}f\left(  x\right)  \,\mathrm{d}%
x\,, \label{IN.8}%
\end{equation}
or just as $\int_{a}^{b}f\left(  x\right)  \,\mathrm{d}x$ if there is no risk
of confusion.\smallskip
\end{definition}

We shall show in Section \ref{Section Comparison} that any Lebesgue integrable function, and more generally, any
Denjoy-Perron-Henstock-Kurzweil integrable function is distributionally integrable, and
the integrals are the same. Therefore the symbol $\int_{a}^{b}f\left(
x\right)  \,\mathrm{d}x$ will have only one possible meaning if the function
$f$ is Denjoy-Perron-Henstock-Kurzweil integrable or Lebesgue integrable. In some cases
we shall use the notation $\left(  \mathfrak{dist}\right)  \int_{a}%
^{b}f\left(  x\right)  \,\mathrm{d}x,$ however, to emphasize that we are
dealing with the integral defined in this article. Occasionally, we shall also
use the notation $\left(  \mathfrak{DPHK}\right)  \int_{a}^{b}f\left(
x\right)  \,\mathrm{d}x$ for a Denjoy-Perron-Henstock-Kurzweil integral and $\left(
\mathfrak{Leb}\right)  \int_{a}^{b}f\left(  x\right)  \,\mathrm{d}x$ for a
Lebesgue integral.

Observe that the function $f$ is distributionally integrable over $\left[
a,b\right]  $ if and only if for each $\varepsilon>0$ there are minor and
major pairs, $\left(  \mathsf{v},\mathsf{V}\right)  $ and $\left(
\mathsf{u},\mathsf{U}\right)  $, such that
\begin{equation}
\mathsf{U}\left(  b\right)  -\mathsf{V}\left(  b\right)  <\varepsilon\,.
\label{IN.9}%
\end{equation}

We shall first show that the distributional integral has the standard
properties of an integral.\smallskip

\begin{proposition}
\label{Prop. IN 1}If $f$ is distributionally integrable over $\left[
a,b\right]  $ then it is distributionally integrable over any subinterval
$\left[  c,d\right]  \subset\left[  a,b\right]  .$
\end{proposition}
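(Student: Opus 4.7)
The plan is to construct, for each $\varepsilon>0$, major and minor pairs for $f$ on $[c,d]$ whose ``gap'' at $d$ is smaller than $\varepsilon$, by cutting off given pairs on $[a,b]$ at $c$ and $d$. This cut-off is available precisely because the \L ojasiewicz distributions $\mathsf{U}$ and $\mathsf{V}$ have honest two-sided point values at those two junctions, which lets us paste in constants without creating jump singularities.

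First I would fix $\varepsilon>0$ and use (\ref{IN.9}) to pick a major pair $(\mathsf{u},\mathsf{U})$ and a minor pair $(\mathsf{v},\mathsf{V})$ for $f$ on $[a,b]$ with $\mathsf{U}(b)-\mathsf{V}(b)<\varepsilon$. Writing $U\leftrightarrow\mathsf{U}$ for the associated \L ojasiewicz function, I would introduce
\begin{equation*}
U_{1}(x)=\begin{cases} 0, & x\leq c,\\ U(x)-U(c), & c\leq x\leq d,\\ U(d)-U(c), & x\geq d,\end{cases}
\end{equation*}
and build $V_{1}$ analogously. Because $\mathsf{U}$ has two-sided \L ojasiewicz values at $c$ and $d$, the lateral limits of $U_{1}$ match at those points, so the pasting property of \L ojasiewicz functions recorded in Subsection \ref{Sbs Lojasiewics dist} makes $U_{1}$ a \L ojasiewicz function, with associated distribution $\mathsf{U}_{1}$ satisfying $\mathsf{U}_{1}(c)=0$ and $\mathsf{U}_{1}(d)=\mathsf{U}(d)-\mathsf{U}(c)$. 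Putting $\mathsf{u}_{1}=\mathsf{U}_{1}^{\prime}$, the fact that $\mathsf{U}_{1}$ is constant outside $[c,d]$ forces $\operatorname*{supp}\mathsf{u}_{1}\subseteq[c,d]$, and on the open interval $(c,d)$ the distribution $\mathsf{u}_{1}$ coincides with $\mathsf{u}$ because $U_1$ and $U$ differ only by a constant there.

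Next, I would verify that $(\mathsf{u}_{1},\mathsf{U}_{1})$ is a major pair for $f$ on $[c,d]$. For each $x\in(c,d)$ and each $\phi\in\mathcal{T}_{0}$, the identity
\begin{equation*}
F_{\phi}\{\mathsf{u}\}(x,t)-F_{\phi}\{\mathsf{u}_{1}\}(x,t)=\bigl\langle (\mathsf{u}-\mathsf{u}_{1})(z),\, t^{-1}\phi((z-x)/t)\bigr\rangle
\end{equation*}
tends to $0$ as $t\to 0^{+}$: $\operatorname*{supp}(\mathsf{u}-\mathsf{u}_{1})$ is contained in $[a,c]\cup[d,b]$, at positive distance $\delta>0$ from $x$, while the strong decay $\phi^{(m)}(y)=O(|y|^{\alpha-m})$ with $\alpha<-1$ forces $t^{-1-m}\phi^{(m)}((z-x)/t)=O(t^{-1-\alpha})$ uniformly on the compact support. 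Hence $(\mathsf{u}_{1})_{\phi,0}^{-}(x)=(\mathsf{u})_{\phi,0}^{-}(x)$ for $x\in(c,d)$, and conditions (\ref{IN.2}) and (\ref{IN.3}) for $(\mathsf{u}_{1},\mathsf{U}_{1})$ on $[c,d]$ follow after enlarging the exceptional sets to $Z_{1}=(Z\cap[c,d])\cup\{c,d\}$ and $E_{1}=(E\cap[c,d])\cup\{c,d\}$, which remain of null Lebesgue measure and at most countable, respectively. The verification for $(\mathsf{v}_{1},\mathsf{V}_{1})$ is symmetric. Finally, Lemma \ref{Lemma IN1} applied to the original pairs says that $U-V$ is continuous and increasing with $(U-V)(a)=0$, so
\begin{equation*}
0\leq\mathsf{U}_{1}(d)-\mathsf{V}_{1}(d)=(U-V)(d)-(U-V)(c)\leq(U-V)(b)<\varepsilon,
\end{equation*}
and (\ref{IN.7}) holds on $[c,d]$. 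The delicate step, I expect, is the $\phi$-transform comparison: it depends both on the strong decay built into $\mathcal{T}_{0}$ and $\mathcal{T}_{1}$ and on the absence of spurious delta masses at $c$ and $d$ in the cut-off, an absence guaranteed only by the matching \L ojasiewicz values at those two points.
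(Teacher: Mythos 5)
Your proof is correct and follows essentially the same route as the paper's: both cut off the given major and minor pairs at $c$ and $d$ via the definitions (\ref{IN.10})--(\ref{IN.11}), using the \L ojasiewicz point values of $\mathsf{U}$ and $\mathsf{V}$ at the junctions, and conclude with $\widetilde{\mathsf{U}}(d)-\widetilde{\mathsf{V}}(d)<\varepsilon$. The only difference is that you spell out the localization of the radial $\phi$-transform limits (using the strong decay in $\mathcal{T}_{0}$), a verification the paper leaves implicit.
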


\begin{proof}
Let $\varepsilon>0,$ and choose minor and major pairs for $f$ over $\left[
a,b\right]  $, $\left(  \mathsf{v},\mathsf{V}\right)  $ and $\left(
\mathsf{u},\mathsf{U}\right)  $, such that $\mathsf{U}\left(  b\right)
-\mathsf{V}\left(  b\right)  <\varepsilon.$ Let $U\leftrightarrow\mathsf{U}$
and $V\leftrightarrow\mathsf{V}.$ Let now $\widetilde{\mathsf{U}}$ and
$\widetilde{\mathsf{V}}$ be the \L ojasiewicz distributions corresponding to
the \L ojasiewicz functions $\widetilde{U}$ and $\widetilde{V}$ given by%
\begin{equation}
\widetilde{U}\left(  x\right)  =\left\{
\begin{array}
[c]{cc}%
0 & x<c\,,\\
U\left(  x\right)  -U\left(  c\right)  \,, & c\leq x\leq d\,,\\
U\left(  d\right)  -U\left(  c\right)  \,, & x>d\,,
\end{array}
\right.  \label{IN.10}%
\end{equation}
and
\begin{equation}
\widetilde{V}\left(  x\right)  =\left\{
\begin{array}
[c]{cc}%
0 & x<c\,,\\
V\left(  x\right)  -V\left(  c\right)  \,, & c\leq x\leq d\,,\\
V\left(  d\right)  -V\left(  c\right)  \,, & x>d\,.
\end{array}
\right.  \label{IN.11}%
\end{equation}
Then $\left(  \widetilde{\mathsf{V}}^{\prime},\widetilde{\mathsf{V}}\right)  $
and $\left(  \widetilde{\mathsf{U}}^{\prime},\widetilde{\mathsf{U}}\right)  $
are minor and major pairs for $f$ over $\left[  c,d\right]  ,$ and
$\widetilde{\mathsf{U}}\left(  d\right)  -\widetilde{\mathsf{V}}\left(
d\right)  <\varepsilon.$\smallskip
\end{proof}

We now consider the integrals of functions that are equal almost everywhere.
As it is the case with other integrals, the integral can actually be defined
as a functional on the space of equivalence classes of functions equal
$\left(  \text{a.e.}\right)  ,$ and each class has elements that are finite
everywhere.\smallskip

\begin{proposition}
\label{Prop. IN 5}If $f$ is distributionally integrable over $\left[
a,b\right]  $ then it is finite almost everywhere.
\end{proposition}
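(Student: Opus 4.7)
The plan is to show that both $A_{+}=\{x\in[a,b]:f(x)=+\infty\}$ and $A_{-}=\{x\in[a,b]:f(x)=-\infty\}$ have Lebesgue measure zero; I treat $A_{+}$ since $A_{-}$ is symmetric. Since $f$ is distributionally integrable, fix any major pair $(\mathsf{u},\mathsf{U})$ and any minor pair $(\mathsf{v},\mathsf{V})$. By \lemref{Lemma IN1}, the difference $\mu=\mathsf{u}-\mathsf{v}$ is a positive finite Radon measure on $[a,b]$, whose Lebesgue decomposition $\mu=h\,\mathrm{d}x+\mu_{\mathrm{sing}}$ has density $h\in L^{1}([a,b])$, finite almost everywhere.

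Next fix any $\phi\in\mathcal{T}_{1}$. By linearity of the $\phi$-transform,
\begin{equation*}
F_{\phi}\{\mathsf{v}\}(x,t)=F_{\phi}\{\mathsf{u}\}(x,t)-F_{\phi}\{\mu\}(x,t).
\end{equation*}
The key ingredient is the classical Lebesgue differentiation theorem for the positive Radon measure $\mu$: the monotonicity condition $x\phi'(x)\leq 0$ together with the decay in Definition~\ref{Def M1} make $\phi$ its own radially decreasing integrable majorant, so the family $\phi_{t}(y)=t^{-1}\phi(y/t)$ is a standard approximation of the identity, and
\begin{equation*}
F_{\phi}\{\mu\}(x,t)\longrightarrow h(x)\qquad\text{as }t\to 0^{+}
\end{equation*}
for almost every $x\in[a,b]$.

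Finally, let $N$ be the Lebesgue-null union of the exceptional set where the above convergence fails, the set $\{h=+\infty\}$, and the null set $Z$ from (\ref{IN.2}) for the chosen major pair; let $E_{\mathsf{v}}$ denote the countable exceptional set from (\ref{IN.6}) for the chosen minor pair. For any $x_{0}\in A_{+}\setminus(N\cup E_{\mathsf{v}})$, (\ref{IN.2}) forces $\liminf_{t\to 0^{+}}F_{\phi}\{\mathsf{u}\}(x_{0},t)\geq f(x_{0})=+\infty$; subtracting the finite limit $F_{\phi}\{\mu\}(x_{0},t)\to h(x_{0})$ yields $\liminf_{t\to 0^{+}}F_{\phi}\{\mathsf{v}\}(x_{0},t)=+\infty$, whence $(\mathsf{v})_{\phi,0}^{+}(x_{0})=+\infty$, in direct contradiction with (\ref{IN.6}). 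Thus $A_{+}\subset N\cup E_{\mathsf{v}}$ has Lebesgue measure zero, and the same argument applied to the reversed inequalities controls $A_{-}$. The main technical point to verify is the almost-everywhere convergence $F_{\phi}\{\mu\}(x,t)\to h(x)$ for the positive Radon measure $\mu$, and this is precisely why the proof uses $\mathcal{T}_{1}$ rather than the broader class $\mathcal{T}_{0}$: the radial monotonicity of $\phi$ is what furnishes the decreasing majorant needed for the standard maximal-function bound.
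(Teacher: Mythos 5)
Your proposal is correct and takes essentially the same route as the paper: both arguments rest on Lemma \ref{Lemma IN1} (positivity of $\mathsf{u}-\mathsf{v}$) together with the defining inequalities (\ref{IN.2})/(\ref{IN.5}) and the finiteness clauses (\ref{IN.3})/(\ref{IN.6}), off a null set plus a countable set, and then invoke a classical almost-everywhere theorem to rule out $|f(x)|=\infty$ except on a null set. The only difference is in packaging the classical input: the paper uses the a.e. finiteness of the derivative of the increasing continuous function $U-V$, whereas you use the (slightly heavier, maximal-function based) a.e. convergence $F_{\phi}\{\mu\}(x,t)\to h(x)$ of the radially majorized approximate identity for the finite positive measure $\mu=\mathsf{u}-\mathsf{v}$; both are standard facts and both close the argument in the same way.
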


\begin{proof}
Let $A$ be the set of points where $\left\vert f\left(  x\right)  \right\vert
=\infty.$ Let $\left(  \mathsf{v},\mathsf{V}\right)  $ and $\left(
\mathsf{u},\mathsf{U}\right)  $ be minor and major pairs for $f$ over $\left[
a,b\right]  ,$ and let $E$ be the denumerable set outside of where $\left(
\mathsf{u}\right)  _{\phi,0}^{-}\left(  x\right)  >-\infty$ and $\left(
\mathsf{v}\right)  _{\phi,0}^{+}\left(  x\right)  <\infty\,,$ for all $\phi
\in\mathcal{T}_{1}.$ Consider the increasing continuous function $\rho\left(
x\right)  =\mathsf{U}\left(  x\right)  -\mathsf{V}\left(  x\right)  .$ Using
(\ref{IN.3}) and (\ref{IN.6}) we obtain that if $x\in A\setminus E$ then
$\rho^{\prime}\left(  x\right)  =\infty,$ but the set of points where the
derivative of an increasing continuous function is infinite has measure
$0.$\smallskip
\end{proof}

The ensuing result allows us to consider distributional integration of
functions that are defined almost everywhere.\smallskip

\begin{proposition}
\label{Prop. IN 6}If $f$ is distributionally integrable over $\left[
a,b\right]  $ and $g\left(  x\right)  =f\left(  x\right)  $ $\left(
\text{a.e.}\right)  $ then $g$ is also distributionally integrable over
$\left[  a,b\right]  $ and
\begin{equation}
\int_{a}^{b}g\left(  x\right)  \,\mathrm{d}x=\int_{a}^{b}f\left(  x\right)
\,\mathrm{d}x\,. \label{IN.15}%
\end{equation}

\end{proposition}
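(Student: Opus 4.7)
The plan is to observe that major and minor distributional pairs for $f$ are, essentially without modification, also major and minor distributional pairs for $g$. The key point is that in Definitions \ref{Def IN1} and \ref{Def IN2} the function $f$ enters only through the inequalities (\ref{IN.2}) and (\ref{IN.5}), which are required to hold off of a Lebesgue null set $Z$; the remaining conditions (the \L ojasiewicz character of $\mathsf{U}$ and $\mathsf{V}$, and the finiteness conditions (\ref{IN.3}) and (\ref{IN.6})) depend only on the distributions $\mathsf{u}$ and $\mathsf{v}$, not on $f$.

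Concretely, first I would let $N=\{x\in[a,b]:f(x)\neq g(x)\}$, so that $m(N)=0$. Given any major distributional pair $(\mathsf{u},\mathsf{U})$ for $f$ with associated exceptional sets $Z$ (null) and $E$ (countable, which we may assume satisfies $E\subset Z$ by the remark after Definition \ref{Def IN2}), I would set $Z'=Z\cup N$. Then $m(Z')=0$, and for every $x\in[a,b]\setminus Z'$ we have $g(x)=f(x)$, so (\ref{IN.2}) gives $(\mathsf{u})_{\phi,0}^{-}(x)\geq f(x)=g(x)$ for all $\phi\in\mathcal{T}_{0}$. Conditions (1) and (2) of Definition \ref{Def IN1} are preserved verbatim, and (\ref{IN.3}) also persists on $[a,b]\setminus E$ (after possibly enlarging $E$ to lie inside $Z'$) because it makes no mention of $f$. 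Hence $(\mathsf{u},\mathsf{U})$ is a major distributional pair for $g$. The same argument, applied symmetrically using (\ref{IN.5}) and (\ref{IN.6}), shows that every minor distributional pair for $f$ is a minor distributional pair for $g$.

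Since the roles of $f$ and $g$ are symmetric (replacing $f$ by $g$ and vice versa gives the converse), the collections of major pairs for $f$ and for $g$ coincide, and similarly for minor pairs. Therefore
\[
\sup_{(\mathsf{v},\mathsf{V})\text{ minor for }g}\mathsf{V}(b)=\sup_{(\mathsf{v},\mathsf{V})\text{ minor for }f}\mathsf{V}(b)=\inf_{(\mathsf{u},\mathsf{U})\text{ major for }f}\mathsf{U}(b)=\inf_{(\mathsf{u},\mathsf{U})\text{ major for }g}\mathsf{U}(b)\,,
\]
where the middle equality is the distributional integrability of $f$. This establishes the distributional integrability of $g$ together with the equality (\ref{IN.15}).

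There is no real obstacle here: the proof is a bookkeeping observation that the Lebesgue-null exceptional set in the Definitions is large enough to absorb the set where $f$ and $g$ differ, while the countable exceptional set $E$ and the distributional conditions on $\mathsf{u},\mathsf{v},\mathsf{U},\mathsf{V}$ are left untouched.
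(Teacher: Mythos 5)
Your proof is correct and is essentially the paper's own argument: the paper simply observes that any major or minor pair for $f$ is also one for $g$ and conversely, since $f$ enters the definitions only through the a.e.\ inequalities, whose exceptional null set can absorb the set where $f\neq g$. Your write-up just makes this bookkeeping explicit.
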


\begin{proof}
Indeed, any major or minor pair for $f$ is also a major or a minor pair for
$g,$ and conversely.\smallskip
\end{proof}

The integral has the expected linear properties.\smallskip

\begin{proposition}
\label{Prop. IN 2}If $f_{1}$ and $f_{2}$ are distributionally integrable over
$\left[  a,b\right]  $ then so is $f_{1}+f_{2}$ and
\begin{equation}
\int_{a}^{b}\left(  f_{1}\left(  x\right)  +f_{2}\left(  x\right)  \right)
\,\mathrm{d}x=\int_{a}^{b}f_{1}\left(  x\right)  \,\mathrm{d}x+\int_{a}%
^{b}f_{2}\left(  x\right)  \,\mathrm{d}x\,. \label{IN.12}%
\end{equation}

\end{proposition}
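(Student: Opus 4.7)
The plan is to prove linearity by adding a minor pair for $f_1$ to one for $f_2$ (respectively a major pair for each) and showing that the resulting sum is a minor (respectively major) distributional pair for $f_1+f_2$. Concretely, fix $\varepsilon>0$ and choose, for $i=1,2$, a minor pair $(\mathsf{v}_i,\mathsf{V}_i)$ and a major pair $(\mathsf{u}_i,\mathsf{U}_i)$ for $f_i$ with $\mathsf{U}_i(b)-\mathsf{V}_i(b)<\varepsilon/2$. I would then set $\mathsf{u}=\mathsf{u}_1+\mathsf{u}_2$, $\mathsf{U}=\mathsf{U}_1+\mathsf{U}_2$, and analogously $\mathsf{v}$, $\mathsf{V}$, and verify that $(\mathsf{u},\mathsf{U})$ is a major pair and $(\mathsf{v},\mathsf{V})$ is a minor pair for $f_1+f_2$.

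Conditions 1) and 2) of Definitions~\ref{Def IN1} and \ref{Def IN2} are immediate: support and normalization behave additively, $\mathsf{U}'=\mathsf{u}$, and the sum of two \L ojasiewicz distributions is a \L ojasiewicz distribution with $(\mathsf{U}_1+\mathsf{U}_2)(a)=0$, since the distributional point value is additive wherever both summands have one. The substantive step is condition 3), for which I would invoke the super-additivity of $\liminf$: for any $\phi\in\mathcal{T}_{0}$,
\begin{equation}
(\mathsf{u}_1+\mathsf{u}_2)_{\phi,0}^{-}(x)\geq (\mathsf{u}_1)_{\phi,0}^{-}(x)+(\mathsf{u}_2)_{\phi,0}^{-}(x),
\end{equation}
valid whenever the right-hand side is not of the indeterminate form $\infty-\infty$. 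By Proposition~\ref{Prop. IN 5} each $f_i$ is finite a.e., so by enlarging the respective null sets $Z_i$ (and their countable subsets $E_i\subset Z_i$) we may assume $f_i$ is finite off $Z_1\cup Z_2$; then off this common null set the sum $f_1(x)+f_2(x)$ is well defined and the inequality $\geq f_1(x)+f_2(x)$ follows from adding the corresponding inequalities for each pair. The control (\ref{IN.3}) transfers in the same manner off $E_1\cup E_2$, which remains countable: the sum of two quantities that are $>-\infty$ is $>-\infty$. A symmetric argument, using the sub-additivity of $\limsup$, handles the minor pair.

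Once this is established, one has $\mathsf{V}(b)\leq\int_a^b(f_1+f_2)\,\mathrm{d}x\leq\mathsf{U}(b)$ as soon as one knows integrability, which follows from
\begin{equation}
\mathsf{U}(b)-\mathsf{V}(b)=\bigl(\mathsf{U}_1(b)-\mathsf{V}_1(b)\bigr)+\bigl(\mathsf{U}_2(b)-\mathsf{V}_2(b)\bigr)<\varepsilon,
\end{equation}
so $f_1+f_2$ is distributionally integrable by criterion (\ref{IN.9}). For the identity (\ref{IN.12}), note that $\mathsf{V}_1(b)+\mathsf{V}_2(b)\leq \int_a^b f_1+\int_a^b f_2\leq \mathsf{U}_1(b)+\mathsf{U}_2(b)$ and that the same bounds sandwich $\int_a^b(f_1+f_2)\,\mathrm{d}x$; hence the two quantities differ by at most $\varepsilon$, and letting $\varepsilon\to 0^{+}$ yields equality.

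The only real obstacle is the potential $\infty-\infty$ ambiguity in the liminf/limsup inequalities and the handling of points where $f_i=\pm\infty$. This is defused by Proposition~\ref{Prop. IN 5} together with the freedom, noted after Definition~\ref{Def IN2}, to absorb the countable exceptional sets into the null exceptional sets; everything else is routine bookkeeping.
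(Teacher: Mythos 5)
Your proposal is correct and follows essentially the same route as the paper: reduce to the case where each $f_{i}$ is finite (via Propositions \ref{Prop. IN 5} and \ref{Prop. IN 6}), observe that sums of major (resp.\ minor) pairs are major (resp.\ minor) pairs for $f_{1}+f_{2}$, and conclude with the $\varepsilon$-criterion (\ref{IN.9}). The paper states this in one line; your version merely fills in the same bookkeeping (superadditivity of $\liminf$, union of exceptional sets) explicitly.
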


\begin{proof}
Using Propositions \ref{Prop. IN 5} and \ref{Prop. IN 6} it follows that we
may assume that both $f_{1}$ and $f_{2}$ are finite everywhere, so that its
sum is also defined everywhere. Then we just observe that the sum of major
pairs for $f_{1}$ and $f_{2}$ is a major pair for $f_{1}+f_{2},$ and similarly
for the sum of minor pairs.\smallskip
\end{proof}

\begin{proposition}
\label{Prop. IN 3}If $f$ is distributionally integrable over $\left[
a,b\right]  $ then so is $kf$ for any constant $k$ and
\begin{equation}
\int_{a}^{b}kf\left(  x\right)  \,\mathrm{d}x=k\int_{a}^{b}f\left(  x\right)
\,\mathrm{d}x\,. \label{IN.13}%
\end{equation}

\end{proposition}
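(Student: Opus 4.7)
The plan is to split into three cases according to the sign of $k$ and show that scalar multiplication by $k$ sets up a natural bijection between the collections of major and minor pairs for $f$ and those for $kf$. The case $k=0$ is trivial: the zero distribution paired with itself serves both as a major and a minor pair for the function $0$, yielding integral $0$.

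For $k>0$, I would argue that $(\mathsf{u},\mathsf{U})\mapsto(k\mathsf{u},k\mathsf{U})$ is a bijection between major pairs for $f$ and major pairs for $kf$, and similarly for minor pairs. All the structural conditions in Definitions \ref{Def IN1} and \ref{Def IN2} are preserved under scaling: $k\mathsf{U}$ is a \L ojasiewicz distribution with $(k\mathsf{U})(a)=0$ and derivative $k\mathsf{u}\in\mathcal{E}^{\prime}[a,b]$. The $\phi$-transform is linear in its distributional argument, so
\begin{equation}
(k\mathsf{u})_{\phi,0}^{-}(x)=k\,(\mathsf{u})_{\phi,0}^{-}(x)\geq k f(x)\,,
\end{equation}
for $x\in[a,b]\setminus Z$, because multiplication by a positive constant preserves both liminf and the inequality; likewise (\ref{IN.3}) is preserved. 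The same exceptional sets $E$ and $Z$ work. The relation $\int_{a}^{b}kf\,\mathrm{d}x=k\int_{a}^{b}f\,\mathrm{d}x$ then follows by taking infima over major pairs and using $\inf k\mathsf{U}(b)=k\inf \mathsf{U}(b)$ for $k>0$.

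For $k<0$, it suffices, by composition with the positive case, to handle $k=-1$. Here the key observation is that negation swaps major and minor pairs, using the elementary identity $\liminf(-g)=-\limsup(g)$, which gives $(-\mathsf{u})_{\phi,0}^{+}(x)=-(\mathsf{u})_{\phi,0}^{-}(x)$ and $(-\mathsf{u})_{\phi,0}^{-}(x)=-(\mathsf{u})_{\phi,0}^{+}(x)$. Hence if $(\mathsf{u},\mathsf{U})$ is a major pair for $f$, then $(-\mathsf{u},-\mathsf{U})$ satisfies
\begin{equation}
(-\mathsf{u})_{\phi,0}^{+}(x)=-(\mathsf{u})_{\phi,0}^{-}(x)\leq -f(x)\,,
\end{equation}
as well as $(-\mathsf{u})_{\phi,0}^{+}(x)<\infty$ off the countable set $E$, so $(-\mathsf{u},-\mathsf{U})$ is a minor pair for $-f$; the reverse direction is analogous. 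Consequently
\begin{equation}
\sup_{(\mathsf{v},\mathsf{V})\text{ minor for }-f}\mathsf{V}(b)=\sup_{(\mathsf{u},\mathsf{U})\text{ major for }f}(-\mathsf{U}(b))=-\inf_{(\mathsf{u},\mathsf{U})\text{ major for }f}\mathsf{U}(b)\,,
\end{equation}
and similarly $\inf_{\text{major for }-f}=-\sup_{\text{minor for }f}$. The distributional integrability of $f$ forces these two quantities to coincide and to equal $-\int_{a}^{b}f(x)\,\mathrm{d}x$, which establishes (\ref{IN.13}) for $k=-1$ and then, composing with the positive case, for all $k<0$.

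There is no serious obstacle here; the argument is purely structural. The only point requiring some care is the correct handling of the sign flip in the angular extrema (that $\limsup$ and $\liminf$ exchange roles under negation), together with the observation that all the auxiliary conditions --- Łojasiewicz regularity, the vanishing at $a$, the countable exceptional set $E$ and the null set $Z$ --- are invariant under multiplication by a nonzero scalar.
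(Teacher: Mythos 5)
Your proposal is correct and follows essentially the same route as the paper: for $k>0$ scalar multiplication carries major pairs to major pairs and minor pairs to minor pairs, while for $k<0$ it exchanges the two families (the paper leaves the $\limsup$/$\liminf$ sign-flip and the sup/inf bookkeeping implicit, which you spell out). No gaps; the extra detail on the case $k=-1$ and the invariance of the auxiliary conditions is just an elaboration of the paper's one-line argument.
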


\begin{proof}
The result follows from the following observations. If $k>0$ then multiplying
a major pair for $f$ with $k$ gives a major pair for $kf,$ and similarly for
minor pairs. If $k<0$ then multiplication with $k$ transforms major pairs for
$f$ into minor pairs for $kf$ and minor pairs for $f$ into major pairs for
$kf.$\smallskip
\end{proof}

It follows from the previous results that the set of distributionally
integrable functions over $\left[  a,b\right]  $ is a linear space and that
the integral is a linear functional.

We also have the following easy result.\smallskip

\begin{proposition}
\label{Prop. IN 4}Suppose $a<c<b.$ A function $f$ defined in $\left[
a,b\right]  $ is distributionally integrable there if and only if it is
distributionally integrable over $\left[  a,c\right]  $ and $\left[
c,b\right]  ,$ and when this is the case,%
\begin{equation}
\int_{a}^{b}f\left(  x\right)  \,\mathrm{d}x=\int_{a}^{c}f\left(  x\right)
\,\mathrm{d}x+\int_{c}^{b}f\left(  x\right)  \,\mathrm{d}x\,. \label{IN.14}%
\end{equation}
\smallskip
\end{proposition}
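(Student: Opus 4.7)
The forward implication is immediate from Proposition \ref{Prop. IN 1}, so only the converse and additivity need work. Writing $I_{1}=\int_{a}^{c}f$ and $I_{2}=\int_{c}^{b}f$, I would fix $\varepsilon>0$ and pick minor/major pairs $(\mathsf{v}_{j},\mathsf{V}_{j})$, $(\mathsf{u}_{j},\mathsf{U}_{j})$ for $f$ on $[a,c]$ (for $j=1$) and on $[c,b]$ (for $j=2$), each with gap less than $\varepsilon/2$. Then I would glue by setting $\mathsf{U}=\mathsf{U}_{1}+\mathsf{U}_{2}$, $\mathsf{V}=\mathsf{V}_{1}+\mathsf{V}_{2}$, $\mathsf{u}=\mathsf{U}'$, $\mathsf{v}=\mathsf{V}'$, and argue that $(\mathsf{u},\mathsf{U})$ is a major pair and $(\mathsf{v},\mathsf{V})$ a minor pair for $f$ on $[a,b]$.

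The routine items are immediate: $\mathsf{u}=\mathsf{u}_{1}+\mathsf{u}_{2}\in\mathcal{E}'[a,b]$; the sum of two \L ojasiewicz distributions is \L ojasiewicz (distributional point values being defined by linear limits); and $\mathsf{U}(a)=0$ because each $\mathsf{U}_{j}$ vanishes identically on $(-\infty,a]$. The substantive step is to verify (\ref{IN.2}) and (\ref{IN.3}) for the glued $\mathsf{u}$. My plan is a localization argument: for $x_{0}\neq c$, writing $F_{\phi}\{\mathsf{u}\}=F_{\phi}\{\mathsf{u}_{1}\}+F_{\phi}\{\mathsf{u}_{2}\}$, the ``inactive'' summand should vanish at $x_{0}$ as $t\to 0^{+}$. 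For instance, for $x_{0}<c$ and any $\phi\in\mathcal{T}_{0}$, I would show
\[
F_{\phi}\{\mathsf{u}_{2}\}(x_{0},t)=\bigl\langle\mathsf{u}_{2}(z),\,t^{-1}\phi((z-x_{0})/t)\bigr\rangle\longrightarrow 0\quad\text{as }t\to 0^{+},
\]
using the finite order of $\mathsf{u}_{2}\in\mathcal{E}'(\mathbb{R})$ on its compact support $[c,b]$ together with the strong decay $\phi^{(k)}(y)=O(|y|^{\alpha-k})$, $\alpha<-1$, from Definition \ref{Def M1}; quantitatively, all derivatives of $t^{-1}\phi((\,\cdot\,-x_{0})/t)$ are $O(t^{-\alpha-1})=o(1)$ uniformly on $[c,b]$. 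It follows that $(\mathsf{u})_{\phi,0}^{-}(x_{0})=(\mathsf{u}_{1})_{\phi,0}^{-}(x_{0})$, so (\ref{IN.2}) and (\ref{IN.3}) at such $x_{0}$ reduce to the corresponding properties of $(\mathsf{u}_{1},\mathsf{U}_{1})$. A symmetric argument covers $x_{0}>c$, and the single point $c$ is absorbed into the countable/null exceptional sets of the definition. The minor pair is handled identically.

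Once both pairs are verified, $\mathsf{U}_{1}$ is constant on $[c,\infty)$, giving $\mathsf{U}(b)=\mathsf{U}_{1}(c)+\mathsf{U}_{2}(b)$, and similarly $\mathsf{V}(b)=\mathsf{V}_{1}(c)+\mathsf{V}_{2}(b)$. Thus $\mathsf{U}(b)-\mathsf{V}(b)<\varepsilon$, and by the criterion (\ref{IN.9}), $f$ is distributionally integrable on $[a,b]$. Both $\int_{a}^{b}f$ and $I_{1}+I_{2}$ lie in the interval $[\mathsf{V}(b),\mathsf{U}(b)]$ of length less than $\varepsilon$, so $|\int_{a}^{b}f-(I_{1}+I_{2})|<\varepsilon$; letting $\varepsilon\to 0^{+}$ yields (\ref{IN.14}). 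The main obstacle is the localization estimate above, because test functions in $\mathcal{T}_{0}$ need not be compactly supported and one must exploit the quantitative strong decay of Definition \ref{Def M1} rather than pure support-disjointness.
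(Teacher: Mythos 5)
Your argument is correct, and in fact the paper offers no proof of this proposition (it is dismissed as an ``easy result''), so there is nothing to diverge from: your gluing of pairs, $\mathsf{U}=\mathsf{U}_{1}+\mathsf{U}_{2}$, $\mathsf{V}=\mathsf{V}_{1}+\mathsf{V}_{2}$, is the natural companion to the restriction construction used in the proof of Proposition~\ref{Prop. IN 1}, and your localization estimate $F_{\phi}\{\mathsf{u}_{2}\}(x_{0},t)=O(t^{-\alpha-1})=o(1)$ for $x_{0}<c$, exploiting the finite order of $\mathsf{u}_{2}\in\mathcal{E}'(\mathbb{R})$ and the strong decay in Definition~\ref{Def M1}, is exactly the right way to handle the non-compactly supported $\phi\in\mathcal{T}_{0}$. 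The only step you assert without comment is that $\mathsf{U}_{2}$ vanishes identically to the left of $c$ (needed for $\mathsf{U}(a)=0$); this follows because $\mathsf{U}_{2}$ is constant on $(-\infty,c)$ and its \L ojasiewicz value at $c$ is $0$, so the constant must be $0$, and with that remark the proof is complete, including the final two-sided estimate giving (\ref{IN.14}).
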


If $A\subset\left[  a,b\right]  $ then we say that $f$ is distributionally
integrable over $A$ if $\chi_{A}f,$ where $\chi_{A}$ is the characteristic
function of $A,$ is distributionally integrable, and use the notation
\begin{equation}
\left(  \mathfrak{dist}\right)  \int_{A}f\left(  x\right)  \,\mathrm{d}x\,.
\label{IN.16}%
\end{equation}
As with any non-absolute integral, $f$ will not be integrable over all
measurable subsets of $\left[  a,b\right]  ,$ but if $A$ has measure $0$ the
distributional integral exists and equals $0.$ Also, according to Proposition
\ref{Prop. IN 1}, if $f$ is distributionally integrable over $\left[
a,b\right]  $ then it is integrable over any of its subintervals.\smallskip

\section{The indefinite integral}\label{Section: Indefinite integral}

We shall now study the indefinite integral function
\begin{equation}
F\left(  x\right)  =\left(  \mathfrak{dist}\right)  \int_{a}^{x}f\left(
t\right)  \,\mathrm{d}t\,, \label{PR.1}%
\end{equation}
of a function $f$ that is distributionally integrable over $\left[
a,b\right]  .$ We are interested in the case when $a\leq x\leq b,$ but
sometimes it would be convenient to extend the domain of $F$ by putting
$F\left(  x\right)  =0$ for $x<a$ and $F\left(  x\right)  =F\left(  b\right)
$ for $x>b.$

The indefinite integral of a Lebesgue integrable function is absolutely
continuous, while that of a Denjoy-Perron-Henstock-Kurzweil integrable function is
continuous. We shall show that (\ref{PR.1}) defines a \L ojasiewicz function,
with associated \L ojasiewicz distribution $\mathsf{F,}$ $F\leftrightarrow
\mathsf{F.}$ We shall also show that the derivative $\mathsf{f=F}^{\prime}$ is
a distribution that has \L ojasiewicz distributional point values almost
everywhere and actually $\mathsf{f}\left(  x\right)  =f\left(  x\right)  $
$\left(  \text{a.e.}\right)  .$

We start with some useful results.\smallskip

\begin{lemma}
\label{Lemma PR 1}Let $\left(  \mathsf{v},\mathsf{V}\right)  $ and $\left(
\mathsf{u},\mathsf{U}\right)  $ be minor and major pairs for a
distributionally integrable function $f$ over $\left[  a,b\right]  .$ Let
$U\leftrightarrow\mathsf{U}$ and $V\leftrightarrow\mathsf{V}.$ Then $U-F$ and
$F-V$ are both continuous increasing functions that vanish at $x=a.$
\end{lemma}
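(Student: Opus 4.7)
The plan is to deduce everything from three observations: (i) the restriction construction used in the proof of Proposition \ref{Prop. IN 1}, which turns a major or minor pair on $[a,b]$ into a major or minor pair on any subinterval; (ii) the defining sup/inf property of the integral together with the fact that $\mathsf{V}(a)=\mathsf{U}(a)=0$; and (iii) Lemma \ref{Lemma IN1}, which gives that $U-V$ is a continuous increasing function vanishing at $a$.

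First I would fix $x\in(a,b]$ and apply the cut-off construction of Proposition \ref{Prop. IN 1} to the interval $[a,x]$. The truncated pair obtained from $(\mathsf{v},\mathsf{V})$ (resp.~$(\mathsf{u},\mathsf{U})$) is a minor (resp.~major) pair for $f$ on $[a,x]$ whose endpoint value at $x$ equals $V(x)-V(a)=V(x)$ (resp.~$U(x)-U(a)=U(x)$). By the inf/sup characterisation of the distributional integral in Definition \ref{Def IN3} this immediately gives
\begin{equation}
V(x)\leq F(x)\leq U(x)\qquad\forall x\in[a,b],
\end{equation}
with equality at $x=a$. Applying the same truncation, but on an arbitrary subinterval $[x_{1},x_{2}]\subset[a,b]$, yields similarly
\begin{equation}
V(x_{2})-V(x_{1})\leq F(x_{2})-F(x_{1})\leq U(x_{2})-U(x_{1}),
\end{equation}
which says that $F-V$ and $U-F$ are both increasing on $[a,b]$; they are also non-negative and vanish at $a$.

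It remains to establish continuity. By Lemma \ref{Lemma IN1} the function $U-V$ is continuous and increasing. Writing
\begin{equation}
(U-V)(x)=(U-F)(x)+(F-V)(x),
\end{equation}
we have expressed a continuous increasing function as the sum of two non-negative increasing functions. If either summand had a jump at some $x_{0}$, both one-sided jumps would be non-negative (by monotonicity) yet sum to $0$ (by continuity of $U-V$), forcing both to vanish. Hence $U-F$ and $F-V$ are continuous, which completes the proof.

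The only step requiring care is the first one: one must verify that the truncated pairs genuinely satisfy conditions 1)–3) of Definitions \ref{Def IN1} and \ref{Def IN2} on the subinterval. The Łojasiewicz property and the \L ojasiewicz primitive condition transfer because the truncated $\widetilde{U}$, $\widetilde{V}$ agree with $U$, $V$ up to a constant on the subinterval and are extended by constants outside; the angular/radial lower and upper bounds on $(\mathsf{u})^{-}_{\phi,0}$ and $(\mathsf{v})^{+}_{\phi,0}$ are local in nature, so they survive at all interior points, and the small modification at the endpoints changes things only on a (finite, hence null and countable) set that can be absorbed into $Z$ and $E$. Once this is granted, the restrictions are valid minor/major pairs on $[x_{1},x_{2}]$, the comparisons with $F(x_{2})-F(x_{1})$ follow at once, and the remaining arithmetic with $U-V$ finishes the lemma.
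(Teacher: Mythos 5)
Your proof is correct and follows essentially the same route as the paper: truncate the given pairs to subintervals via the construction of Proposition \ref{Prop. IN 1}, use the sup/inf characterization of the integral to get $V(x_{2})-V(x_{1})\leq F(x_{2})-F(x_{1})\leq U(x_{2})-U(x_{1})$ (hence monotonicity of $U-F$ and $F-V$), and then deduce continuity from writing the continuous increasing function $U-V$ of Lemma \ref{Lemma IN1} as the sum of these two increasing functions. The extra verification that the truncated pairs satisfy Definitions \ref{Def IN1} and \ref{Def IN2} is a fine addition but is already covered by Proposition \ref{Prop. IN 1}.
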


\begin{proof}
Observe that if $a\leq c<d\leq b$ then (\ref{IN.10}) gives a major pair
$\left(  \widetilde{\mathsf{U}}^{\prime},\widetilde{\mathsf{U}}\right)  $ for
$f$ over $\left[  c,d\right]  $ with $\widetilde{U}\left(  t\right)  =U\left(
t\right)  -U\left(  c\right)  $ for $c\leq x\leq d.$ Thus%
\[
F\left(  d\right)  -F\left(  c\right)  =\int_{c}^{d}f\left(  x\right)
\,\mathrm{d}x\leq\widetilde{U}\left(  d\right)  =U\left(  d\right)  -U\left(
c\right)  \,,
\]
and so%
\begin{equation}
U\left(  c\right)  -F\left(  c\right)  \leq U\left(  d\right)  -F\left(
d\right)  \,. \label{PR.2}%
\end{equation}
Similarly one shows that $F-V$ is increasing.

Observe now that $U-V=(U-F)+(F-V)$ is a continuous increasing function (Lemma
\ref{Lemma IN1}) written as the sum of two increasing functions: we conclude
that both $U-F$ and $F-V$ are continuous.\smallskip
\end{proof}

Using the lemma we see that $F=(F-V)+V$ is the sum of a continuous function
and a \L ojasiewicz function and thus it is a \L ojasiewicz
function.\smallskip

\begin{theorem}
\label{Theorem PR 1}Let $f$ be a distributionally integrable function over
$\left[  a,b\right]  ,$ with indefinite integral $F.$ Then $F$ is a
\L ojasiewicz function.\smallskip
\end{theorem}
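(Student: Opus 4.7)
My plan is to use Lemma \ref{Lemma PR 1} and reduce the statement to a very short verification. Since $f$ is distributionally integrable over $[a,b]$, pick any minor distributional pair $(\mathsf{v},\mathsf{V})$ for $f$; such a pair exists by the very definition of distributional integrability (Definition \ref{Def IN3}). Let $V\leftrightarrow\mathsf{V}$. By the definition of a minor pair, $\mathsf{V}$ is a \L ojasiewicz distribution and hence $V$ is a \L ojasiewicz function. By Lemma \ref{Lemma PR 1}, the function $F-V$ is continuous on $[a,b]$ (and we may continuously extend it to $\mathbb{R}$ using the conventions that $F$ is constantly $0$ to the left of $a$ and constantly $F(b)$ to the right of $b$, matching the behavior of $V$).

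The next step is to observe that a continuous function $C:\mathbb{R}\to\mathbb{R}$ is itself a \L ojasiewicz function: the regular distribution $\mathsf{C}\leftrightarrow C$ has distributional point value $\mathsf{C}(x_{0})=C(x_{0})$ at every $x_{0}$, because a continuous function is automatically a zeroth-order primitive of itself satisfying the \L ojasiewicz criterion (\ref{2.3}) with $n=0$. Hence $F-V$ corresponds to a \L ojasiewicz distribution with point values equal to $(F-V)(x)$ everywhere.

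To finish, I would use the fact that the sum of two \L ojasiewicz distributions is a \L ojasiewicz distribution, and the point values add: if $\mathsf{f}_{1}$ and $\mathsf{f}_{2}$ have \L ojasiewicz point values $\gamma_{1}$ and $\gamma_{2}$ at $x_{0}$, then $\mathsf{f}_{1}+\mathsf{f}_{2}$ has point value $\gamma_{1}+\gamma_{2}$ at $x_{0}$. This follows immediately from the linearity of the limit (\ref{pe1}). Writing $F=(F-V)+V$ and applying this observation at every $x_{0}\in\mathbb{R}$ yields a \L ojasiewicz distribution $\mathsf{F}\leftrightarrow F$ whose point value at every point is precisely $F(x_{0})$, so $F$ is a \L ojasiewicz function.

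There is essentially no obstacle here: Lemma \ref{Lemma PR 1} does the real analytic work by establishing continuity of $F-V$ via a monotonicity sandwich between $U-F$ and $F-V$, and everything else is either the definition of a \L ojasiewicz function or a routine linearity observation. The only thing one must be slightly careful about is the behavior outside $[a,b]$, but with the standard extension ($F$ constant outside $[a,b]$, and $\mathsf{u},\mathsf{v}\in\mathcal{E}^{\prime}[a,b]$ so $U,V$ are already constant there), the decomposition $F=(F-V)+V$ holds globally on $\mathbb{R}$, and the conclusion follows.
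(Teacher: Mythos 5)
Your proof is correct and follows essentially the same route as the paper, which, after establishing Lemma \ref{Lemma PR 1}, simply observes that $F=(F-V)+V$ is the sum of a continuous function and a \L ojasiewicz function and hence a \L ojasiewicz function. Your additional remarks (continuity of $F-V$, continuous functions being \L ojasiewicz, additivity of point values, and the extension outside $[a,b]$) merely spell out the details the paper leaves implicit.
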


Observe that one may consider $f$ as an equivalence class of functions defined
almost everywhere, and thus the value $f\left(  x\right)  $ for a particular
$x$ may or may not have a useful meaning. However, $F$ is a \L ojasiewicz
function, and this implies that the value $F\left(  x\right)  $ has a clear
interpretation for \emph{all} numbers $x.$

Since $F$ is a \L ojasiewicz function, it has an associated \L ojasiewicz
distribution $\mathsf{F.}$ The distributional derivative $\mathsf{f=F}%
^{\prime}$ is a well defined distribution with $\operatorname*{supp}%
\mathsf{f}\subset\left[  a,b\right]  .$ The relationship between $\mathsf{f}$
and $f$ is as follows.\smallskip

\begin{theorem}
\label{Theorem PR 2}Let $f$ be a distributionally integrable function over
$\left[  a,b\right]  ,$ with indefinite integral $F,$ let $F\leftrightarrow
\mathsf{F,}$ and let $\mathsf{f=F}^{\prime}.$ Then $\mathsf{f}$ has point
values almost everywhere and%
\begin{equation}
\mathsf{f}\left(  x\right)  =f\left(  x\right)  \ \ \ \ \ \ \left(
\text{a.e.}\right)  \,. \label{PR.3}%
\end{equation}

\end{theorem}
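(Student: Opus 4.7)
The plan is to sandwich the upper and lower radial $\phi$-transforms of $\mathsf{f}$ at a typical point $x_{0}$ between $f(x_{0})$ plus or minus the density of a positive measure of arbitrarily small total mass, and then invoke \lemref{Lemma M.1} to extract a genuine distributional point value.

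First I would pick sequences of major and minor pairs $(\mathsf{u}_{n},\mathsf{U}_{n})$ and $(\mathsf{v}_{n},\mathsf{V}_{n})$ for $f$ on $[a,b]$ with $\mathsf{U}_{n}(b)-\mathsf{V}_{n}(b)<2^{-n}$. By \lemref{Lemma PR 1}, the functions $U_{n}-F$ and $F-V_{n}$ are continuous increasing and vanish at $a$, so $\mu_{n}:=\mathsf{u}_{n}-\mathsf{f}$ and $\nu_{n}:=\mathsf{f}-\mathsf{v}_{n}$ are positive Radon measures of total mass at most $2^{-n}$; let $g_{n},h_{n}\in L^{1}$ be their Radon--Nikodym densities with respect to Lebesgue measure. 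For any positive $\phi\in\mathcal{T}_{0}$, the $\phi$-transform of a positive measure $\rho$ is the convolution $F_{\rho}(x,t)=t^{-1}\int\phi((y-x)/t)\,\mathrm{d}\rho(y)$, and since $\phi$ has a radial decreasing integrable majorant (from the strong decay in Definition~\ref{Def M1}), a standard Hardy--Littlewood maximal function argument yields $\lim_{t\to 0^{+}}F_{\mu_{n}}(x,t)=g_{n}(x)$ and $\lim_{t\to 0^{+}}F_{\nu_{n}}(x,t)=h_{n}(x)$ for every $x$ outside a Lebesgue-null set $N_{n}$ depending only on $\mu_{n},\nu_{n}$. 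Chebyshev combined with Borel--Cantelli (since $\sum 2^{-n/2}<\infty$) then forces $g_{n}(x_{0})+h_{n}(x_{0})\to 0$ at almost every $x_{0}$.

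Then for $x_{0}$ lying outside the null and countable exceptional sets supplied by Definitions~\ref{Def IN1} and \ref{Def IN2} for each pair, outside $\bigcup_{n}N_{n}$, and outside the Borel--Cantelli exceptional set, the existence of the radial limits of $F_{\mu_{n}}$ and $F_{\nu_{n}}$ allows me to pass the identities $F_{\mathsf{u}_{n}}=F_{\mathsf{f}}+F_{\mu_{n}}$ and $F_{\mathsf{v}_{n}}=F_{\mathsf{f}}-F_{\nu_{n}}$ through $\liminf$ and $\limsup$, giving
\[\mathsf{f}_{\phi,0}^{-}(x_{0})=(\mathsf{u}_{n})_{\phi,0}^{-}(x_{0})-g_{n}(x_{0})\geq f(x_{0})-g_{n}(x_{0}),\]
\[\mathsf{f}_{\phi,0}^{+}(x_{0})=(\mathsf{v}_{n})_{\phi,0}^{+}(x_{0})+h_{n}(x_{0})\leq f(x_{0})+h_{n}(x_{0}),\]
where the inequalities are the defining conditions (\ref{IN.2}) and (\ref{IN.5}). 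Sending $n\to\infty$ traps both extremes at $f(x_{0})$ for every positive normalized $\phi\in\mathcal{D}(\mathbb{R})\subset\mathcal{T}_{0}$, and \lemref{Lemma M.1} then delivers the \L ojasiewicz point value $\mathsf{f}(x_{0})=f(x_{0})$.

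The hard part will be ensuring that the Lebesgue differentiation exceptional set for the measures $\mu_{n},\nu_{n}$ can indeed be chosen independently of the test function $\phi\in\mathcal{T}_{0}$; this uniformity rests on the radial decreasing integrable majorant furnished by the strong decay condition~(\ref{Mea.4p}), so that a single maximal-function estimate controls the convergence simultaneously for all admissible $\phi$.
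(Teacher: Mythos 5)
Your proof is correct, and its skeleton coincides with the paper's: trap the radial extreme values of $\mathsf{f}$ between $f(x_{0})$ minus and plus a small error coming from major and minor pairs, and then invoke Lemma~\ref{Lemma M.1}. Where you genuinely differ is in how the smallness of $\mathsf{U}(b)-\mathsf{V}(b)$ is converted into pointwise smallness almost everywhere. The paper fixes $\varepsilon,\eta>0$, takes a \emph{single} major pair with $U(b)-F(b)<\varepsilon\eta$, and applies a Chebyshev estimate to the almost everywhere derivative $\rho'$ of the increasing continuous function $\rho=U-F$ (Lemma~\ref{Lemma PR 1}), obtaining $\left(\mathsf{f}\right)_{\phi,0}^{-}(x)=\left(\mathsf{u}\right)_{\phi,0}^{-}(x)-\rho'(x)>f(x)-\varepsilon$ off a set of measure less than $\eta$, then lets $\eta\rightarrow0$ and $\varepsilon\rightarrow0$; you instead take a sequence of pairs with geometric control, pass to the Lebesgue decomposition of the error measures $\mu_{n}=\mathsf{u}_{n}-\mathsf{f}$ and $\nu_{n}=\mathsf{f}-\mathsf{v}_{n}$, identify the radial limits of their $\phi$-transforms a.e.\ with the densities $g_{n},h_{n}$ of the absolutely continuous parts via the approximate-identity/maximal-function theorem, and finish with Chebyshev plus Borel--Cantelli. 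Analytically the two arguments use the same underlying fact: the paper's identity $\left(\mathsf{f}\right)_{\phi,0}^{-}(x)=\left(\mathsf{u}\right)_{\phi,0}^{-}(x)-\rho'(x)$ is precisely the statement that the $\phi$-transform of $\mathrm{d}\rho$ converges radially to $\rho'(x)$ at points of differentiability. What the paper's packaging buys is brevity and elementarity (only a.e.\ differentiability of monotone functions and $\int_{a}^{b}\rho'\leq\rho(b)$ are needed); what yours buys is that it makes explicit the uniformity the paper leaves implicit, namely that the exceptional null set can be chosen independently of $\phi$, since it suffices to take the Lebesgue points of $g_{n},h_{n}$ together with the points where the singular parts have vanishing symmetric derivative, a set depending only on the measures and valid simultaneously for every kernel with a radially decreasing integrable majorant. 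Two small touch-ups: your $g_{n},h_{n}$ are the densities of the absolutely continuous parts only (the measures may well have singular parts, which is exactly why you must quote the approximate-identity theorem for measures rather than for $L^{1}$ functions), and you should also discard the null set where $f$ is infinite (Proposition~\ref{Prop. IN 5}) so that the trapped value is finite before applying Lemma~\ref{Lemma M.1}.
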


\begin{proof}
Let $\varepsilon,\eta>0.$ Let $\left(  \mathsf{u},\mathsf{U}\right)  $ be a
major pair for $f$ over $\left[  a,b\right]  $ with
\begin{equation}
U\left(  b\right)  -F\left(  b\right)  <\varepsilon\eta\,, \label{PR.4}%
\end{equation}
where $U\leftrightarrow\mathsf{U}.$ Let $\rho=U-F,$ an increasing continuous function.

Consider the set $A=\left\{  x\in\left[  a,b\right]  :\rho^{\prime}\left(
x\right)  \geq\varepsilon\right\}  .$ Since
\[
\varepsilon m\left(  A\right)  \leq\int_{a}^{b}\rho^{\prime}\left(  x\right)
\,\mathrm{d}x\leq\rho\left(  b\right)  <\varepsilon\eta\,,
\]
it follows that $m\left(  A\right)  <\eta,$ where $m\left(  A\right)  $ is the
Lebesgue measure.

Notice now that if $x\in\left[  a,b\right]  \setminus(A\cup Z),$ where $Z$ is
the null set outside of where $\left(  \mathsf{u}\right)  _{\phi,0}%
^{-}\left(  x\right)  \geq f\left(  x\right)  >-\infty,$ for all $\phi
\in\mathcal{T}_{0}$, then%
\[
\left(  \mathsf{f}\right)  _{\phi,0}^{-}\left(  x\right)  =\left(
\mathsf{u}\right)  _{\phi,0}^{-}\left(  x\right)  -\rho^{\prime}\left(
x\right)  >f\left(  x\right)  -\varepsilon\,.
\]
Hence%
\begin{equation}
m\left(  \left\{  x\in\left[  a,b\right]  :\left(  \mathsf{f}\right)
_{\phi,0}^{-}\left(  x\right)  \leq f\left(  x\right)  -\varepsilon\text{
}\forall\phi\in\mathcal{T}_{0}\right\}  \right)  <\eta\,.
\label{PR.5}%
\end{equation}
But $\eta$ is arbitrary, and thus the set where $\left(  \mathsf{f}\right)
_{\phi,0}^{-}\left(  x\right)  \leq f\left(  x\right)  -\varepsilon$ has
measure $0,$ and since $\varepsilon$ is also arbitrary we obtain that $\left(
\mathsf{f}\right)  _{\phi,0}^{-}\left(  x\right)  \geq f\left(  x\right)
$ $\left(  \text{a.e.}\right)  .$

Using a similar analysis involving minor pairs one likewise obtains that
$\left(  \mathsf{f}\right)  _{\phi,0}^{+}\left(  x\right)  \leq f\left(
x\right)  $ $\left(  \text{a.e.}\right)  .$ If we now use the Lemma
\ref{Lemma M.1} then (\ref{PR.3}) follows.\smallskip
\end{proof}

The following consequence of the preceding theorem is worth mentioning.\smallskip

\begin{corollary}
\label{Cor PR 1}
If $f$ is distributionally integrable over $\left[  a,b\right]  $ then it is measurable.
\end{corollary}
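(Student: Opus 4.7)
The plan is to derive the corollary directly from Theorem \ref{Theorem PR 2}. Since that theorem gives us that the distributional point value $\mathsf{f}(x)=f(x)$ almost everywhere, and since measurability is invariant under modification on a null set, it suffices to exhibit the function $x\mapsto \mathsf{f}(x)$ (defined almost everywhere by the distributional point values) as a pointwise a.e.\ limit of measurable functions.

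First I would fix any normalized positive test function $\phi\in\mathcal{D}(\mathbb{R})$ and consider the $\phi$-transform $G(x,t)=F_{\phi}\{\mathsf{f}\}(x,t)=\langle \mathsf{f}(x+ty),\phi(y)\rangle$, which is well defined on $\mathbb{H}$ since $\mathsf{f}\in\mathcal{D}^{\prime}(\mathbb{R})$ (indeed $\operatorname{supp}\mathsf{f}\subset[a,b]$ because $F$ is constant outside $[a,b]$). For each fixed $t>0$, a change of variable shows that $x\mapsto G(x,t)$ equals, up to a reflection and dilation of $\phi$, the convolution of $\mathsf{f}$ with a smooth compactly supported function, and is therefore smooth in $x$; in particular each $x\mapsto G(x,1/n)$ is continuous, hence Borel measurable.

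Next I would invoke the observation recalled in Section \ref{Sc The phi transform}: if the distributional point value $\mathsf{f}(x_{0})=\gamma$ exists then $G(x_{0},t)\to\gamma$ as $t\to0^{+}$ (this is just the radial case of the angular convergence mentioned after (\ref{fi.4})). By Theorem \ref{Theorem PR 2}, the set $S$ of points where $\mathsf{f}(x)$ exists satisfies $m([a,b]\setminus S)=0$, and on $S$ we have $\mathsf{f}(x)=\lim_{n\to\infty}G(x,1/n)$. Setting $g(x)=\lim_{n\to\infty}G(x,1/n)$ wherever this limit exists, and $g(x)=0$ elsewhere, we obtain a Borel measurable function with $g(x)=\mathsf{f}(x)=f(x)$ almost everywhere; hence $f$ is measurable.

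No serious obstacle is anticipated — the only tiny subtlety is to note that the $\phi$-transform with $\phi\in\mathcal{D}(\mathbb{R})$ is smooth in $x$ for each fixed $t>0$, so that the a.e.\ radial limit inherits Borel measurability from the approximating continuous functions, and everything else is an immediate bookkeeping consequence of Theorem \ref{Theorem PR 2}.
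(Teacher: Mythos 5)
Your proposal is correct and is essentially the paper's own argument: the paper's proof takes a normalized $\phi\in\mathcal{D}(\mathbb{R})$ and the continuous functions $f_{n}(x)=\left\langle \mathsf{f}(x+y/n),\phi(y)\right\rangle$, which are exactly your $G(x,1/n)$, and concludes measurability from their a.e.\ convergence to $f$ at the points where the distributional value exists (Theorem \ref{Theorem PR 2}). The only difference is that you spell out the smoothness of the $\phi$-transform in $x$ and the harmless extra assumption that $\phi$ be positive, neither of which changes the substance.
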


\begin{proof}
Let $\phi\in\mathcal{D}\left(  \mathbb{R}\right)  $ be a normalized test
function. Then the sequence of continuous functions
\begin{equation}
f_{n}\left(  x\right)  =\left\langle \mathsf{f}\left(  x+y/n),\phi\left(
y\right)  \right)  \right\rangle \,,
\end{equation}
converges to $f$ almost everywhere, namely where (\ref{PR.3}) holds, and the
measurability of $f$ is thus obtained.\smallskip
\end{proof}

If we now use Theorem \ref{Theorem PR 2}, combined with Lemma \ref{Lemma PR 1}%
, we obtain more information on the nature of major and minor pairs.\smallskip

\begin{proposition}
\label{Prop PR 1}Let $\left(  \mathsf{v},\mathsf{V}\right)  $ and $\left(
\mathsf{u},\mathsf{U}\right)  $ be minor and major pairs for a
distributionally integrable function $f$ over $\left[  a,b\right]  .$ Then the
distributional point values $\mathsf{v}\left(  t\right)  $ and $\mathsf{u}%
\left(  t\right)  $ exist almost everywhere in $\left[  a,b\right]  .$ If
$\widetilde{v}$ is a function given by the point values of $\mathsf{v},$
namely, $\widetilde{v}\left(  t\right)  =\mathsf{v}\left(  t\right)  $ when
the value exists, extended in any way to a function over $\left[  a,b\right]
,$ then $\widetilde{v}$ is distributionally integrable over $\left[
a,b\right]  .$ Similarly the function $\widetilde{u}\left(  t\right)
=\mathsf{u}\left(  t\right)  ,$ when the value exists, is distributionally
integrable over $[a,b].$ Furthermore,%
\begin{equation}
\mathsf{V}\left(  d\right)  -\mathsf{V}\left(  c\right)  \leq\int_{c}%
^{d}\widetilde{v}\left(  x\right)  \,\mathrm{d}x\leq\int_{c}^{d}f\left(
x\right)  \,\mathrm{d}x\,, \label{PR.6}%
\end{equation}
and%
\begin{equation}
\int_{c}^{d}f\left(  x\right)  \,\mathrm{d}x\leq\int_{c}^{d}\widetilde
{u}\left(  x\right)  \,\mathrm{d}x\leq\mathsf{U}\left(  d\right)
-\mathsf{U}\left(  c\right)  \,. \label{PR.6p}%
\end{equation}

\end{proposition}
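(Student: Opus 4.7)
The plan is to combine Lemma~\ref{Lemma PR 1} with Theorem~\ref{Theorem PR 2} to deduce the existence of the point values, and then to verify directly that the original pairs themselves serve as major and minor pairs for the new integrands $\widetilde{v}$ and $\widetilde{u}$.

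For the existence of point values, Lemma~\ref{Lemma PR 1} asserts that $F - V$ and $U - F$ are continuous non-decreasing functions vanishing at $a$, so $\mathsf{f} - \mathsf{v}$ and $\mathsf{u} - \mathsf{f}$ are positive Radon measures supported in $[a,b]$. I would invoke the classical fact that a positive Radon measure on $\mathbb{R}$ admits a \L ojasiewicz point value at almost every point, equal almost everywhere to its Radon--Nikodym derivative with respect to Lebesgue measure: the $\phi$-transform of such a measure is its convolution against an approximation of the identity, so at every Lebesgue point of the absolutely continuous part at which the symmetric derivative of the singular part vanishes (a full-measure set) this convolution converges, and Lemma~\ref{Lemma M.1} promotes the convergence to a \L ojasiewicz point value. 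Theorem~\ref{Theorem PR 2} already supplies $\mathsf{f}(t)$ a.e., so the identities $\mathsf{v} = \mathsf{f} - (\mathsf{f} - \mathsf{v})$ and $\mathsf{u} = \mathsf{f} + (\mathsf{u} - \mathsf{f})$ yield the existence of $\mathsf{v}(t)$ and $\mathsf{u}(t)$ almost everywhere in $[a,b]$.

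For integrability, the key observation is that $(\mathsf{v},\mathsf{V})$ is a minor pair for both $\widetilde{v}$ and $\widetilde{u}$, and that $(\mathsf{u},\mathsf{U})$ is a major pair for both. Indeed, at almost every $x$ the point values $\mathsf{v}(x)$ and $\mathsf{u}(x)$ exist, whence $(\mathsf{v})^{+}_{\phi,0}(x) = \mathsf{v}(x) = \widetilde{v}(x) \leq f(x) \leq \widetilde{u}(x) = \mathsf{u}(x) = (\mathsf{u})^{-}_{\phi,0}(x)$ for every $\phi\in\mathcal{T}_{0}$ (since $\mathsf{v}$ and $\mathsf{u}$ are compactly supported, their \L ojasiewicz point values automatically control the $\phi$-transforms for all $\phi\in\mathcal{T}_{0}$); the finiteness conditions (\ref{IN.3}) and (\ref{IN.6}) for the new integrands are inherited verbatim from those of the original pairs. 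Choosing pairs for $f$ with $\mathsf{U}(b)-\mathsf{V}(b) < \varepsilon$ then shows that both $\widetilde{v}$ and $\widetilde{u}$ are distributionally integrable over $[a,b]$, and Proposition~\ref{Prop. IN 1} extends the integrability to every $[c,d]\subset[a,b]$.

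Finally, the bounds in (\ref{PR.6}) and (\ref{PR.6p}) reduce to the definitions. The cut-off construction (\ref{IN.11}) applied to $(\mathsf{v},\mathsf{V})$ produces a minor pair for $\widetilde{v}$ over $[c,d]$ with endpoint value $\mathsf{V}(d) - \mathsf{V}(c)$, giving the left inequality of (\ref{PR.6}); applied to $(\mathsf{u},\mathsf{U})$, it produces a major pair for $\widetilde{u}$ over $[c,d]$ with endpoint value $\mathsf{U}(d) - \mathsf{U}(c)$, giving the right inequality of (\ref{PR.6p}). The middle inequalities $\int_{c}^{d}\widetilde{v}\leq\int_{c}^{d}f$ and $\int_{c}^{d}f\leq\int_{c}^{d}\widetilde{u}$ follow from $\widetilde{v}\leq f\leq\widetilde{u}$ a.e.\ together with the monotonicity of the distributional integral, which is immediate since any major pair for a larger integrand is also a major pair for a smaller one. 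The principal obstacle is the auxiliary point-value statement for positive Radon measures, which must be proved en route since it is not stated in the preliminaries; everything else is bookkeeping off Lemma~\ref{Lemma PR 1}, Theorem~\ref{Theorem PR 2}, and the definitions.
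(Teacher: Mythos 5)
Your argument for the a.e.\ existence of the point values $\mathsf{v}(t)$ and $\mathsf{u}(t)$ is sound and essentially the paper's (write $\mathsf{v}=\mathsf{f}-(\mathsf{f}-\mathsf{v})$, note that $\mathsf{f}-\mathsf{v}$ is a positive measure by Lemma~\ref{Lemma IN1}/Lemma~\ref{Lemma PR 1} and that positive measures have values a.e.; the paper gets this even more cheaply, since the primitive $F-V$ is continuous and increasing, hence differentiable a.e., and \L ojasiewicz's characterization with $n=1$ applies). The genuine gap is in the integrability step. You claim that the fixed $(\mathsf{v},\mathsf{V})$ is a minor pair for $\widetilde{v}$ and that major pairs of $f$ are major pairs of $\widetilde{v}$ (both true, since $\widetilde{v}=\mathsf{v}\leq f$ a.e.), and then you say that "choosing pairs for $f$ with $\mathsf{U}(b)-\mathsf{V}(b)<\varepsilon$" yields integrability. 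But the pairs $(\mathsf{v},\mathsf{V})$, $(\mathsf{u},\mathsf{U})$ of the proposition are \emph{given}, not at your disposal, and a tight minor pair $(\mathsf{v}',\mathsf{V}')$ for $f$ is \emph{not} in general a minor pair for $\widetilde{v}$: the minor-pair condition gives $(\mathsf{v}')_{\phi,0}^{+}(x)\leq f(x)$, which is the wrong direction relative to $\widetilde{v}\leq f$. Consequently the only minor pairs for $\widetilde{v}$ your argument produces have endpoint value at most $\mathsf{V}(b)$, while every major pair you produce has endpoint value at least $F(b)$, so the achievable gap is bounded below by $F(b)-\mathsf{V}(b)$, the total mass of the measure $\mathsf{f}-\mathsf{v}$, which need not be small. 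Condition (\ref{IN.7})/(\ref{IN.9}) for $\widetilde{v}$ is therefore never verified, and the same defect occurs symmetrically for $\widetilde{u}$. Since the remaining steps (the cut-off pair giving $\mathsf{V}(d)-\mathsf{V}(c)\leq\int_{c}^{d}\widetilde{v}$, and monotonicity giving the middle inequalities) presuppose that $\widetilde{v}$ and $\widetilde{u}$ are integrable over $[c,d]$, the proof is incomplete at its central point.

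The missing idea, which is how the paper proceeds, is a decomposition rather than a sandwich: since $\mathsf{f}-\mathsf{v}$ is a finite positive measure whose point values a.e.\ equal the density $h\geq0$ of its absolutely continuous part (the singular part having vanishing symmetric derivative a.e.), one has $\widetilde{v}(t)=f(t)-h(t)$ a.e.; as $h$ is Lebesgue integrable it is distributionally integrable (Theorem~\ref{Theorem COM 1}), so $\widetilde{v}$ is distributionally integrable by linearity and Proposition~\ref{Prop. IN 6}, and likewise $\widetilde{u}=f+k$ a.e.\ with $k$ the a.c.\ density of $\mathsf{u}-\mathsf{f}$. The inequalities (\ref{PR.6}) then follow from $0\leq\int_{c}^{d}\bigl(f(x)-\widetilde{v}(x)\bigr)\,\mathrm{d}x=\int_{c}^{d}h(x)\,\mathrm{d}x\leq\bigl(\mathsf{F}(d)-\mathsf{V}(d)\bigr)-\bigl(\mathsf{F}(c)-\mathsf{V}(c)\bigr)$, and (\ref{PR.6p}) similarly; your cut-off and monotonicity arguments are then fine but no longer needed in that form.
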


\begin{proof}
Let $U\leftrightarrow\mathsf{U},$ $V\leftrightarrow\mathsf{V},$ and
$F\leftrightarrow\mathsf{F}.$ Since $F-V$ is an increasing continuous
function, it follows that $\left(  \mathsf{F}-\mathsf{V}\right)  ^{\prime
}=\mathsf{f}-\mathsf{v}$ is a positive measure, and thus it has distributional
values almost everywhere, and since $\mathsf{f}$ has a.e. distributional
values (equal to $f),$ it follows that likewise $\mathsf{v}$ has
distributional values a.e.. The function $\tilde{v}$ is distributionally
integrable because $\tilde{v}(t)=f(t)-h(t)$ $\left(  \text{a.e.}\right)  $,
where $h$ is the Lebesgue integrable function which corresponds to the
absolutely continuous part of $\mathsf{f-v}$ (see Theorem \ref{Theorem COM 1} below). The inequality (\ref{PR.6}) is
obtained from the fact that
\begin{equation}
0\leq\int_{c}^{d}(f\left(  x\right)  -\widetilde{v}\left(  x\right)
)\,\mathrm{d}x\leq\left(  \mathsf{F}\left(  d\right)  -\mathsf{V}\left(
d\right)  \right)  -\left(  \mathsf{F}\left(  c\right)  -\mathsf{V}\left(
c\right)  \right)  \,.
\end{equation}
The results for the major pair are obtained in a similar fashion.\smallskip
\end{proof}

This proposition suggests an alternative approach to the distributional
integral. Call a pair $\left(  \mathsf{u},\mathsf{U}\right)  $ a major pair
v.2 (version 2) if it satisfies all the conditions of the Definition
\ref{Def IN1} plus the extra requirement that $\mathsf{u}\left(  x\right)  $
exist almost everywhere in $\left[  a,b\right]  .$ Define, analogously, minor
pairs v.2 and an integral in terms of major and minor pairs v.2. Then this
integral would be identical to the distributional integral we have been
considering, because any major or minor pair in the original sense is actually
a pair in the v.2 sense. However, use of the definition v.2 allows one to
obtain some proofs, as that of Theorems \ref{Theorem PR 1} and
\ref{Theorem PR 2}, in a rather simple way.

Propositon \ref{Prop PR 1} also has the following consequence on the major and minor distributional pairs.\smallskip

\begin{corollary}
\label{Cor PR 2} Let $\left(  \mathsf{v},\mathsf{V}\right)  $ and $\left(
\mathsf{u},\mathsf{U}\right)  $ be minor and major pairs for a
distributionally integrable function $f$ over $\left[  a,b\right]  .$ Then, there exists a set of null Lebesgue measure $Z$
such that for all $x\in\left[  a,b\right]  \setminus Z$, all $\phi
\in\mathcal{T}_{0}$, and all angles we have
\begin{equation}
\left(  \mathsf{u}\right)  _{\phi,\theta}^{-}\left(  x\right)  \geq f\left(
x\right)  \,, \label{ang.1}%
\end{equation}
and
\begin{equation}
\left(  \mathsf{v}\right)  _{\phi,\theta}^{+}\left(  x\right)  \leq f\left(
x\right)  \,. \label{ang.2}%
\end{equation}
\end{corollary}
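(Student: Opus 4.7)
The plan is to derive the angular strengthening of the radial inequalities from two ingredients: first, that the differences $\mathsf{u}-\mathsf{f}$ and $\mathsf{f}-\mathsf{v}$ are positive measures, which makes the corresponding $\phi$-transforms pointwise non-negative on all of $\mathbb{H}$; and second, that $\mathsf{f}$ has the \L ojasiewicz point value $f(x)$ almost everywhere, which forces angular convergence of $F_{\phi}\{\mathsf{f}\}$ to $f(x)$ on a set of full measure. I would take $Z$ to be the union of the set where the point value $\mathsf{f}(x)$ either fails to exist or exists but differs from $f(x)$, together with the original null exceptional sets appearing in the definitions of $(\mathsf{u},\mathsf{U})$ and $(\mathsf{v},\mathsf{V})$; by Theorem \ref{Theorem PR 2} this set has Lebesgue measure zero.

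First I would recall from Lemma \ref{Lemma PR 1} that $U-F$ and $F-V$ are continuous increasing functions, so that differentiation in $\mathcal{D}'(\mathbb{R})$ yields that $\mathsf{u}-\mathsf{f}$ and $\mathsf{f}-\mathsf{v}$ are positive measures supported in a compact set. Given any $\phi\in\mathcal{T}_{0}$ (which by definition is positive), the $\phi$-transform of a positive compactly supported measure $\mu$ can be written as
\begin{equation}
F_{\phi}\{\mu\}(y,t)=\frac{1}{t}\int_{\mathbb{R}}\phi\!\left(\frac{u-y}{t}\right)\mathrm{d}\mu(u)\geq 0\,,
\end{equation}
for every $(y,t)\in\mathbb{H}$. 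Applying this to $\mathsf{u}-\mathsf{f}$ and $\mathsf{f}-\mathsf{v}$ and using linearity of the $\phi$-transform, I obtain the pointwise inequalities
\begin{equation}
F_{\phi}\{\mathsf{u}\}(y,t)\geq F_{\phi}\{\mathsf{f}\}(y,t)\geq F_{\phi}\{\mathsf{v}\}(y,t)\,,\qquad (y,t)\in\mathbb{H}\,.
\end{equation}

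Next, for $x\in[a,b]\setminus Z$ the point value $\mathsf{f}(x)=f(x)$ exists distributionally. Since $\mathsf{f}\in\mathcal{E}'(\mathbb{R})$, the pairings against any $\phi\in\mathcal{T}_{0}$ are well defined, and the strong decay condition $\phi(y)=O(|y|^{\alpha})$ with $\alpha<-1$ built into $\mathcal{T}_{0}$ is enough to run the standard \L ojasiewicz angular convergence argument recalled in Section \ref{Sc The phi transform}, namely replacing $\phi$ by the compact family of translates $\{\phi(\cdot+\tau):|\tau|\leq M\}$ (with $M=\tan\theta$), so that $F_{\phi}\{\mathsf{f}\}(y,t)\to f(x)$ as $(y,t)\to(x,0)$ inside any cone $C_{x,\theta}$. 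Taking the angular liminf of the first inequality then gives $(\mathsf{u})_{\phi,\theta}^{-}(x)\geq f(x)$, and taking the angular limsup of the second gives $(\mathsf{v})_{\phi,\theta}^{+}(x)\leq f(x)$, as required.

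I expect the main obstacle to be the justification of the angular convergence at step three, since the earlier discussion in the paper formulates it explicitly only for $\phi\in\mathcal{D}(\mathbb{R})$; here one must legitimately replace the compactly supported $\phi$ by a merely strongly decaying one. This is safe because $\mathsf{f}$ has compact support, so the pairing reduces to an evaluation on a bounded region after controlling a tail integrable term by the strong decay of $\phi$, but writing this out carefully is the only non-routine piece of the argument.
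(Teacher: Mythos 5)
Your argument is correct, but it is organized differently from the paper's. The paper's proof is essentially one line: it takes $Z$ to be the complement of the set where the distributional point values of $\mathsf{u}$, $\mathsf{v}$ \emph{and} $\mathsf{f}$ all exist, which is null by Proposition \ref{Prop PR 1} and Theorem \ref{Theorem PR 2}; off $Z$ the angular limits of the $\phi$-transforms of $\mathsf{u}$ and $\mathsf{v}$ exist and equal $\mathsf{u}(x)$ and $\mathsf{v}(x)$, and the radial inequalities from Definitions \ref{Def IN1} and \ref{Def IN2} then force $\mathsf{u}(x)\geq f(x)\geq\mathsf{v}(x)$. You instead bypass Proposition \ref{Prop PR 1} entirely: from Lemma \ref{Lemma PR 1} you extract that $\mathsf{u}-\mathsf{f}$ and $\mathsf{f}-\mathsf{v}$ are positive measures, which gives the pointwise sandwich $F_{\phi}\{\mathsf{u}\}\geq F_{\phi}\{\mathsf{f}\}\geq F_{\phi}\{\mathsf{v}\}$ on all of $\mathbb{H}$ for positive $\phi\in\mathcal{T}_{0}$, and then you only need angular convergence of $F_{\phi}\{\mathsf{f}\}$ to $f(x)$ at the a.e.\ points supplied by Theorem \ref{Theorem PR 2}. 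The trade-off is minor: the paper's route needs the a.e.\ existence of the point values of $\mathsf{u}$ and $\mathsf{v}$ (already proved) and then uses angular convergence at those points, while yours needs it only for $\mathsf{f}$ but adds the explicit measure-positivity step. The one point you flag — that angular convergence at a \L ojasiewicz point must be justified for $\phi\in\mathcal{T}_{0}$ rather than $\phi\in\mathcal{D}(\mathbb{R})$ — is indeed the only delicate step, but it is no extra burden relative to the paper: since the corollary is stated for all $\phi\in\mathcal{T}_{0}$ and all angles, the paper's own proof tacitly uses the same fact (for $\mathsf{u}$ and $\mathsf{v}$), and it holds because $\mathsf{f}\in\mathcal{E}'(\mathbb{R})$ satisfies the Ces\`{a}ro growth hypotheses recalled in Subsection \ref{Sbs Point values} for every $\alpha<-1$, with the usual uniformity over the compact family of translates $\phi(\cdot+\tau)$, $|\tau|\leq\tan\theta$.
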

\begin{proof} Let $Z$ be the complement in $[a,b]$ of the set on which the distributional point values of $\mathsf{u}$, $\mathsf{v},$ and $\mathsf{f}$ exist. Then $Z$ has null Lebesgue measure and (\ref{ang.1}) and (\ref{ang.2}) are both valid on $[a,b]\setminus Z$.
\end{proof}
Corollary \ref{Cor PR 2} implicitly suggests a third variant yet for the definition of the distributional integral.
Let us say that $\left(  \mathsf{u},\mathsf{U}\right)  $ is a major pair
v.3 (version 3) if it satisfies the conditions of Definition
\ref{Def IN1} and additionally we replace the radial condition (\ref{IN.3}) by the stronger requirement (\ref{ang.1}), assumed to hold for all $x\in[a,b]\setminus Z$, $m(Z)=0$, all $\phi\in\mathcal{T}_{0}$, and all angles. Likewise, one defines minor pairs v.3. If we define an integral in terms of major and minor pairs v.3, then we obtain nothing new, because in view of Corollary \ref{Cor PR 2} this integral coincides with the distributional integral defined in Section \ref{Sc Integral}.

\section{Comparison with other integrals}\label{Section Comparison}

We shall now consider the relationship of the distributional integral to the
Lebesgue integral, to the Denjoy-Perron-Henstock-Kurzweil, and to the \L ojasiewicz
method (see (\ref{INT})). We also give a constructive solution to Denjoy's
problem on the reconstruction of functions from their higher order
differential quotients \cite{den}.

Let us start with the Lebesgue integration.\smallskip

\begin{theorem}
\label{Theorem COM 1}Any Lebesgue integrable function over $\left[
a,b\right]  $ is also distributionally integrable over $\left[  a,b\right]  $
and the integrals coincide.
\end{theorem}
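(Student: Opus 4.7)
The plan is to take the Lebesgue indefinite integral of $f$ as a major pair and to use a sequence of truncations to manufacture minor pairs whose right endpoints approach the Lebesgue value. After splitting $f=f^{+}-f^{-}$ and invoking Propositions \ref{Prop. IN 2} and \ref{Prop. IN 3}, it suffices to prove the result for $f\geq 0$. So assume $f\geq 0$, and let $U(x)=(\mathfrak{Leb})\int_{a}^{x}f(t)\,\mathrm{d}t$, extended by $0$ for $x<a$ and by the constant $U(b)$ for $x>b$. Because $U$ is continuous, $U$ is a \L ojasiewicz function, its associated distribution $\mathsf{U}$ is a \L ojasiewicz distribution, $\mathsf{U}(a)=0$, and $\mathsf{u}=\mathsf{U}^{\prime}\in\mathcal{E}^{\prime}[a,b]$ is the regular distribution corresponding to $f\chi_{[a,b]}$.

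\textbf{Major pair.} I would show that $(\mathsf{u},\mathsf{U})$ itself is a major pair. The key point is that every $\phi\in\mathcal{T}_{0}$ is smooth, positive, and satisfies $\phi(x)=O(|x|^{\alpha})$ strongly for some $\alpha<-1$, so $\phi$ admits a radially decreasing integrable majorant and the standard approximate-identity theorem yields
\[
F_{\phi}\{\mathsf{u}\}(x,t)=\int_{-\infty}^{\infty}f(x+ty)\phi(y)\,\mathrm{d}y\longrightarrow f(x)\quad\text{as }t\to 0^{+}
\]
at every Lebesgue point of $f$, so $(\mathsf{u})_{\phi,0}^{-}(x)=f(x)\geq f(x)$ almost everywhere, verifying (\ref{IN.2}). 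Condition (\ref{IN.3}) is even simpler: for $\phi\in\mathcal{T}_{1}$ both $\phi$ and $f$ are non-negative, hence $F_{\phi}\{\mathsf{u}\}(x,t)\geq 0$ at every $(x,t)\in\mathbb{H}$, so $(\mathsf{u})_{\phi,0}^{-}(x)\geq 0>-\infty$ with empty exceptional set. Therefore $(\mathsf{u},\mathsf{U})$ is a major pair with $\mathsf{U}(b)=(\mathfrak{Leb})\int_{a}^{b}f$, giving $\inf_{(\mathsf{u},\mathsf{U})\,\text{major}}\mathsf{U}(b)\leq(\mathfrak{Leb})\int_{a}^{b}f$.

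\textbf{Minor pairs via truncation.} The same pair generally fails to be a minor pair, because $(\mathsf{u})_{\phi,0}^{+}(x)$ can equal $+\infty$ precisely on the set where the Hardy--Littlewood maximal function of $f$ explodes, which is merely null and in general not countable, so (\ref{IN.6}) may fail. To remedy this I would replace $f$ by its truncation $f_{n}=\min(f,n)$, set $U_{n}(x)=\int_{a}^{x}f_{n}(t)\,\mathrm{d}t$ continuously extended as before, and consider the analogous pair $(\mathsf{u}_{n},\mathsf{U}_{n})$. Since $f_{n}$ is bounded by $n$, the uniform bound $|F_{\phi}\{\mathsf{u}_{n}\}(x,t)|\leq n\|\phi\|_{L^{1}}$ gives (\ref{IN.6}) with empty exceptional set, while the Lebesgue differentiation argument above yields $(\mathsf{u}_{n})_{\phi,0}^{+}(x)=f_{n}(x)\leq f(x)$ almost everywhere, verifying (\ref{IN.5}). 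Thus $(\mathsf{u}_{n},\mathsf{U}_{n})$ is a minor pair, and by monotone convergence $\mathsf{U}_{n}(b)=(\mathfrak{Leb})\int_{a}^{b}f_{n}\uparrow(\mathfrak{Leb})\int_{a}^{b}f$ as $n\to\infty$.

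\textbf{Conclusion.} Combining the two constructions, $\sup_{(\mathsf{v},\mathsf{V})\,\text{minor}}\mathsf{V}(b)\geq(\mathfrak{Leb})\int_{a}^{b}f\geq\inf_{(\mathsf{u},\mathsf{U})\,\text{major}}\mathsf{U}(b)$, while Lemma \ref{Lemma IN1} forces the reverse inequality. Equality in (\ref{IN.7}) therefore holds at the Lebesgue value, so $f$ is distributionally integrable and its distributional integral agrees with the Lebesgue integral; the general case follows from linearity. The main obstacle I anticipate is precisely the observation that $(\mathsf{u},\mathsf{U})$ cannot play the role of minor pair directly, because the uncountable null set on which the maximal function of an arbitrary $L^{1}$ function blows up would violate (\ref{IN.6}); the truncation step resolves this by replacing an uncountable obstruction with a trivially bounded $\phi$-transform, after which passage to the sup over $n$ recovers the full Lebesgue value.
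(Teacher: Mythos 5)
Your proof is correct, but it takes a genuinely different route from the paper. The paper argues Perron-style: it invokes the Vitali--Carath\'{e}odory theorem to produce a lower semicontinuous $u\geq f$ (resp.\ an upper semicontinuous minorant) with $\int_a^b(u-f)<\varepsilon/2$, and turns the indefinite integrals of these semicontinuous envelopes into major and minor distributional pairs whose endpoint values squeeze the Lebesgue integral within $\varepsilon$; this parallels, and prepares, the subsequent proof for Denjoy--Perron--Henstock--Kurzweil integrable functions via classical major/minor functions. You instead reduce to $f\geq0$, observe that the indefinite integral of $f$ itself already furnishes an \emph{exact} major pair --- because (\ref{IN.2}) only needs to hold almost everywhere, which you get at Lebesgue points from the approximate-identity theorem for kernels with a radially decreasing integrable majorant (every $\phi\in\mathcal{T}_0$ qualifies), while (\ref{IN.3}) is automatic from positivity --- and you manufacture minor pairs from the truncations $\min(f,n)$, whose $\phi$-transforms are uniformly bounded so that (\ref{IN.6}) holds with empty exceptional set, finishing with monotone convergence and Lemma \ref{Lemma IN1}. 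Your diagnosis of why $(\mathsf{u},\mathsf{U})$ itself cannot serve as a minor pair (the null but possibly uncountable blow-up set, clashing with the countability requirement in (\ref{IN.6})) is exactly the right obstruction, and the truncation is a clean fix. What each approach buys: yours is self-contained measure theory (Lebesgue differentiation plus MCT) and produces pairs attaining the Lebesgue value exactly rather than within $\varepsilon$; the paper's is shorter once Vitali--Carath\'{e}odory is granted and its semicontinuous-envelope mechanism transfers directly to the Perron major/minor function argument used for Theorem \ref{Theorem COM 1p}, which your construction does not.
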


\begin{proof}
Let $\varepsilon>0.$ If $f$ is a Lebesgue integrable function over $\left[
a,b\right]  $, we can apply the Vitali-Carath\'{e}odory Theorem \cite[III
(7.6)]{Sacks}\ to find a lower semi-continuous function $u$ with $u\left(
x\right)  \geq f\left(  x\right)  $ for all $x,$ and with%
\begin{equation}
\left(  \mathfrak{Leb}\right)  \int_{a}^{b}\left(  u\left(  x\right)
-f\left(  x\right)  \right)  \,\mathrm{d}x<\frac{\varepsilon}{2}\,.
\label{Perr.1}%
\end{equation}
If $U\left(  x\right)  =\int_{a}^{x}u\left(  x\right)  \,\mathrm{d}x$, then
the pair $\left(  \mathsf{U}^{\prime}\mathsf{,U}\right)  ,$ where
$U\leftrightarrow\mathsf{U,}$ is a distributional major pair for $f;$ with
\begin{equation}
U\left(  b\right)  <\left(  \mathfrak{Leb}\right)  \int_{a}^{b}f\left(
x\right)  \,\mathrm{d}x+\frac{\varepsilon}{2}\,. \label{Perr.2}%
\end{equation}
Similarly, employing minor functions and upper semi-continuous functions, we
can find a minor distributional major pair for $f,$ $\left(  \mathsf{V}%
^{\prime}\mathsf{,V}\right)  $ with%
\begin{equation}
V\left(  b\right)  >\left(  \mathfrak{Leb}\right)  \int_{a}^{b}f\left(
x\right)  \,\mathrm{d}x-\frac{\varepsilon}{2}\,. \label{Perr.3}%
\end{equation}
The distributional integrability of $f$ and the fact that
\begin{equation}
\left(  \mathfrak{dist}\right)  \int_{a}^{b}f\left(  x\right)  \,\mathrm{d}%
x=\left(  \mathfrak{Leb}\right)  \int_{a}^{b}f\left(  x\right)  \,\mathrm{d}%
x\,, \label{Perr.4}%
\end{equation}
then follow.\smallskip
\end{proof}

The Perron method of integration uses major and minor \emph{functions}
\cite{Gordon, Natanson2, Sacks}. We shall show that these functions give major
and minor distributional pairs in a natural way.\smallskip

\begin{theorem}
\label{Theorem COM 1p}Any Denjoy-Perron-Henstock-Kurzweil integrable function over
$\left[  a,b\right]  $ is also distributionally integrable over $\left[
a,b\right]  $ and the integrals coincide.
\end{theorem}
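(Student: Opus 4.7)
The plan is to build a distributional major (resp.\ minor) pair directly from a Perron major (resp.\ minor) function for $f$, and then squeeze between them. Fix $\varepsilon>0$ and choose a Perron major function $U$ and Perron minor function $V$ for $f$ with $U(b)-V(b)<\varepsilon$, both continuous on $[a,b]$ with $U(a)=V(a)=0$, with $\underline{D}U\geq f$ and $\overline{D}V\leq f$ a.e., and with $\underline{D}U>-\infty$, $\overline{D}V<+\infty$ outside countable sets. Because $U$ and $V$ are continuous they are \L ojasiewicz functions, so the associated distributions $\mathsf{U}\leftrightarrow U$, $\mathsf{V}\leftrightarrow V$ are \L ojasiewicz. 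Set $\mathsf{u}=\mathsf{U}'$ and $\mathsf{v}=\mathsf{V}'$; the only nontrivial claim is that $(\mathsf{u},\mathsf{U})$ satisfies (\ref{IN.2})--(\ref{IN.3}) of Definition~\ref{Def IN1} (and the analogous claim for $(\mathsf{v},\mathsf{V})$).

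For the $\phi$-transform of $\mathsf{u}$ I would use integration by parts to obtain, after subtracting the constant $U(x)$ using $\int\phi'(y)\,dy=0$,
\begin{equation}
F(x,t)=-\frac{1}{t}\int U(x+ty)\phi'(y)\,dy=\int \frac{U(x+ty)-U(x)}{ty}\bigl(-y\phi'(y)\bigr)\,dy,
\end{equation}
where the weight satisfies $\int(-y\phi'(y))\,dy=\int\phi(y)\,dy=1$. When $\phi\in\mathcal{T}_{1}$ the condition $x\phi'(x)\leq0$ makes $-y\phi'(y)\geq0$, so $F(x,t)$ is a genuine weighted average of difference quotients of $U$. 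Fatou's lemma then gives
\begin{equation}
\liminf_{t\to 0^{+}}F(x,t)\geq \int \liminf_{t\to 0^{+}}\frac{U(x+ty)-U(x)}{ty}(-y\phi'(y))\,dy\geq \underline{D}U(x),
\end{equation}
since for $y>0$ the inner liminf dominates $\underline{D}^{+}U(x)$ and for $y<0$ it dominates $\underline{D}^{-}U(x)$. This yields (\ref{IN.3}) outside the countable set where $\underline{D}U=-\infty$.

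The condition (\ref{IN.2}) must hold for \emph{all} $\phi\in\mathcal{T}_{0}$, where $-y\phi'(y)$ can change sign, so the Fatou trick fails. Instead I would invoke the classical theorem of Saks that any Perron major function of a Perron integrable $f$ is differentiable a.e.\ with $U'(x)\geq f(x)$ a.e.\ (one route: $U-V$ is continuous with $\underline{D}(U-V)\geq 0$ off a countable set, hence nondecreasing and a.e.\ differentiable; combined with the ACG character of the indefinite DPHK integral, $U$ is a.e.\ differentiable). At any point $x_{0}$ of differentiability of $U$, write
\begin{equation}
F(x_{0},t)=U'(x_{0})-\int \frac{U(x_{0}+ty)-U(x_{0})-ty\,U'(x_{0})}{t}\phi'(y)\,dy,
\end{equation}
and split the remainder integral at $|y|=\delta/t$: on the inner part, the bound $o(1)|y|$ from differentiability and the integrability of $|y\phi'(y)|$ handle it; on the outer tail, the strong estimate $\phi'(y)=O(|y|^{\alpha-1})$ with $\alpha<-1$ provided by $\mathcal{T}_{0}$ makes $t^{-1}\int_{|y|>\delta/t}|\phi'(y)|\,dy=O(t^{|\alpha|-1})\to 0$. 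Thus $F(x_{0},t)\to U'(x_{0})$, so $(\mathsf{u})_{\phi,0}^{\pm}(x_{0})=U'(x_{0})\geq f(x_{0})$ and (\ref{IN.2}) holds.

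The same argument applied to $V$ yields a distributional minor pair $(\mathsf{v},\mathsf{V})$, and $\mathsf{U}(b)-\mathsf{V}(b)=U(b)-V(b)<\varepsilon$ gives both distributional integrability of $f$ and, letting $\varepsilon\to 0^{+}$, equality with the DPHK integral. The \textbf{main obstacle} is the proof of (\ref{IN.2}) for general $\phi\in\mathcal{T}_{0}$: without sign control on $-y\phi'(y)$ one cannot average, so one must actually produce a limit of $F(x,t)$ at a.e.\ point, which forces the combination of Saks's a.e.\ differentiability result with the careful use of the strong decay built into $\mathcal{T}_{0}$.
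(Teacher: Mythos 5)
Your proposal is correct and follows essentially the same route as the paper: Perron major and minor functions, transplanted as distributions, serve directly as distributional major and minor pairs, with the a.e.\ differentiability of $U$ (and $U'\geq f$ a.e.) giving (\ref{IN.2}), and the sign condition $y\phi'(y)\leq 0$ for $\phi\in\mathcal{T}_{1}$ — your weighted-average/Fatou bound, the paper's local splitting $\mathsf{U}=\mathsf{U}_{1}+\mathsf{U}_{2}$ — giving (\ref{IN.3}). Your explicit tail estimates from the strong decay in $\mathcal{T}_{0}$ and the direct limit computation at points of differentiability just spell out steps the paper delegates to the distributional point-value machinery.
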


\begin{proof}
Let $U$ be a continuous major function for a Denjoy-Perron-Henstock-Kurzweil
integrable\ function $f$ over $\left[  a,b\right]  .$ Then the pair $\left(
\mathsf{U}^{\prime}\mathsf{,U}\right)  ,$ where $U\leftrightarrow\mathsf{U,}$
is a distributional major pair for $f.$ Indeed, the derivative $U^{\prime
}\left(  x\right)  $ exists $\left(  \text{a.e.}\right)  $ in $\left[
a,b\right]  ,$ and at those points the distributional value $\mathsf{U}%
^{\prime}\left(  x\right)  $ exists, and thus $(\mathsf{U}^{\prime}%
)_{\phi,0}^{-}\left(  x\right) =\mathsf{U}^{\prime}\left(  x\right)
=U^{\prime}\left(  x\right)  \geq f\left(  x\right)  $ for all $\phi
\in\mathcal{T}_{0}$.

Furthermore, for any $x\in\left[  a,b\right]  ,$%
\begin{equation}
\liminf_{y\rightarrow x}\frac{U\left(  y\right)  -U\left(  x\right)  }%
{y-x}>-\infty\,. \label{Perron.5}%
\end{equation}
But if $\left(  y-x\right)^{-1}(U\left(  y\right)  -U\left(  x\right))  \geq M  $ for
$\left\vert x-y\right\vert <c,$ then we can write $\mathsf{U}=\mathsf{U}%
_{1}+\mathsf{U}_{2},$ where $\mathsf{U}_{1}\left(  y\right)  =\chi_{\left(
x-c,x+c\right)  }\left(  y\right)  \mathsf{U}_{1}\left(  y\right)  .$ Let
$\phi\in\mathcal{T}_{1}.$ Since $\mathsf{U}_{2}\left(  y\right)  =0$ in a
neighborhood of $y=x,$ it follows that $\left\langle \mathsf{U}_{2}^{^{\prime
}}\left(  x+\varepsilon y\right)  ,\phi\left(  y\right)  \right\rangle
\rightarrow0.$ Also,%
\begin{align*}
\liminf_{\varepsilon\rightarrow0^{+}}\left\langle \mathsf{U}_{1}^{^{\prime}%
}\left(  x+\varepsilon y\right)  ,\phi\left(  y\right)  \right\rangle  &
=-\liminf_{\varepsilon\rightarrow0^{+}}\frac{1}{\varepsilon}\left\langle
\mathsf{U}_{1}\left(  x+\varepsilon y\right)  -\mathsf{U}_{1}\left(  x\right)
,\phi^{\prime}\left(  y\right)  \right\rangle \\
&  \geq-M\int_{-\infty}^{\infty}y\phi^{\prime}\left(  y\right)  \,\mathrm{d}%
y\\
&  =M\,.
\end{align*}
Hence $(\mathsf{U}^{\prime})_{\phi,0}^{-}\left(  x\right)  \geq M>-\infty.$

Similarly, if $V$ is a continuous minor function for $f,$ then $\left(
\mathsf{V}^{\prime}\mathsf{,V}\right)  ,$ where $V\leftrightarrow\mathsf{V,}$
is a distributional minor pair for $f.$ The fact that $f$ is distributionally
integrable over $\left[  a,b\right]  $ and that the distributional and the
Denjoy-Perron-Henstock-Kurzweil integrals coincide is now clear.\smallskip
\end{proof}

On the other hand, one does not need to go beyond the Lebesgue integral when
considering positive functions.\smallskip

\begin{theorem}
\label{Theorem COM 2}Let $f$ be distributionally integrable over $\left[
a,b\right]  .$ If $f\left(  x\right)  \geq0$ $\forall x\in\left[  a,b\right]
$ then $f$ is Lebesgue integrable over $\left[  a,b\right]  .$

Suppose that $f_{1}$ and $f_{2}$ are distributionally integrable over $\left[
a,b\right]  $ and $f_{1}\left(  x\right)  \geq f_{2}\left(  x\right)  $
$\forall x\in\left[  a,b\right]  .$ Then $f_{1}$ is Lebesgue integrable over
$\left[  a,b\right]  $ if and only if $f_{2}$ is Lebesgue integrable over
$\left[  a,b\right]  .$ Similarly, $f_{1}$ is Denjoy-Perron-Henstock-Kurzweil
integrable over $\left[  a,b\right]  $ if and only if $f_{2}$ is
Denjoy-Perron-Henstock-Kurzweil integrable over $\left[  a,b\right]  .$
\end{theorem}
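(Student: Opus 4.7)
The strategy is to reduce everything to the first assertion and then exploit classical results on monotone functions. The key observation is that when $f\geq 0$ everywhere on $[a,b]$, the trivial pair $(\mathsf{v},\mathsf{V})=(0,0)$ is a legitimate minor distributional pair: the zero distribution is \L ojasiewicz with $\mathsf{V}(a)=0$, and $(\mathsf{0})_{\phi,0}^{+}(x)=0\leq f(x)$ for every $x$ and every $\phi\in\mathcal{T}_{0}$, while $0<\infty$ handles condition (\ref{IN.6}) trivially. This lets me plug $V\equiv 0$ into Lemma~\ref{Lemma PR 1}.

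First, I apply Lemma~\ref{Lemma PR 1} with $(\mathsf{v},\mathsf{V})=(0,0)$ to conclude that the indefinite integral $F(x)=(\mathfrak{dist})\int_{a}^{x}f(t)\,\mathrm{d}t$ itself is a continuous increasing function on $[a,b]$ with $F(a)=0$. Then, by the classical Lebesgue differentiation theorem for monotone functions, $F$ is differentiable almost everywhere, $F'$ is Lebesgue measurable and nonnegative, and
\[
(\mathfrak{Leb})\int_{a}^{b}F'(x)\,\mathrm{d}x\leq F(b)-F(a)<\infty.
\]
At every point where $F$ is classically differentiable, the \L ojasiewicz point value $\mathsf{F}'(x)$ coincides with $F'(x)$; combined with Theorem~\ref{Theorem PR 2}, which asserts $\mathsf{F}'(x)=f(x)$ almost everywhere, this gives $f(x)=F'(x)$ a.e. Hence $f$ is measurable and Lebesgue integrable on $[a,b]$ (and in fact the Lebesgue integral equals $F(b)$ by Theorem~\ref{Theorem COM 1}).

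For the comparison assertion, suppose $f_{1}\geq f_{2}$ are both distributionally integrable on $[a,b]$. By Propositions~\ref{Prop. IN 2} and \ref{Prop. IN 3}, the function $g=f_{1}-f_{2}$ is distributionally integrable and $g\geq 0$ everywhere, so the first part of the theorem already yields that $g$ is Lebesgue integrable. Writing $f_{1}=f_{2}+g$ and $f_{2}=f_{1}-g$, one deduces immediately that $f_{1}$ is Lebesgue integrable if and only if $f_{2}$ is, since $g\in L^{1}[a,b]$. The same argument works for the Denjoy-Perron-Henstock-Kurzweil class, because any Lebesgue integrable function is a fortiori DPHK-integrable (with identical integral) and the DPHK-integrable functions form a vector space.

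The only nontrivial step is the first one: verifying that $(0,0)$ is a valid minor pair and invoking Lemma~\ref{Lemma PR 1} to get monotonicity of $F$. Everything else is standard real-variable theory plus the linearity of the distributional integral established earlier; I do not anticipate any real obstacle, and in particular I do not need to revisit the $\phi$-transform characterisations from Section~\ref{Sc The phi transform}.
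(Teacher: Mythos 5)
Your proof is correct, and for the first assertion it takes a genuinely different route from the paper's. Both arguments begin with the same observation, that $(\mathsf{0},\mathsf{0})$ is a minor pair when $f\geq 0$; but the paper then works with an arbitrary major pair $(\mathsf{u},\mathsf{U})$: by Lemma \ref{Lemma IN1} (that is, by the $\phi$-transform characterization of measures, Theorem \ref{Theorem M4}) $\mathsf{u}$ is a positive measure, by Proposition \ref{Prop PR 1} its point values $\widetilde{u}(x)$ exist a.e.\ with $\widetilde{u}(x)\geq f(x)$ a.e., and by (\ref{PR.6p}) $\int_a^b\widetilde{u}(x)\,\mathrm{d}x<\infty$, so $f$ is Lebesgue integrable by comparison (measurability coming from Corollary \ref{Cor PR 1}). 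You instead feed the minor pair $(\mathsf{0},\mathsf{0})$ into Lemma \ref{Lemma PR 1} to conclude that $F$ itself is continuous and increasing, and then combine the classical differentiation theorem for monotone functions with Theorem \ref{Theorem PR 2} and the fact that ordinary differentiability of $F$ at a point forces the distributional point value $\mathsf{F}'(x)=F'(x)$ (a correct step, and one the paper itself uses in the proof of Theorem \ref{Theorem COM 1p}), obtaining $f=F'$ a.e.\ and hence $f\in L^{1}[a,b]$. Your route sidesteps Proposition \ref{Prop PR 1} and the domination argument, replacing them with standard real-variable theory about monotone functions; note, though, that your closing remark about not needing the $\phi$-transform machinery is only superficially true, since the continuity assertion of Lemma \ref{Lemma PR 1} rests on Lemma \ref{Lemma IN1}, i.e.\ on Theorem \ref{Theorem M4}. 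The second assertion you handle exactly as the paper does, writing $f_{1}=f_{2}+(f_{1}-f_{2})$ and applying the first part to $f_{1}-f_{2}\geq 0$.
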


\begin{proof}
Let $\left(  \mathsf{u},\mathsf{U}\right)  $ be a major pair for $f.$ Then
because $f\left(  x\right)  \geq0$ $\forall x$ it follows that $\left(
\mathsf{0,0}\right)  $ is a minor pair for $f.$ Therefore, $\mathsf{u}$ is a
positive measure, the point values $\widetilde{u}\left(  x\right)
=\mathsf{u}\left(  x\right)  $ exist almost everywhere, and satisfy
$\widetilde{u}\left(  x\right)  \geq f\left(  x\right)  $ almost everywhere.
Since (\ref{PR.6p}) yields that $\int_{a}^{b}\widetilde{u}\left(  x\right)
\,\mathrm{d}x<\infty$ we obtain by comparison that $f$ is Lebesgue integrable
over $\left[  a,b\right]  .$

The second part follows by writing $f_{1}=f_{2}+\left(  f_{1}-f_{2}\right)  $
and observing that $\left(  f_{1}-f_{2}\right)  $ is Lebesgue integrable over
$\left[  a,b\right]  .$\smallskip
\end{proof}

>From this theorem it follows that if $f$ is distributionally integrable over
$\left[  a,b\right]  $ and $f\left(  x\right)  \geq0$ $\forall x\in\left[
a,b\right]  $ then actually the distribution $\mathsf{f=F}^{\prime}$ is a
positive measure (indeed, a regular distribution).

We also have the following results, natural for non absolute
integrals.\smallskip

\begin{theorem}
\label{Theorem COM 3}If $f$ is distributionally integrable over any measurable
subset of $\left[  a,b\right]  $ then $f$ is Lebesgue integrable over $\left[
a,b\right]  .$

If $f$ is measurable and $\left\vert f\right\vert $ is distributionally
integrable over $\left[  a,b\right]  $ then $f$ is Lebesgue integrable over
$\left[  a,b\right]  .$
\end{theorem}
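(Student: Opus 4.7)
The two parts should both be handled by reduction to Theorem \ref{Theorem COM 2}, which already supplies the key upgrade from distributional to Lebesgue integrability for nonnegative distributionally integrable functions. The basic strategy is to write $f$ as the difference of its positive and negative parts and show each is Lebesgue integrable.

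For part two, the plan is quick: if $\left|f\right|$ is distributionally integrable over $\left[a,b\right]$, then since $\left|f\right|\geq 0$ everywhere, Theorem \ref{Theorem COM 2} immediately gives that $\left|f\right|$ is Lebesgue integrable. Combined with the assumed measurability of $f$, this yields Lebesgue integrability of $f$ by the standard comparison test, with no further work needed.

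For part one, I would first invoke Corollary \ref{Cor PR 1} (with $[a,b]$ itself as the measurable subset of hypothesis) to conclude that $f$ is measurable. Then split $[a,b]$ into the measurable sets $A^{+}=\left\{x\in\left[a,b\right]:f\left(x\right)\geq 0\right\}$ and $A^{-}=\left\{x\in\left[a,b\right]:f\left(x\right)<0\right\}$. By hypothesis, $\chi_{A^{+}}f$ and $\chi_{A^{-}}f$ are both distributionally integrable over $\left[a,b\right]$; Proposition \ref{Prop. IN 3} then tells us that $-\chi_{A^{-}}f$ is also distributionally integrable. The functions $\chi_{A^{+}}f$ and $-\chi_{A^{-}}f$ are both pointwise nonnegative, so Theorem \ref{Theorem COM 2} shows that each is Lebesgue integrable. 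Hence $\left|f\right|=\chi_{A^{+}}f+\left(-\chi_{A^{-}}f\right)$ is Lebesgue integrable, and together with measurability this gives Lebesgue integrability of $f$.

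There is no real obstacle here: every ingredient is already available. The only point requiring a brief justification is that the sets $A^{\pm}$ are measurable, which follows from the measurability of $f$ established via Corollary \ref{Cor PR 1}, and that the definition of distributional integrability over a subset (given just after Proposition \ref{Prop. IN 4}) in terms of $\chi_{A}f$ aligns cleanly with the hypothesis so that Theorem \ref{Theorem COM 2} can be applied directly to the nonnegative pieces.
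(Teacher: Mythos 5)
Your proposal is correct and follows essentially the same route as the paper: decompose $f$ via the sets where $f\geq0$ and $f<0$, use the hypothesis (and Proposition \ref{Prop. IN 3}) to get nonnegative distributionally integrable pieces, upgrade them to Lebesgue integrability via Theorem \ref{Theorem COM 2}, and conclude by comparison; the second part is likewise the paper's argument. Your explicit appeal to Corollary \ref{Cor PR 1} for the measurability of $A^{\pm}$ is a minor point the paper leaves implicit, but it does not change the approach.
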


\begin{proof}
Indeed, in the first case both $f_{+}=f\chi_{A^{+}}=\left(  f+\left\vert
f\right\vert \right)  /2$ and $f_{-}=f\chi_{A^{-}}=\left(  \left\vert
f\right\vert -f\right)  /2,$ where $A^{\pm}$ is the set where $\pm f\left(
t\right)  >0,$ are distributionally integrable, and since they are positive
they must be Lebesgue integrable. Then $f=f_{+}-f_{-}$ would be Lebesgue integrable.

If now $\left\vert f\right\vert $ is distributionally integrable over $\left[
a,b\right]  $ then it is Lebesgue integrable over $\left[  a,b\right]  .$
Since $0\leq f_{\pm}\leq\left\vert f\right\vert ,$ it follows that both
$f_{\pm}$ are Lebesgue integrable, and so is $f=f_{+}-f_{-}.$\smallskip
\end{proof}

Our next task is to consider \L ojasiewicz functions.\smallskip

\begin{theorem}
\label{Theorem COM 4}Let $G$ be a \L ojasiewicz function, with associated
distribution $\mathsf{G.}$ Let $\mathsf{g=G}^{\prime},$ and suppose the
distributional point values
\begin{equation}
g\left(  x\right)  =\mathsf{g}\left(  x\right)  \,, \label{COM.1}%
\end{equation}
exist for all $x\in\left[  a,b\right]  \setminus E,$ where $\left\vert
E\right\vert \leq\aleph_{0}.$ Then $g$ is distributionally integrable over
$\left[  a,b\right]  $ and
\begin{equation}
\left(  \mathfrak{dist}\right)  \int_{c}^{d}g\left(  x\right)  \,\mathrm{d}%
x=G\left(  d\right)  -G\left(  c\right)  \,, \label{COM.2}%
\end{equation}
for $\left[  c,d\right]  \subset\left[  a,b\right]  .$
\end{theorem}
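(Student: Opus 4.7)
My strategy is to exhibit a single pair $(\tilde{\mathsf{g}}, \tilde{\mathsf{G}})$ that simultaneously serves as a major and as a minor distributional pair for $g$ on $[c,d]$, built by truncating the \L ojasiewicz function $G$ itself. Once such a pair is produced, Lemma \ref{Lemma IN1} forces $\sup_{(\mathsf{v},\mathsf{V})}\mathsf{V}(d) = \inf_{(\mathsf{u},\mathsf{U})}\mathsf{U}(d) = \tilde{\mathsf{G}}(d)$, which simultaneously yields distributional integrability of $g$ over $[c,d]$ and the identity (\ref{COM.2}).

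Fix $[c,d]\subset[a,b]$ and put
\[
\tilde{G}(x) = \begin{cases} 0, & x<c,\\ G(x)-G(c), & c\leq x\leq d,\\ G(d)-G(c), & x>d. \end{cases}
\]
Since any \L ojasiewicz point value coincides with the corresponding two-sided distributional limit, the lateral limits of $\tilde{G}$ at $c$ (both equal to $0$) and at $d$ (both equal to $G(d)-G(c)$) match the assigned pointwise values; the gluing principle from Subsection \ref{Sbs Lojasiewics dist} then produces a \L ojasiewicz function $\tilde{G}$. Let $\tilde{\mathsf{G}}\leftrightarrow\tilde{G}$ and $\tilde{\mathsf{g}}=\tilde{\mathsf{G}}'$. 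Then $\operatorname*{supp}\tilde{\mathsf{g}}\subset[c,d]$, so $\tilde{\mathsf{g}}\in\mathcal{E}'[c,d]$, and $\tilde{\mathsf{g}}=\mathsf{g}$ on a neighbourhood of every interior point $x_{0}\in(c,d)$ because $\tilde{G}$ differs from $G$ by a constant there.

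Enlarge $E$ to include $\{c,d\}$ (still countable) and take $Z=E$. Items (1)--(2) of Definitions \ref{Def IN1} and \ref{Def IN2} hold by construction, noting $\tilde{\mathsf{G}}(c)=0$. For item (3), fix $x_{0}\in[c,d]\setminus E$; then the \L ojasiewicz point value $\tilde{\mathsf{g}}(x_{0})=\mathsf{g}(x_{0})=g(x_{0})$ exists. This immediately supplies the radial convergence of the $\phi$-transform for every normalized positive $\phi\in\mathcal{D}(\mathbb{R})$; since $\tilde{\mathsf{g}}$ is compactly supported, the Ces\`aro extension of the point-value formula recalled in Subsection \ref{Sbs Point values} allows test functions with strong decay of order $\alpha<-1$, hence applies to every $\phi\in\mathcal{T}_{0}$, and in particular every $\phi\in\mathcal{T}_{1}\subset\mathcal{T}_{0}$. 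We obtain
\[
(\tilde{\mathsf{g}})^{-}_{\phi,0}(x_{0}) \;=\; (\tilde{\mathsf{g}})^{+}_{\phi,0}(x_{0}) \;=\; g(x_{0})
\]
for every $\phi\in\mathcal{T}_{0}$ and every $x_{0}\in[c,d]\setminus E$; this gives (\ref{IN.2})--(\ref{IN.3}) and (\ref{IN.5})--(\ref{IN.6}) simultaneously. Hence $(\tilde{\mathsf{g}},\tilde{\mathsf{G}})$ is at once a major and a minor distributional pair, and its right endpoint value $\tilde{\mathsf{G}}(d)=G(d)-G(c)$ furnishes (\ref{COM.2}).

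The only genuine obstacle is reconciling two notions of test function: the hypothesis supplies \L ojasiewicz point values (defined through $\mathcal{D}(\mathbb{R})$), whereas Definitions \ref{Def IN1}--\ref{Def IN2} require testing against the larger classes $\mathcal{T}_{0},\mathcal{T}_{1}\subset\mathcal{E}(\mathbb{R})$. This bridge is exactly the Ces\`aro-type extension of point-value evaluation recalled in Subsection \ref{Sbs Point values}; it applies here precisely because $\tilde{\mathsf{g}}$ has compact support and the $\mathcal{T}_{0}$ test functions decay strongly of order strictly less than $-1$.
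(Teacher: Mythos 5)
Your proposal is correct and follows essentially the same route as the paper: the paper also truncates $\mathsf{G}$ to produce a \L ojasiewicz distribution $\mathsf{H}$ (constant outside the interval, vanishing at the left endpoint) and observes that $(\mathsf{H}',\mathsf{H})$ is simultaneously a major and a minor pair, which immediately yields integrability and the value $G(d)-G(c)$. Your extra remarks -- working directly on $[c,d]$ and spelling out why the \L ojasiewicz point value, a priori tested against $\mathcal{D}(\mathbb{R})$, extends to the classes $\mathcal{T}_{0}\supset\mathcal{T}_{1}$ via the Ces\`aro/strong-decay criterion for the compactly supported $\tilde{\mathsf{g}}$ -- only make explicit what the paper leaves implicit.
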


\begin{proof}
Let $\mathsf{H}$ be the \L ojasiewicz distribution equal to $0$ in $\left(
-\infty,a\right)  ,$ equal to $\mathsf{G-G}\left(  a\right)  $ in $\left(
a,b\right)  ,$ and equal to the constant $\mathsf{G}\left(  b\right)
\mathsf{-G}\left(  a\right)  $ in $\left(  b,\infty\right)  .$ Then the pair
$\left(  \mathsf{H}^{\prime}\mathsf{,H}\right)  $ is both a major and a minor
pair for $g.$\smallskip
\end{proof}

This theorem applies to \L ojasiewicz functions, and, more generally, to
distributionally regulated functions.\smallskip

\begin{theorem}
\label{Theorem COM 5}Any \L ojasiewicz function is distributionally
integrable. Any distributionally regulated function\ is distributionally
integrable.\smallskip
\end{theorem}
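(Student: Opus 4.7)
The plan is to reduce both claims to Theorem \ref{Theorem COM 4}, invoking the structural facts about Łojasiewicz and regulated distributions recorded in Subsections \ref{Sbs Lojasiewics dist} and \ref{Sbs Regulated}. In both cases the strategy is identical: take a primitive $\mathsf{F}$ of the distribution associated to the function, check that $\mathsf{F}$ is a Łojasiewicz distribution, and verify that the hypothesis on point values of $\mathsf{g} = \mathsf{F}'$ in Theorem \ref{Theorem COM 4} holds off some at most countable set $E$.

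First I would treat the Łojasiewicz case. Let $f$ be a Łojasiewicz function with associated distribution $\mathsf{f},$ so that $\mathsf{f}(x) = f(x)$ for every $x \in \mathbb{R}.$ Pick any distributional primitive $\mathsf{F}$ with $\mathsf{F}' = \mathsf{f}.$ By the property quoted in Subsection \ref{Sbs Lojasiewics dist} (the primitive of a Łojasiewicz distribution is again a Łojasiewicz distribution), $\mathsf{F}$ itself is a Łojasiewicz distribution. Therefore the hypotheses of Theorem \ref{Theorem COM 4} are satisfied on any $[a,b],$ with $E = \emptyset,$ since $\mathsf{f}(x)$ exists everywhere. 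Theorem \ref{Theorem COM 4} then yields that $f$ is distributionally integrable and
\[
(\mathfrak{dist})\int_{c}^{d} f(x)\,\mathrm{d}x = F(d) - F(c),\qquad [c,d]\subset[a,b],
\]
where $F\leftrightarrow\mathsf{F}.$

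Next I would handle the distributionally regulated case in exactly the same manner. Let $f$ be a distributionally regulated function, $f\leftrightarrow\mathsf{f},$ where $\mathsf{f}$ is a regulated distribution, and let $\mathfrak{S}$ be the (at most countable) set of points $x_0$ where $\mathsf{f}(x_0^-) \neq \mathsf{f}(x_0^+).$ For $x_0 \notin \mathfrak{S}$ the two lateral values coincide, no delta function is present at $x_0,$ so the distributional point value $\mathsf{f}(x_0)$ exists and equals $f(x_0).$ Now pick a primitive $\mathsf{F}$ with $\mathsf{F}' = \mathsf{f};$ by the property recorded in Subsection \ref{Sbs Regulated}, the primitive of a regulated distribution is a Łojasiewicz distribution. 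Thus the hypotheses of Theorem \ref{Theorem COM 4} are again satisfied on any $[a,b],$ this time with $E = \mathfrak{S}\cap[a,b],$ which is countable. The conclusion follows.

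There is essentially no obstacle: the content of the theorem is entirely packaged in the preparatory results, and the only mild bookkeeping issue is to check that in the regulated case the convention chosen for $f$ at points of $\mathfrak{S}$ is irrelevant. This is immediate from Proposition \ref{Prop. IN 6}, since any two choices of values of $f$ on the measure-zero set $\mathfrak{S}$ produce functions equal almost everywhere, and they are therefore simultaneously distributionally integrable with the same integral.
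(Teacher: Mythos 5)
Your argument is correct and is exactly the route the paper takes: Theorem \ref{Theorem COM 5} is obtained as an immediate application of Theorem \ref{Theorem COM 4}, using that primitives of \L ojasiewicz (resp.\ regulated) distributions are \L ojasiewicz distributions, that point values exist everywhere in the first case and off the countable jump set $\mathfrak{S}$ in the second, with the values on $\mathfrak{S}$ immaterial by Proposition \ref{Prop. IN 6}. No gaps.
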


Observe that the integral of a \L ojasiewicz function, obtained from the
Definition \ref{Def IN3}, is equal to (\ref{INT}), the definition given in
\cite{loj}. Similarly the integral of a distributionally regulated function
reduces to the definition (\ref{Int}).

We can generalize the Theorem \ref{Theorem COM 4} by considering a
\L ojasiewicz distribution whose derivative has values almost everywhere if we
assume distributional boundedness at the other points.\smallskip

\begin{theorem}
\label{Theorem COM 6}Let $G$ be a \L ojasiewicz function, with associated
distribution $\mathsf{G.}$ Let $\mathsf{g=G}^{\prime},$ and suppose the
distributional point values
\begin{equation}
g\left(  x\right)  =\mathsf{g}\left(  x\right)  \,, \label{COM.3}%
\end{equation}
exist almost everywhere in $\left[  a,b\right]  ,$ while $\mathsf{g}$ is
distributionally bounded at all $x\in\left[  a,b\right]  \setminus E,$ where
$\left\vert E\right\vert \leq\aleph_{0}.$ Then $g$ is distributionally
integrable over $\left[  a,b\right]  $ and
\begin{equation}
\left(  \mathfrak{dist}\right)  \int_{c}^{d}g\left(  x\right)  \,\mathrm{d}%
x=G\left(  d\right)  -G\left(  c\right)  \,,
\end{equation}
for $\left[  c,d\right]  \subseteq\left[  a,b\right]  .$
\end{theorem}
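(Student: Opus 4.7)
The plan is to adapt the argument of Theorem \ref{Theorem COM 4}: I will construct a single pair $(\mathsf{H}^{\prime},\mathsf{H})$ and show that it is simultaneously a major and a minor pair for $g$ over $[a,b]$. Define $\mathsf{H}$ to be $0$ on $(-\infty,a)$, $\mathsf{G}-\mathsf{G}(a)$ on $(a,b)$, and the constant $\mathsf{G}(b)-\mathsf{G}(a)$ on $(b,\infty)$. Since $\mathsf{G}$ is a \L ojasiewicz distribution, its lateral distributional limits at $a$ and $b$ coincide with the point values $G(a)$ and $G(b)$; hence the one-sided limits of $\mathsf{H}$ at $a$ and $b$ match, making $\mathsf{H}$ a globally \L ojasiewicz distribution with $\mathsf{H}(a)=0$, $\mathsf{H}(b)=G(b)-G(a)$, and $\mathsf{H}^{\prime}\in\mathcal{E}^{\prime}[a,b]$ without delta contributions at the endpoints (so $\mathsf{H}^{\prime}$ agrees with $\mathsf{g}$ on $(a,b)$). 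Once $(\mathsf{H}^{\prime},\mathsf{H})$ is shown to be both a major and a minor pair, Lemma \ref{Lemma IN1} and Definition \ref{Def IN3} immediately yield distributional integrability with value $\mathsf{H}(b)=G(b)-G(a)$; the general statement for $[c,d]\subseteq[a,b]$ follows by the identical construction starting from $\mathsf{G}-\mathsf{G}(c)$ on $(c,d)$.

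Let $Z\subseteq[a,b]$ be the Lebesgue null set of points where the distributional value of $\mathsf{g}$ does not exist or fails to coincide with $g$, and let $E$ be the countable set of the hypothesis (enlarged to include the endpoints) outside of which $\mathsf{g}$ is distributionally bounded. For $x\in[a,b]\setminus Z$ the \L ojasiewicz point value $\mathsf{g}(x)=g(x)$ exists, and so $\lim_{\varepsilon\to 0^+}\langle\mathsf{g}(x+\varepsilon y),\phi(y)\rangle=g(x)$ for every normalized $\phi\in\mathcal{D}(\mathbb{R})$. Because $\mathsf{H}^{\prime}\in\mathcal{E}^{\prime}(\mathbb{R})$ has trivial Ces\`{a}ro growth at infinity, the extension results quoted in Subsection \ref{Sbs Point values} show that this limit persists for every $\phi\in\mathcal{T}_{0}$. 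Consequently $(\mathsf{H}^{\prime})_{\phi,0}^{\pm}(x)=g(x)$ at each $x\in[a,b]\setminus Z$, so the inequalities (\ref{IN.2}) and (\ref{IN.5}) hold with equality simultaneously for the major and the minor roles.

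The main obstacle is the verification of the finiteness conditions (\ref{IN.3}) and (\ref{IN.6}) at points of $[a,b]\setminus E$, where only distributional boundedness is available. Fix $x\in(a,b)\setminus E$ and $\phi\in\mathcal{T}_{1}$. By the Campos Ferreira characterization recalled in Subsection \ref{Sbs Point values}, there exist $n\in\mathbb{N}$, $\delta>0$ and a continuous function $F$ on $(x-\delta,x+\delta)$ with $F(z)=O(|z-x|^{n})$ and $\mathsf{g}=F^{(n)}$ there. Choose a smooth cutoff $\chi$ with $\chi\equiv 1$ on $(x-\delta/2,x+\delta/2)$ and $\operatorname*{supp}\chi\subset(x-\delta,x+\delta)$, and split $\mathsf{H}^{\prime}=\mathsf{g}_{1}+\mathsf{g}_{2}$ with $\mathsf{g}_{1}=\chi\mathsf{H}^{\prime}=\chi F^{(n)}$. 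For $\mathsf{g}_{1}$, Leibniz expansion, $n$ integrations by parts, and the change of variables $z=x+\varepsilon w$ give
\begin{equation*}
\langle\mathsf{g}_{1}(x+\varepsilon y),\phi(y)\rangle=(-1)^{n}\varepsilon^{-n}\int F(x+\varepsilon w)\chi(x+\varepsilon w)\phi^{(n)}(w)\,\mathrm{d}w+R_{\varepsilon}\,,
\end{equation*}
where $R_{\varepsilon}$ collects the contributions involving $\chi^{(k)}$ for $k\geq 1$. The leading integral is dominated by $C\int|w|^{n}(1+|w|)^{\alpha-n}\,\mathrm{d}w=O(1)$ thanks to the strong decay $\phi^{(n)}(w)=O(|w|^{\alpha-n})$ with $\alpha<-1$, while in $R_{\varepsilon}$ the derivatives of $\phi$ are evaluated in the tail regime $|w|\geq\delta/(2\varepsilon)$, producing an overall factor $\varepsilon^{-1-\alpha}\to 0$. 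For $\mathsf{g}_{2}=(1-\chi)\mathsf{H}^{\prime}$, whose support lies at distance at least $\delta/2$ from $x$, all derivatives of the rescaled test function $\varepsilon^{-1}\phi((\cdot-x)/\varepsilon)$ tend to $0$ uniformly on that support by the same strong decay estimate, whence $\langle\mathsf{g}_{2}(x+\varepsilon y),\phi(y)\rangle\to 0$. Combining these estimates gives $\langle\mathsf{H}^{\prime}(x+\varepsilon y),\phi(y)\rangle=O(1)$, which furnishes both (\ref{IN.3}) and its minor-pair analogue (\ref{IN.6}) and completes the verification that $(\mathsf{H}^{\prime},\mathsf{H})$ serves simultaneously as a major and a minor pair.
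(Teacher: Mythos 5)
Your proof is correct and follows essentially the same route as the paper: the paper's own argument constructs exactly the same truncated \L ojasiewicz distribution $\mathsf{H}$ (as in Theorem \ref{Theorem COM 4}) and declares $\left(\mathsf{H}^{\prime},\mathsf{H}\right)$ to be simultaneously a major and a minor pair ``because of the distributional boundedness of $\mathsf{g}$ on $[a,b]\setminus E$.'' The only difference is that you spell out the step the paper leaves implicit, namely that distributional boundedness (via the structural characterization $\mathsf{g}=F^{(n)}$ with $F(z)=O(|z-x|^{n})$) gives finite radial bounds for test functions in $\mathcal{T}_{1}$, and your estimates for that step are sound.
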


\begin{proof}
Let $g$ be an extension of the function $\mathsf{g}\left(  t\right)  ,$
defined in the set of full measure where the values exist, to $\left[
a,b\right]  .$ If, as before, $\mathsf{H}$ is the \L ojasiewicz distribution
equal to $0$ in $\left(  -\infty,a\right)  ,$ equal to $\mathsf{G-G}\left(
a\right)  $ in $\left(  a,b\right)  ,$ and equal to the constant
$\mathsf{G}\left(  b\right)  \mathsf{-G}\left(  a\right)  $ in $\left(
b,\infty\right)  ,$ then the pair $\left(  \mathsf{H}^{\prime}\mathsf{,H}%
\right)  $ is both a major and a minor pair for $g$ because of the
distributional boundedness of $\mathsf{g}$ on $[a,b]\setminus E.$\smallskip
\end{proof}

We now apply the ideas of this section to reconstruct functions from their
higher order \emph{Peano generalized derivatives}. Let $f$ be continuous on
$[a,b]$, we say that $f$ has a Peano $n^{\text{th}}$ derivative at $x\in(a,b)$
if there are $n$ numbers $f_{1}(x),\dots,f_{n}(x)$ such that
\begin{equation}
\label{PD}f(x+h)=f(x)+f_{1}(x)h+\dots+f_{n}(x)\frac{h^{n}}{n!}+o(h^{n})\;,
\ \ \ \mbox{as }h\to0\ .
\end{equation}
We call each $f_{j}(x)$ its Peano $j^{\text{th}}$ derivative at $x$. The same
notion makes sense at $x=a$ or $x=b$ if we only ask (\ref{PD}) to hold as
$h\to0^{+}$ or $h\to0^{-}$, respectively. Notice that the ordinary first order
derivative of $f$ must exist at $x$, and actually $f^{\prime}(x)=f_{1}(x)$. We
set $f_{0}(x)=f(x)$.

Suppose that (\ref{PD}) holds everywhere in $[a,b]$. Naturally, the everywhere
existence of the Peano $n^{\text{th}}$ derivative does not even imply that $f$
is $C^{1}$. On the other hand, if the distribution $\mathsf{f}$ corresponds to
$f$, where $f$ has been extended to $\mathbb{R}$ as $f(x)=\sum_{j=0}^{n}%
(f_{j}(a)/j!)(x-a)^{j}$ for $x\leq a$ and $f(x)=\sum_{j=0}^{n}(f_{j}%
(b)/j!)(x-b)^{j}$ for $b\leq x$, then we do have that the $\mathsf{f}^{(j)}$
are \L ojasiewicz distributions for all $0\leq j\leq n$ and, indeed,
$\mathsf{f}^{(j)}(x)=f_{j}(x)$ $\forall x\in[a,b]$. Thus, the functions
$f_{j}$ are distributionally integrable over $[a,b]$ and
\begin{equation}
\label{INTD}f_{j-1}(x)= f_{j-1}(a)+ \left(  \mathfrak{dist}\right)  \int
_{a}^{x}f_{j}(x)\:\mathrm{d}x, \ \ \ j=n,\dots,1\ ,
\end{equation}
The relations (\ref{INTD}) allow us to reconstruct $f$ from $f_{n}$ as an
$n$-times iterated integral. Furthermore, we obtain the ensuing stronger
result if we employ Theorem \ref{Theorem COM 6}.

\begin{theorem}
\label{Theorem COM 7} Let $f$ be continuous on $[a,b]$. Suppose that $f$ has
Peano $(n-1)^{\text{th}}$ derivative at every point of $[a,b]$. Furthermore,
assume that there is a denumerable set $E$ such that for all $x\in
[a,b]\setminus E$
\begin{equation}
f(x+h)=f(x)+f_{1}(x)h+\dots+f_{n-1}(x)\frac{h^{n-1}}{(n-1)!}+O(h^{n})\;,
\ \ \ \mbox{as }h\to0\,.
\end{equation}
If the Peano $n^{\mathit{th}}$ derivative $f_{n}(x)$ of $f$ exits almost
everywhere in $[a,b]$, then $f_{n}$ is distributionally integrable over
$[a,b]$ and
\begin{equation}
\label{RPD}f(x)=\sum_{j=0}^{n-1}\frac{f_{j}(a)}{j!}(x-a)^{j}+ \left(
\mathfrak{dist}\right)  \int_{a}^{x}\int_{a}^{t_{n-1}}\dots\int_{a}^{t_{2}%
}f_{n}(t_{1})\: \mathrm{d}t_{1}\cdots\mathrm{d}t_{n-1}\mathrm{d}t_{n}\: .
\end{equation}

In particular, if $f_{n}(x)=0$ a.e., then $f$ is a polynomial of degree at
most $n-1$.
\end{theorem}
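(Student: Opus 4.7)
My approach is to lift the whole problem into distribution theory: identify the Peano derivatives $f_j(x)$ with \L ojasiewicz point values of the distributional derivatives $\mathsf{f}^{(j)}$, and then apply the reconstruction results of the previous section to peel off one integration at a time. First I extend $f$ to all of $\mathbb{R}$ by its Taylor polynomials of degree $n-1$ at $a$ and $b$, exactly as in the discussion preceding the theorem; on the extended function the Peano expansion of order $n-1$ is two-sided at $a$ and $b$ (the polynomial matches trivially), and outside $[a,b]$ all Peano derivatives are polynomial data. Let $\mathsf{f}$ denote the corresponding regular distribution.

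Next I would prove that for each $0\le j\le n-1$ the distribution $\mathsf{f}^{(j)}$ is a \L ojasiewicz distribution with $\mathsf{f}^{(j)}(x)=f_{j}(x)$ at every $x\in[a,b]$. The proof is a direct computation from Definition of the \L ojasiewicz point value: integrating by parts $j$ times,
\begin{equation}
\bigl\langle\mathsf{f}^{(j)}(x_{0}+\varepsilon y),\phi(y)\bigr\rangle=\frac{(-1)^{j}}{\varepsilon^{j}}\int f(x_{0}+\varepsilon y)\,\phi^{(j)}(y)\,\mathrm{d}y\,,
\end{equation}
and substituting the Peano expansion $f(x_{0}+\varepsilon y)=\sum_{k=0}^{j}f_{k}(x_{0})(\varepsilon y)^{k}/k!+o((\varepsilon y)^{j})$ makes the terms with $k<j$ vanish (since $\int y^{k}\phi^{(j)}(y)\mathrm{d}y=0$ by integration by parts with compactly supported $\phi$), the $k=j$ term produces exactly $f_{j}(x_{0})\int\phi(y)\,\mathrm{d}y$, and the $o((\varepsilon y)^{j})$ remainder is absorbed by $\varepsilon^{-j}$. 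The same calculation with the sharper $O(h^{n})$ expansion at points $x\notin E$ shows that $\mathsf{f}^{(n)}$ is \emph{distributionally bounded} at every $x\in[a,b]\setminus E$ (in the sense of Campos Ferreira recalled in Section \ref{Sbs Point values}), and where $f_{n}(x)$ exists the same argument one order higher gives the \L ojasiewicz point value $\mathsf{f}^{(n)}(x)=f_{n}(x)$; hence $\mathsf{f}^{(n)}$ has point values almost everywhere on $[a,b]$.

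With these identifications in hand the reconstruction is immediate. Applying Theorem \ref{Theorem COM 6} to $\mathsf{G}=\mathsf{f}^{(n-1)}$ (so $G=f_{n-1}$ is a \L ojasiewicz function and $\mathsf{g}=\mathsf{G}'=\mathsf{f}^{(n)}$ has point values a.e.\ equal to $f_{n}$ and is distributionally bounded off the countable set $E$) yields that $f_{n}$ is distributionally integrable over $[a,b]$ and
\begin{equation}
(\mathfrak{dist})\int_{a}^{x}f_{n}(t)\,\mathrm{d}t=f_{n-1}(x)-f_{n-1}(a)\,.
\end{equation}
For each $1\le j\le n-1$ the pair $(\mathsf{f}^{(j-1)},\mathsf{f}^{(j)})$ satisfies the hypotheses of Theorem \ref{Theorem COM 4} (both distributions are \L ojasiewicz, and the derivative has point values \emph{everywhere} on $[a,b]$), giving $(\mathfrak{dist})\int_{a}^{x}f_{j}(t)\,\mathrm{d}t=f_{j-1}(x)-f_{j-1}(a)$. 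Substituting these identities into one another $n$ times and collecting the constant terms $f_{j}(a)(x-a)^{j}/j!$ produces the Taylor remainder formula (\ref{RPD}). The final assertion is the special case $f_{n}\equiv 0$ a.e.

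The main technical obstacle is the distribution-theoretic translation in the second paragraph: one must verify cleanly that a classical Peano expansion of the continuous function $f$ really does deliver a \L ojasiewicz point value (respectively, a distributional bound) for $\mathsf{f}^{(j)}$ at the relevant orders. Everything else is either a direct invocation of Theorems \ref{Theorem COM 4} and \ref{Theorem COM 6} or bookkeeping of iterated integrals; the key identity $\int y^{k}\phi^{(j)}(y)\,\mathrm{d}y=0$ for $k<j$ is what makes each Peano coefficient below the top order invisible to the corresponding distributional derivative and allows the scaling $\varepsilon^{-j}$ to be absorbed without producing divergent lower-order contributions.
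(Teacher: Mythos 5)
Your proposal is correct and follows essentially the same route the paper intends: extend $f$ by its Taylor polynomials of degree $n-1$ at the endpoints, identify the Peano derivatives $f_j$ with the \L ojasiewicz point values of $\mathsf{f}^{(j)}$ (and the $O(h^n)$ condition off $E$ with distributional boundedness of $\mathsf{f}^{(n)}$), then apply Theorem \ref{Theorem COM 6} at the top order and Theorem \ref{Theorem COM 4} (the relations (\ref{INTD})) at the lower orders to obtain (\ref{RPD}). Your scaling computation simply makes explicit the verification the paper leaves to the discussion preceding the theorem, so no further changes are needed.
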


\begin{remark}
The last integration step in (\ref{RPD}) may be made with the
Denjoy-Perron-Henstock-Kurzweil integral; however, in general, the previous integrals
do not have to exist in the sense of Denjoy-Perron-Henstock-Kurzweil.
\end{remark}

\section{Distributions and
integration}\label{Section Distributions and Integration}

If $f$ is a distributionally integrable function over $\left[  a,b\right]  ,$
with indefinite integral $F,$ which in turn has an associated distribution
$\mathsf{F,}$ $F\leftrightarrow\mathsf{F,}$ then we proved in Theorem
\ref{Theorem PR 2} that the distribution $\mathsf{f=F}^{\prime}$ has
distributional point values almost everywhere and actually $\mathsf{f}\left(
x\right)  =f\left(  x\right)  $ almost everywhere in $\left[  a.b\right]  .$
Our aim is to show that the association $f\leftrightarrow\mathsf{f,}$ is a
\emph{natural} one, in the same way that Lebesgue integrable functions are
associated to regular distributions, by showing that
\begin{equation}
\left\langle \mathsf{f,}\psi\right\rangle =\left(  \mathfrak{dist}\right)
\int_{a}^{b}f\left(  x\right)  \psi\left(  x\right)  \,\mathrm{d}x\,,
\label{DI.1}%
\end{equation}
for all test functions $\psi\in\mathcal{D}\left(  \mathbb{R}\right)  .$

Observe, first of all, that if a distribution $\mathsf{f}$ has point values
\emph{everywhere} then there is a well defined association between $f,$ the
function given by those values, and $\mathsf{f.}$ That (\ref{DI.1}) is
satisfied in this case was proved by \L ojasiewicz \cite{loj}. However, when
we extend this idea to values that exist almost everywhere we need to proceed
with care. For instance, the Dirac delta function $\delta\left(  x\right)  $
has distributional values almost everywhere equal to $0,$ but is not the zero
distribution, or the distributional derivative of the Cantor function is a
measure concentrated on the Cantor set, and thus it has values a.e. equal to
$0$ without being the null distribution. \emph{Unless }(\ref{DI.1})\emph{ is
satisfied one cannot associate a distribution to the function given by its
point values, even if those values exist almost everywhere.}

Actually the almost everywhere values of a distribution tell us very little
about the nature of the distribution.\smallskip

\begin{theorem}
\label{Theorem DI 1}Let $f$ be any finite measurable function defined in
$\left[  a,b\right]  .$ Then there are infinitely many distributions
$\mathsf{g}$ that have distributional values almost everywhere and satisfy%
\begin{equation}
\mathsf{g}\left(  x\right)  =f\left(  x\right)  ~~~~~\left(  \text{a.e.}%
\right)  \,. \label{DI.2}%
\end{equation}

\end{theorem}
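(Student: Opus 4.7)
The plan is to first reduce the claim to producing a single distribution $\mathsf{g}_0$ with the stated property, and then to generate infinitely many others from it by adding Dirac masses. Concretely, suppose $\mathsf{g}_0\in\mathcal{D}^{\prime}(\mathbb{R})$ has Lojasiewicz point values almost everywhere in $[a,b]$ with $\mathsf{g}_0(x)=f(x)$ a.e. For every real $c$ and every point $x_0\in[a,b]$, consider $\mathsf{g}_{c,x_0}:=\mathsf{g}_0+c\,\delta(x-x_0)$. Since $\delta(x-x_0)$ has Lojasiewicz point value equal to $0$ at every $x\neq x_0$ (the rescaled test function $\phi((x_0-y_0)/\varepsilon)/\varepsilon$ vanishes for small $\varepsilon$ when $y_0\neq x_0$), the sum $\mathsf{g}_{c,x_0}$ still has distributional point values at every point where $\mathsf{g}_0$ does, except perhaps at the single point $x_0$ of measure zero, and those values remain $\mathsf{g}_0(x)=f(x)$. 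The family $\{\mathsf{g}_{c,x_0}\}_{c\in\mathbb{R}}$ is pairwise distinct (for any fixed $x_0$), giving uncountably many admissible distributions.

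To produce $\mathsf{g}_0$, I would invoke the classical theorem of Lusin (1917): any finite real-valued measurable function $f$ on $[a,b]$ is the pointwise almost-everywhere classical derivative of some continuous function $F\colon[a,b]\to\mathbb{R}$. Extend $F$ continuously to all of $\mathbb{R}$ (for instance, by taking it to be constant on each ray outside $[a,b]$), let $\mathsf{F}$ be the regular distribution associated with this extension, and set $\mathsf{g}_0:=\mathsf{F}^{\prime}$. At every $x_0$ at which $F^{\prime}(x_0)$ exists in the classical sense, the continuous function $F-F(x_0)$ is a primitive of order one of $\mathsf{g}_0$ satisfying
\begin{equation*}
\lim_{x\to x_0}\frac{F(x)-F(x_0)}{x-x_0}=F^{\prime}(x_0)\,,
\end{equation*}
so by the Lojasiewicz characterization recalled in Subsection~\ref{Sbs Point values} the distributional point value $\mathsf{g}_0(x_0)$ exists and equals $F^{\prime}(x_0)$. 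Since $F^{\prime}(x)=f(x)$ almost everywhere on $[a,b]$ by Lusin's theorem, we obtain $\mathsf{g}_0(x)=f(x)$ a.e., as required.

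The main obstacle is the invocation of Lusin's theorem on derivatives, which is the one non-elementary ingredient; everything else is routine manipulation of Lojasiewicz point values and the observation that adding an isolated Dirac mass destroys the point value only at a single prescribed point. One could in principle avoid this deep tool in the special case where $f$ is essentially bounded (taking directly the regular distribution of $f$, whose Lojasiewicz point values exist at every Lebesgue point) and then attempt to reduce the general case via the decomposition $[a,b]=\bigsqcup_{n}\{n-1\leq|f|<n\}$; however, summing the corresponding pieces into a single distribution requires a delicate regularization that is strictly more intricate than a direct appeal to Lusin's result, and so I would proceed as above.
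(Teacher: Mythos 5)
Your proposal is correct and follows essentially the same route as the paper: both invoke Lusin's theorem to obtain a continuous $F$ with $F'=f$ a.e., take $\mathsf{g}_{0}=\mathsf{F}^{\prime}$ (whose \L ojasiewicz values at points of classical differentiability equal $F'$), and then generate infinitely many examples by adding distributions concentrated on null sets. The only cosmetic difference is that you perturb by multiples of a single Dirac delta, whereas the paper allows an arbitrary distribution whose support has measure zero; your verification of the point-value claims is simply a more detailed version of the same argument.
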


\begin{proof}
Existence follows at once from Lusin's Theorem \cite[Section VII. 2]{Sacks},
that says that if $f$ is any finite measurable function defined in $\left[
a,b\right]  $ then there exists a continuous function $F$ such that
$F^{\prime}\left(  x\right)  =f\left(  x\right)  $ almost everywhere. We then
consider the distribution $\mathsf{g}_{0}\mathsf{=F}^{\prime},$ where
$F\leftrightarrow\mathsf{F.}$ If $\mathsf{g}_{1}$ is any distribution whose
support has measure $0$ then $\mathsf{g}=\mathsf{g}_{0}+\mathsf{g}_{1}$
satisfies (\ref{DI.2}).\smallskip
\end{proof}

We now proceed to the proof of the formula (\ref{DI.1}) when $f$ is
distributionally integrable over $\left[  a,b\right]  .$ First we shall prove
that $f\psi$ is distributionally integrable whenever $\psi$ is $C^{\infty}$ on
$\left[  a,b\right]  .$ Since any smooth function defined in $\left[
a,b\right]  $ can be extended to the whole real line, and since the integral
of $f\psi$ over $\left[  a,b\right]  $ does not depend on how we do the
extension, it is convenient to assume that $\psi$ is actually smooth in
$\mathbb{R}.$\smallskip

\begin{theorem}
\label{Theorem DI 2}Let $f$ be distributionally integrable over $\left[
a,b\right]  $ and let $\psi$ be any smooth function defined in $\mathbb{R}.$
Then $f\psi$ is distributionally integrable over $\left[  a,b\right]  $ and
\begin{equation}
\left(  \mathfrak{dist}\right)  \int_{a}^{b}f\left(  x\right)  \psi\left(
x\right)  \,\mathrm{d}x=F\left(  b\right)  \psi\left(  b\right)  -\left(
\mathfrak{dist}\right)  \int_{a}^{b}F\left(  x\right)  \psi^{\prime}\left(
x\right)  \,\mathrm{d}x\,, \label{DI.3}%
\end{equation}
where $F$ is the indefinite integral $F\left(  x\right)  =\left(
\mathfrak{dist}\right)  \int_{a}^{x}f\left(  t\right)  \,\mathrm{d}t.$
\end{theorem}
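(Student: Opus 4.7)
The plan is to construct a major (resp.\ minor) distributional pair for $f\psi$ from one for $f$ by a formal integration by parts at the level of primitives, then to extract (\ref{DI.3}) by passage to the limit. By replacing $\psi$ with $\psi + M$ for a sufficiently large constant $M$ and appealing to linearity (Propositions \ref{Prop. IN 2} and \ref{Prop. IN 3}), we may assume $\psi \geq 0$ on $[a,b]$.

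Given a major pair $(\mathsf{u}, \mathsf{U})$ for $f$, the product $\mathsf{U}\psi'$ is a \L ojasiewicz distribution because $\psi'$ is smooth (Subsection \ref{Sbs Lojasiewics dist}), and thus admits a \L ojasiewicz primitive $\mathsf{H}$ with $\mathsf{H}(a)=0$. Set $\mathsf{W} = \mathsf{U}\psi - \mathsf{H}$, so that $\mathsf{W}$ is a \L ojasiewicz distribution with $\mathsf{W}(a)=0$ and
\[
\mathsf{W}' = (\mathsf{U}\psi)' - \mathsf{U}\psi' = \mathsf{u}\psi + \mathsf{U}\psi' - \mathsf{U}\psi' = \mathsf{u}\psi.
\]
The analogous construction applied to a minor pair $(\mathsf{v}, \mathsf{V})$ produces $(\mathsf{v}\psi, \mathsf{W}_v)$ with $\mathsf{W}_v = \mathsf{V}\psi - \mathsf{H}_v$.

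To verify that $(\mathsf{u}\psi, \mathsf{W})$ and $(\mathsf{v}\psi, \mathsf{W}_v)$ are major and minor distributional pairs for $f\psi$, I will use the angular v.3 characterization described after Corollary \ref{Cor PR 2}: it suffices to check (\ref{ang.1}) and (\ref{ang.2}) for the new data. The key input is Proposition \ref{Prop PR 1}, which supplies distributional point values $\mathsf{u}(x)$ and $\mathsf{v}(x)$ for a.e.\ $x \in [a,b]$, with $\mathsf{v}(x) \leq f(x) \leq \mathsf{u}(x)$. At such $x_0$, since the angular $\phi$-transform of $\mathsf{u}$ converges to $\mathsf{u}(x_0)$ and $\psi$ is smooth, a Taylor-expansion argument shows that $(\mathsf{u}\psi)_{\phi,\theta}^{\pm}(x_0) = \mathsf{u}(x_0)\psi(x_0) \geq f(x_0)\psi(x_0)$ for every $\phi \in \mathcal{T}_0$ and every angle $\theta$ (using $\psi(x_0) \geq 0$), and similarly on the minor side. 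The principal technical obstacle is this convergence of the angular $\phi$-transform for the product $\mathsf{u}\psi$, which reduces, via $\psi(x_0+\varepsilon y) = \psi(x_0) + O(\varepsilon |y|)$, to the weak boundedness of $\mathsf{u}(x_0+\varepsilon y)$ guaranteed by the existence of the point value $\mathsf{u}(x_0)$.

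Finally, given $\varepsilon > 0$, choose the initial pairs so that $\mathsf{U}(b) - \mathsf{V}(b) < \varepsilon$. By Lemma \ref{Lemma IN1}, $U - V$ is continuous, increasing, vanishes at $a$ and is bounded by $\varepsilon$ on $[a,b]$; using Theorem \ref{Theorem COM 4} to identify $H(b) = \int_a^b U\psi'\,\mathrm{d}t$, and similarly for $H_v$, one obtains
\[
\mathsf{W}(b) - \mathsf{W}_v(b) = \bigl(\mathsf{U}(b) - \mathsf{V}(b)\bigr)\psi(b) - \int_a^b (U-V)\psi'\,\mathrm{d}t \leq \varepsilon\Bigl(|\psi(b)| + \int_a^b |\psi'(t)|\,\mathrm{d}t\Bigr),
\]
which establishes the distributional integrability of $f\psi$. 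To extract (\ref{DI.3}), apply Lemma \ref{Lemma PR 1}: $U - F$ is continuous, increasing, vanishes at $a$, and $U(b) - F(b) < \varepsilon$, so $U \to F$ uniformly on $[a,b]$ as $\varepsilon \to 0$. Then $\mathsf{U}(b)\psi(b) \to F(b)\psi(b)$ and $\int_a^b U\psi'\,\mathrm{d}t \to \int_a^b F\psi'\,\mathrm{d}t$, and passage to the limit in $\mathsf{W}(b) = \mathsf{U}(b)\psi(b) - \int_a^b U\psi'\,\mathrm{d}t$ yields (\ref{DI.3}).
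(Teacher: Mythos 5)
Your construction of the candidate pairs is exactly the paper's: from a major pair $(\mathsf{u},\mathsf{U})$ you form $(\psi\mathsf{u},\,\psi\mathsf{U}-\mathsf{H})$ with $\mathsf{H}$ a \L ojasiewicz primitive of $\psi'\mathsf{U}$, and your final squeeze/limit argument for (\ref{DI.3}) is also the paper's. The genuine gap is in the middle: you never verify condition 3 of Definition \ref{Def IN1} in full, namely the finiteness requirements (\ref{IN.3}) and (\ref{IN.6}), which must hold at \emph{every} point of $[a,b]$ outside a \emph{countable} set. Your verification rests on Proposition \ref{Prop PR 1}, which gives distributional point values $\mathsf{u}(x_0)$, $\mathsf{v}(x_0)$ only almost everywhere; at the remaining points (a null set, but possibly uncountable) you have no control whatsoever on $\psi\mathsf{u}$. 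The appeal to the ``v.3'' remark after Corollary \ref{Cor PR 2} does not repair this: that variant strengthens the a.e.\ domination condition to an angular one but still requires the radial finiteness off a countable set (this off-countable condition is precisely what Theorems \ref{Theorem M3}--\ref{Theorem M4} need, and the paper's remark after Theorem \ref{TheoremMeasures} — the example $-\delta(x-a)$ — shows that a.e.\ information alone cannot substitute for it). So ``it suffices to check (\ref{ang.1}) and (\ref{ang.2})'' is not correct; as written, your candidate pairs have not been shown to be major/minor pairs at all, and without that Lemma \ref{Lemma IN1} and the squeeze at $b$ cannot be invoked.

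The paper closes exactly this hole with an estimate valid at \emph{every} $x$, not just where $\mathsf{u}$ has a value: since $\mathsf{U}$ is a \L ojasiewicz distribution, $\mathsf{U}(x+ty)=\mathsf{U}(x)+o(1)$ in $\mathcal{D}'$ as $t\to0^{+}$, hence $\mathsf{u}(x+ty)=o(1/t)$, and therefore
\begin{equation*}
\psi(x+ty)\,\mathsf{u}(x+ty)=\bigl(\psi(x)+O(t)\bigr)\mathsf{u}(x+ty)=\psi(x)\,\mathsf{u}(x+ty)+o(1)\,.
\end{equation*}
With $\psi(x)>0$ (the paper reduces to strictly positive $\psi$, not merely $\psi\geq0$, which you should also do to multiply the inequalities safely), this transfers both (\ref{IN.2}) and, crucially, (\ref{IN.3}) from $\mathsf{u}$ to $\psi\mathsf{u}$ with the \emph{same} exceptional sets ($Z$ null, $E$ countable), and likewise for the minor pair. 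If you insert this pointwise estimate in place of your a.e.\ point-value argument, the rest of your proof (the bound $\mathsf{W}(b)-\mathsf{W}_v(b)\leq\varepsilon(|\psi(b)|+\int_a^b|\psi'|)$ and the passage $U\to F$ uniformly to obtain (\ref{DI.3})) goes through and coincides with the paper's conclusion.
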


\begin{proof}
Observe, first, that $F$ is a \L ojasiewicz function, and hence so is
$F\psi^{\prime}.$ Thus $F\psi^{\prime}$ is distributionally integrable over
$\left[  a,b\right]  $ and consequently the right side of the equation
(\ref{DI.3}) is well defined.

We start with the case when $\psi\left(  x\right)  >0$ $\forall x\in\left[
a,b\right]  .$ Let $\left(  \mathsf{u},\mathsf{U}\right)  $ be major pair for
$f$ in $\left[  a,b\right]  .$ Let $\mathsf{H}_{+}$ be the \L ojasiewicz
distribution that satisfies $\mathsf{H}_{+}^{\prime}=\psi^{\prime}\mathsf{U}$
in $\left(  a,b\right)  ,$ equal to $0$ in $\left(  -\infty,a\right)  ,$ and
constant in $\left(  b,\infty\right)  .$ Then the pair $\left(  \psi
\mathsf{u},\psi\mathsf{U}-\mathsf{H}_{+}\right)  $ is a distributional major
pair for $\psi f.$ Indeed, conditions 1 and 2 of the Definition \ref{Def IN1}
are clear, while for 3 we observe for a fixed $x$
\begin{equation}
\mathsf{U}\left(  x+ty\right)  =\mathsf{U}\left(  x\right)  +o\left(
1\right)  \,,\ \ \ \text{as }t\rightarrow0^{+}\,,\ \ \ \mbox{in }\mathcal{D}%
^{\prime}(\mathbb{R})\:,\label{DI.4}%
\end{equation}
and thus%
\begin{equation}
\mathsf{u}\left(  x+ty\right)  =o\left(  1/t\right)  \,,\ \ \ \text{as
}t\rightarrow0^{+}\,,\ \ \ \mbox{in }\mathcal{D}^{\prime}(\mathbb{R}%
)\:.\label{DI.5}%
\end{equation}
Hence%
\begin{align}
\psi\left(  x+ty\right)  \mathsf{u}\left(  x+ty\right)   &  =\left(
\psi\left(  x\right)  +O\left(  t\right)  \right)  \mathsf{u}\left(
x+ty\right)  \nonumber\\
&  =\psi\left(  x\right)  \mathsf{u}\left(  x+ty\right)  +o\left(  1\right)
\,,\ \ \ \mbox{in }\mathcal{D}^{\prime}(\mathbb{R})\:.\label{DI.6}%
\end{align}
Since $\psi\left(  x\right)  >0$ we obtain that if $\mathsf{u}_{\phi,0}%
^{-}\left(  x\right)  >-\infty,$ $\phi\in\mathcal{T}_{1},$ then%
\begin{equation}
\left(  \psi\mathsf{u}\right)  _{\phi,0}^{-}\left(  x\right)  >-\infty
\,,\label{DI.7}%
\end{equation}
while if $\mathsf{u}_{\phi,0}^{-}\left(  x\right)  \geq f\left(
x\right)  ,$ $\phi\in\mathcal{T}_{0},$ then%
\begin{equation}
\left(  \psi\mathsf{u}\right)  _{\phi,0}^{-}\left(  x\right)  \geq
\psi\left(  x\right)  f\left(  x\right)  \,.\label{DI.8}%
\end{equation}
Similarly, if $\left(  \mathsf{v},\mathsf{V}\right)  $ is a minor pair for $f$
in $\left[  a,b\right]  ,$ then $\left(  \psi\mathsf{v},\psi\mathsf{V}%
-\mathsf{H}_{-}\right)  $ is a minor pair for $\psi f,$ where $\mathsf{H}_{-}$
is the \L ojasiewicz distribution that satisfies $\mathsf{H}_{-}^{\prime}%
=\psi^{\prime}\mathsf{V}$ in $\left(  a,b\right)  ,$ equal to $0$ in $\left(
-\infty,a\right)  ,$ and constant in $\left(  b,\infty\right)  .$

Let $\varepsilon>0,$ and choose the major and minor pairs $\left(
\mathsf{u},\mathsf{U}\right)  $ and $\left(  \mathsf{v},\mathsf{V}\right)  $
in such a way that%
\begin{equation}
\mathsf{U}\left(  b\right)  -\mathsf{V}\left(  b\right)  <\varepsilon\left(
\psi\left(  b\right)  +\int_{a}^{b}\left\vert \psi^{\prime}\left(  x\right)
\right\vert \,\mathrm{d}x\right)  ^{-1} \,. \label{DI.9}%
\end{equation}
Then the major and minor pairs $\left(  \psi\mathsf{u},\psi\mathsf{U}%
-\mathsf{H}_{+}\right)  $ and $\left(  \psi\mathsf{v},\psi\mathsf{V}%
-\mathsf{H}_{-}\right)  $ for $\psi f$ satisfy%
\begin{equation}
\left(  \psi\left(  b\right)  \mathsf{U}\left(  b\right)  -\mathsf{H}%
_{+}\left(  b\right)  \right)  -\left(  \psi\left(  b\right)  \mathsf{V}%
\left(  b\right)  -\mathsf{H}_{-}\left(  b\right)  \right)  <\varepsilon\,,
\label{DI.10}%
\end{equation}
where we have used Lemma \ref{Lemma IN1}. The distributional integrability of
$\psi f$ is obtained.

If we take the infimum of $\psi\left(  b\right)  \mathsf{U}\left(  b\right)
-\mathsf{H}_{+}\left(  b\right)  ,$ or the supremum of $\psi\left(  b\right)
\mathsf{V}\left(  b\right)  -\mathsf{H}_{-}\left(  b\right)  ,$ we obtain
$\psi\left(  b\right)  F\left(  b\right)  -\int_{a}^{b}\psi^{\prime}\left(
x\right)  F\left(  x\right)  \,\mathrm{d}x,$ and this yields the integration
by parts formula (\ref{DI.3}).

For a general function $\psi\in C^{\infty}\left(  \mathbb{R}\right)  $ we can
find a constant $k>0$ such that $k+\psi\left(  x\right)  >0$ $\forall
x\in\left[  a,b\right]  .$ The distributional integrability of $\psi f=\left(
k+\psi\right)  f-kf$ follows, while formula (\ref{DI.3}) is obtained because
it holds for both $\left(  k+\psi\right)  f$ and $kf.$\smallskip
\end{proof}

We can now prove that the association $f\leftrightarrow\mathsf{f}$ is a
natural one.\smallskip

\begin{theorem}
\label{Theorem DI 3}Let $f$ be distributionally integrable function over
$\left[  a,b\right]  ,$ its indefinite integral be $F,$ with associated
distribution $\mathsf{F,}$ $F\leftrightarrow\mathsf{F,}$ and let
$\mathsf{f=F}^{\prime}\in\mathcal{E}^{\prime}(\mathbb{R}),$ so that
$\mathsf{f}\left(  x\right)  =f\left(  x\right)  $ almost everywhere in
$\left[  a,b\right]  .$ Then for any $\psi\in\mathcal{E}\left(  \mathbb{R}%
\right)  ,$%
\begin{equation}
\left\langle \mathsf{f,}\psi\right\rangle =\left(  \mathfrak{dist}\right)
\int_{a}^{b}f\left(  x\right)  \psi\left(  x\right)  \,\mathrm{d}x\,.
\label{DI.11}%
\end{equation}

\end{theorem}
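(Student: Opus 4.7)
The plan is to reduce to the compactly supported case and then combine the integration by parts formula of \lemref{Theorem DI 2} with the \L ojasiewicz evaluation formula (\ref{INTeval}).

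First, since $\mathsf{f}\in\mathcal{E}^{\prime}(\mathbb{R})$ has support in $[a,b]$, I choose a cut-off $\rho\in\mathcal{D}(\mathbb{R})$ with $\rho\equiv 1$ on a neighborhood of $[a,b]$. Then $\langle\mathsf{f},\psi\rangle=\langle\mathsf{f},\rho\psi\rangle$, and since $\rho=1$ on $[a,b]$, the right-hand side of (\ref{DI.11}) is unchanged when $\psi$ is replaced by $\rho\psi$. Thus it suffices to prove the theorem for $\psi\in\mathcal{D}(\mathbb{R})$.

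Next, since $\mathsf{f}=\mathsf{F}^{\prime}$ and $\psi\in\mathcal{D}(\mathbb{R})$, I can write $\langle\mathsf{f},\psi\rangle=-\langle\mathsf{F},\psi^{\prime}\rangle$. Because $F$ is a \L ojasiewicz function (extended by $0$ for $x<a$ and by $F(b)$ for $x>b$), formula (\ref{INTeval}) yields
\begin{equation*}
\langle\mathsf{F},\psi^{\prime}\rangle=\int_{\alpha}^{\beta}F(x)\psi^{\prime}(x)\,\mathrm{d}x\,,
\end{equation*}
where $[\alpha,\beta]\supset[a,b]\cup\operatorname*{supp}\psi$ and the integral is in the \L ojasiewicz sense (\ref{INT}). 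Splitting this integral at $a$ and $b$ via Proposition \ref{Prop. IN 4} (valid also for the \L ojasiewicz integral in view of Theorem \ref{Theorem COM 5}), the contribution over $[\alpha,a]$ vanishes (since $F\equiv 0$ there), while the contribution over $[b,\beta]$ is, using the smoothness of $F(b)\psi^{\prime}$ and the fundamental theorem of calculus, $F(b)(\psi(\beta)-\psi(b))=-F(b)\psi(b)$. Hence
\begin{equation*}
\langle\mathsf{f},\psi\rangle=F(b)\psi(b)-\int_{a}^{b}F(x)\psi^{\prime}(x)\,\mathrm{d}x\,,
\end{equation*}
the remaining integral still being in the \L ojasiewicz sense.

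Now I invoke Theorem \ref{Theorem COM 5}: since $F\psi^{\prime}$ is a \L ojasiewicz function (product of a \L ojasiewicz function with a smooth function), its distributional integral over $[a,b]$ coincides with the \L ojasiewicz integral (\ref{INT}). Therefore the right-hand side above equals
\begin{equation*}
F(b)\psi(b)-(\mathfrak{dist})\int_{a}^{b}F(x)\psi^{\prime}(x)\,\mathrm{d}x\,,
\end{equation*}
which, by the integration by parts formula (\ref{DI.3}) of Theorem \ref{Theorem DI 2}, equals $(\mathfrak{dist})\int_{a}^{b}f(x)\psi(x)\,\mathrm{d}x$, proving (\ref{DI.11}).

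The step I expect to require the most care is the splitting of $\langle\mathsf{F},\psi^{\prime}\rangle$ into the three pieces and the correct identification of the boundary contribution $-F(b)\psi(b)$; the extension conventions for $F$ outside $[a,b]$ and the fact that $F$ need not be classically continuous at $b$ make it tempting to lose or double-count this boundary term. The rest is essentially a bookkeeping combination of Theorem \ref{Theorem DI 2}, Theorem \ref{Theorem COM 5}, and the \L ojasiewicz evaluation formula (\ref{INTeval}).
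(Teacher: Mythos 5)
Your argument is correct and is essentially the paper's own proof: both reduce (\ref{DI.11}) to the identity $\left\langle \mathsf{f},\psi\right\rangle =F\left(  b\right)  \psi\left(  b\right)  -\left(  \mathfrak{dist}\right)  \int_{a}^{b}F\left(  x\right)  \psi^{\prime}\left(  x\right)  \,\mathrm{d}x$ and then conclude with the integration by parts formula of Theorem \ref{Theorem DI 2}. The only cosmetic difference is how the boundary term is extracted: you split the \L ojasiewicz evaluation of $\mathsf{F}$ using the constant extension beyond $b$ (after a cut-off reducing $\psi\in\mathcal{E}$ to $\mathcal{D}$), whereas the paper reads it off as the jump term $-F\left(  b\right)  \delta\left(  x-b\right)$ in the derivative of the regulated distribution $\chi\mathsf{F}$.
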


\begin{proof}
Let $\chi$ be the characteristic function of $\left[  a,b\right]  .$ Then
$\chi\mathsf{F}$ is distributionally regulated, with a jump of magnitude
$-F\left(  b\right)  $ at $x=b.$ Thus%
\begin{equation}
\left(  \chi\left(  x\right)  \mathsf{F}\left(  x\right)  \right)  ^{\prime
}=\mathsf{f}\left(  x\right)  -F\left(  b\right)  \delta\left(  x-b\right)
\,, \label{DI.12}%
\end{equation}
and this yields%
\begin{align*}
\left\langle \mathsf{f,}\psi\right\rangle  &  =\left\langle \left(
\chi\mathsf{F}\right)  ^{\prime}+F\left(  b\right)  \delta\left(  x-b\right)
\mathsf{,}\psi\right\rangle \\
&  =F\left(  b\right)  \psi\left(  b\right)  -\left\langle \chi\mathsf{F,}%
\psi^{\prime}\right\rangle \\
&  =F\left(  b\right)  \psi\left(  b\right)  -\left(  \mathfrak{dist}\right)
\int_{a}^{b}F\left(  x\right)  \psi^{\prime}\left(  x\right)  \,\mathrm{d}x\\
&  =\left(  \mathfrak{dist}\right)  \int_{a}^{b}f\left(  x\right)  \psi\left(
x\right)  \,\mathrm{d}x\,,
\end{align*}
as required.\smallskip
\end{proof}

\begin{remark}
The Theorem \ref{Theorem DI 2} actually shows that the integration by parts
formula%
\begin{equation}
\left(  \mathfrak{dist}\right)  \int_{a}^{b}f\left(  x\right)  \psi\left(
x\right)  \,\mathrm{d}x=\left.  G\left(  x\right)  \psi\left(  x\right)
\genfrac{.}{.}{0pt}{}{{}}{{}}%
\right\vert _{x=a}^{x=b}-\left(  \mathfrak{dist}\right)  \int_{a}^{b}G\left(
x\right)  \psi^{\prime}\left(  x\right)  \,\mathrm{d}x\,, \label{DI.10p}%
\end{equation}
holds for any \L ojasiewicz function $G,$ $G\leftrightarrow\mathsf{G,}$ with
$\mathsf{f=G}^{\prime},$ and $\psi$ smooth.\smallskip
\end{remark}

We say that a function $f$ defined in $\mathbb{R}$ is locally distributionally
integrable if $f$ is integrable over any compact \emph{interval} of
$\mathbb{R}.$ For such a function we define the improper integral%
\begin{equation}
\left(  \mathfrak{dist}\right)  \int_{-\infty}^{\infty}f\left(  x\right)
\,\mathrm{d}x=\lim_{\substack{a\rightarrow-\infty\\b\rightarrow\infty}}\left(
\mathfrak{dist}\right)  \int_{a}^{b}f\left(  x\right)  \,\mathrm{d}x\,,
\label{DI.12p}%
\end{equation}
if the limit exists.

The previous theorem treats the case of $\mathcal{E}^{\prime}\left(
\mathbb{R}\right)  ;$ for the space $\mathcal{D}^{\prime}\left(
\mathbb{R}\right)  $ we have a corresponding result.\smallskip

\begin{theorem}
\label{Theorem DI 4}Let $f$ be locally distributionally integrable over
$\mathbb{R}.$ Then the formula%
\begin{equation}
\psi\rightsquigarrow\left(  \mathfrak{dist}\right)  \int_{-\infty}^{\infty
}f\left(  x\right)  \psi\left(  x\right)  \,\mathrm{d}x\,, \label{DI.13}%
\end{equation}
for $\psi\in\mathcal{D}\left(  \mathbb{R}\right)  ,$ defines a distribution
$\mathsf{f}\in\mathcal{D}^{\prime}\left(  \mathbb{R}\right)  .$ Actually for
any fixed $a\in\mathbb{R},$ $\mathsf{f=F}^{\prime},$ where $F\leftrightarrow
\mathsf{F,}$ and $F\left(  x\right)  =\left(  \mathfrak{dist}\right)  \int
_{a}^{x}f\left(  t\right)  \,\mathrm{d}t.$

Furthermore, $\mathsf{f}\left(  x\right)  =f\left(  x\right)  $ almost
everywhere in $\mathbb{R}$.
\end{theorem}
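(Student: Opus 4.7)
The plan is to construct $\mathsf{f}\in\mathcal{D}'(\mathbb{R})$ as the distributional derivative of a global indefinite integral of $f$, and then identify it with the linear functional in (\ref{DI.13}) via Theorem \ref{Theorem DI 3}.

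First, fix $a\in\mathbb{R}$ and set
\[
F(x)=(\mathfrak{dist})\int_{a}^{x}f(t)\,\mathrm{d}t\,,\qquad x\in\mathbb{R}\,,
\]
with the usual sign convention when $x<a$. Local distributional integrability of $f$ together with Proposition \ref{Prop. IN 4} shows that $F$ is well defined on $\mathbb{R}$. By Theorem \ref{Theorem PR 1}, on each bounded interval $[c,d]\ni a$ the restriction $F|_{[c,d]}$ is a \L ojasiewicz function, giving a local \L ojasiewicz distribution. Because the $F\leftrightarrow\mathsf{F}$ correspondence is uniquely determined by point values in the \L ojasiewicz class (Section \ref{Sbs Lojasiewics dist}), these local distributions agree on overlaps and patch to a single $\mathsf{F}\in\mathcal{D}'(\mathbb{R})$ with $\mathsf{F}(x)=F(x)$ for every $x\in\mathbb{R}$. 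Define $\mathsf{f}:=\mathsf{F}'\in\mathcal{D}'(\mathbb{R})$.

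Next, I would verify (\ref{DI.13}). Given $\psi\in\mathcal{D}(\mathbb{R})$, choose an interval $[c,d]$ with $a\in[c,d]$ and $\operatorname*{supp}\psi\subset(c,d)$, and put $G(x)=(\mathfrak{dist})\int_{c}^{x}f(t)\,\mathrm{d}t$. Proposition \ref{Prop. IN 4} gives $G=F-F(c)$ on $[c,d]$, so $\mathsf{G}=\mathsf{F}-F(c)$ on $(c,d)$ and hence $\mathsf{G}'=\mathsf{f}$ on $(c,d)$. Applying Theorem \ref{Theorem DI 3} to $f$ on $[c,d]$ and using that $\psi$ vanishes outside $(c,d)$,
\[
\langle\mathsf{f},\psi\rangle=\langle\mathsf{G}',\psi\rangle=(\mathfrak{dist})\int_{c}^{d}f(x)\psi(x)\,\mathrm{d}x=(\mathfrak{dist})\int_{-\infty}^{\infty}f(x)\psi(x)\,\mathrm{d}x\,.
\]
This establishes the representation (\ref{DI.13}) and simultaneously shows that it defines the distribution $\mathsf{f}=\mathsf{F}'$, independently of the choice of the base point $a$.

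The a.e. identification $\mathsf{f}(x)=f(x)$ follows by applying Theorem \ref{Theorem PR 2} on every bounded interval $[c,d]$, which gives the a.e. equality on $(c,d)$; covering $\mathbb{R}$ by countably many such intervals yields the conclusion globally. The main subtle step is the patching argument in the first paragraph: one must check that the various local \L ojasiewicz distributions genuinely agree on overlaps so as to define a single $\mathsf{F}\in\mathcal{D}'(\mathbb{R})$. This rests on the uniqueness of the $F\leftrightarrow\mathsf{F}$ correspondence for \L ojasiewicz functions and on the observation that at every point $x_{0}\in\mathbb{R}$ the two distributional lateral limits of $F$ exist and both equal $F(x_{0})$, so the gluing criterion from Section \ref{Sbs Lojasiewics dist} applies.
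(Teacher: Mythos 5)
Your proof is correct and follows essentially the same route as the paper, whose own argument is just the remark that for $\psi\in\mathcal{D}(\mathbb{R})$ the integral in (\ref{DI.13}) is really an integral over a compact interval, so Theorem \ref{Theorem DI 3} applies directly. The extra work you do (patching the local \L ojasiewicz primitives into a global $\mathsf{F}$ via uniqueness of the $F\leftrightarrow\mathsf{F}$ correspondence, and localizing Theorem \ref{Theorem PR 2} for the a.e.\ identification) simply makes explicit details the paper leaves to the reader.
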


\begin{proof}
It follows at once from the Theorem \ref{Theorem DI 3}. Observe that for any
$\psi$ the integral in (\ref{DI.13}) is not really an improper integral, but
actually an integral over a compact interval.\smallskip
\end{proof}

We can now extend the notion of association between a function and a
distribution. From our results, we can associate to any locally
distributionally integrable function over $\mathbb{R}$ a unique distribution.
We shall call those distributions \emph{locally integrable distributions.} The
association%
\begin{equation}
f\leftrightarrow\mathsf{f}\,, \label{DI.14}%
\end{equation}
between locally distributionally integrable functions and locally integrable
distributions is characterized by the equation%
\begin{equation}
\left\langle \mathsf{f},\psi\right\rangle =\left(  \mathfrak{dist}\right)
\int_{-\infty}^{\infty}f\left(  x\right)  \psi\left(  x\right)  \,\mathrm{d}%
x\,,\ \ \ \ \ \psi\in\mathcal{D}\left(  \mathbb{R}\right)  \,. \label{DI.15}%
\end{equation}
This association generalizes the association between locally Lebesgue
integrable functions and regular distributions as well as the association
between \L ojasiewicz functions and \L ojasiewicz distributions.\smallskip

We also have similar integral representation results for other spaces of
distributions. In fact, since all evaluations in $\mathcal{S}^{\prime}\left(
\mathbb{R}\right)  $ and $\mathcal{K}^{\prime}\left(  \mathbb{R}\right)  $ are
Ces\`{a}ro evaluations \cite{VEEdin}\ we immediately obtain the
following.\smallskip

\begin{theorem}
\label{Theorem DI 5}Let $\mathsf{f}$ be a locally integrable distribution,
$f\leftrightarrow\mathsf{f.}$ If $\mathsf{f}\in\mathcal{S}^{\prime}\left(
\mathbb{R}\right)  $ then there exists $k\in\mathbb{N}$ such that for all
$\phi\in\mathcal{S}\left(  \mathbb{R}\right)  ,$%
\begin{equation}
\left\langle \mathsf{f},\phi\right\rangle =\left(  \mathfrak{dist}\right)
\int_{-\infty}^{\infty}f\left(  x\right)  \phi\left(  x\right)  \,\mathrm{d}%
x\ \ \ \ \left(  \mathrm{C},k\right)  \,. \label{DI.15p}%
\end{equation}
If $\mathsf{f}\in\mathcal{K}^{\prime}\left(  \mathbb{R}\right)  $ and $\phi
\in\mathcal{K}\left(  \mathbb{R}\right)  ,$ then (\ref{DI.15p}) holds for some
$k\in\mathbb{N}$ that depends on $\phi.$\smallskip
\end{theorem}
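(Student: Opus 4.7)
The plan is to combine Theorem \ref{Theorem DI 4} with the Cesàro-limit characterization of evaluations of tempered and $\mathcal{K}'$ distributions recalled in Subsections \ref{Sbs Cesaro}--\ref{Sbs Evaluations}. First I would pick a smooth partition of unity $1=\theta_{+}+\theta_{-}$ on $\mathbb{R}$ with $\operatorname*{supp}\theta_{+}\subset[-1,\infty)$ and $\operatorname*{supp}\theta_{-}\subset(-\infty,1]$, and split $\phi=\phi_{+}+\phi_{-}$ with $\phi_{\pm}=\theta_{\pm}\phi$. Both $\phi_{\pm}$ remain in $\mathcal{S}(\mathbb{R})$ (respectively $\mathcal{K}(\mathbb{R})$), and $\mathsf{f}\phi_{\pm}$ has support bounded on one side. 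By linearity, $\langle\mathsf{f},\phi\rangle$ and the Cesàro integral $(\mathfrak{dist})\int_{-\infty}^{\infty}f\phi\,\mathrm{d}x$ both split into a sum of two one-sided pieces, so it suffices to treat, say, the $\phi_{+}$ part.

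For that one-sided piece, the Cesàro characterization of $\mathcal{S}'(\mathbb{R})$ yields a fixed $\alpha\in\mathbb{R}$ with $\mathsf{f}(x)=O(|x|^{\alpha})$ $(\mathrm{C})$ as $|x|\to\infty$, while $\phi_{+}$ is rapidly decreasing strongly. There is then an integer $k$ depending only on $\alpha$ such that the primitive $\mathsf{g}_{+}$ of $\mathsf{f}\phi_{+}$ whose support is bounded on the left satisfies $\mathsf{g}_{+}(x)\to\langle\mathsf{f},\phi_{+}\rangle$ in the $(\mathrm{C},k)$ sense as $x\to\infty$; this is exactly the defining property of the Cesàro evaluation recorded in (\ref{T.1}). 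Next, Theorem \ref{Theorem DI 2} shows that $f\phi_{+}$ is locally distributionally integrable, and Theorem \ref{Theorem DI 4} produces its Łojasiewicz indefinite integral $G_{+}(x)=(\mathfrak{dist})\int_{-1}^{x}f(t)\phi_{+}(t)\,\mathrm{d}t$, whose associated distribution $\mathsf{G}_{+}$ satisfies $\mathsf{G}_{+}'=\mathsf{f}\phi_{+}$ on $(-1,\infty)$. Choosing $\theta_{+}\equiv 0$ on $(-\infty,-1]$, we may extend $G_{+}$ by $0$ for $x<-1$; the extended object has support in $[-1,\infty)$, so it differs from $\mathsf{g}_{+}$ by at most an additive constant, which must vanish since both primitives are zero on $(-\infty,-1)$. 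Hence $\mathsf{G}_{+}=\mathsf{g}_{+}$, and therefore $(\mathfrak{dist})\int_{-\infty}^{\infty}f(x)\phi_{+}(x)\,\mathrm{d}x=\langle\mathsf{f},\phi_{+}\rangle$ $(\mathrm{C},k)$. Summing with the symmetric identity for $\phi_{-}$ yields the $\mathcal{S}'$ statement; the $\mathcal{K}'$ case proceeds identically, except that since $\phi\in\mathcal{K}(\mathbb{R})$ may have arbitrary strong polynomial growth, the order $k$ must now depend on $\phi$ as well.

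The main technical obstacle is the identification $\mathsf{G}_{+}=\mathsf{g}_{+}$, i.e., checking that extending the indefinite distributional integral by zero across $x=-1$ introduces no spurious delta-type contribution and that both primitives are genuinely supported in $[-1,\infty)$. The Łojasiewicz property of $\mathsf{G}_{+}$ (Theorem \ref{Theorem PR 1}) together with the vanishing of $\phi_{+}$ and all its derivatives on $(-\infty,-1]$ will suffice, but the bookkeeping of this matching step is the only nontrivial ingredient; everything else is a direct assembly of the Cesàro evaluation theory from Subsection \ref{Sbs Evaluations} with the indefinite integral results already established.
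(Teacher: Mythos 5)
Your argument is correct and follows essentially the same route as the paper: the paper's proof is a one-line appeal to the fact (cited from the Vindas--Estrada support paper) that all evaluations in $\mathcal{S}^{\prime}(\mathbb{R})$ and $\mathcal{K}^{\prime}(\mathbb{R})$ are Ces\`{a}ro evaluations, combined with the association $f\phi\leftrightarrow\mathsf{f}\phi$ already secured by Theorems \ref{Theorem DI 2}--\ref{Theorem DI 4}. Your partition-of-unity splitting and the matching $\mathsf{G}_{+}=\mathsf{g}_{+}$ just make explicit the bookkeeping the paper leaves implicit, while your key analytic claim (one-sided $(\mathrm{C},k)$ limits of the primitives equal to the duality pairing, with $k$ uniform in $\phi$ in the tempered case) is precisely the cited external result, invoked at the same level of detail as in the paper.
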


Using the results of \cite{est2, EF}, (see \cite[Chp. 6]{est-kan}) we also
obtain the ensuing useful characterization.\smallskip

\begin{theorem}
\label{Theorem DI 6}Let $\mathsf{f}$ be a locally integrable distribution,
$f\leftrightarrow\mathsf{f.}$ Then $\mathsf{f}\in\mathcal{K}^{\prime}\left(
\mathbb{R}\right)  $ if and only if the integrals%
\begin{equation}
\left(  \mathfrak{dist}\right)  \int_{-\infty}^{\infty}f\left(  x\right)
x^{n}\mathrm{d}x\ \ \ \ \left(  \mathrm{C}\right)  \,, \label{DI.15p2}%
\end{equation}
exist in the Ces\`{a}ro sense for all $n\in\mathbb{N}.$\smallskip
\end{theorem}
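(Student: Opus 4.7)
For the forward direction ($\mathsf{f} \in \mathcal{K}^{\prime}(\mathbb{R}) \Rightarrow$ the integrals exist), the key observation is that each monomial $\phi(x) = x^{n}$ lies in $\mathcal{K}_{n}(\mathbb{R}) \subset \mathcal{K}(\mathbb{R})$, since $\phi^{(m)}(t) = O(|t|^{n-m})$ strongly as $|t| \rightarrow \infty$. Theorem \ref{Theorem DI 5} applied with this $\phi$ immediately gives
\[
\langle \mathsf{f}, x^{n}\rangle = (\mathfrak{dist})\int_{-\infty}^{\infty} f(x)\, x^{n}\, \mathrm{d}x \ \ (\mathrm{C}, k_{n})\,,
\]
for some $k_{n} \in \mathbb{N}$, establishing the required Cesàro existence of the moment integrals.

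For the reverse direction, assume that the Cesàro moment integrals exist for every $n \in \mathbb{N}$. The plan is to reduce to the characterization (from \cite{est2, EF} and \cite[Chp. 6]{est-kan}) that $\mathsf{f} \in \mathcal{K}^{\prime}(\mathbb{R})$ is equivalent to the distributional Cesàro evaluations $\mu_{n} = \langle \mathsf{f}(x), x^{n}\rangle$ (C) all existing, which in turn is equivalent to $\mathsf{f}(x) = O(|x|^{\alpha})$ (C) as $|x|\to\infty$ holding for every $\alpha \in \mathbb{R}$, as recalled in Subsection \ref{Sbs Cesaro}. To effect the reduction, set $F(x) = (\mathfrak{dist})\int_{0}^{x} f(t)\, \mathrm{d}t$, so that $F$ is a \L ojasiewicz function on every bounded interval by Theorem \ref{Theorem PR 1}, with $\mathsf{F}^{\prime} = \mathsf{f}$ by Theorem \ref{Theorem DI 4}. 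Form the successive iterated distributional primitives $F_{1} = F, F_{2}, F_{3}, \ldots$, each again a \L ojasiewicz function. Iterating the integration-by-parts formula of Theorem \ref{Theorem DI 2} expresses
\[
\int_{a}^{b} f(x)\, x^{n}\, \mathrm{d}x = \sum_{j=1}^{n+1} (-1)^{j-1} \frac{n!}{(n-j+1)!}\bigl[F_{j}(b)\, b^{n-j+1} - F_{j}(a)\, a^{n-j+1}\bigr]\,.
\]
The hypothesised Cesàro existence of the left side for every $n$, combined with induction on $n$, then forces each primitive $F_{j}$ to satisfy $F_{j}(x) = O(|x|^{\alpha})$ (C) as $|x| \rightarrow \infty$ for every $\alpha$, which by the definition recalled in Subsection \ref{Sbs Cesaro} is exactly the statement $\mathsf{f} \in \mathcal{K}^{\prime}(\mathbb{R})$.

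The main obstacle is to reconcile the two notions of Cesàro moment at play. The improper distributional integral $(\mathfrak{dist})\int_{-\infty}^{\infty} f(x)\, x^{n}\, \mathrm{d}x$ (C) in the statement is to be read via (\ref{DI.12p}) combined with the Cesàro conventions of Subsection \ref{Sbs Evaluations}, while the characterization borrowed from \cite{est2, EF} is phrased in terms of the distributional Cesàro evaluation $\langle \mathsf{f}(x), x^{n}\rangle$ (C). Both unwind to statements about the Cesàro behaviour at $\pm\infty$ of primitives of $\mathsf{f}(x)\, x^{n}$, so after splitting $\mathsf{f} = \mathsf{f}_{1} + \mathsf{f}_{2}$ with supports bounded on the left and right respectively and isolating the boundary contributions at $a$ and $b$ in the iterated integration-by-parts expansion, the two agree; but this bookkeeping, rather than any new idea, is where most of the technical work in the proof will live.
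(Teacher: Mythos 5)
Your forward direction is the same as the paper's: $x^{n}\in\mathcal{K}_{n}(\mathbb{R})\subset\mathcal{K}(\mathbb{R})$, so Theorem \ref{Theorem DI 5} gives the $(\mathrm{C},k_{n})$ existence of the integrals. For the converse, the paper does exactly what you announce at the start of your second paragraph and nothing more: it invokes the characterization from \cite{est2, EF} (see \cite[Chp. 6]{est-kan}) that $\mathsf{f}\in\mathcal{K}^{\prime}(\mathbb{R})$ if and only if all Ces\`{a}ro evaluations $\left\langle \mathsf{f}(x),x^{n}\right\rangle$ $(\mathrm{C})$ exist, together with the identification of these evaluations with the integrals (\ref{DI.15p2}). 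That identification is essentially immediate: $f(x)x^{n}$ is locally distributionally integrable by Theorem \ref{Theorem DI 2}, and by Theorems \ref{Theorem DI 3} and \ref{Theorem DI 4} its indefinite integral corresponds to a first-order primitive of the distribution $\mathsf{f}(x)x^{n}$, so (after the splitting $\mathsf{f}=\mathsf{f}_{1}+\mathsf{f}_{2}$ of Subsection \ref{Sbs Evaluations}) the Ces\`{a}ro limits at $\pm\infty$ defining the evaluation and defining the integral are literally the same limits. Your closing paragraph correctly locates the work there.

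The gap is in your middle paragraph. The claim that the hypothesis, via the iterated integration-by-parts identity and ``induction on $n$,'' forces each primitive $F_{j}$ to satisfy $F_{j}(x)=O(|x|^{\alpha})$ $(\mathrm{C})$ for every $\alpha$ is false as stated: primitives of rapidly decreasing distributions are not rapidly decreasing in the Ces\`{a}ro sense. Already for a Gaussian $f$ one has $F_{1}(x)\rightarrow\sqrt{\pi}/2\neq0$, so $F_{1}$ is not $O(|x|^{\alpha})$ $(\mathrm{C})$ for any negative admissible $\alpha$, and the higher $F_{j}$ grow polynomially. What is equivalent to $\mathsf{f}\in\mathcal{K}^{\prime}(\mathbb{R})$ is that for each $\alpha$ \emph{some} primitive of sufficiently high order $N$ equals a polynomial of degree at most $N-1$ plus $O(|x|^{\alpha+N})$; the polynomial correction you drop is precisely what your endpoint identity cannot control, since the hypothesis only constrains the full combination $\sum_{j}(-1)^{j-1}\frac{n!}{(n-j+1)!}\bigl[F_{j}(b)b^{n-j+1}-F_{j}(a)a^{n-j+1}\bigr]$ and not the individual terms. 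Extracting the needed growth estimates from that identity would require Ces\`{a}ro-order lemmas on multiplication and division by powers of $x$ and would in effect reprove the theorem of \cite{est2} that you already cite; the direct identification of the integral with $\left\langle \mathsf{f}(x),x^{n}\right\rangle$ $(\mathrm{C})$ described above makes the whole detour unnecessary.
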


At this point we point out an useful local bound for locally
integrable distributions.\smallskip

\begin{proposition}
\label{Prop DI 1}Let $\mathsf{f}$ be a locally integrable distribution. Then
for any $x\in\mathbb{R}$%
\begin{equation}
\mathsf{f}\left(  x+\varepsilon y\right)  =o\left(  1/\varepsilon\right)
\,,\ \ \ \ \ \varepsilon\rightarrow0^{+}, \label{DI.16}%
\end{equation}
in the space $\mathcal{D}^{\prime}\left(  \mathbb{R}\right)  ,$ that is, if
$\psi\in\mathcal{D}\left(  \mathbb{R}\right)  ,$ then%
\begin{equation}
\left\langle \mathsf{f}\left(  x+\varepsilon y\right)  ,\psi\left(  y\right)
\right\rangle =o\left(  1/\varepsilon\right)  \,,\ \ \ \ \ \varepsilon
\rightarrow0^{+}. \label{DI.17}%
\end{equation}

\end{proposition}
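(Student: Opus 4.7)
The plan is to reduce the statement to a fact about the primitive $\mathsf{F}$, exploiting the Łojasiewicz nature of indefinite integrals (Theorem~\ref{Theorem PR 1}) and the key observation that the test function $\psi'$ has vanishing integral. Here is the outline.

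First, I would invoke Theorem~\ref{Theorem DI 4}: if $f\leftrightarrow\mathsf{f}$ is locally distributionally integrable, then for any fixed $a\in\mathbb{R}$ we have $\mathsf{f}=\mathsf{F}'$, where $\mathsf{F}\leftrightarrow F$ and $F(x)=(\mathfrak{dist})\int_{a}^{x} f(t)\,\mathrm{d}t$. By Theorem~\ref{Theorem PR 1}, $F$ is a Łojasiewicz function, so $\mathsf{F}$ is a Łojasiewicz distribution; in particular, the distributional point value $\mathsf{F}(x)$ exists at the given point, which by definition means
\begin{equation*}
\lim_{\varepsilon\rightarrow 0^{+}}\left\langle \mathsf{F}(x+\varepsilon y),\phi(y)\right\rangle =F(x)\int_{-\infty}^{\infty}\phi(y)\,\mathrm{d}y\,,
\end{equation*}
for every $\phi\in\mathcal{D}(\mathbb{R})$.

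Second, the key computation is the differentiation identity
\begin{equation*}
\mathsf{f}(x+\varepsilon y)=\frac{1}{\varepsilon}\,\frac{d}{dy}\bigl[\mathsf{F}(x+\varepsilon y)\bigr]\,,
\end{equation*}
valid in $\mathcal{D}'(\mathbb{R}_{y})$ from $\mathsf{F}'=\mathsf{f}$ by the chain rule. Pairing with $\psi\in\mathcal{D}(\mathbb{R})$ and integrating by parts yields
\begin{equation*}
\varepsilon\left\langle \mathsf{f}(x+\varepsilon y),\psi(y)\right\rangle =-\left\langle \mathsf{F}(x+\varepsilon y),\psi'(y)\right\rangle\,.
\end{equation*}

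Third, apply the Łojasiewicz limit to the test function $\psi'$. Since $\psi$ has compact support, $\int_{-\infty}^{\infty}\psi'(y)\,\mathrm{d}y=0$, so
\begin{equation*}
\lim_{\varepsilon\rightarrow 0^{+}}\left\langle \mathsf{F}(x+\varepsilon y),\psi'(y)\right\rangle =F(x)\cdot 0=0\,.
\end{equation*}
Consequently $\varepsilon\left\langle \mathsf{f}(x+\varepsilon y),\psi(y)\right\rangle =o(1)$ as $\varepsilon\rightarrow 0^{+}$, which is exactly (\ref{DI.17}). The proof is straightforward once one sees that integration by parts transfers the singular $1/\varepsilon$ factor to a pairing with a zero-mean test function, upgrading the automatic $O(1)$ bound on a Łojasiewicz primitive to the desired $o(1)$. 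There is no real obstacle; the only subtlety worth flagging is that without the zero-mean property of $\psi'$ one would only get $O(1/\varepsilon)$, so the estimate genuinely relies on the primitive being a Łojasiewicz distribution with point value at $x$ (which, in turn, is the content of our construction of the indefinite integral).
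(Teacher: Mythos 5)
Your proof is correct and is essentially the paper's own argument: the paper also notes that the primitive $\mathsf{F}$ is a \L ojasiewicz distribution, so $\mathsf{F}(x+\varepsilon y)=\mathsf{F}(x)+o(1)$ in $\mathcal{D}'(\mathbb{R})$, and then differentiates, which is exactly your integration-by-parts step with $\psi'$ (the vanishing of $\int\psi'$ being why the constant term drops out). You have merely written out explicitly the step the paper compresses into ``differentiation of (\ref{DI.18}) yields (\ref{DI.16}).''
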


\begin{proof}
Indeed, if $\mathsf{F}$ is a primitive for $\mathsf{f,}$ $\mathsf{F}^{\prime
}=\mathsf{f,}$ then $\mathsf{F}$ is a \L ojasiewicz distribution, and thus the
point value $\mathsf{F}\left(  x\right)  $ exists. Hence in $\mathcal{D}%
^{\prime}\left(  \mathbb{R}\right)  ,$%
\begin{equation}
\mathsf{F}\left(  x+\varepsilon y\right)  =\mathsf{F}\left(  x\right)
+o\left(  1\right)  \ \ \ \text{as\ \ }\varepsilon\rightarrow0^{+}.
\label{DI.18}%
\end{equation}
Differentiation of (\ref{DI.18}) yields (\ref{DI.16}).\smallskip
\end{proof}

Suppose that $\mathsf{f}$ is a locally integrable distribution with compact
support contained in $\left[  a,b\right]  .$ Then while (\ref{DI.17}) is valid
at the endpoints $x=a$ and $x=b$ if $\psi\in\mathcal{D}\left(  \mathbb{R}%
\right)  ,$ it is enough to consider the distributional limits $\mathsf{f}%
\left(  a+\varepsilon y\right)  $ as $\varepsilon\rightarrow0^{+},$ only for
$y>0,$ and $\mathsf{f}\left(  b+\varepsilon y\right)  $ as $\varepsilon
\rightarrow0^{+},$ only for $y<0.$ This means that if $x=a,$ it suffices to ask
(\ref{DI.17}) to hold if $\psi\in\mathcal{D}\left(  \mathbb{R}\right)  $
satisfies $\operatorname*{supp}\psi\subset\left(  0,\infty\right)  ,$ or, when
needed, if $\operatorname*{supp}\psi\subset\lbrack0,\infty).$ Similarly at
$x=b$ one just needs to consider test functions with $\operatorname*{supp}%
\psi\subset\left(  -\infty,0\right)  $ or $\operatorname*{supp}\psi
\subset(-\infty,0].$

\section{Improper integrals}\label{Section Improper Integrals}

It is well known that if $f$ is Lebesgue integrable over $\left[  a,c\right]
$ for any $c<b,$ then the improper integral%
\begin{equation}
\int_{a}^{b}f\left(  x\right)  \,\mathrm{d}x=\lim_{c\rightarrow b^{-}}\int
_{a}^{c}f\left(  x\right)  \,\mathrm{d}x\,, \label{Imp.1}%
\end{equation}
may exist even when $f$ is not Lebesgue integrable over $\left[  a,b\right]
.$

On the other hand, according to Hake's theorem \cite{Hake} (see \cite{Bartle},
\cite{Gordon}, or \cite{Sacks}), if $f$ is Denjoy-Perron-Henstock-Kurzweil integrable
over $\left[  a,c\right]  $ for any $c<b,$ and the improper integral
(\ref{Imp.1}) exists, then $f$ must be Denjoy-Perron-Henstock-Kurzweil integrable over
$\left[  a,b\right]  .$ In other words, there is no such thing as improper
Denjoy-Perron-Henstock-Kurzweil integrals over a finite interval.

For the distributional integral we have the following result.\smallskip

\begin{theorem}
\label{Theorem Imp 1}Let $f$ be distributionally integrable over $\left[
a,c\right]  $ for any $c<b.$ Let $F\left(  x\right)  =\left(  \mathfrak{dist}%
\right)  \int_{a}^{x}f\left(  t\right)  \,\mathrm{d}t,$ $x<b,$ be its
indefinite integral, and let $\mathsf{F}$ be the corresponding \L oja\-sie\-wicz
distribution defined for $x<b,$ $F\leftrightarrow\mathsf{F.}$ Suppose that the
distributional limit%
\begin{equation}
\lim_{c\rightarrow b^{-}}\mathsf{F}\left(  c\right)  =L\,, \label{Imp.2}%
\end{equation}
exists. Then $f$ is distributionally integrable over $\left[  a,b\right]  $
and%
\begin{equation}
\left(  \mathfrak{dist}\right)  \int_{a}^{b}f\left(  x\right)  \,\mathrm{d}%
x=L\,. \label{Imp.3}%
\end{equation}

\end{theorem}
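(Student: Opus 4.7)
The plan is to verify integrability by the $\varepsilon$-characterization (\ref{IN.9}): for each $\varepsilon>0$ I will construct a major pair $(\mathsf{u},\mathsf{U})$ and a minor pair $(\mathsf{v},\mathsf{V})$ for $f$ on $[a,b]$ with $|\mathsf{U}(b)-L|\leq\varepsilon$, $|\mathsf{V}(b)-L|\leq\varepsilon$, and $\mathsf{U}(b)-\mathsf{V}(b)\leq 2\varepsilon$. The strategy is to exhaust $[a,b)$ by a sequence of subintervals, pick major pairs on each with geometrically decreasing slack, and paste them together into a single \L ojasiewicz primitive on $[a,b]$; the distributional limit hypothesis is used precisely to extend this primitive across $b$.

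Concretely, pick $a=c_{0}<c_{1}<\cdots$ with $c_{n}\uparrow b$. By Proposition \ref{Prop. IN 1}, $f$ is distributionally integrable over each $[c_{n},c_{n+1}]$, so we may choose a major pair $(\mathsf{u}_{n},\mathsf{U}_{n})$ on $[c_{n},c_{n+1}]$. By Lemma \ref{Lemma PR 1} the function $\sigma_{n}:=U_{n}-(F-F(c_{n}))$ is continuous and increasing on $[c_{n},c_{n+1}]$, vanishes at $c_{n}$, and we may arrange $\sigma_{n}(c_{n+1})<\varepsilon/2^{n+1}$ by definition of the distributional integral on $[c_{n},c_{n+1}]$. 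Define
\[
\sigma(x)=\sum_{k=0}^{n-1}\sigma_{k}(c_{k+1})+\sigma_{n}(x)\quad\text{for }x\in[c_{n},c_{n+1}]\,,
\]
together with $\sigma(b)=\sum_{k=0}^{\infty}\sigma_{k}(c_{k+1})\leq\varepsilon$. Compatibility at each joint $c_{n}$ and the geometric bound show that $\sigma$ is continuous, increasing on $[a,b]$ with $\sigma(a)=0$.

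Since $\mathsf{F}(b^{-})=L$ exists distributionally, the \L ojasiewicz gluing result recalled in Section \ref{Sbs Lojasiewics dist} produces a \L ojasiewicz function on $\mathbb{R}$ which agrees with $F$ on $(-\infty,b)$ and equals the constant $L$ on $(b,\infty)$; we still denote it by $F$ and note $F(b)=L$. Set $U=F+\sigma$ on $[a,b]$, extended by $0$ for $x<a$ and by $L+\sigma(b)$ for $x>b$. Because $\sigma$ is continuous, $U$ is a \L ojasiewicz function on $\mathbb{R}$ with $U(a)=0$ and $U(b)=L+\sigma(b)\leq L+\varepsilon$. I claim $(\mathsf{u},\mathsf{U}):=(\mathsf{U}^{\prime},\mathsf{U})$ is a major pair for $f$ over $[a,b]$: on each open subinterval $(c_{n},c_{n+1})$, $U$ differs from $U_{n}$ by a locally constant function, so $\mathsf{U}^{\prime}=\mathsf{u}_{n}$ there and the radial bounds (\ref{IN.2})--(\ref{IN.3}) are inherited from $(\mathsf{u}_{n},\mathsf{U}_{n})$; the countable exceptional set $\{c_{n}\}_{n\geq 1}\cup\{b\}$ is absorbed into $E$ and into the null set $Z$ of Definition \ref{Def IN1}. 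An entirely parallel construction with minor pairs produces $(\mathsf{v},\mathsf{V})$ with $\mathsf{V}(b)\geq L-\varepsilon$. Hence $\mathsf{U}(b)-\mathsf{V}(b)\leq 2\varepsilon$ and both values sandwich $L$; letting $\varepsilon\to 0^{+}$ in Definition \ref{Def IN3} completes the proof.

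The main obstacle will be the extension of $F$ across $b$: one must use precisely the hypothesis $\mathsf{F}(b^{-})=L$ and the gluing property of \L ojasiewicz distributions to ensure the extended object is genuinely \L ojasiewicz at $b$, and then confirm that the point value of $U=F+\sigma$ at $b$ really computes as $L+\sigma(b)$ (so that we control $\mathsf{U}(b)$ and $\mathsf{V}(b)$ from above and below by $L\pm\varepsilon$). The remaining verifications---continuity of $\sigma$ at each $c_{n}$ and at $b$, inheritance of the radial bounds on each $(c_{n},c_{n+1})$, and telescoping of the $\varepsilon/2^{n+1}$ errors---are routine.
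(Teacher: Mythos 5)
Your proof is correct and takes essentially the same route as the paper: partition $[a,b)$ by $c_{n}\uparrow b$, choose pairs on each $[c_{n},c_{n+1}]$ with geometrically small slack, paste them into a single \L ojasiewicz primitive, and use the hypothesis $\lim_{c\to b^{-}}\mathsf{F}(c)=L$ to extend it across $b$, giving major and minor pairs with $\mathsf{U}(b),\mathsf{V}(b)\in[L-\varepsilon,L+\varepsilon]$. The only cosmetic difference is bookkeeping: you assemble the majorant as $F+\sigma$ with $\sigma$ continuous increasing (extending $F$ past $b$ via the gluing property of \L ojasiewicz functions), whereas the paper sums the $\mathsf{U}_{n}$ as a distributional series $\widetilde{\mathsf{U}}$ and extends it after observing that $\widetilde{\mathsf{U}}-\mathsf{F}$ is continuous, increasing and bounded --- the same underlying mechanism.
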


\begin{proof}
Let $\varepsilon>0.$ Let $\left\{  c_{n}\right\}  _{n=1}^{\infty}$ be a
strictly increasing sequence with $c_{1}=a$ and $c_{n}\nearrow b.$ For each
$n$ let $\left(  \mathsf{u}_{n},\mathsf{U}_{n}\right)  $ and $\left(
\mathsf{v}_{n},\mathsf{V}_{n}\right)  $ be major and minor pairs for $f$ over
$\left[  c_{n},c_{n+1}\right]  $ that satisfy $\mathsf{U}_{n}\left(
c_{n+1}\right)  -\mathsf{V}_{n}\left(  c_{n+1}\right)  <\varepsilon/2^{n}.$
The two series%
\begin{equation}
\widetilde{\mathsf{U}}=\sum_{n=1}^{\infty}\mathsf{U}_{n}%
\,,\ \ \ \ \ \widetilde{\mathsf{V}}=\sum_{n=1}^{\infty}\mathsf{V}_{n}\,,
\label{Imp.4}%
\end{equation}
converge distributionally in the interval $(-\infty,b),$ since in any interval
of the form $(-\infty,c)$ for $c<b$ the series become finite sums. The
distributions $\widetilde{\mathsf{U}}$ and $\widetilde{\mathsf{V}}$ are
\L ojasiewicz distributions for $x<b,$ and for each $c<b$ they yield major and
minor pairs for $f$ over $\left[  a,c\right]  ,$ $\left(  \mathsf{U}_{\left(
c\right)  }^{\prime},\mathsf{U}_{\left(  c\right)  }\right)  $ and $\left(
\mathsf{V}_{\left(  c\right)  }^{\prime},\mathsf{V}_{\left(  c\right)
}\right)  $ by taking $\mathsf{U}_{\left(  c\right)  }$ and $\mathsf{V}%
_{\left(  c\right)  }$ to be \L ojasiewicz distributions over $\mathbb{R}$
that equal $\widetilde{\mathsf{U}}$ and $\widetilde{\mathsf{V}},$
respectively, over $\left(  -\infty,c\right)  $ and constant over $\left(
c,\infty\right)  .$ Also, $\widetilde{\mathsf{U}}\left(  c\right)
-\widetilde{\mathsf{V}}\left(  c\right)  <\varepsilon$ for $c<b.$

Observe now that $\widetilde{\mathsf{U}}-\mathsf{F}$ and $\mathsf{F}%
-\widetilde{\mathsf{V}}$ are both \L ojasiewicz distributions over
$(-\infty,b),$ corresponding to continuous increasing functions. Since
$\mathsf{F}$ has a distributional limit from the left at $x=b,$ the same is
true of both $\widetilde{\mathsf{U}}$ and $\widetilde{\mathsf{V}},$ and thus
one can extend them as \L ojasiewicz distributions over $\mathbb{R}$ by asking
the extensions, say $\mathsf{U}$ and $\mathsf{V},$ to be constant over
$\left(  b,\infty\right)  .$ Then $\left(  \mathsf{U}^{\prime},\mathsf{U}%
\right)  $ and $\left(  \mathsf{V}^{\prime},\mathsf{V}\right)  $ are major and
minor pairs for $f$ over $\left[  a,b\right]  $ with $\mathsf{U}\left(
b\right)  -\mathsf{V}\left(  b\right)  \leq\varepsilon,$ and the
distributional integrability of $f$ over $\left[  a,b\right]  $ follows.
Furthermore, we also obtain the bounds%
\[
L-\varepsilon\leq\mathsf{V}\left(  b\right)  \leq\mathsf{U}\left(  b\right)
\leq L+\varepsilon\,,
\]
which immediately yield (\ref{Imp.3}).\smallskip
\end{proof}

One can rephrase the previous theorem by simply saying that the distributional integral
$\left(  \mathfrak{dist}\right)  \int_{a}^{b}f\left(  x\right)  \,\mathrm{d}x$
exists, and is finite, if and only if the distributional limit of $\left(
\mathfrak{dist}\right)  \int_{a}^{c}f\left(  x\right)  \,\mathrm{d}x$ as
$c\rightarrow b^{-}$ exists. We may reformulate Theorem \ref{Theorem Imp 1} if
we use local Ces\`{a}ro limits. Let $g$ be distributional integrable over
$[a,c]$ for any $c<b$. Define its sequence of $n$ primitives $\left\{
g_{a}^{(-n)}\right\}  _{n=0}^{\infty}$ on $[a,b)$ recursively as
\begin{equation}
g_{a}^{(0)}(x)=g(x)\;,\ \ \ g_{a}^{(-n-1)}(x)=\left(  \mathfrak{dist}\right)
\int_{a}^{x}g_{a}^{(-n)}(t)\;\mathrm{d}t\;,\ \ \ x\in\lbrack a,b)\,.
\end{equation}
We say that $g$ has a Ces\`{a}ro limit as $c\rightarrow b_{-}$, and write
\begin{equation}
\lim_{c\rightarrow b^{-}}g(c)=L\ \ \ (\mathrm{C})\;, \label{CLL}%
\end{equation}
if there exist $d\in\lbrack a,b)$, $n\in\mathbb{N}$, and a polynomial $p$ of
degree at most $n-1$ such that $g_{a}^{(-n)}$ is continuous on $(d,b)$ and
\begin{equation}
\lim_{c\rightarrow b^{-}}\frac{g_{a}^{(-n)}(c)-p(c)}{(c-b)^{n}}=\frac{L}%
{n!}\ .
\end{equation}
Let $g\leftrightarrow\mathsf{g}\in\mathcal{E}^{\prime}[a,b]$. Because of
\L ojasiewicz characterization of distributional limits
\cite{loj,vindas2,VP09}, we have that (\ref{CLL}) is equivalent to the
distributional lateral limit $\lim_{c\rightarrow b^{-}}\mathsf{g}(c)=L$. This
yields immediately the following version of Theorem \ref{Theorem Imp 1} in
which we replace (\ref{Imp.2}) by a Ces\`{a}ro limit.

\begin{theorem}
\label{Theorem Imp 1 v.2} Let $f$ be distributionally integrable over $\left[
a,c\right]  $ for any $c<b.$ Then $f$ is distributionally integrable over
$\left[  a,b\right]  $ if only if the following Ces\`{a}ro limit exists, and
it is finite,
\begin{equation}
\label{Imp.2.1}\lim_{c\to b^{-}}\left(  \mathfrak{dist}\right)  \int_{a}%
^{c}f\left(  x\right)  \,\mathrm{d}x=L \ \ \ (\mathrm{C})\, ,
\end{equation}
In this case (\ref{Imp.3}) holds.
\end{theorem}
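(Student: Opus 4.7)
The plan is to reduce Theorem \ref{Theorem Imp 1 v.2} directly to Theorem \ref{Theorem Imp 1} by exploiting the equivalence, stated in the paragraph preceding the theorem, between the local Cesàro limit (\ref{CLL}) and the distributional lateral limit, for a function $g$ that corresponds to a distribution $\mathsf{g}\in\mathcal{E}'[a,b]$. Since $f$ is distributionally integrable on every $[a,c]$ with $c<b$, its indefinite integral $F(x)=(\mathfrak{dist})\int_{a}^{x}f(t)\,\mathrm{d}t$ is, by Theorem \ref{Theorem PR 1}, a \L ojasiewicz function on $[a,b)$; multiplying the associated \L ojasiewicz distribution $\mathsf{F}$ by a smooth cutoff equal to $1$ on a neighborhood of $b$ and supported in $(-\infty,b]$ places the relevant object in $\mathcal{E}'[a,b]$ without altering behavior near $b$, so the stated equivalence applies.

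First I would handle the ``if'' direction: assume the Cesàro limit $\lim_{c\to b^{-}}F(c)=L$ $(\mathrm{C})$ exists and is finite. By the \L ojasiewicz characterization cited in the preceding paragraph, this is equivalent to the distributional lateral limit $\lim_{c\to b^{-}}\mathsf{F}(c)=L$. Theorem \ref{Theorem Imp 1} then applies directly and delivers both the distributional integrability of $f$ on $[a,b]$ and the value (\ref{Imp.3}).

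Second, for the ``only if'' direction, assume $f$ is distributionally integrable on $[a,b]$. By Theorem \ref{Theorem PR 1} applied to the entire interval, the indefinite integral extends to a \L ojasiewicz function on $[a,b]$, so the \L ojasiewicz point value $\mathsf{F}(b)$ exists and equals $L=(\mathfrak{dist})\int_{a}^{b}f(t)\,\mathrm{d}t$. The existence of a \L ojasiewicz point value at $b$ implies in particular the existence of the distributional lateral limit $\lim_{c\to b^{-}}\mathsf{F}(c)=L$, and by the same equivalence we recover (\ref{Imp.2.1}).

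The only delicate point, and what I regard as the main obstacle, is to check that the iterated primitives $F_{a}^{(-n)}$ appearing in Definition (\ref{CLL}) agree, modulo a polynomial of degree at most $n-1$, with arbitrary distributional primitives of order $n$ of $\mathsf{F}$ near $b$; this is needed so that the \L ojasiewicz structural theorem (\ref{2.3}) translates between the two notions of limit. This identification is routine once one iterates Proposition \ref{Prop PR 1} and Theorem \ref{Theorem PR 1} to see that each step $g_{a}^{(-n)}\mapsto g_{a}^{(-n-1)}$ produces a \L ojasiewicz function whose associated distribution is indeed a primitive of the previous one, so that any two primitives differ only by a polynomial of degree at most $n-1$, exactly as required by the definition of the Cesàro limit.
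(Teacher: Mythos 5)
Your argument is essentially the paper's own proof: the authors obtain the theorem precisely by observing that, applied to $g=F$, the Ces\`{a}ro limit (\ref{CLL}) is equivalent (via the \L ojasiewicz structural characterization) to the distributional lateral limit $\lim_{c\rightarrow b^{-}}\mathsf{F}\left(  c\right)  =L$, so the ``if'' part is Theorem \ref{Theorem Imp 1} and the ``only if'' part follows from Theorem \ref{Theorem PR 1} together with the fact that a distributional point value at $b$ implies the lateral limit there. One small correction: your cutoff device is both impossible as described (no smooth function supported in $(-\infty,b]$ can equal $1$ on a neighborhood of $b$) and unnecessary, since the lateral limit $\lim_{c\rightarrow b^{-}}\mathsf{F}\left(  c\right)  $ depends only on the restriction of $\mathsf{F}$ to $(-\infty,b)$ (cf. the remark in Subsection \ref{Sbs Point values} that distributional limits make sense for distributions on $\mathbb{R}\setminus\{x_{0}\}$), so the equivalence with (\ref{CLL}) applies directly to the \L ojasiewicz distribution $\mathsf{F}$ defined for $x<b$, exactly as in Theorem \ref{Theorem Imp 1}.
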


Theorem \ref{Theorem Imp 1 v.2} therefore tells us that improper Ces\`{a}ro
distributional integrals are always definite integrals, and, conversely, any
definite integral may be computed by the Ces\`{a}ro limit (\ref{Imp.2.1}).
Thus, we have the following analogy with the Denjoy-Perron-Henstock-Kurzweil integral:
there are \emph{no improper} Ces\`{a}ro distributional integrals over finite intervals.

If the integrability of $f$ in $\left[  a,c\right]  $ for all $c<b$ is known,
then we may determine the integrability of $f$ over $\left[  a,b\right]  $
from the behavior of $\mathsf{f,}$ where $f\leftrightarrow\mathsf{f,}$ near
$x=b.$ One result in this direction is the following.\smallskip

\begin{theorem}
\label{Theorem Imp 2}Let $f$ be distributionally integrable over $\left[
a,c\right]  $ for any $c<b.$ Let $f\leftrightarrow\mathsf{f,}$ where
$\mathsf{f}$ is a distribution in $\mathcal{D}^{\prime}\left(  -\infty
,b\right)  .$ Suppose that
\begin{equation}
\mathsf{f}\left(  b+\varepsilon x\right)  =O\left(  \varepsilon^{\alpha
}\right)  \,,\ \ \ \ \ \varepsilon\rightarrow0^{+}, \label{Imp.5}%
\end{equation}
for some $\alpha>-1$ in the space $\mathcal{D}^{\prime}\left(  (-\infty
,0)\right)  ,$ that is, for $\psi\in\mathcal{D}\left(  \mathbb{R}\right)  ,$%
\begin{equation}
\left\langle \mathsf{f}\left(  b+\varepsilon y\right)  ,\psi\left(  y\right)
\right\rangle =O\left(  \varepsilon^{\alpha}\right)  \,,\ \ \ \ \ \varepsilon
\rightarrow0^{+},\ \ \ \text{whenever \ }\operatorname*{supp}\psi
\subset(-\infty,0)\,. \label{Imp.6}%
\end{equation}
Then $f$ is distributionally integrable over $\left[  a,b\right]  .$
\end{theorem}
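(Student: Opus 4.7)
The plan is to reduce to Theorem \ref{Theorem Imp 1}, so it suffices to show that the distributional lateral limit $\lim_{c\rightarrow b^{-}}\mathsf{F}(c)=L$ exists, where $F(x)=\left(\mathfrak{dist}\right)\int_{a}^{x}f(t)\,\mathrm{d}t$ for $x\in[a,b)$ and $F\leftrightarrow\mathsf{F}$. By Theorem \ref{Theorem PR 1}, $F$ is a \L ojasiewicz function on $[a,b)$, hence $\mathsf{F}$ is a \L ojasiewicz distribution on $(-\infty,b)$, and by applying Theorem \ref{Theorem DI 4} on intervals $(-\infty,c)$ with $c<b$ we have $\mathsf{F}^{\prime}=\mathsf{f}$ in $\mathcal{D}^{\prime}(-\infty,b)$.

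Fix $\phi\in\mathcal{D}(-\infty,0)$ and set $G(\varepsilon)=\langle\mathsf{F}(b+\varepsilon y),\phi(y)\rangle$, which is defined for every $\varepsilon>0$ since $b+\varepsilon\operatorname*{supp}\phi\subset(-\infty,b)$. Because $\varepsilon\mapsto\mathsf{F}(b+\varepsilon y)$ is a smooth $\mathcal{D}^{\prime}(-\infty,0)$-valued map, $G$ is smooth and the chain rule for distributions gives
\begin{equation*}
G^{\prime}(\varepsilon)=\langle y\mathsf{F}^{\prime}(b+\varepsilon y),\phi(y)\rangle=\langle\mathsf{f}(b+\varepsilon y),y\phi(y)\rangle\,.
\end{equation*}
Since $y\phi(y)\in\mathcal{D}(-\infty,0)$, hypothesis (\ref{Imp.6}) yields $G^{\prime}(\varepsilon)=O(\varepsilon^{\alpha})$ as $\varepsilon\rightarrow0^{+}$. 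Because $\alpha>-1$, the integral $\int_{0}^{1}\varepsilon^{\alpha}\,\mathrm{d}\varepsilon$ is finite, so the Cauchy criterion produces the limit $A(\phi):=\lim_{\varepsilon\rightarrow0^{+}}G(\varepsilon)$, and the Banach--Steinhaus theorem makes $A$ a distribution in $\mathcal{D}^{\prime}(-\infty,0)$.

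I next identify $A$ using scale invariance. For $\lambda>0$, the substitution $z=\lambda y$ gives $\langle\mathsf{F}(b+\varepsilon y),\lambda\phi(\lambda y)\rangle=\langle\mathsf{F}(b+(\varepsilon/\lambda)z),\phi(z)\rangle$, and letting $\varepsilon\rightarrow0^{+}$ on both sides (noting $\varepsilon/\lambda\rightarrow0^{+}$ as well) yields $A(\lambda\phi(\lambda\cdot))=A(\phi)$. Differentiating this identity in $\lambda$ at $\lambda=1$ gives $\langle A,(y\phi(y))^{\prime}\rangle=0$, that is, $yA^{\prime}=0$ in $\mathcal{D}^{\prime}(-\infty,0)$. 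Since $1/y$ is a smooth multiplier on $(-\infty,0)$, we conclude $A^{\prime}=0$, so $A$ is a constant $L$, i.e., $A(\phi)=L\int_{-\infty}^{0}\phi(y)\,\mathrm{d}y$. This is precisely the assertion that $\lim_{c\rightarrow b^{-}}\mathsf{F}(c)=L$ distributionally, and Theorem \ref{Theorem Imp 1} then gives the distributional integrability of $f$ on $[a,b]$.

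The main technical hurdle is the conversion of the growth bound on $\mathsf{f}$ into convergence of $G(\varepsilon)$; this is precisely where the hypothesis $\alpha>-1$ is indispensable, since it is what makes $\varepsilon^{\alpha}$ integrable near $0$. Once the Cauchy estimate is in place, the existence of $A$ as a distribution and its identification with a constant via scale invariance are essentially formal.
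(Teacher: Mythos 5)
Your proof is correct, and it reaches the crucial point --- existence of the distributional lateral limit of $\mathsf{F}$ at $b$, after which Theorem \ref{Theorem Imp 1} finishes the job --- by a route that differs in its mechanics from the paper's. The paper splits an arbitrary $\psi$ with $\operatorname*{supp}\psi\subset(-\infty,0)$ as $c\psi_{0}+\varphi^{\prime}$, disposes of the derivative part with an $O\left(  \varepsilon^{\alpha+1}\right)$ estimate, and then shows that $\rho\left(  \varepsilon\right)  =\left\langle \mathsf{F}\left(  b+\varepsilon y\right)  ,\psi_{0}\left(  y\right)  \right\rangle$ satisfies $\rho\left(  a\varepsilon\right)  =\rho\left(  \varepsilon\right)  +O\left(  \varepsilon^{\alpha+1}\right)$ uniformly for $a$ in compact subsets of $\left(  0,\infty\right)$ (an asymptotic homogeneity argument, which requires upgrading the hypothesis to a bound uniform over compact sets of test functions). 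You instead fix each $\phi$, differentiate $G\left(  \varepsilon\right)  =\left\langle \mathsf{F}\left(  b+\varepsilon y\right)  ,\phi\left(  y\right)  \right\rangle$ in the scale parameter to get $G^{\prime}\left(  \varepsilon\right)  =\left\langle \mathsf{f}\left(  b+\varepsilon y\right)  ,y\phi\left(  y\right)  \right\rangle =O\left(  \varepsilon^{\alpha}\right)$, and integrate, so that $\alpha>-1$ gives convergence for every $\phi$ directly; you then must identify the limit functional $A$, which you do by observing its scale invariance and solving the Euler equation $yA^{\prime}=0$ on $\left(  -\infty,0\right)$ to conclude $A$ is a constant multiple of Lebesgue measure. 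Your approach buys a more elementary treatment of the convergence step (no uniformity over compacta is needed, only the pointwise hypothesis (\ref{Imp.6}) applied to $y\phi\left(  y\right)$, at the mild cost of a Banach--Steinhaus argument to make $A$ a distribution), while the paper's approach identifies the limit with $L\int\psi$ automatically by reducing everything to the single normalized $\psi_{0}$, at the cost of the uniform-in-$a$ asymptotic homogeneity step. Both proofs exploit the hypothesis in the same essential way, and both correctly isolate $\alpha>-1$ as the condition that makes the accumulated variation $\varepsilon^{\alpha+1}$ vanish.
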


\begin{proof}
Let $F\left(  x\right)  =\left(  \mathfrak{dist}\right)  \int_{a}^{x}f\left(
t\right)  \,\mathrm{d}t,$ $x<b,$ be the indefinite integral of $f,$ and let
$\mathsf{F}$ be the corresponding \L ojasiewicz distribution defined for
$x<b,$ $F\leftrightarrow\mathsf{F.}$ We need to show that $L,$ the
distributional limit of $\mathsf{F}\left(  c\right)  $ as $c\rightarrow b^{-}$
exists, namely, that
\begin{equation}
\lim_{\varepsilon\rightarrow0^{+}}\left\langle \mathsf{F}\left(  b+\varepsilon
y\right)  ,\psi\left(  y\right)  \right\rangle =L\int_{-\infty}^{\infty}%
\psi\left(  x\right)  \,\mathrm{d}x\,, \label{Imp.7}%
\end{equation}
whenever $\operatorname*{supp}\psi\subset(-\infty,0).$

Observe first that if $\int_{-\infty}^{\infty}\psi\left(  x\right)
\,\mathrm{d}x=0$ then $\psi=\varphi^{\prime},$ where $\varphi\in
\mathcal{D}\left(  \mathbb{R}\right)  ,$ $\operatorname*{supp}\varphi
\subset(-\infty,0).$ Thus%
\[
\left\langle \mathsf{F}\left(  b+\varepsilon y\right)  ,\psi\left(  y\right)
\right\rangle =\varepsilon\left\langle \mathsf{f}\left(  b+\varepsilon
y\right)  ,\varphi\left(  y\right)  \right\rangle =O\left(  \varepsilon
^{\alpha+1}\right)  \,,
\]
as $\varepsilon\rightarrow0^{+},$ so that (\ref{Imp.7}) holds with any $L$ if
the integral of $\psi$ vanishes.

Let $\psi_{0}$ be a fixed test function of $\mathcal{D}\left(  \mathbb{R}%
\right)  ,$ with $\operatorname*{supp}\psi_{0}\subset(-\infty,0)$ that
satisfies%
\begin{equation}
\int_{-\infty}^{\infty}\psi_{0}\left(  x\right)  \,\mathrm{d}x=1\,.
\label{Imp.8}%
\end{equation}
If $\psi\in\mathcal{D}\left(  \mathbb{R}\right)  ,$ $\operatorname*{supp}%
\psi\subset(-\infty,0),$ we can write $\psi=c\psi_{0}+\psi_{1},$ where
$c=\int_{-\infty}^{\infty}\psi\left(  x\right)  \,\mathrm{d}x,$ and where
$\int_{-\infty}^{\infty}\psi_{1}\left(  x\right)  \,\mathrm{d}x=0.$ Therefore,%
\begin{equation}
\left\langle \mathsf{F}\left(  b+\varepsilon y\right)  ,\psi\left(  y\right)
\right\rangle =\rho\left(  \varepsilon\right)  \int_{-\infty}^{\infty}%
\psi\left(  x\right)  \,\mathrm{d}x+O\left(  \varepsilon^{\alpha+1}\right)
\,, \label{Imp.9}%
\end{equation}
as $\varepsilon\rightarrow0^{+},$ where%
\begin{equation}
\rho\left(  \varepsilon\right)  =\left\langle \mathsf{F}\left(  b+\varepsilon
y\right)  ,\psi_{0}\left(  y\right)  \right\rangle \,. \label{Imp.10}%
\end{equation}
If $a>0$ then
\begin{equation}
\rho\left(  a\varepsilon\right)  =\rho\left(  \varepsilon\right)  +O\left(
\varepsilon^{\alpha+1}\right)  \,, \label{Imp.11}%
\end{equation}
since%
\begin{align*}
\rho\left(  a\varepsilon\right)   &  =\left\langle \mathsf{F}\left(
b+a\varepsilon y\right)  ,\psi_{0}\left(  y\right)  \right\rangle \\
&  =\frac{1}{a}\left\langle \mathsf{F}\left(  b+\varepsilon y\right)
,\psi_{0}\left(  y/a\right)  \right\rangle \\
&  =\frac{\rho\left(  \varepsilon\right)  }{a}\int_{-\infty}^{\infty}\psi
_{0}\left(  x/a\right)  \,\mathrm{d}x+O\left(  \varepsilon^{\alpha+1}\right)
\\
&  =\rho\left(  \varepsilon\right)  +O\left(  \varepsilon^{\alpha+1}\right)
\,.
\end{align*}
The asymptotic identity (\ref{Imp.11}) is actually valid uniformly in $a$ if
$a\in\left[  A,B\right]  $ and $0<A<B<\infty$ because weak convergence yields
strong convergence in spaces of distributions and thus (\ref{Imp.6}) holds
uniformly on $\psi$ if $\psi$ belongs to a compact subset of $\mathcal{D}%
\left(  \mathbb{R}\right)  .$ Hence $L,$ the limit of $\rho\left(
\varepsilon\right)  $ as $\varepsilon\rightarrow0^{+}$ exists (and actually
$\rho\left(  \varepsilon\right)  =L+O\left(  \varepsilon^{\alpha+1}\right)
$). The required formula, (\ref{Imp.7}) then follows from (\ref{Imp.9}%
).\smallskip
\end{proof}

It is interesting to observe that the condition $\mathsf{f}\left(
b+\varepsilon x\right)  =o\left(  1/\varepsilon\right)  \,,$ as $\varepsilon
\rightarrow0^{+}$ is not enough to give integrability of $f$ over $\left[
a,b\right]  .$ Take $f\left(  x\right)  =(\left(  b-x\right)  \ln\left(
b-x\right)  )^{-1},$ $(x<b),$ for instance. The preceding proof does not work
because (\ref{Imp.11}) becomes $\rho\left(  a\varepsilon\right)  =\rho\left(
\varepsilon\right)  +o\left(  1\right)  ,$ so that $\rho$ is asymptotically
homogeneous of degree $0$ \cite{est-kan,vindasthesis,VP09}, and some
asymptotically homogeneous functions may tend to infinity as $\varepsilon
\rightarrow0^{+}.$

Using the Theorem \ref{Theorem Imp 2} it is possible to give a clear meaning
to some irregular operations involving integrable distributions. If
$\mathsf{f}\in\mathcal{D}^{\prime}\left(  \mathbb{R}\right)  $ and $\chi$ is
the characteristic function of an interval $\left[  c,d\right]  $ then in
general there is no \emph{canonical} way of defining a distribution
$\chi\mathsf{f}$ of the space $\mathcal{D}^{\prime}\left(  \mathbb{R}\right)
$ \cite{est2003}; however, if $\mathsf{f}$ is a locally integrable
distribution then the Proposition \ref{Prop. IN 1} says that $\chi\mathsf{f}$
is defined in a natural way. The following result gives another such natural
definition, namely, that of $\left(  b-x\right)  ^{\beta}\mathsf{f}\left(
x\right)  $ if $\mathsf{f}\in\mathcal{E}^{\prime}\left[  a,b\right]  $ is
integrable and $\beta>0.$\smallskip

\begin{proposition}
Let $f$ be distributionally integrable over $\left[  a,b\right]  .$ If
$\beta>0$ then the function
\[
f_{\beta}\left(  x\right)  =\left(  b-x\right)  ^{\beta}f\left(  x\right)
\,,\label{Imp.12}%
\]
is distributionally integrable over $\left[  a,b\right]  .$ Similarly,
$\left(  x-a\right)  ^{\beta}f\left(  x\right)  $ is also distributionally
integrable over $\left[  a,b\right]  .$
\end{proposition}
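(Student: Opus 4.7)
The plan is to invoke the improper integral criterion, Theorem \ref{Theorem Imp 2}, applied at the endpoint $x=b$. Since $\beta>0$ and the power $(b-x)^{\beta}$ is smooth on $(-\infty,b)$, on any closed subinterval $[a,c]$ with $c<b$ we can extend $(b-x)^{\beta}$ to a function in $\mathcal{E}(\mathbb{R})$, and Theorem \ref{Theorem DI 2} immediately gives that $f_{\beta}$ is distributionally integrable over $[a,c]$. Thus $f_{\beta}$ is distributionally integrable over $[a,c]$ for every $c<b$, and the work reduces to verifying the distributional growth hypothesis (\ref{Imp.5}) at $b$ for the distribution associated with $f_{\beta}$.

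Let $\mathsf{f}\leftrightarrow f$ denote the locally integrable distribution associated with the extension of $f$ by zero outside $[a,b]$ (so $\mathsf{f}\in\mathcal{E}'[a,b]$, by Theorem \ref{Theorem DI 3} and Theorem \ref{Theorem DI 4}). Because $(b-x)^{\beta}$ is smooth on $(-\infty,b)$, the product $\mathsf{f}_{\beta}=(b-x)^{\beta}\mathsf{f}$ is a well-defined distribution on $(-\infty,b)$, and by combining Theorem \ref{Theorem DI 3} with the localization discussed after Proposition \ref{Prop. IN 1}, $\mathsf{f}_{\beta}$ is precisely the distribution associated with $f_{\beta}$ on $(-\infty,b)$. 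Hence Theorem \ref{Theorem Imp 2} will apply as soon as we control $\mathsf{f}_{\beta}(b+\varepsilon y)$ in $\mathcal{D}'((-\infty,0))$.

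The key estimate rests on Proposition \ref{Prop DI 1} (in its one-sided form at $b$ discussed right after it). For any $\psi\in\mathcal{D}(\mathbb{R})$ with $\operatorname*{supp}\psi\subset(-\infty,0)$, the function $(-y)^{\beta}\psi(y)$ lies in $\mathcal{D}(\mathbb{R})$ and is supported away from the origin. Therefore
\begin{align*}
\langle\mathsf{f}_{\beta}(b+\varepsilon y),\psi(y)\rangle
&=\langle(-\varepsilon y)^{\beta}\mathsf{f}(b+\varepsilon y),\psi(y)\rangle\\
&=\varepsilon^{\beta}\langle\mathsf{f}(b+\varepsilon y),(-y)^{\beta}\psi(y)\rangle\\
&=\varepsilon^{\beta}\cdot o(1/\varepsilon)=o\!\left(\varepsilon^{\beta-1}\right),
\end{align*}
as $\varepsilon\to0^{+}$. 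Since $\beta>0$, the exponent $\alpha=\beta-1>-1$, so the hypothesis (\ref{Imp.5}) of Theorem \ref{Theorem Imp 2} is satisfied, and we conclude that $f_{\beta}$ is distributionally integrable over $[a,b]$.

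The statement for $(x-a)^{\beta}f(x)$ follows by the entirely symmetric argument at the left endpoint $x=a$: Theorem \ref{Theorem Imp 2} has an obvious analogue at the lower endpoint (alternatively, change variables via $y=a+b-x$ and apply the case just proved). The only delicate point in the whole argument is the identification of $(b-x)^{\beta}\mathsf{f}$ as the distribution naturally associated with $f_{\beta}$ on $(-\infty,b)$; this is precisely where Theorem \ref{Theorem DI 3} and the compatibility of the distributional integral with smooth multiplication (Theorem \ref{Theorem DI 2}) are needed, and the restriction $\operatorname*{supp}\psi\subset(-\infty,0)$ in the growth check conveniently sidesteps the non-smoothness of $(-y)^{\beta}$ at $y=0$.
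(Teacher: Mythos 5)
Your proof is correct and follows essentially the same route as the paper: the paper's own (very terse) argument is precisely to apply Theorem \ref{Theorem Imp 2} at the endpoint, using Proposition \ref{Prop DI 1} to see that (\ref{Imp.5}) holds for $f_{\beta}$ with $\alpha=\beta-1$. You have merely filled in the details the paper leaves implicit (integrability on $[a,c]$ for $c<b$ via Theorem \ref{Theorem DI 2}, the identification of the associated distribution, and the scaling computation $\langle\mathsf{f}_{\beta}(b+\varepsilon y),\psi(y)\rangle=\varepsilon^{\beta}\langle\mathsf{f}(b+\varepsilon y),(-y)^{\beta}\psi(y)\rangle=o(\varepsilon^{\beta-1})$), all correctly.
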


\begin{proof}
This follows at once from the Theorem \ref{Theorem Imp 2}, since using the
Proposition \ref{Prop DI 1} we see that (\ref{Imp.5}) holds for $f_{\beta}$
with $\alpha=\beta-1.$
\end{proof}

\section{Convergence theorems}\label{Section Convergence Thms}

We shall now show that the usual convergence theorems, namely, the bounded
convergence theorem, the monotone convergence theorem, and Fatou's lemma are
valid for the distributional integral.

It is convenient to first introduce the notation for integrals that have an
infinite value. If $f$ is measurable in $\left[  a,b\right]  ,$ $f\left(
x\right)  \geq0$ almost everywhere, and $f$ is not integrable we put
\begin{equation}
\left(  \mathfrak{dist}\right)  \int_{a}^{b}f\left(  x\right)  \,\mathrm{d}%
x=\infty\,. \label{Conv.1}%
\end{equation}
More generally, we use (\ref{Conv.1}) if $f=f_{1}+f_{2},$ where $f_{1}$ is
distributionally integrable and $f_{2}$ is positive a.e. but not integrable.
The notation $\left(  \mathfrak{dist}\right)  \int_{a}^{b}g\left(  x\right)
\,\mathrm{d}x=-\infty$ is interpreted in a corresponding fashion.

Given a measurable function $f$ defined in $\left[  a,b\right]  $ then there
are three possibilities, namely, $f$ may be distributionally integrable, in
which case $\left(  \mathfrak{dist}\right)  \int_{a}^{b}f\left(  x\right)
\,\mathrm{d}x$ is a real number, maybe $\left(  \mathfrak{dist}\right)
\int_{a}^{b}f\left(  x\right)  \,\mathrm{d}x=\pm\infty,$ or maybe the
distributional integral is undefined. This is also the case for other
integrals, such as the Lebesgue integral or the Denjoy-Perron-Henstock-Kurzweil
integral. If $f$ is distributionally integrable but not Denjoy-Perron-Henstock-Kurzweil
integrable then the symbol $\left(  \mathfrak{DPHK}\right)  \int_{a}%
^{b}f\left(  x\right)  \,\mathrm{d}x$ is undefined, but, even more, if $f$ is
decomposed as $f=f_{1}+f_{2}$ and the symbols $\left(  \mathfrak{DPHK}%
\right)  \int_{a}^{b}f_{j}\left(  x\right)  \,\mathrm{d}x$ are defined for
$j=1$ or $2,$ then one of them is $+\infty$ and the other is $-\infty.$
Similarly, if the Lebesgue integral of $f$ is undefined, but $f$ is
Denjoy-Perron-Henstock-Kurzweil integrable then whenever $f=f_{1}+f_{2}$ and the
symbols $\left(  \mathfrak{Leb}\right)  \int_{a}^{b}f_{j}\left(  x\right)
\,\mathrm{d}x$ are defined for $j=1$ or $2,$ we must have that one of them is
$+\infty$ and the other is $-\infty.$ In a sense, going to a more general
integral means that a method to solve some indefinite forms $+\infty-\infty$
has been included in the definition of the more general and refined integral.

We now consider the following comparison results.\smallskip

\begin{proposition}
\label{Prop Conv 1}Let $f$ and $g$ be measurable on $\left[  a,b\right]  $ and
suppose that $f\left(  x\right)  \geq g\left(  x\right)  $ almost everywhere.
If $g$ is distributionally integrable, then $f$ is also distributionally
integrable or $\int_{a}^{b}f\left(  x\right)  \,\mathrm{d}x=\infty.$
Similarly, if $f$ is is distributionally integrable, then $g$ is
distributionally integrable, too, or $\int_{a}^{b}g\left(  x\right)
\,\mathrm{d}x=-\infty.$\smallskip
\end{proposition}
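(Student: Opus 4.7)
The plan is to reduce everything to the non-negative difference $h = f - g$ and exploit the fact that for positive measurable functions the distributional integral is equivalent to the Lebesgue integral (Theorem \ref{Theorem COM 2}). First, by Proposition \ref{Prop. IN 6}, integrability is unaffected by modification on a null set, so I may alter $f$ on a null set and assume $f(x) \geq g(x)$ everywhere; then $h := f - g$ is a well-defined non-negative measurable function on $[a,b]$.

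Next I would split into two cases according to whether $h$ is Lebesgue integrable on $[a,b]$. If $h$ is Lebesgue integrable, then Theorem \ref{Theorem COM 1} gives that $h$ is distributionally integrable, and Proposition \ref{Prop. IN 2} yields that $f = g + h$ is distributionally integrable with
\begin{equation*}
\int_{a}^{b} f(x)\,\mathrm{d}x = \int_{a}^{b} g(x)\,\mathrm{d}x + \int_{a}^{b} h(x)\,\mathrm{d}x\,.
\end{equation*}
If $h$ is not Lebesgue integrable, then by Theorem \ref{Theorem COM 2} (applied in contrapositive form, since any distributionally integrable non-negative function is Lebesgue integrable) $h$ is not distributionally integrable either; thus, by the convention introduced just before this proposition, $\int_a^b h(x)\,\mathrm{d}x = \infty$, and writing $f = g + h$ with $g$ integrable and $h \geq 0$ non-integrable, the same convention gives $\int_a^b f(x)\,\mathrm{d}x = \infty$.

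For the second assertion, I would argue symmetrically starting from the identity $g = f - h$ with $h \geq 0$ measurable. Either $h$ is Lebesgue integrable, and then $g$ is distributionally integrable by Proposition \ref{Prop. IN 2} and Proposition \ref{Prop. IN 3}, or $h$ is not integrable and $\int_a^b h = \infty$, so the convention gives $\int_a^b g(x)\,\mathrm{d}x = -\infty$.

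There is no real obstacle here beyond bookkeeping: the only subtle point is to verify that the conventions surrounding $\pm\infty$-valued integrals are consistent with the decomposition $f = g + h$ (respectively $g = f - h$), i.e.\ that the decomposition into a distributionally integrable piece plus a positive non-integrable piece is compatible with the definition of $\int = \infty$. Since $g$ is genuinely distributionally integrable (finite value) and $h \geq 0$ a.e., the decomposition falls exactly into the form described in the convention, so no ambiguity arises.
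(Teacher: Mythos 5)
Your argument is correct: reducing to the non-negative difference $h=f-g$, invoking Theorem \ref{Theorem COM 2} to identify distributional with Lebesgue integrability for $h\geq 0$, and then using linearity (Propositions \ref{Prop. IN 2}, \ref{Prop. IN 3}) together with the $\pm\infty$ convention stated just before the proposition is exactly the intended routine argument; the paper itself states this proposition without proof. Your closing remark on consistency is the right point to flag, and it is settled by the observation you implicitly use: if $f$ (resp.\ $g$) were itself distributionally integrable in the second case, then $h=f-g$ would be too, hence Lebesgue integrable, contradicting the case hypothesis, so the assignment $\int_a^b f=\infty$ (resp.\ $\int_a^b g=-\infty$) never conflicts with a finite value.
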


Observe that the proposition implies that if $g$ is distributionally
integrable and $f\geq g,$ then $\left(  \mathfrak{dist}\right)  \int_{a}%
^{b}f\left(  x\right)  \,\mathrm{d}x$ will always be defined, as a number in
$\mathbb{R}\cup\left\{  \infty\right\}  .$ If $f$ is just a measurable
function, without such an inequality, however, then $\left(  \mathfrak{dist}%
\right)  \int_{a}^{b}f\left(  x\right)  \,\mathrm{d}x$ would in general be
meaningless.\smallskip

\begin{proposition}
\label{Prop Conv 2}Let $f,$ $g$ and $h$ be measurable on $\left[  a,b\right]
$ and suppose that
\begin{equation}
f\left(  x\right)  \geq g\left(  x\right)  \geq h\left(  x\right)  \,,
\label{Conv.2}%
\end{equation}
almost everywhere. Suppose $f$ and $h$ are distributionally integrable. Then
$g$ is also distributionally integrable.

If, in addition, one of the functions is Lebesgue integrable, then so are the
other two. Similarly if one of the functions is Denjoy-Perron-Henstock-Kurzweil
integrable then the other two are as well.
\end{proposition}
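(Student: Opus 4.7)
The plan is to reduce the statement to the already-developed machinery, principally Theorem \ref{Theorem COM 2} and the linearity of the distributional integral (Propositions \ref{Prop. IN 2} and \ref{Prop. IN 3}), together with Theorem \ref{Theorem COM 1} which embeds Lebesgue integrability into distributional integrability. The key decomposition is $g = h + (g-h)$, which writes $g$ as the sum of a distributionally integrable function and a nonnegative measurable function that we will show is Lebesgue integrable.

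First I would establish the distributional integrability of $g$. By linearity, $f - h$ is distributionally integrable, and by hypothesis $f - h \geq 0$ almost everywhere. After modifying on a null set (which is harmless thanks to Proposition \ref{Prop. IN 6}), the first assertion of Theorem \ref{Theorem COM 2} applies and gives that $f - h$ is Lebesgue integrable on $[a,b]$. The function $g - h$ is measurable (since $g$ and $h$ are) and satisfies $0 \leq g - h \leq f - h$ almost everywhere, so by the standard Lebesgue comparison test it is itself Lebesgue integrable. Then Theorem \ref{Theorem COM 1} yields that $g - h$ is distributionally integrable, and Proposition \ref{Prop. IN 2} gives that $g = h + (g-h)$ is distributionally integrable.

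For the second assertion, once we know that all three functions $f, g, h$ are distributionally integrable and satisfy $f \geq g \geq h$ a.e., the second part of Theorem \ref{Theorem COM 2} applies directly to each of the pairs $(f,g)$ and $(g,h)$. Hence Lebesgue integrability of any one of the three forces Lebesgue integrability of its comparable neighbor, and then by a second application of the same part of the theorem, of the remaining function. The exact same chain of pairwise comparisons, using the Denjoy-Perron-Henstock-Kurzweil clause of Theorem \ref{Theorem COM 2}, settles the $\mathfrak{DPHK}$ statement.

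I do not expect any serious obstacle: the proof is essentially an exercise in combining linearity, the ``positive implies Lebesgue'' principle of Theorem \ref{Theorem COM 2}, and the classical sandwich argument from Lebesgue theory. The only place demanding a bit of care is the very first step, where one must remember that the comparison used to conclude $g-h$ is Lebesgue integrable is an ordinary Lebesgue-theoretic fact and therefore does not require $g-h$ to have already been shown distributionally integrable.
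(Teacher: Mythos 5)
Your proof is correct and follows essentially the same route as the paper: the same decomposition $g=h+(g-h)$, the observation that $f-h\geq0$ is distributionally integrable and hence Lebesgue integrable by Theorem \ref{Theorem COM 2}, the classical comparison to get $g-h$ Lebesgue integrable, and then the second part of Theorem \ref{Theorem COM 2} for the Lebesgue and Denjoy--Perron--Henstock--Kurzweil statements. The extra citations you add (Propositions \ref{Prop. IN 6}, \ref{Prop. IN 2} and Theorem \ref{Theorem COM 1}) merely spell out steps the paper leaves implicit.
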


\begin{proof}
If $f$ and $h$ are distributionally integrable, then so is $f-h,$ which being
positive, must be Lebesgue integrable. By comparison, $g-h$ is also Lebesgue
integrable. It follows that $g=h+\left(  g-h\right)  $ is distributionally
integrable. The second part is obtained directly from Theorem
\ref{Theorem COM 2}.\smallskip
\end{proof}

We now give the Bounded Convergence Theorem.\smallskip

\begin{theorem}
\label{Theorem Conv 1}Let $f$ and $h$ be distributionally integrable on
$\left[  a,b\right]  $ and suppose that $\left\{  g_{n}\right\}
_{n=1}^{\infty}$ is a sequence of distributionally integrable functions that
satisfies%
\begin{equation}
f\left(  x\right)  \geq g_{n}\left(  x\right)  \geq h\left(  x\right)  \,,
\label{Conv.3}%
\end{equation}
almost everywhere. If $g_{n}\rightarrow g$ almost everywhere then $g$ is
distributionally integrable and%
\begin{equation}
\lim_{n\rightarrow\infty}\left(  \mathfrak{dist}\right)  \int_{a}^{b}%
g_{n}\left(  x\right)  \,\mathrm{d}x=\left(  \mathfrak{dist}\right)  \int
_{a}^{b}g\left(  x\right)  \,\mathrm{d}x\,. \label{Conv.4}%
\end{equation}

\end{theorem}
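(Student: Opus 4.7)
The plan is to reduce the theorem to the classical Lebesgue Bounded Convergence Theorem by exploiting Theorem \ref{Theorem COM 2}, which says that a nonnegative distributionally integrable function is automatically Lebesgue integrable. The key observation is that although $f$, $h$, and the $g_n$ may fail to be Lebesgue integrable individually, once we subtract $h$ the resulting sequence becomes nonnegative, bounded above by a distributionally integrable function, and therefore fits into the Lebesgue framework.

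First, I would form $\varphi = f - h$, $\varphi_n = g_n - h$, and $\varphi_\infty = g - h$. By Proposition \ref{Prop. IN 2} (linearity) and Proposition \ref{Prop. IN 3}, each of $\varphi$ and $\varphi_n$ is distributionally integrable on $[a,b]$, and the hypothesis (\ref{Conv.3}) gives $0 \leq \varphi_n(x) \leq \varphi(x)$ a.e. Applying Theorem \ref{Theorem COM 2} to the nonnegative distributionally integrable function $\varphi$ (and to each $\varphi_n$), we deduce that $\varphi$ and every $\varphi_n$ is Lebesgue integrable over $[a,b]$, with
\begin{equation}
\left(\mathfrak{dist}\right)\int_a^b \varphi_n(x)\,\mathrm{d}x = \left(\mathfrak{Leb}\right)\int_a^b \varphi_n(x)\,\mathrm{d}x\,,
\end{equation}
by Theorem \ref{Theorem COM 1}, and similarly for $\varphi$.

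Second, since $\varphi_n \to \varphi_\infty$ a.e.\ and $0 \leq \varphi_n \leq \varphi$ with $\varphi$ Lebesgue integrable, the classical Lebesgue Dominated Convergence Theorem applies: $\varphi_\infty$ is Lebesgue integrable and
\begin{equation}
\lim_{n\to\infty}\left(\mathfrak{Leb}\right)\int_a^b \varphi_n(x)\,\mathrm{d}x = \left(\mathfrak{Leb}\right)\int_a^b \varphi_\infty(x)\,\mathrm{d}x\,.
\end{equation}
Invoking Theorem \ref{Theorem COM 1} again, $\varphi_\infty$ is distributionally integrable with the same integral value. Writing $g = h + \varphi_\infty$ and using Proposition \ref{Prop. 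IN 2}, we conclude that $g$ is distributionally integrable over $[a,b]$.

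Finally, to obtain (\ref{Conv.4}), I would use linearity one more time: $\int g_n = \int h + \int \varphi_n$ and $\int g = \int h + \int \varphi_\infty$, so the convergence of the Lebesgue integrals $\int \varphi_n \to \int \varphi_\infty$ gives the desired convergence of the distributional integrals. There is no serious obstacle here; the only substantive step is recognizing that the dominating inequalities force all the \emph{differences} into the Lebesgue regime, after which the result is purely classical.
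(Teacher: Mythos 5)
Your proof is correct and follows essentially the same route as the paper: the paper likewise reduces everything to the classical Lebesgue convergence theorem via the bound $\left\vert g-g_{n}\right\vert \leq f-h$, after noting that $f-h$ is nonnegative and hence Lebesgue integrable (Theorem \ref{Theorem COM 2}) and that $g$ is distributionally integrable by the comparison result, whose proof is exactly your decomposition $g=h+(g-h)$. Your version merely inlines that comparison step by applying dominated convergence directly to $\varphi_{n}=g_{n}-h$, which is a harmless reorganization of the same argument.
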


\begin{proof}
Observe that $g$ also satisfies $f\left(  x\right)  \geq g\left(  x\right)
\geq h\left(  x\right)  $ almost everywhere, and thus the comparison result,
Proposition \ref{Prop Conv 2}, gives that $g$ is distributionally integrable.
Notice now that $\left\vert g\left(  x\right)  -g_{n}\left(  x\right)
\right\vert \leq f\left(  x\right)  -h\left(  x\right)  $ almost everywhere,
$f-h$ is Lebesgue integrable, and $\left\vert g-g_{n}\right\vert \rightarrow0$
almost everywhere. We conclude from the Lebesgue bounded convergence theorem
that $\int_{a}^{b}\left\vert g\left(  x\right)  -g_{n}\left(  x\right)
\right\vert \,\mathrm{d}x\rightarrow0,$ that is, that $\left\{  g-g_{n}%
\right\}  _{n=1}^{\infty}$ converges to $0$ in $L^{1}\left[  a,b\right]  ,$
and, in particular, that (\ref{Conv.4}) holds.\smallskip
\end{proof}

We also have a Monotone Convergence Theorem.\smallskip

\begin{theorem}
\label{Theorem Conv 2}Let $h$ be distributionally integrable on $\left[
a,b\right]  $ and let $\left\{  g_{n}\right\}  _{n=1}^{\infty}$ be a\ monotone
sequence of measurable functions that satisfies%
\begin{equation}
g_{n+1}\left(  x\right)  \geq g_{n}\left(  x\right)  \geq h\left(  x\right)
\,, \label{Conv.5}%
\end{equation}
almost everywhere. Let $g\left(  x\right)  =\lim_{n\rightarrow\infty}%
g_{n}\left(  x\right)  .$ Then $g$ is distributionally integrable if and only
if%
\begin{equation}
\lim_{n\rightarrow\infty}\left(  \mathfrak{dist}\right)  \int_{a}^{b}%
g_{n}\left(  x\right)  \,\mathrm{d}x<\infty\,, \label{Conv.6}%
\end{equation}
and if that is the case then $\left(  \mathfrak{dist}\right)  \int_{a}%
^{b}g_{n}\left(  x\right)  \,\mathrm{d}x\rightarrow\left(  \mathfrak{dist}%
\right)  \int_{a}^{b}g\left(  x\right)  \,\mathrm{d}x$ as $n\rightarrow
\infty.$
\end{theorem}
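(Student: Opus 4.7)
The plan is to reduce the statement to the classical Lebesgue Monotone Convergence Theorem by translating the whole sequence down by $h$, thereby obtaining a non-negative monotone increasing sequence where Theorem \ref{Theorem COM 2} converts everything to the Lebesgue setting.

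First I would set $\tilde{g}_n = g_n - h$ and $\tilde{g} = g - h$; these are measurable, $0 \leq \tilde{g}_n \leq \tilde{g}_{n+1}$ almost everywhere, and $\tilde{g}_n \nearrow \tilde{g}$ almost everywhere. By Proposition \ref{Prop Conv 1}, for each $n$ either $g_n$ is distributionally integrable or $(\mathfrak{dist})\int_a^b g_n(x)\,\mathrm{d}x = +\infty$; in the latter case monotonicity forces $(\mathfrak{dist})\int_a^b g_m(x)\,\mathrm{d}x = +\infty$ for all $m \geq n$, and Proposition \ref{Prop Conv 1} applied to $g \geq g_n$ gives $(\mathfrak{dist})\int_a^b g(x)\,\mathrm{d}x = +\infty$ as well, so both sides of the equivalence in the theorem fail together and there is nothing to prove. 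Thus I may assume every $g_n$ is distributionally integrable, and hence so is each $\tilde{g}_n = g_n - h$. Since $\tilde{g}_n \geq 0$ almost everywhere, Theorem \ref{Theorem COM 2} guarantees that each $\tilde{g}_n$ is in fact \emph{Lebesgue} integrable.

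Now I would apply the classical Lebesgue Monotone Convergence Theorem to the sequence $\{\tilde{g}_n\}$: since $0 \leq \tilde{g}_n \nearrow \tilde{g}$ almost everywhere, the Lebesgue integrals satisfy
\[
\lim_{n \to \infty} (\mathfrak{Leb})\int_a^b \tilde{g}_n(x)\,\mathrm{d}x = (\mathfrak{Leb})\int_a^b \tilde{g}(x)\,\mathrm{d}x \in [0,\infty]\,.
\]
Because $(\mathfrak{dist})\int_a^b g_n(x)\,\mathrm{d}x = (\mathfrak{Leb})\int_a^b \tilde{g}_n(x)\,\mathrm{d}x + (\mathfrak{dist})\int_a^b h(x)\,\mathrm{d}x$ by Propositions \ref{Prop. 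IN 2} and \ref{Prop. IN 3} together with Theorem \ref{Theorem COM 1}, condition (\ref{Conv.6}) is equivalent to the Lebesgue integral of $\tilde{g}$ being finite, i.e., to $\tilde{g}$ being Lebesgue integrable. When this holds, $g = \tilde{g} + h$ is distributionally integrable (again by linearity), and passing to the limit and adding back $(\mathfrak{dist})\int_a^b h(x)\,\mathrm{d}x$ to both sides of the Lebesgue MCT yields (\ref{Conv.4}). Conversely, if $g$ is distributionally integrable, then $\tilde{g} = g - h$ is distributionally integrable and non-negative, hence Lebesgue integrable by Theorem \ref{Theorem COM 2}, forcing (\ref{Conv.6}).

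The only step that is not purely bookkeeping is the invocation of Theorem \ref{Theorem COM 2}; it is the bridge that lets us step out of the distributional setting, apply a classical convergence theorem in the Lebesgue setting, and then step back. No genuine obstacle arises, provided one first isolates the ``some $g_n$ already has infinite integral'' case as above.
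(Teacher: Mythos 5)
Your proof is correct, but it is organized differently from the paper's. The paper splits the equivalence into two directions with two different tools: for ``$g$ integrable $\Rightarrow$ (\ref{Conv.6})'' it invokes its own bounded convergence theorem (Theorem \ref{Theorem Conv 1}) with the dominating pair $g,h$; for the converse it shows directly that $\left\{ g_{n}-h\right\}$ is a Cauchy sequence in $L^{1}\left[ a,b\right]$ (using monotonicity to compute the $L^{1}$ distances as differences of integrals) and then identifies the $L^{1}$ limit with $g-h$ via the a.e.\ convergence. You instead make a single reduction: subtract $h$, use Theorem \ref{Theorem COM 2} to see that each $g_{n}-h$ is Lebesgue integrable, and let the classical monotone convergence theorem for nonnegative functions do all the work in both directions at once; Theorem \ref{Theorem COM 1} and linearity translate the conclusion back. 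Both routes pivot on the same bridge (nonnegative distributionally integrable functions are Lebesgue integrable), but yours is more economical and has the added merit of explicitly disposing of the case in which some $g_{n}$ fails to be integrable, which the paper handles only implicitly through its $\int=\infty$ convention. Two small touch-ups: in that degenerate case, Proposition \ref{Prop Conv 1} as stated needs the \emph{lower} function to be integrable, so to get $\int_{a}^{b}g=\infty$ you should apply it with $g\geq h$ and rule out integrability of $g$ by the sandwich argument of Proposition \ref{Prop Conv 2} (since $g\geq g_{n}\geq h$ and $g_{n}$ is not integrable); and Theorem \ref{Theorem COM 2} is stated for functions nonnegative everywhere, so one should first redefine $g_{n}-h$ on a null set, which is harmless by Proposition \ref{Prop. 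IN 6}. Neither point is a genuine gap.
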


\begin{proof}
Suppose first that $g$ is distributionally integrable. Then $g\left(
x\right)  \geq g_{n}\left(  x\right)  \geq h\left(  x\right)  $ almost
everywhere, and from the bounded convergence theorem, Theorem
\ref{Theorem Conv 1}, we conclude that the increasing numerical sequence
$\left\{  \int_{a}^{b}g_{n}\left(  x\right)  \,\mathrm{d}x\right\}
_{n=1}^{\infty}$ converges to $\int_{a}^{b}g\left(  x\right)  \,\mathrm{d}x;$
(\ref{Conv.6}) follows.

Conversely, if (\ref{Conv.6}) holds, then $\left\{  g_{n}-h\right\}
_{n=1}^{\infty}$ is a Cauchy sequence in the space $L^{1}\left[  a,b\right]
,$ because
\[
\lim_{\substack{n,m\rightarrow\infty\\n\geq m}}\int_{a}^{b}\left\vert
(g_{n}\left(  x\right)  -h\left(  x\right)  )-(g_{m}\left(  x\right)
-h\left(  x\right)  )\right\vert \,\mathrm{d}x
\]%
\begin{align*}
\hspace{1.5in}  &  =\lim_{\substack{n,m\rightarrow\infty\\n\geq m}}\int
_{a}^{b}(g_{n}\left(  x\right)  -g_{m}\left(  x\right)  )\,\mathrm{d}x\\
\hspace{1.5in}  &  =\lim_{\substack{n,m\rightarrow\infty\\n\geq m}}\left(
\int_{a}^{b}g_{n}\left(  x\right)  \,\mathrm{d}x-\int_{a}^{b}g_{m}\left(
x\right)  \,\mathrm{d}x\right) \\
\hspace{1.5in}  &  =0\,.
\end{align*}
Since $\left\{  g_{n}-h\right\}  _{n=1}^{\infty}$ converges a.e. to $g-h,$ it
must also converge to $g-h$ in $L^{1}\left[  a,b\right]  ;$ we also obtain the
convergence of $\int_{a}^{b}(g_{n}\left(  x\right)  -h\left(  x\right)
)\,\mathrm{d}x$ to $\int_{a}^{b}(g\left(  x\right)  -h\left(  x\right)
)\,\mathrm{d}x.$ Thus $\int_{a}^{b}(g\left(  x\right)  -h\left(  x\right)
)\,\mathrm{d}x<\infty,$ and so $g=\left(  g-h\right)  +h$ is distributionally
integrable. The convergence of $\int_{a}^{b}g_{n}\left(  x\right)
\,\mathrm{d}x$ to $\int_{a}^{b}g\left(  x\right)  \,\mathrm{d}x$ is now
clear.\smallskip
\end{proof}

Observe that the Monotone Convergence Theorem also says that if $g$ is not
distributionally integrable, so that $\left(  \mathfrak{dist}\right)  \int
_{a}^{b}g\left(  x\right)  \,\mathrm{d}x=\infty,$ then $\left(
\mathfrak{dist}\right)  \int_{a}^{b}g_{n}\left(  x\right)  \,\mathrm{d}%
x\nearrow\infty.$\smallskip

Fatou's lemma takes the following form.\smallskip

\begin{theorem}
\label{Theorem Conv 3}Let $h$ be distributionally integrable on $\left[
a,b\right]  $ and let $\left\{  g_{n}\right\}  _{n=1}^{\infty}$ be a\ sequence
of measurable functions that satisfies%
\begin{equation}
g_{n}\left(  x\right)  \geq h\left(  x\right)  \,, \label{Conv.7}%
\end{equation}
almost everywhere. Suppose that%
\begin{equation}
\liminf_{n\rightarrow\infty}\left(  \mathfrak{dist}\right)  \int_{a}^{b}%
g_{n}\left(  x\right)  \,\mathrm{d}x<\infty\,. \label{Conv.8}%
\end{equation}
Then the function defined by $g_{\ast}\left(  x\right)  =\liminf
_{n\rightarrow\infty}g_{n}\left(  x\right)  $ is distributionally integrable
and%
\begin{equation}
\left(  \mathfrak{dist}\right)  \int_{a}^{b}g_{\ast}\left(  x\right)
\,\mathrm{d}x\leq\liminf_{n\rightarrow\infty}\left(  \mathfrak{dist}\right)
\int_{a}^{b}g_{n}\left(  x\right)  \,\mathrm{d}x\,. \label{Conv.9}%
\end{equation}

\end{theorem}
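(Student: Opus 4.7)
My plan is to mimic the classical proof of Fatou's lemma, reducing it via a sandwich to the Monotone Convergence Theorem (Theorem~\ref{Theorem Conv 2}), using the comparison results (Propositions~\ref{Prop Conv 1} and~\ref{Prop Conv 2}) to control distributional integrability of the auxiliary functions.

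First I would define $\phi_n(x)=\inf_{k\geq n}g_k(x)$. Since each $\phi_n$ is a countable infimum of measurable functions, it is measurable; moreover, $h(x)\leq \phi_n(x)\leq g_n(x)$ almost everywhere because $g_k\geq h$ a.e.\ for all $k\geq n$. Applying the sandwich Proposition~\ref{Prop Conv 2} with lower bound $h$ and upper bound $g_n$, I conclude that each $\phi_n$ is distributionally integrable. Furthermore, the sequence $\{\phi_n\}$ is monotone nondecreasing in $n$ and $\phi_n(x)\to g_{\ast}(x)=\liminf_{n\to\infty}g_n(x)$ pointwise.

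Next I would observe the basic monotonicity $\int_a^b\phi_n\,\mathrm{d}x\leq\int_a^b g_n\,\mathrm{d}x$. This follows because $g_n-\phi_n\geq 0$ a.e.\ and $0\leq g_n-\phi_n\leq g_n-h$; since $g_n-h\geq 0$ is distributionally integrable, Theorem~\ref{Theorem COM 2} forces $g_n-h$ to be Lebesgue integrable, and hence $g_n-\phi_n$ is Lebesgue integrable with nonnegative integral. Linearity of the distributional integral (Propositions~\ref{Prop. IN 2} and~\ref{Prop. IN 3}) then yields
\[
\int_a^b g_n(x)\,\mathrm{d}x-\int_a^b\phi_n(x)\,\mathrm{d}x=\int_a^b(g_n(x)-\phi_n(x))\,\mathrm{d}x\geq 0\,.
\]
Taking $\liminf$ on both sides and using the hypothesis (\ref{Conv.8}),
\[
\lim_{n\to\infty}\int_a^b\phi_n(x)\,\mathrm{d}x\leq\liminf_{n\to\infty}\int_a^b g_n(x)\,\mathrm{d}x<\infty\,,
\]
where the limit on the left exists (possibly as a supremum) because the sequence $\{\int_a^b\phi_n\}$ is nondecreasing.

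Finally I would invoke the Monotone Convergence Theorem (Theorem~\ref{Theorem Conv 2}) applied to $\{\phi_n\}$, whose lower bound $h$ is distributionally integrable. Since $\lim_{n\to\infty}\int_a^b\phi_n\,\mathrm{d}x$ is finite, that theorem asserts that $g_{\ast}$ is distributionally integrable and $\int_a^b\phi_n\,\mathrm{d}x\to\int_a^b g_{\ast}\,\mathrm{d}x$, which combined with the previous inequality yields (\ref{Conv.9}). The only subtle point, and the one I would be most careful about, is verifying that the auxiliary functions $\phi_n$ are distributionally integrable: this is not automatic from pointwise bounds for a non-absolute integral, but it is precisely the kind of sandwich to which Proposition~\ref{Prop Conv 2} applies; everything else is a direct transcription of the standard measure-theoretic argument.
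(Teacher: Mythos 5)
Your overall route is the paper's route: set $\phi_n=\inf_{k\geq n}g_k$, show each $\phi_n$ is distributionally integrable by a sandwich, note $\int_a^b\phi_n\,\mathrm{d}x\leq\int_a^b g_n\,\mathrm{d}x$, and finish with the Monotone Convergence Theorem (Theorem \ref{Theorem Conv 2}). However, the step you yourself single out as the subtle one is where your argument has a genuine gap: you apply Proposition \ref{Prop Conv 2} with $g_n$ as the \emph{upper} bound, but the hypotheses of the theorem do not guarantee that $g_n$ is distributionally integrable for every $n$. The $g_n$ are only assumed measurable with $g_n\geq h$ a.e.; by the convention at the start of the section the symbol $(\mathfrak{dist})\int_a^b g_n\,\mathrm{d}x$ is then defined as an element of $\mathbb{R}\cup\{\infty\}$, and condition (\ref{Conv.8}) only forces it to be finite along a subsequence. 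For an $n$ with $(\mathfrak{dist})\int_a^b g_n\,\mathrm{d}x=\infty$, Proposition \ref{Prop Conv 2} cannot be invoked with upper bound $g_n$, so your justification that $\phi_n$ is integrable fails for such $n$. This matters precisely because the integral is non-absolute: a pointwise bound $h\leq\phi_n\leq g_n$ with a non-integrable majorant gives nothing.

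The repair is small and is exactly what the paper does: for each fixed $n$, use (\ref{Conv.8}) to choose some index $j\geq n$ with $g_j$ distributionally integrable, and sandwich $h\leq\phi_n\leq g_j$ to conclude that $\phi_n$ is distributionally integrable. The same remark applies to your derivation of $\int_a^b\phi_n\,\mathrm{d}x\leq\int_a^b g_n\,\mathrm{d}x$ via Lebesgue integrability of $g_n-h$, which again presupposes that $g_n$ is integrable; either note that the inequality is trivial when the right-hand side is $\infty$, or carry out the comparison only along the subsequence realizing the $\liminf$, which is all that is needed to conclude $\lim_n\int_a^b\phi_n\,\mathrm{d}x\leq\liminf_n\int_a^b g_n\,\mathrm{d}x$ before applying Theorem \ref{Theorem Conv 2}. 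With these adjustments your proof coincides with the one in the paper.
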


\begin{proof}
Let $h_{n}\left(  x\right)  =\inf\left\{  g_{j}\left(  x\right)  :n\leq
j<\infty\right\}  .$ Then $h\leq h_{n}\leq g_{j}$ for $n\leq j,$ and since
(\ref{Conv.8}) implies that for each $n$ there are indices $j$\ with $n\leq j$
such that $g_{j}$ is distributionally integrable, it follows that $h_{n}$ is
distributionally integrable for all $n.$ Notice also that the sequence
$\left\{  h_{n}\right\}  $ is increasing. Since $\int_{a}^{b}h_{n}\left(
x\right)  \,\mathrm{d}x\leq\int_{a}^{b}g_{n}\left(  x\right)  \,\mathrm{d}x$
we obtain that
\[
\lim_{n\rightarrow\infty}\int_{a}^{b}h_{n}\left(  x\right)  \,\mathrm{d}%
x\leq\liminf_{n\rightarrow\infty}\int_{a}^{b}g_{n}\left(  x\right)
\,\mathrm{d}x<\infty\,.
\]
If we now use the fact that $g_{\ast}\left(  x\right)  =\lim_{n\rightarrow
\infty}h_{n}\left(  x\right)  $ and Theorem \ref{Theorem Conv 2}, we obtain
that $g_{\ast}$ is distributionally integrable and%
\[
\int_{a}^{b}g_{\ast}\left(  x\right)  \,\mathrm{d}x=\lim_{n\rightarrow\infty
}\int_{a}^{b}h_{n}\left(  x\right)  \,\mathrm{d}x\leq\liminf_{n\rightarrow
\infty}\int_{a}^{b}g_{n}\left(  x\right)  \,\mathrm{d}x\,,
\]
as required.\smallskip
\end{proof}

It is interesting to observe that if a sequence of integrable distributions
$\left\{  \mathsf{f}_{n}\right\}  _{n=1}^{\infty}$ of the space $\mathcal{E}%
^{\prime}\left[  a,b\right]  $ converges distributionally to $\mathsf{f},$ and
$\mathsf{f}$ is integrable, then trivially%
\begin{equation}
\lim_{n\rightarrow\infty}\left(  \mathfrak{dist}\right)  \int_{a}^{b}%
f_{n}\left(  x\right)  \,\mathrm{d}x=\left(  \mathfrak{dist}\right)  \int
_{a}^{b}f\left(  x\right)  \,\mathrm{d}x\,, \label{Conv.10}%
\end{equation}
where $f\leftrightarrow\mathsf{f,}$ $f_{n}\leftrightarrow\mathsf{f}_{n},$
since $\left\langle \mathsf{f}_{n},\psi\right\rangle \rightarrow\left\langle
\mathsf{f},\psi\right\rangle $ for all test functions $\psi.$ However, in
general, if $\left\{  \mathsf{f}_{n}\right\}  _{n=1}^{\infty}$ converges
distributionally to $\mathsf{f,}$ then $\mathsf{f}$ does not have to be
integrable. Actually, $\left\{  f_{n}\right\}  _{n=1}^{\infty}$ could even
converge $\left(  \text{a.e.}\right)  $ to a function $f,$ but $f$ and
$\mathsf{f}$ cannot be associated if $\mathsf{f}$ is not integrable;
(\ref{Conv.10}) may or may not hold in such a case. For example, if
$I_{n}=\left[  1/n,2/n\right]  ,$ and $f_{n}=n\chi_{I_{n}},$ then $\left\{
\mathsf{f}_{n}\right\}  _{n=1}^{\infty}$ converges distributionally to
$\delta\left(  x\right)  ,$ but $\left\{  f_{n}\right\}  _{n=1}^{\infty}$
converges everywhere to $f=0.$ Naturally (\ref{Conv.10}) does not hold if
$a<0<b.$

\section{Change of variables}\label{Section Change of Variables}

We now consider changes of variables in the integral. Let us start with a
function
\begin{equation}
\rho:\left[  c,d\right]  \rightarrow\left[  a,b\right]  \,, \label{ChV.1}%
\end{equation}
that is of class $C^{\infty},$ even at the endpoints, and satisfies
$\left\vert \rho^{\prime}\left(  t\right)  \right\vert >0$ for all
$t\in\left[  c,d\right]  .$ Then $\rho$ induces an isomorphism
\begin{equation}
T_{\rho}:\mathcal{E}^{\prime}\left[  a,b\right]  \rightarrow\mathcal{E}%
^{\prime}\left[  c,d\right]  \,, \label{ChV.2}%
\end{equation}
given by $T_{\rho}\left\{  \mathsf{f}\right\}  \left(  t\right)
=\mathsf{f}\left(  \rho\left(  t\right)  \right)  $ for $\mathsf{f}%
\in\mathcal{E}^{\prime}\left[  a,b\right]  .$ Observe that $T_{\rho}%
^{-1}=T_{\rho^{-1}}.$ In this case we say that $\rho$ is a change of variables
of type I.

For changes of type I it is easy to see \cite{CF,loj} that the distributional
point value $\mathsf{f}\left(  x\right)  $ exists at $x=x_{0}$ if and only if
the distributional point value $\mathsf{f}\left(  \rho\left(  t\right)
\right)  $ exists at $t=t_{0},$ where $x_{0}=\rho\left(  t_{0}\right)  ,$ and
when both values exist they coincide. Also, if $f$ is a function defined in
$\left[  a,b\right]  $, $\rho^{\prime}>0$, and $\left(  \mathsf{u}%
,\mathsf{U}\right)  $ is a major distributional pair for $f$ in $\left[
a,b\right]  ,$ then $\left(  \mathsf{u}\left(  \rho\left(  t\right)  \right)
\rho^{\prime}\left(  t\right)  ,\mathsf{U}\left(  \rho\left(  t\right)
\right)  \right)  $ is a major pair for $f\left(  \rho\left(  t\right)
\right)  \rho^{\prime}\left(  t\right)  $ in $\left[  c,d\right]  ,$ and
similarly for minor pairs. Thus, we immediately obtain the following
result.\smallskip

\begin{proposition}
\label{Prop. ChV 1}Let $\rho:\left[  c,d\right]  \rightarrow\left[
a,b\right]  $ be a change of variables of type I. A function $f$ is
distributionally integrable over $\left[  a,b\right]  $ if and only if
$f\left(  \rho\left(  t\right)  \right)  \rho^{\prime}\left(  t\right)  $ is
distributionally integrable over $\left[  c,d\right]  $ and if $a=\rho\left(
c\right)  ,$ $b=\rho\left(  d\right)  ,$%
\begin{equation}
\left(  \mathfrak{dist}\right)  \int_{a}^{b}f\left(  x\right)  \,\mathrm{d}%
x=\left(  \mathfrak{dist}\right)  \int_{c}^{d}f\left(  \rho\left(  t\right)
\right)  \rho^{\prime}\left(  t\right)  \,\mathrm{d}t\,. \label{ChV.3}%
\end{equation}
\smallskip
\end{proposition}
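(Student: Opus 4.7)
The plan is to establish a bijection between major (respectively minor) distributional pairs for $f$ on $[a,b]$ and those for $g(t)=f(\rho(t))\rho'(t)$ on $[c,d]$, induced by the map $(\mathsf{u},\mathsf{U})\mapsto(T_\rho\mathsf{u}\cdot\rho',T_\rho\mathsf{U})$, and then observe that under this bijection the value at the right endpoint is preserved. I would split into the two cases $\rho'>0$ and $\rho'<0$; in the first case major pairs go to major pairs and minor to minor, while in the second the orientation reversal (and the sign of $\rho'$) swaps the roles, which matches the sign conventions built into \eqref{ChV.3}.

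Treating the case $\rho'>0$, $\rho(c)=a$, $\rho(d)=b$: given a major pair $(\mathsf{u},\mathsf{U})$ for $f$, set $\widetilde{\mathsf{U}}=T_\rho\mathsf{U}$ and $\widetilde{\mathsf{u}}=(T_\rho\mathsf{u})\rho'$. Conditions 1 and 2 of Definition \ref{Def IN1} are immediate: the distributional chain rule for a $C^\infty$ change of variables gives $\widetilde{\mathsf{U}}'=\widetilde{\mathsf{u}}$; type I changes send $\mathcal{E}'[a,b]$ to $\mathcal{E}'[c,d]$ and preserve the \L ojasiewicz class, since, as recalled just before the statement, distributional point values transform covariantly, $(T_\rho\mathsf{U})(t_0)=\mathsf{U}(\rho(t_0))$; and $\widetilde{\mathsf{U}}(c)=\mathsf{U}(a)=0$.

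The delicate part is verifying the angular inequalities \eqref{IN.2} and \eqref{IN.3} for $\widetilde{\mathsf{u}}$. I would avoid a direct manipulation of $\phi$-transforms (which would require tracking rescalings of the test functions under the linearization of $\rho$) and instead invoke the v.2 formulation supplied by Proposition \ref{Prop PR 1}: $\mathsf{u}$ has a distributional point value $\widetilde{u}(x)=\mathsf{u}(x)$ at almost every $x\in[a,b]$, and $\widetilde{u}(x)\ge f(x)$ a.e. By covariance of point values under type I changes, $T_\rho\mathsf{u}$ has point values almost everywhere on $[c,d]$ equal to $\widetilde{u}(\rho(t))$, hence $\widetilde{\mathsf{u}}$ has point values a.e. equal to $\widetilde{u}(\rho(t))\rho'(t)\ge f(\rho(t))\rho'(t)=g(t)$. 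At any point where a distributional point value exists, every angular $(\phi,\theta)$-value coincides with it, so \eqref{IN.2} (and even its strengthening \eqref{ang.1}) holds off a null set; moreover the countable exceptional set for \eqref{IN.3} is simply transported to $\rho^{-1}(E)$, which is again countable because $\rho$ is a diffeomorphism. Thus $(\widetilde{\mathsf{u}},\widetilde{\mathsf{U}})$ is a major pair for $g$ on $[c,d]$, and the same argument applied to the minor pairs, or via Corollary \ref{Cor PR 2}, gives the analogous statement there. The main obstacle in this step is exactly the point-value transfer, and it is overcome by importing the a.e.\ existence of point values that comes for free from Proposition \ref{Prop PR 1}.

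Once the bijection $(\mathsf{u},\mathsf{U})\leftrightarrow(\widetilde{\mathsf{u}},\widetilde{\mathsf{U}})$ is in place, note that $\widetilde{\mathsf{U}}(d)=\mathsf{U}(\rho(d))=\mathsf{U}(b)$, so the infimum in \eqref{IN.7} over major pairs for $f$ equals the infimum over major pairs for $g$, and likewise the supremum over minor pairs. Therefore distributional integrability of $f$ over $[a,b]$ is equivalent to that of $g$ over $[c,d]$, and their integrals coincide, which is \eqref{ChV.3}. The inverse of the bijection is the analogous construction with $\rho^{-1}$, which is itself a type I change, so no separate converse argument is needed. Finally, for $\rho'<0$ I would apply the preceding argument to $\tilde\rho(t)=\rho(c+d-t)$, which is of type I with positive derivative, and then rewrite the resulting identity in terms of $\rho$; the minus sign from $\rho'$ is exactly absorbed by the reversal of the limits of integration, recovering \eqref{ChV.3} in full generality.
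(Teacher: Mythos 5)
Your overall strategy coincides with the paper's: transport major and minor pairs by $(\mathsf{u},\mathsf{U})\mapsto(\mathsf{u}(\rho(t))\rho^{\prime}(t),\mathsf{U}(\rho(t)))$, observe that the endpoint values are preserved, get the converse from $\rho^{-1}$, and reduce $\rho^{\prime}<0$ to $\rho^{\prime}>0$ by reflection. The a.e.\ inequality \eqref{IN.2} for the transported pair is correctly handled the way you do it, via the a.e.\ existence of point values of $\mathsf{u}$ (Proposition \ref{Prop PR 1}, Corollary \ref{Cor PR 2}) together with the covariance of point values under type I changes, since at a point where the value exists all the $(\phi,\theta)$ extreme values collapse to it.

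The genuine gap is at condition \eqref{IN.3} (and symmetrically \eqref{IN.6} for minor pairs). That condition must hold at \emph{every} $t\in[c,d]$ outside a \emph{countable} set, for every $\phi\in\mathcal{T}_{1}$, but your argument only covers the $t$ at which the point value $\mathsf{u}(\rho(t))$ exists; the set where it fails is merely of Lebesgue measure zero and can be uncountable (e.g.\ $\mathsf{u}=\mathsf{f}+\mu$ with $\mu$ a positive measure carried by a Cantor set), so it cannot be absorbed into the countable exceptional set permitted by Definition \ref{Def IN1}. At such points the hypothesis gives only finiteness of the radial lower limit of the $\phi$-transform of $\mathsf{u}$ at $x=\rho(t)$ for each fixed $\phi\in\mathcal{T}_{1}$, and transferring this to $\widetilde{\mathsf{u}}(t)=\mathsf{u}(\rho(t))\rho^{\prime}(t)$ is exactly the computation you declared you would avoid: after substitution, the transform of $\widetilde{\mathsf{u}}$ at $(t,s)$ is an evaluation of $\mathsf{u}$ against an $s$-dependent distorted test function which need not belong to $\mathcal{T}_{1}$ (positivity and normalization survive, but the condition $y\phi^{\prime}(y)\leq0$ and the strong decay need not), and the bounds in \eqref{IN.3} carry no uniformity over such a family, so they cannot simply be quoted for it. Hence the assertion that the countable exceptional set is just transported to $\rho^{-1}(E)$ is unsupported; filling it requires either an analysis of how the $\phi$-transform behaves under the change of variables or an invariant structural characterization of the one-sided boundedness condition, and this is precisely the content compressed into the paper's statement, made just before the proposition, that the transported pair \emph{is} a major (resp.\ minor) pair. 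The remaining bookkeeping in your plan (endpoint values, the inverse change via $\rho^{-1}$, the reflection argument) is fine.
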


The change of variables formula (\ref{ChV.3}) remains valid under more general
conditions on the function $\rho.$ It holds for $\rho\left(  t\right)
=t^{\alpha}$ in $\left[  0,d\right]  ,$ if $\alpha>0;$ this change of
variables is not of type I.\smallskip

\begin{lemma}
\label{Lemma ChV 1}Let $\mathsf{f}\in\mathcal{D}^{\prime}\left(  0,b\right)  $
and let $\alpha>0$. Then the distributional limit of $\mathsf{f}$ from the
right at $x=0$ exists and equals $\gamma$ if and only if the distributional
limit of $\mathsf{f}\left(  t^{\alpha}\right)  $ from the right at $t=0$
exists and equals $\gamma.$

A function $f$ is distributionally integrable over $\left[  0,d\right]  $ if
and only if $f\left(  t^{\alpha}\right)  t^{\alpha-1}$ is distributionally
integrable over $\left[  0,d^{1/\alpha}\right]  $ and
\begin{equation}
\left(  \mathfrak{dist}\right)  \int_{0}^{d}f\left(  x\right)  \,\mathrm{d}%
x=\left(  \mathfrak{dist}\right)  \int_{0}^{d^{1/\alpha}}\alpha f\left(
t^{\alpha}\right)  t^{\alpha-1}\,\mathrm{d}t\,. \label{ChV.4}%
\end{equation}

\end{lemma}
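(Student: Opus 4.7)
The plan is to handle the two claims of the lemma in sequence. For the first claim about distributional limits, I will reduce everything to a bijection on test functions supported in $(0,\infty)$ via the substitution $u=t^{\alpha}$. For the second claim, I will apply the already-established change-of-variables formula (Proposition \ref{Prop. ChV 1}) on every sub-interval $[\eta,d]$ with $\eta>0$, where the substitution is smooth with non-vanishing derivative and hence of type I, and then pass to the limit $\eta\to 0^{+}$ using the Hake-type theorem (Theorem \ref{Theorem Imp 1}) at the left endpoint, invoking Part 1 to transfer the existence of the distributional limit across the substitution.

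For Part 1, I would set $\mathsf{g}(t)=\mathsf{f}(t^{\alpha})$ on $(0,b^{1/\alpha})$; this pullback is well defined because $t\mapsto t^{\alpha}$ is smooth with non-vanishing derivative on $(0,\infty)$. Given $\psi\in\mathcal{D}(0,\infty)$, a direct computation combining the pullback formula with the dilation rule yields the identity
\begin{equation}
\langle \mathsf{g}(\varepsilon t),\psi(t)\rangle=\langle \mathsf{f}(\varepsilon^{\alpha}u),\phi(u)\rangle\,,
\end{equation}
where $\phi(u)=\frac{1}{\alpha}\psi(u^{1/\alpha})u^{1/\alpha-1}$. The key observations are: (i) the assignment $\psi\mapsto\phi$ is a bijection $\mathcal{D}(0,\infty)\to\mathcal{D}(0,\infty)$, because $u\mapsto u^{1/\alpha}$ is a diffeomorphism of $(0,\infty)$; (ii) the classical change of variable $u=t^{\alpha}$ gives $\int_{0}^{\infty}\phi(u)\,\mathrm{d}u=\int_{0}^{\infty}\psi(t)\,\mathrm{d}t$; and (iii) $\varepsilon\to 0^{+}$ if and only if $\varepsilon^{\alpha}\to 0^{+}$. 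Putting these together, the limit defining $\mathsf{f}(0^{+})=\gamma$ exists if and only if the limit defining $\mathsf{g}(0^{+})=\gamma$ does, with equal values.

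For Part 2, fix $\eta\in(0,d)$. On $[\eta^{1/\alpha},d^{1/\alpha}]$ the map $\rho(t)=t^{\alpha}$ is $C^{\infty}$ with $\rho'(t)=\alpha t^{\alpha-1}>0$, so Proposition \ref{Prop. ChV 1} yields
\begin{equation}
\left(\mathfrak{dist}\right)\int_{\eta}^{d}f(x)\,\mathrm{d}x=\left(\mathfrak{dist}\right)\int_{\eta^{1/\alpha}}^{d^{1/\alpha}}\alpha f(t^{\alpha})t^{\alpha-1}\,\mathrm{d}t\,,
\end{equation}
with $f$ integrable on $[\eta,d]$ iff $\alpha f(t^{\alpha})t^{\alpha-1}$ is integrable on $[\eta^{1/\alpha},d^{1/\alpha}]$. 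Denote these indefinite integrals by $F(\eta)$ and $G(\sigma)$ respectively, so $F(\eta)=G(\eta^{1/\alpha})$. By the left-endpoint analogue of Theorem \ref{Theorem Imp 1} (immediate by the symmetry $x\mapsto d-x$ which preserves distributional integrability), $f$ is distributionally integrable on $[0,d]$ precisely when the distributional limit $\lim_{\eta\to 0^{+}}\mathsf{F}(\eta)$ exists, and similarly for $G$. But the relation $\mathsf{F}(\eta)=\mathsf{G}(\eta^{1/\alpha})$ is exactly an instance of the substitution handled in Part 1 (with $1/\alpha>0$ playing the role of $\alpha$), so the two one-sided distributional limits exist simultaneously and are equal. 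Combining this with the identity on $[\eta,d]$ gives (\ref{ChV.4}).

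The main obstacle I anticipate is the bookkeeping in Part 1: one has to be careful that the pullback $\mathsf{f}(t^{\alpha})$ really is well defined as an element of $\mathcal{D}'(0,b^{1/\alpha})$ and that the formal manipulations combining pullback and dilation are justified by the standard composition of distributions with a diffeomorphism of $(0,\infty)$. The rest is then a clean application of prior results (Proposition \ref{Prop. ChV 1} and Theorem \ref{Theorem Imp 1}) together with Part 1.
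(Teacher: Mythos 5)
Your proposal is correct and follows essentially the same route as the paper: Part 1 via the dilation-plus-substitution identity $\left\langle \mathsf{f}\left(  \left(  \varepsilon t\right)  ^{\alpha}\right)  ,\psi\left(  t\right)  \right\rangle =\frac{1}{\alpha}\left\langle \mathsf{f}\left(  \varepsilon^{\alpha}x\right)  ,\psi\left(  x^{1/\alpha}\right)  x^{1/\alpha-1}\right\rangle$, and Part 2 by applying Proposition \ref{Prop. ChV 1} on subintervals away from $0$, transferring the distributional limit of the indefinite integral through Part 1, and concluding with Theorem \ref{Theorem Imp 1} at the left endpoint. Your extra remarks (the bijection $\psi\mapsto\phi$ giving the two-sided equivalence, and the reflection argument justifying the left-endpoint version of the Hake theorem) only make explicit what the paper leaves implicit.
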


\begin{proof}
The distributional limit of $\mathsf{f}$ from the right at $x=0$ exists and
equals $\gamma$ if and only%
\begin{equation}
\lim_{\varepsilon\rightarrow0^{+}}\left\langle \mathsf{f}\left(  \varepsilon
x\right)  ,\psi\left(  x\right)  \right\rangle =\gamma\int_{0}^{\infty}%
\psi\left(  x\right)  \,\mathrm{d}x\,, \label{ChV.5}%
\end{equation}
for all $\psi\in\mathcal{D}\left(  0,\infty\right)  .$ But if (\ref{ChV.5})
holds then%
\begin{align*}
\lim_{\varepsilon\rightarrow0^{+}}\left\langle \mathsf{f}\left(  \left(
\varepsilon t\right)  ^{\alpha}\right)  ,\psi\left(  t\right)  \right\rangle
&  =\frac{1}{\alpha}\lim_{\varepsilon\rightarrow0^{+}}\left\langle
\mathsf{f}\left(  \varepsilon^{\alpha}x\right)  ,\psi\left(  x^{1/\alpha
}\right)  x^{1/\alpha-1}\right\rangle \\
&  =\frac{\gamma}{\alpha}\int_{0}^{\infty}\psi\left(  x^{1/\alpha}\right)
x^{1/\alpha-1}\,\mathrm{d}x\\
&  =\gamma\int_{0}^{\infty}\psi\left(  t\right)  \,\mathrm{d}t\,,
\end{align*}
and it follows that the distributional limit of $\mathsf{f}\left(  t^{\alpha
}\right)  $ from the right at $t=0$ exists and equals $\gamma.$

Suppose now that $f$ is distributionally integrable over $\left[  0,d\right]
.$ Using the Proposition \ref{Prop. ChV 1} we obtain that $f\left(  t^{\alpha
}\right)  t^{\alpha-1}$ is distributionally integrable over $\left[
c,d^{1/\alpha}\right]  $ for any $c>0$ and if $F\left(  c\right)  =\left(
\mathfrak{dist}\right)  \int_{c}^{d}f\left(  x\right)  \,\mathrm{d}x$ is the
indefinite integral of $f,$ then the indefinite integral of $\alpha f\left(
t^{\alpha}\right)  t^{\alpha-1}$ is $F\left(  c^{\alpha}\right)  =\left(
\mathfrak{dist}\right)  \int_{c}^{d^{1/\alpha}}\alpha f\left(  t^{\alpha
}\right)  t^{\alpha-1}\,\mathrm{d}t\,.$ Now $\mathsf{F}\left(  c\right)
\mathsf{,}$ where $\mathsf{F}\leftrightarrow F,$ has a distributional limit
from the right at $c=0,$ equal to $\left(  \mathfrak{dist}\right)  \int
_{0}^{d^{1/\alpha}}f\left(  x\right)  \,\mathrm{d}x,$ and it follows that the
distributional limit of $\mathsf{F}\left(  c^{\alpha}\right)  ,$ which
corresponds to the function $\left(  \mathfrak{dist}\right)  \int
_{c}^{d^{1/\alpha}}\alpha f\left(  t^{\alpha}\right)  t^{\alpha-1}%
\,\mathrm{d}t,$ as $c\rightarrow0^{+}$ also exists and equals $\left(
\mathfrak{dist}\right)  \int_{0}^{d}f\left(  x\right)  \,\mathrm{d}x.$ The
integrability of $\alpha f\left(  t^{\alpha}\right)  t^{\alpha-1}$ over
$\left[  0,d^{1/\alpha}\right]  $ and formula (\ref{ChV.4}) then follow from
the Theorem \ref{Theorem Imp 1}.\smallskip
\end{proof}

Introduce the changes of variables of type II as those continuous functions
$\rho$ from $\left[  c,d\right]  $ to $\left[  a,b\right]  $ that are of type
I in $\left[  c_{1},d_{1}\right]  $ whenever $c<c_{1}<d_{1}<d,$ and that at
the endpoints satisfy that there exist $\alpha>0$ and $\beta>0$ such that
$\left\vert \rho\left(  x\right)  -\rho\left(  c\right)  \right\vert ^{\alpha
}$ is of type I in $\left[  c,d_{1}\right]  $ and $\left\vert \rho\left(
x\right)  -\rho\left(  d\right)  \right\vert ^{\beta}$ is of type I in
$\left[  c_{1},d\right]  .$ Then using the Lemma \ref{Lemma ChV 1}\ we obtain
that the Proposition \ref{Prop. ChV 1} also holds for changes of variables of
type II.

Actually the Proposition \ref{Prop. ChV 1} remains valid for changes of
variables of type III, which are those continuous functions $\rho$ from
$\left[  c,d\right]  $ to $\mathbb{R}$ for which there are numbers $\left\{
c_{j}\right\}  _{j=0}^{n}$ with $c=c_{0}<c_{1}<\cdots<c_{n-1}<c_{n}=d$ such
that $\rho$ is of type II in each of the subintervals $\left[  c_{j}%
,c_{j+1}\right]  $ for $0\leq j<n.$\smallskip

\begin{proposition}
\label{Prop. ChV 2}Let $\rho:\left[  c,d\right]  $ be a change of variables of
type III. A function $f$ is distributionally integrable over $\rho\left(
\left[  c,d\right]  \right)  $ if and only if $f\left(  \rho\left(  t\right)
\right)  \rho^{\prime}\left(  t\right)  $ is distributionally integrable over
$\left[  c,d\right]  $ and
\begin{equation}
\left(  \mathfrak{dist}\right)  \int_{\rho\left(  c\right)  }^{\rho\left(
d\right)  }f\left(  x\right)  \,\mathrm{d}x=\left(  \mathfrak{dist}\right)
\int_{c}^{d}f\left(  \rho\left(  t\right)  \right)  \rho^{\prime}\left(
t\right)  \,\mathrm{d}t\,.
\end{equation}
\smallskip
\end{proposition}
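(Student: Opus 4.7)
The plan is to reduce Proposition \ref{Prop. ChV 2} to the type II case by partitioning $[c,d]$ at the breakpoints $c_0<c_1<\cdots<c_n$ of the definition of type III, applying Proposition \ref{Prop. ChV 1} (in its extended type II form, valid in view of Lemma \ref{Lemma ChV 1}) on each subinterval $[c_j,c_{j+1}]$, and then combining via the additivity of the distributional integral (Proposition \ref{Prop. IN 4}). Throughout I interpret $\int_{p}^{q}f(x)\,\mathrm{d}x$ with the usual sign convention when $p>q$, so that the telescoping
\[
\sum_{j=0}^{n-1}\int_{\rho(c_{j})}^{\rho(c_{j+1})}f(x)\,\mathrm{d}x=\int_{\rho(c)}^{\rho(d)}f(x)\,\mathrm{d}x
\]
holds provided each summand exists.

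For the direction ``$\Rightarrow$'', assume $f$ is distributionally integrable over the compact interval $\rho([c,d])$. Since $\rho$ is continuous and strictly monotone on each piece $[c_{j},c_{j+1}]$, the image $\rho([c_{j},c_{j+1}])$ is precisely the interval joining $\rho(c_{j})$ and $\rho(c_{j+1})$, which is a closed subinterval of $\rho([c,d])$. By Proposition \ref{Prop. IN 1}, $f$ is distributionally integrable over each of these pieces; by the type II version of Proposition \ref{Prop. ChV 1} applied to each piece, $f(\rho(t))\rho'(t)$ is distributionally integrable on each $[c_{j},c_{j+1}]$ and
\[
\int_{c_{j}}^{c_{j+1}}f(\rho(t))\rho'(t)\,\mathrm{d}t=\int_{\rho(c_{j})}^{\rho(c_{j+1})}f(x)\,\mathrm{d}x\,.
\]
Summing over $j$ and invoking Proposition \ref{Prop. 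IN 4} yields distributional integrability of $f(\rho(t))\rho'(t)$ on $[c,d]$ together with the desired formula.

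For the direction ``$\Leftarrow$'', assume $f(\rho(t))\rho'(t)$ is distributionally integrable over $[c,d]$, hence over each $[c_{j},c_{j+1}]$ by Proposition \ref{Prop. IN 1}. The type II result then gives that $f$ is distributionally integrable over each $J_{j}:=\rho([c_{j},c_{j+1}])$. The obstacle here is that the intervals $J_{j}$ may overlap, so one cannot directly apply Proposition \ref{Prop. IN 4} to assemble integrability on the whole image $[A,B]:=\rho([c,d])$. To circumvent this, list the distinct values among $\{\rho(c_{j})\}_{j=0}^{n}$ in increasing order as $A=r_{0}<r_{1}<\cdots<r_{m}=B$; the endpoints of the image must lie in this list since $\rho$ is piecewise monotone. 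For each $k$, continuity and monotonicity of $\rho$ on some piece imply that $[r_{k},r_{k+1}]\subset J_{j(k)}$ for some index $j(k)$, so by Proposition \ref{Prop. IN 1} the function $f$ is distributionally integrable over $[r_{k},r_{k+1}]$. A finite application of Proposition \ref{Prop. IN 4} then gives integrability of $f$ on $[A,B]=\rho([c,d])$. The formula is obtained exactly as in the forward direction.

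The main technical obstacle is the reverse direction: reconstructing integrability on the full image from integrability on possibly overlapping pieces. The refined partition $\{r_{k}\}$ handles this cleanly, exploiting the fact that the breakpoints of $\rho$ are finite in number so that $[A,B]$ decomposes into finitely many non-overlapping subintervals, each contained in some monotone piece of $\rho$. The remaining details (the signs of $\rho'$ on each piece and the convention for the signed integral $\int_{\rho(c)}^{\rho(d)}$) are routine and already absorbed in the type II formula from the discussion following Lemma \ref{Lemma ChV 1}.
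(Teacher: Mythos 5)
Your proof is correct and follows exactly the route the paper intends: the paper states Proposition \ref{Prop. ChV 2} without an explicit argument, treating it as an immediate consequence of applying the type II version of Proposition \ref{Prop. ChV 1} on each subinterval $[c_{j},c_{j+1}]$ and combining with Proposition \ref{Prop. IN 4}. Your refinement of the partition of the image $\rho([c,d])$ in the reverse direction, needed because the pieces $\rho([c_{j},c_{j+1}])$ may overlap, is a careful filling-in of a detail the paper leaves implicit, and it is handled correctly.
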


Let us now consider changes of variables with an infinite range or
domain.\smallskip

\begin{lemma}
\label{Lemma ChV 2}Let $\mathsf{f}\in\mathcal{D}^{\prime}\left(  0,1\right)
.$ Then the distributional limit of $\mathsf{f}$ from the right at $x=0$
exists and equals $\gamma$ if and only if the Ces\`{a}ro limit of
$\mathsf{f}\left(  1/t\right)  $ as $t\rightarrow\infty$ exists and equals
$\gamma.$

A function $f$ is distributionally integrable over $\left[  0,1\right]  $ if
and only if $f\left(  t^{-1}\right)  t^{-2}$ is distributionally Ces\`{a}ro
integrable over $[1,\infty)$ and%
\begin{equation}
\left(  \mathfrak{dist}\right)  \int_{0}^{1}f\left(  x\right)  \,\mathrm{d}%
x=\left(  \mathfrak{dist}\right)  \int_{1}^{\infty}f\left(  t^{-1}\right)
t^{-2}\,\mathrm{d}t\ \ \ \ \left(  \mathrm{C}\right)  \,. \label{ChV.6}%
\end{equation}

\end{lemma}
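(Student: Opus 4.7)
\textbf{Proof plan for Lemma \ref{Lemma ChV 2}.} My plan is to prove the equivalence of limits directly from the definition via a change of variables on test functions, and then deduce the integrability statement from the first part combined with Proposition \ref{Prop. ChV 1} and Theorem \ref{Theorem Imp 1 v.2}.

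For the first assertion, I observe that $\tau:t\mapsto 1/t$ is a smooth diffeomorphism of $(0,\infty)$ onto itself with $|\tau'(t)|=1/t^{2}$, so the pullback $\mathsf{g}(t)=\mathsf{f}(1/t)$ is a well-defined element of $\mathcal{D}'(0,\infty)$. Given $\phi\in\mathcal{D}(0,\infty)$, I set $\psi(t)=t^{-2}\phi(1/t)$, which again lies in $\mathcal{D}(0,\infty)$ and satisfies $\int_{0}^{\infty}\psi=\int_{0}^{\infty}\phi$; the correspondence $\phi\leftrightarrow\psi$ is a bijection of $\mathcal{D}(0,\infty)$ onto itself. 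A short computation using the change-of-variables formula for distributions, together with the dilation identity $\langle\mathsf{g}(\lambda t),\psi(t)\rangle=\lambda^{-1}\langle\mathsf{g}(s),\psi(s/\lambda)\rangle$, yields, for $\lambda=1/\varepsilon$, the key identity
\[
\langle\mathsf{g}(\lambda t),\psi(t)\rangle=\langle\mathsf{f}(\varepsilon x),\phi(x)\rangle.
\]
Combined with $\int\psi=\int\phi$, this identity shows that $\lim_{\varepsilon\to 0^{+}}\langle\mathsf{f}(\varepsilon x),\phi(x)\rangle=\gamma\int\phi$ for every $\phi\in\mathcal{D}(0,\infty)$ if and only if $\mathsf{g}(\lambda t)\to\gamma$ weakly in $\mathcal{D}'(0,\infty)$ as $\lambda\to\infty$. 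The left-hand condition is precisely the existence of the distributional right-limit $\mathsf{f}(0^{+})=\gamma$ as in (\ref{3.9.48}), while the right-hand condition is, by the parametric characterization of Ces\`{a}ro behavior at infinity recalled in Subsection \ref{Sbs Cesaro}, equivalent to $\lim_{t\to\infty}\mathsf{g}(t)=\gamma$ $(\mathrm{C})$.

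For the second assertion, fix $a\in(0,1)$. Applying Proposition \ref{Prop. ChV 1} to the type I change of variables $\rho(t)=1/t$ from $[1,1/a]$ onto $[a,1]$ (absorbing the sign from $\rho'(t)=-1/t^{2}$ into the reversal of orientation), $f$ is distributionally integrable over $[a,1]$ if and only if $f(1/t)t^{-2}$ is distributionally integrable over $[1,1/a]$, with
\[
(\mathfrak{dist})\int_{a}^{1}f(x)\,\mathrm{d}x=(\mathfrak{dist})\int_{1}^{1/a}f(1/t)t^{-2}\,\mathrm{d}t.
\]
Theorem \ref{Theorem Imp 1 v.2} (applied at the left endpoint, which is symmetric to the right-endpoint formulation there) then says $f$ is distributionally integrable over $[0,1]$ iff $f$ is distributionally integrable over $[a,1]$ for every $a\in(0,1)$ and the Ces\`{a}ro limit of $(\mathfrak{dist})\int_{a}^{1}f(x)\,\mathrm{d}x$ as $a\to 0^{+}$ exists. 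By the first part of the lemma applied to the \L ojasiewicz distribution associated with this indefinite integral, that Ces\`{a}ro limit as $a\to 0^{+}$ exists and equals $L$ if and only if $\int_{1}^{t}f(1/s)s^{-2}\,\mathrm{d}s$ has Ces\`{a}ro limit $L$ as $t\to\infty$, which by a second application of Theorem \ref{Theorem Imp 1 v.2} at $+\infty$ is exactly distributional Ces\`{a}ro integrability of $f(1/s)s^{-2}$ over $[1,\infty)$. Letting $a\to 0^{+}$ in the displayed change-of-variables identity gives (\ref{ChV.6}).

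The main obstacle is the first paragraph: establishing the distributional identity $\langle\mathsf{g}(\lambda t),\psi(t)\rangle=\langle\mathsf{f}(\varepsilon x),\phi(x)\rangle$ at the level of distributions rather than merely functions, and correctly invoking the parametric-distributional characterization of Ces\`{a}ro limits at infinity for one-sided test functions (supported in $(0,\infty)$). Once these two ingredients are in place, the second part is essentially a mechanical combination of Proposition \ref{Prop. ChV 1} with two applications of the Hake-type result Theorem \ref{Theorem Imp 1 v.2}.
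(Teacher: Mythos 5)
Your proof of the second part is, in structure, exactly the paper's: Proposition \ref{Prop. ChV 1} applied to the subintervals $[a,1]\leftrightarrow[1,1/a]$, the identity $F(1/t)=\int_{1}^{t}f(1/s)s^{-2}\,\mathrm{d}s$ for the indefinite integral, the first part of the lemma applied to the distribution corresponding to $F$, and the Hake-type theorem at the endpoint $0$; so that half is fine. The difference is in the first part, where the paper gives no argument at all beyond citing \cite[Chap.~6]{est-kan}, while you make the reduction explicit via $\psi(t)=t^{-2}\phi(1/t)$ and the dilation identity $\left\langle \mathsf{g}(\lambda t),\psi(t)\right\rangle=\left\langle \mathsf{f}(\varepsilon x),\phi(x)\right\rangle$ (which is correct, as is $\int\psi=\int\phi$). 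The one caveat is the final invocation: the equivalence you need, namely that weak convergence $\mathsf{g}(\lambda t)\to\gamma$ over $\mathcal{D}(0,\infty)$ is equivalent to $\mathsf{g}(t)=\gamma+o(1)$ $(\mathrm{C})$ as $t\to\infty$, is \emph{not} what Subsection \ref{Sbs Cesaro} records -- that subsection only states the two-sided big-$O$ parametric characterization for $\alpha>-1$ -- so at that point you still need the structural result for degree-zero (one-sided, little-$o$) quasiasymptotics, i.e.\ precisely the material of \cite[Chap.~6]{est-kan} that the paper cites; your reduction does not remove that dependence, it only relocates it. Two minor bookkeeping points: $\mathsf{g}(t)=\mathsf{f}(1/t)$ is an element of $\mathcal{D}^{\prime}(1,\infty)$, not of $\mathcal{D}^{\prime}(0,\infty)$ (harmless, since only the behavior at infinity and large $\lambda$ matter); and your ``second application of Theorem \ref{Theorem Imp 1 v.2} at $+\infty$'' is unnecessary and not available in the paper -- Ces\`{a}ro distributional integrability of $f(1/s)s^{-2}$ over $[1,\infty)$ simply \emph{means}, as in Subsection \ref{Sbs Evaluations} and in the paper's own proof, that the indefinite integral has a Ces\`{a}ro limit at infinity, so that identification holds by definition.
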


\begin{proof}
The first part follows from the results of \cite[Chap. 6]{est-kan}. The second
part is obtained because using the Proposition \ref{Prop. ChV 1} $f$ is
distributionally integrable over $\left[  c,1\right]  $ if and only if
$f\left(  t^{-1}\right)  t^{-2}$ is distributionally integrable over $[1,1/c]$
and if $F\left(  c\right)  =\left(  \mathfrak{dist}\right)  \int_{c}%
^{1}f\left(  x\right)  \,\mathrm{d}x$ then $F\left(  1/c\right)  =\left(
\mathfrak{dist}\right)  \int_{1}^{1/c}f\left(  t^{-1}\right)  t^{-2}%
\,\mathrm{d}t.$ But the distribution corresponding to $F\left(  c\right)  $
has a distributional lateral limit from the right at $c=0$ if and only if the
distribution corresponding to $F\left(  1/c\right)  $ has a Ces\`{a}ro limit
at infinity, and the limits coincide; in the first case the integral $\left(
\mathfrak{dist}\right)  \int_{0}^{1}f\left(  x\right)  \,\mathrm{d}x$ exists,
while in the second the Ces\`{a}ro integral $\left(  \mathfrak{dist}\right)
\int_{1}^{\infty}f\left(  t^{-1}\right)  t^{-2}\,\mathrm{d}t$ $\left(
\text{C}\right)  $ exists.\smallskip
\end{proof}

Let us say that a function $\rho:[c,d)\rightarrow\lbrack a,\infty),$ with
$\lim_{x\rightarrow d^{-}}\rho\left(  x\right)  =\infty,$ is a change of
variables of type IV if whenever $c<x<d,$ $\rho$ is of type II in $\left[
c,x\right]  $ and $1/\rho$ is of type II in $\left[  x,d\right]  .$ Then our
previous results immediately yield the ensuing change of variables
formula.\smallskip

\begin{proposition}
\label{Prop. ChV 3}Let $\rho:[c,d)\rightarrow\lbrack a,\infty)$ be a change of
variables of type IV. A function is Ces\`{a}ro distributionally integrable
over $[a,\infty)$ if and only if $f\left(  \rho\left(  t\right)  \right)
\rho^{\prime}\left(  t\right)  $ is distributionally integrable over $\left[
c,d\right]  $ and
\begin{equation}
\left(  \mathfrak{dist}\right)  \int_{\rho\left(  c\right)  }^{\infty}f\left(
x\right)  \,\mathrm{d}x=\left(  \mathfrak{dist}\right)  \int_{c}^{d}f\left(
\rho\left(  t\right)  \right)  \rho^{\prime}\left(  t\right)  \,\mathrm{d}%
t\ \ \ \ \left(  \mathrm{C}\right)  \,. \label{ChV.7}%
\end{equation}

\end{proposition}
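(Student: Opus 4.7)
The plan is to reduce a type IV change of variables to the combination of a type II change on a truncated finite subinterval and the inversion trick from Lemma \ref{Lemma ChV 2} that converts a finite-interval distributional integral into a Ces\`{a}ro distributional integral over an infinite interval. Fix any $x_{0}\in(c,d)$ and set $b=\rho(x_{0})\in(a,\infty)$. By the additivity of the distributional integral (Proposition \ref{Prop. IN 4}) applied to both sides of (\ref{ChV.7}), it suffices to handle the two pieces $[c,x_{0}]\leftrightarrow[a,b]$ and $[x_{0},d)\leftrightarrow[b,\infty)$ separately, and then add.

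On the finite piece, $\rho$ restricted to $[c,x_{0}]$ is of type II by hypothesis, so the extension of Proposition \ref{Prop. ChV 1} to type II changes (already established in the paper just after the definition of type II) yields directly that $f$ is distributionally integrable on $[a,b]$ iff $f(\rho(t))\rho'(t)$ is distributionally integrable on $[c,x_{0}]$, with the equality of integrals.

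For the improper piece, I would use $\sigma(t):=1/\rho(t)$ on $[x_{0},d]$, which is of type II by hypothesis, with $\sigma(x_{0})=1/b$ and $\sigma(d)=0$. Since $\rho=1/\sigma$ and $\rho'=-\sigma'/\sigma^{2}$, applying the type II change of variables $s=\sigma(t)$ to $\int_{x_{0}}^{d} f(\rho(t))\rho'(t)\,\mathrm{d}t$ converts it into $\int_{0}^{1/b} f(1/s)\,s^{-2}\,\mathrm{d}s$ (the minus sign is absorbed by the reversal of orientation $\sigma(x_{0})>\sigma(d)$); in particular, distributional integrability on one side is equivalent to distributional integrability on the other. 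A trivial rescaling $s=u/b$ (itself a type I change) in Lemma \ref{Lemma ChV 2} produces the identity
\begin{equation*}
\left(\mathfrak{dist}\right)\int_{0}^{1/b} \frac{f(1/s)}{s^{2}}\,\mathrm{d}s = \left(\mathfrak{dist}\right)\int_{b}^{\infty} f(x)\,\mathrm{d}x \ \ \ (\mathrm{C}),
\end{equation*}
with the existence of one side equivalent to the existence of the other. Chaining these two equivalences gives the improper piece, and combining with the finite piece completes the proof.

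The main obstacle will be the bookkeeping in the middle step: one must carefully verify that the type II hypothesis on $1/\rho$ near $t=d$ (requiring some power $|\sigma(t)-\sigma(d)|^{\gamma}=\rho(t)^{-\gamma}$ to be of type I near $d$) correctly matches the behavior required to invoke Lemma \ref{Lemma ChV 2} after the $s=\sigma(t)$ substitution, and to track the sign coming from $\sigma$ being decreasing. Once those points are checked, everything reduces mechanically to Propositions \ref{Prop. ChV 1}--\ref{Prop. ChV 2} and Lemma \ref{Lemma ChV 2}, as the word ``immediately'' in the statement suggests.
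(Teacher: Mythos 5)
Your argument is correct and is essentially the paper's own route (the paper gives no details, asserting only that the previous results ``immediately yield'' the formula): split at an interior point, apply the type II change-of-variables result on the bounded piece, and reduce the unbounded piece to Lemma \ref{Lemma ChV 2} via the substitution $s=1/\rho(t)$, which is of type II precisely by the definition of a type IV change, with the orientation reversal absorbed as you indicate. The only point left implicit---that the final dilation $x=bt$ preserves Ces\`{a}ro limits at infinity, so that $\int_{1}^{\infty}bf(bt)\,\mathrm{d}t\ (\mathrm{C})$ equals $\int_{b}^{\infty}f(x)\,\mathrm{d}x\ (\mathrm{C})$---is a standard dilation-invariance property of the Ces\`{a}ro behavior and causes no gap.
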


\section{Mean value theorems}\label{Section MVT}

In this section we shall show how the usual three mean value theorems of
integral calculus have versions for the distributional integral.\smallskip

\begin{proposition}
\label{Prop. MVT 1}Let $f$ be a \L ojasiewicz function on $\left[  a,b\right]
$ and let $\psi$ be smooth and positive in $\left[  a,b\right]  .$ Then there
exists $\xi\in\left(  a,b\right)  $ such that%
\begin{equation}
\left(  \mathfrak{dist}\right)  \int_{a}^{b}f\left(  x\right)  \psi\left(
x\right)  \,\mathrm{d}x=f\left(  \xi\right)  \int_{a}^{b}\psi\left(  x\right)
\,\mathrm{d}x\,. \label{MVT.1}%
\end{equation}

\end{proposition}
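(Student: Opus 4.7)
The plan is to reduce the statement to an intermediate value property for \L ojasiewicz functions, using the comparison principle for the distributional integral.

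Since $f$ is \L ojasiewicz and $\psi$ is smooth, the product $f\psi$ is also a \L ojasiewicz function (Subsection~\ref{Sbs Lojasiewics dist}), and is therefore distributionally integrable over $[a,b]$ by Theorem~\ref{Theorem COM 5}. Because $\psi>0$ is continuous, $I:=\int_{a}^{b}\psi\,\mathrm{d}x>0$, so
\[
\lambda:=\frac{1}{I}\int_{a}^{b}f(x)\psi(x)\,\mathrm{d}x
\]
is a well-defined real number. Setting $m=\inf_{[a,b]}f$ and $M=\sup_{[a,b]}f$ in $\overline{\mathbb{R}}$, the pointwise inequalities $m\psi\le f\psi\le M\psi$ combined with the monotonicity of the distributional integral (Proposition~\ref{Prop Conv 1}) give $m\le\lambda\le M$.

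If $\lambda=m$, then $m$ is finite, $(f-m)\psi\ge 0$ has vanishing integral, and Theorem~\ref{Theorem COM 2} forces $f=m$ almost everywhere. Since any \L ojasiewicz function that coincides almost everywhere with another \L ojasiewicz function must agree with it pointwise (both induce the same distribution and the \L ojasiewicz point value is distributionally determined), one deduces $f\equiv m$ on $[a,b]$, and any $\xi\in(a,b)$ satisfies \eqref{MVT.1}. The case $\lambda=M$ is symmetric. One may therefore assume $m<\lambda<M$, and pick $x_{1},x_{2}\in[a,b]$ with $x_{1}<x_{2}$ and $f(x_{1})<\lambda<f(x_{2})$.

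It then suffices to establish the intermediate value property for \L ojasiewicz functions: some $\xi\in(x_{1},x_{2})\subseteq(a,b)$ satisfies $f(\xi)=\lambda$. Arguing by contradiction, suppose $f(x)\neq\lambda$ on $(x_{1},x_{2})$, and partition $[x_{1},x_{2}]=A\cup B$ with $A=\{f<\lambda\}$ and $B=\{f>\lambda\}$. Fix $\phi\in\mathcal{D}(\mathbb{R})$ with $\phi\ge 0$, $\operatorname{supp}\phi\subset(0,1)$ and $\int\phi=1$, and set $\xi_{0}=\sup A$. If $\xi_{0}\in A$, then $f>\lambda$ on $(\xi_{0},x_{2}]$, so for sufficiently small $\epsilon>0$ one has $f(\xi_{0}+\epsilon u)>\lambda$ for $u\in(0,1)$; monotonicity of the distributional integral yields $\int f(\xi_{0}+\epsilon u)\phi(u)\,\mathrm{d}u\ge\lambda$, and passing to the limit $\epsilon\to 0^{+}$ via the \L ojasiewicz point value at $\xi_{0}$ (see~\eqref{pe1} and the lateral version~\eqref{3.9.48}) gives $f(\xi_{0})\ge\lambda$, contradicting $\xi_{0}\in A$. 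A mirror argument applied at $\eta_{0}=\inf B$ with $\operatorname{supp}\phi\subset(-1,0)$ rules out $\eta_{0}\in B$.

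The main obstacle is the residual configuration $\xi_{0}\in B$ and $\eta_{0}\in A$ (so $\eta_{0}<\xi_{0}$), in which points of $A$ accumulate at $\xi_{0}$ from the left and points of $B$ accumulate at $\eta_{0}$ from the right, preventing a direct one-sided argument. I expect this case to be handled by iterating the dichotomy on the shorter subinterval $[\eta_{0},\xi_{0}]$ and invoking Romanovski's lemma (Theorem~\ref{TheoremRomanovskiLemma}) applied to the family of open subintervals of $(x_{1},x_{2})$ on which the intermediate value conclusion is known to hold; conditions~I and~IV require the most care, combining a Baire category argument with the one-sided \L ojasiewicz estimates above, in the spirit of the proofs of Theorems~\ref{Theorem M3} and~\ref{Theorem M3p}. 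Once the intermediate value property is in hand, the produced $\xi$ lies in $(x_{1},x_{2})\subseteq(a,b)$ with $f(\xi)=\lambda$, and \eqref{MVT.1} follows.
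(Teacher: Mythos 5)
Your reduction is the same as the paper's: $f\psi$ is a \L ojasiewicz function (smooth multiple of one), hence distributionally integrable by Theorem~\ref{Theorem COM 5}; the weighted average $\lambda$ satisfies $m\le\lambda\le M$; and the whole proposition then rests on the Darboux (intermediate value) property of \L ojasiewicz functions. The paper closes precisely that step by citing \L ojasiewicz's work \cite{loj}, where this property is established; your separate treatment of the extreme cases $\lambda=m$, $\lambda=M$ is correct but does not change the structure. So up to and including the reduction, your argument is in order.

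The gap is that you do not actually prove the intermediate value property, and it is the only non-elementary ingredient of the statement. Your one-sided arguments ruling out $\sup A\in A$ and $\inf B\in B$ (with $A=\{f<\lambda\}$, $B=\{f>\lambda\}$) are sound, but the residual configuration $\xi_{0}=\sup A\in B$, $\eta_{0}=\inf B\in A$ is exactly the hard case, and your handling of it is a declaration of intent rather than a proof: iterating on $[\eta_{0},\xi_{0}]$ need not shrink the interval at all (if $B$ accumulates at $x_{1}$ and $A$ at $x_{2}$ one gets $\eta_{0}=x_{1}$ and $\xi_{0}=x_{2}$), there is no termination or limiting argument, and the appeal to Romanovski's lemma comes with no specified family $\mathfrak{F}$ and no verification of conditions I--IV; condition IV in particular would demand two-sided control at points of a perfect set, which the purely one-sided point-value estimates you use do not provide. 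Note also that a direct proof is genuinely delicate because the order $n$ in the characterization \eqref{2.3} of the point value varies from point to point, so no single primitive controls $f$ on the whole interval. The repair is either to quote the Darboux property as a known theorem of \L ojasiewicz, as the paper does, or to supply a complete proof of it; as written, the key step is missing.
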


\begin{proof}
Observe that since $\psi$ is $C^{\infty}$ then $f\psi$ is also a \L ojasiewicz
function, and thus integrable. Since $\psi\geq0,$ it follows that
\begin{equation}
m\int_{a}^{b}\psi\left(  x\right)  \,\mathrm{d}x\leq\left(  \mathfrak{dist}%
\right)  \int_{a}^{b}f\left(  x\right)  \psi\left(  x\right)  \,\mathrm{d}%
x\leq M\int_{a}^{b}\psi\left(  x\right)  \,\mathrm{d}x\,, \label{MVT.2}%
\end{equation}
where $m=\inf\left\{  f\left(  x\right)  :x\in\left[  a,b\right]  \right\}  ,$
$M=\sup\left\{  f\left(  x\right)  :x\in\left[  a,b\right]  \right\}  .$
Naturally $m$ and $M$ do not have to be real numbers in this case,
$-\infty\leq m\leq M\leq\infty.$ Notice now \cite{loj}\ that any \L ojasiewicz
function satisfies the Darboux or intermediate value property, that is,
$\left[  f\left(  c\right)  ,f\left(  d\right)  \right]  \subset f\left(
\left[  c,d\right]  \right)  $ for any subinterval $\left[  c,d\right]  $ of
$\left[  a,b\right]  .$ Hence there exists $\xi\in\left(  a,b\right)  $ such
that $f\left(  \xi\right)  =(\int_{a}^{b}f\left(  x\right)  \psi\left(
x\right)  \,\mathrm{d}x)/(\int_{a}^{b}\psi\left(  x\right)  \,\mathrm{d}%
x).$\smallskip
\end{proof}

We also have the following second mean value theorem.\smallskip

\begin{proposition}
\label{Prop. MVT 2}Let $f$ be distributionally integrable over $\left[
a,b\right]  $ and let $\psi$ be smooth and monotonic. Then there exists
$\xi\in\left(  a,b\right)  $ such that%
\begin{equation}
\int_{a}^{b}f\left(  x\right)  \psi\left(  x\right)  \,\mathrm{d}x=\psi\left(
a\right)  \int_{a}^{\xi}f\left(  x\right)  \,\mathrm{d}x+\psi\left(  b\right)
\int_{\xi}^{b}f\left(  x\right)  \,\mathrm{d}x\,. \label{MVT.3}%
\end{equation}

\end{proposition}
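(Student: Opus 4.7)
The plan is to reduce the second mean value theorem to the first (Proposition \ref{Prop. MVT 1}) by means of integration by parts.

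First I would set $F\left(x\right)=\left(\mathfrak{dist}\right)\int_{a}^{x}f\left(t\right)\,\mathrm{d}t$; by Theorem \ref{Theorem PR 1}, $F$ is a \L ojasiewicz function on $\mathbb{R}$ with $F\left(a\right)=0$ and $F\left(b\right)=\left(\mathfrak{dist}\right)\int_{a}^{b}f\left(x\right)\,\mathrm{d}x$. Since $\psi$ is smooth, the integration by parts formula of Theorem \ref{Theorem DI 2} applies, giving
\[
\left(\mathfrak{dist}\right)\int_{a}^{b}f\left(x\right)\psi\left(x\right)\,\mathrm{d}x=F\left(b\right)\psi\left(b\right)-\left(\mathfrak{dist}\right)\int_{a}^{b}F\left(x\right)\psi^{\prime}\left(x\right)\,\mathrm{d}x\,.
\]
Note that the integral on the right is well defined because $F\psi^{\prime}$ is a \L ojasiewicz function and hence distributionally integrable by Theorem \ref{Theorem COM 5}.

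Next, since $\psi$ is monotonic and smooth, $\psi^{\prime}$ has constant sign on $\left[a,b\right]$. Without loss of generality I would assume $\psi^{\prime}\geq0$ (otherwise replace $\psi$ by $-\psi$ and adjust signs at the end, which leaves the identity (\ref{MVT.3}) invariant). If $\psi$ happens to be constant, both sides of (\ref{MVT.3}) reduce to $\psi\left(a\right)F\left(b\right)$ for every choice of $\xi$, so there is nothing to prove. Otherwise $\psi^{\prime}$ is a smooth nonnegative function on $\left[a,b\right]$ with strictly positive integral, so Proposition \ref{Prop. MVT 1} applies to the \L ojasiewicz function $F$ paired against the smooth positive weight $\psi^{\prime}$, producing a point $\xi\in\left(a,b\right)$ with
\[
\left(\mathfrak{dist}\right)\int_{a}^{b}F\left(x\right)\psi^{\prime}\left(x\right)\,\mathrm{d}x=F\left(\xi\right)\int_{a}^{b}\psi^{\prime}\left(x\right)\,\mathrm{d}x=F\left(\xi\right)\left(\psi\left(b\right)-\psi\left(a\right)\right)\,.
\]

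Substituting this into the integration by parts formula and using Proposition \ref{Prop. IN 4} to write $F\left(\xi\right)=\int_{a}^{\xi}f$ and $F\left(b\right)-F\left(\xi\right)=\int_{\xi}^{b}f$, I obtain
\[
\left(\mathfrak{dist}\right)\int_{a}^{b}f\psi=F\left(b\right)\psi\left(b\right)-F\left(\xi\right)\left(\psi\left(b\right)-\psi\left(a\right)\right)=\psi\left(a\right)F\left(\xi\right)+\psi\left(b\right)\left(F\left(b\right)-F\left(\xi\right)\right)\,,
\]
which is precisely (\ref{MVT.3}). The main thing to be careful about is ensuring that the hypotheses of Proposition \ref{Prop. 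MVT 1} are genuinely met by the pair $\left(F,\psi^{\prime}\right)$; monotonicity of $\psi$ gives the sign condition on $\psi^{\prime}$, and splitting off the constant case handles the degenerate situation $\int_{a}^{b}\psi^{\prime}=0$. Everything else is bookkeeping.
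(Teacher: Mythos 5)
Your proof is correct and follows essentially the same route as the paper: integrate by parts via Theorem \ref{Theorem DI 2} and then apply the first mean value theorem (Proposition \ref{Prop. MVT 1}) to $\int_a^b F(x)\psi'(x)\,\mathrm{d}x$ with weight $\psi'$. Your extra care with the sign of $\psi'$ and the constant-$\psi$ case is a welcome refinement of details the paper's proof leaves implicit, but it does not change the argument.
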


\begin{proof}
Let $F\left(  x\right)  =\left(  \mathfrak{dist}\right)  \int_{a}^{x}f\left(
t\right)  \,\mathrm{d}t$ be the indefinite integral of $f.$ Then applying the
Proposition \ref{Prop. MVT 1} to $\int_{a}^{b}F\left(  x\right)  \psi^{\prime
}\left(  x\right)  \,\mathrm{d}x$ we obtain the existence of $\xi\in\left(
a,b\right)  $ such that%
\begin{align*}
\int_{a}^{b}f\left(  x\right)  \psi\left(  x\right)  \,\mathrm{d}x  &
=F\left(  b\right)  \psi\left(  b\right)  -\int_{a}^{b}F\left(  x\right)
\psi^{\prime}\left(  x\right)  \,\mathrm{d}x\\
&  =F\left(  b\right)  \psi\left(  b\right)  -F\left(  \xi\right)  \int
_{a}^{b}\psi^{\prime}\left(  x\right)  \,\mathrm{d}x\\
&  =\psi\left(  b\right)  \left(  F\left(  b\right)  -F\left(  \xi\right)
\right)  +\psi\left(  a\right)  F\left(  \xi\right)  \,,
\end{align*}
as required.\smallskip
\end{proof}

The Bonnet form of the mean value theorem is as follows.\smallskip

\begin{proposition}
\label{Prop. MVT 3}Let $f$ be distributionally integrable over $\left[
a,b\right]  $ and let $\psi$ be smooth, positive, and increasing. Then there
exists $\xi\in\left(  a,b\right)  $ such that%
\begin{equation}
\left(  \mathfrak{dist}\right)  \int_{a}^{b}f\left(  x\right)  \psi\left(
x\right)  \,\mathrm{d}x=\psi\left(  b\right)  \left(  \mathfrak{dist}\right)
\int_{\xi}^{b}f\left(  x\right)  \,\mathrm{d}x\,. \label{MVT.4}%
\end{equation}

\end{proposition}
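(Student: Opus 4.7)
The plan is to combine the integration-by-parts identity of Theorem \ref{Theorem DI 2} with the first mean value theorem (Proposition \ref{Prop. MVT 1}), and then exploit the Darboux intermediate value property enjoyed by \L ojasiewicz functions to convert the resulting expression into Bonnet's form.

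Let $F(x)=\left(\mathfrak{dist}\right)\int_{a}^{x}f(t)\,\mathrm{d}t$ be the indefinite integral of $f$. By Theorem \ref{Theorem PR 1}, $F$ is a \L ojasiewicz function, so in particular it satisfies the Darboux property cited in the proof of Proposition \ref{Prop. MVT 1}. The integration-by-parts formula (\ref{DI.3}), together with $F(a)=0$, yields
\begin{equation*}
\int_{a}^{b}f(x)\psi(x)\,\mathrm{d}x = F(b)\psi(b) - \int_{a}^{b}F(x)\psi'(x)\,\mathrm{d}x.
\end{equation*}
Since $\psi$ is smooth and increasing, $\psi'$ is smooth and non-negative on $[a,b]$. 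Applying Proposition \ref{Prop. MVT 1} to the \L ojasiewicz function $F$ against the weight $\psi'$ produces an $\eta\in(a,b)$ such that $\int_{a}^{b}F\psi'\,\mathrm{d}x = F(\eta)\bigl(\psi(b)-\psi(a)\bigr)$. Setting $\lambda=(\psi(b)-\psi(a))/\psi(b)$, which lies in $[0,1)$ because $\psi$ is positive and $\psi(a)\leq\psi(b)$, substitution gives the MVT-2--like identity
\begin{equation*}
\int_{a}^{b}f\psi\,\mathrm{d}x = \psi(b)\bigl(F(b)-F(\eta)\bigr) + \psi(a)F(\eta) = \psi(b)\bigl(F(b)-\lambda F(\eta)\bigr).
\end{equation*}

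It now suffices to exhibit $\xi\in(a,b)$ with $F(\xi)=\lambda F(\eta)$, for then $\int_{a}^{b}f\psi\,\mathrm{d}x=\psi(b)(F(b)-F(\xi))=\psi(b)\int_{\xi}^{b}f\,\mathrm{d}t$, which is the desired (\ref{MVT.4}). Because $F(a)=0$ and $\lambda F(\eta)$ lies between $0$ and $F(\eta)$ for $\lambda\in[0,1]$ (regardless of the sign of $F(\eta)$), the Darboux property applied to $F$ on $[a,\eta]$ supplies such a $\xi$. When $F(\eta)\neq 0$ and $\lambda\in(0,1)$, the target value $\lambda F(\eta)$ is \emph{strictly} between $F(a)=0$ and $F(\eta)$, so $\xi$ can be chosen in the open interval $(a,\eta)\subset(a,b)$.

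The principal nuisance, rather than a real obstacle, is the bookkeeping of the degenerate boundary cases. When $\lambda=0$ the monotonicity forces $\psi$ to be constant, and then both sides of (\ref{MVT.4}) equal $\psi(b)F(b)$ for any interior $\xi$; when $F(\eta)=0$ one either locates an interior zero of $F$ via continuity on $[a,\eta]$ when $F\not\equiv 0$ there, or takes $\xi$ arbitrarily in $(a,\eta)$ since the requirement collapses. No substantial new machinery beyond Theorems \ref{Theorem PR 1} and \ref{Theorem DI 2}, Proposition \ref{Prop. MVT 1}, and the Darboux property of \L ojasiewicz functions cited from \cite{loj} is needed.
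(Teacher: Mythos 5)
Your proof is correct in substance but takes a genuinely different route from the paper's. The paper proves (\ref{MVT.4}) by an extension trick: it enlarges the interval to $[a',b]$, putting $f=0$ on $[a',a)$ and extending $\psi$ smoothly, increasing, with $\psi(a')=0$, and then invokes the second mean value theorem (Proposition \ref{Prop. MVT 2}) on $[a',b]$, so that the $\psi(a')$--term drops out. You instead stay on $[a,b]$: you rederive the two--term identity of Proposition \ref{Prop. MVT 2} from Theorem \ref{Theorem DI 2} and Proposition \ref{Prop. MVT 1}, and then use the Darboux property of the \L ojasiewicz function $F$ to produce $\xi$ with $F(\xi)=\lambda F(\eta)$, $\lambda=(\psi(b)-\psi(a))/\psi(b)$, which turns the two--term expression into Bonnet's one--term form. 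What each buys: the paper's argument is shorter, but it does not address the possibility that the $\xi$ furnished by Proposition \ref{Prop. MVT 2} lands in $(a',a]$ rather than in $(a,b)$; your intermediate--value argument locates $\xi$ inside $(a,b)$ explicitly in the main case, at the cost of some bookkeeping. Two small repairs to that bookkeeping: when $F(\eta)=0$ you may simply take $\xi=\eta\in(a,b)$ (and note that $F$ need not be continuous, so ``locating a zero via continuity'' is not the right phrase); and in the case $\lambda=0$ your assertion that both sides of (\ref{MVT.4}) equal $\psi(b)F(b)$ for arbitrary interior $\xi$ is incorrect, since the right side is $\psi(b)\bigl(F(b)-F(\xi)\bigr)$ --- but $\lambda=0$ forces $\psi$ to be constant, a degenerate case in which the statement with $\xi$ in the \emph{open} interval can actually fail (take $f\equiv1$), so ``increasing'' should be read strictly; then $\lambda\in(0,1)$ and your main argument, together with the $F(\eta)=0$ remark, covers all cases.
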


\begin{proof}
Let $a^{\prime}<a,$ and extend $f$ and $\psi$ to $\left[  a^{\prime},b\right]
$ as follows. Put $f\left(  x\right)  =0$ for $a^{\prime}\leq x<a,$ and let
$\psi$ be an extension that is smooth, positive, increasing, and with
$\psi\left(  a^{\prime}\right)  =0.$ Then employing Proposition
\ref{Prop. MVT 2} for the integral $\left(  \mathfrak{dist}\right)
\int_{a^{\prime}}^{b}f\left(  x\right)  \psi\left(  x\right)  \,\mathrm{d}x$
we obtain (\ref{MVT.4}).\smallskip
\end{proof}

We also have the other form of the Bonnet mean value theorem, namely, if $f$
is distributionally integrable over $\left[  a,b\right]  $ and $\psi$ is
smooth, positive, and \emph{decreasing,} then there exists $\omega\in\left(
a,b\right)  $ such that%
\begin{equation}
\left(  \mathfrak{dist}\right)  \int_{a}^{b}f\left(  x\right)  \psi\left(
x\right)  \,\mathrm{d}x=\psi\left(  a\right)  \left(  \mathfrak{dist}\right)
\int_{a}^{\omega}f\left(  x\right)  \,\mathrm{d}x\,. \label{MVT.5}%
\end{equation}

We now give an example that shows how many standard arguments can still be
used with the distributional integral.\smallskip

\begin{example}
Let $a>0,$ and let $f$ be a locally distributionally integrable function
defined in $[a,\infty).$ Let us suppose that the indefinite integral of $f,$
$F\left(  x\right)  =\left(  \mathfrak{dist}\right)  \int_{a}^{x}f\left(
t\right)  \,\mathrm{d}t,$ is a bounded function in $[a,\infty).$ Let now
$\psi$ be a $C^{\infty}$ function defined in $[a,\infty)$ that decreases to
$0,$ $\lim_{x\rightarrow\infty}\psi\left(  x\right)  =0.$ \emph{Then the
improper distributional integral}
\begin{equation}
\left(  \mathfrak{dist}\right)  \int_{a}^{\infty}f\left(  x\right)
\psi\left(  x\right)  \,\mathrm{d}x=\lim_{b\rightarrow\infty}\int_{a}%
^{b}f\left(  x\right)  \psi\left(  x\right)  \,\mathrm{d}x\,, \label{MVT.6}%
\end{equation}
\emph{converges.}

Naturally, this is a well known result for locally Lebesgue integrable
functions, but our aim is to show that the usual ideas also work for the
distributional integral. Indeed, let%
\begin{equation}
G\left(  b\right)  =\left(  \mathfrak{dist}\right)  \int_{a}^{b}f\left(
x\right)  \psi\left(  x\right)  \,\mathrm{d}x\,. \label{MVT.7}%
\end{equation}
Then if $a\leq b<b^{\prime}$ there exists $\xi\in\left(  b,b^{\prime}\right)
$ such that%
\[
G\left(  b^{\prime}\right)  -G\left(  b\right)  =\psi\left(  b\right)  \left(
F\left(  \xi\right)  -F\left(  b\right)  \right)  +\psi\left(  b^{\prime
}\right)  \left(  F\left(  b^{\prime}\right)  -F\left(  \xi\right)  \right)
\,,
\]
and since $F$ is bounded and $\psi$ tends to $0,$ we conclude that $G\left(
b^{\prime}\right)  -G\left(  b\right)  \rightarrow0$ as $b,b^{\prime
}\rightarrow\infty,$ that is, $G$ satisfies the Cauchy criterion at $\infty.$
Thus $G$ has a limit at infinity, as we wanted to show.
\end{example}

\section{Examples}\label{Section Examples}

We shall now give several examples of functions that are or are not
distributionally integrable. We shall also consider several illustrations of
our ideas.

\begin{example}
Observe, first of all, that for positive functions distributional integration
is equivalent to Lebesgue integration, and thus nothing new arises in this
case. Let $\alpha\in\mathbb{R}.$ The positive function $\left\vert
x\right\vert ^{\alpha}$ is distributionally integrable over $\mathbb{R}$ if
and only if $\alpha>-1;$ the same is true for the function $x^{\alpha}H\left(
x\right)  .$ There is a well defined and well known distribution,
$x_{+}^{\alpha},$ whenever $\alpha\neq-1,-2,-3,\ldots;$ the distribution
$x_{+}^{\alpha}$ is a regularization of the function $x^{\alpha}H\left(
x\right)  ,$ but the association $x^{\alpha}H\left(  x\right)  \leftrightarrow
x_{+}^{\alpha}$ between an \emph{integrable} function and the corresponding
distribution holds only for $\alpha>-1.$\smallskip
\end{example}

Functions that are distributionally integrable but not Lebesgue integrable
need to be oscillatory.\smallskip

\begin{example}
Let us now consider the distribution $\mathsf{s}_{\alpha}\left(  x\right)  $
that corresponds to the function $\left\vert x\right\vert ^{\alpha}\sin\left(  1/x\right)
$ for $\alpha\in\mathbb{C}.$ If $\Re e\,\alpha>-1$ then the function
$\left\vert x\right\vert ^{\alpha}\sin\left(  1/x\right)  $ is locally
Lebesgue integrable and thus it yields a regular distribution given by
\begin{equation}
\left\langle \mathsf{s}_{\alpha}\left(  x\right)  ,\psi\left(  x\right)
\right\rangle =\int_{-\infty}^{\infty}\left\vert x\right\vert ^{\alpha}%
\sin\left(  1/x\right)  \,\psi\left(  x\right)  \,\mathrm{d}x\,,\ \ \psi
\in\mathcal{D}\left(  \mathbb{R}\right)  \,. \label{Exa.1}%
\end{equation}
It is easy to show that $\mathsf{s}_{\alpha}$ admits an analytic continuation
from the right side half-plane $\Re e\,\alpha>-1$ to the whole complex plane.
If $-1\geq\Re e\,\alpha>-2$ then the function $\left\vert x\right\vert
^{\alpha}\sin\left(  1/x\right)  $ is not Lebesgue integrable near $x=0$ but
it is Denjoy-Perron-Henstock-Kurzweil integrable. The function $\left\vert x\right\vert
^{\alpha}\sin\left(  1/x\right)  $ is locally distributionally integrable for
all $\alpha\in\mathbb{C},$ since actually it is a \L ojasiewicz function
because \cite{loj}\ the distributional value $\mathsf{s}_{\alpha}\left(
0\right)  $ exists and equals $0$ for all $\alpha.$ The association
$\left\vert x\right\vert ^{\alpha}\sin\left(  1/x\right)  \leftrightarrow
\mathsf{s}_{\alpha}\left(  x\right)  $ holds $\forall\alpha\in\mathbb{C}.$

Similarly, $\left\vert x\right\vert ^{\alpha}\cos\left(  1/x\right)  $ is
locally distributionally integrable for all $\alpha\in\mathbb{C}$ and it thus
defines a distribution $\mathsf{c}_{\alpha}\left(  x\right)  $ given by%
\begin{equation}
\left\langle \mathsf{c}_{\alpha}\left(  x\right)  ,\psi\left(  x\right)
\right\rangle =\left(  \mathfrak{dist}\right)  \int_{-\infty}^{\infty
}\left\vert x\right\vert ^{\alpha}\cos\left(  1/x\right)  \,\psi\left(
x\right)  \,\mathrm{d}x\,, \label{Exa.2}%
\end{equation}
for $\psi\in\mathcal{D}\left(  \mathbb{R}\right)  .$ The generalized function
$\mathsf{c}_{\alpha}$ is an entire function of $\alpha.$\smallskip
\end{example}

\begin{example}
Making a change of variables, we obtain that the functions $\left\vert
x\right\vert ^{\alpha}\sin\left\vert x\right\vert ^{-\beta}$ and $\left\vert
x\right\vert ^{\alpha}\cos\left\vert x\right\vert ^{-\beta}$ are locally
distributionally integrable for all $\alpha\in\mathbb{C}$ and for all
$\beta>0.$ We can multiply locally distributionally integrable functions by
the characteristic functions of intervals and still obtain locally
distributionally integrable functions. Thus $x^{\alpha}H\left(  x\right)
\sin\left\vert x\right\vert ^{-\beta},$ $x^{\alpha}H\left(  x\right)
\cos\left\vert x\right\vert ^{-\beta},$ as well as $\left\vert x\right\vert
^{\alpha}\operatorname*{sgn}x\sin\left\vert x\right\vert ^{-\beta}$ and
$\left\vert x\right\vert ^{\alpha}\operatorname*{sgn}x\cos\left\vert
x\right\vert ^{-\beta}$ are also locally distributionally integrable functions
for all $\alpha\in\mathbb{C}$ and for all $\beta>0.$\smallskip
\end{example}

\begin{example}
Let $g$ be a locally distributionally integrable function, $g\leftrightarrow
\mathsf{g,}$ where $\mathsf{g}\in\mathcal{K}^{\prime}\left(  \mathbb{R}%
\right)  .$ Then a change of variables show that for each $a\in\mathbb{R}$ the
function $f\left(  x\right)  =g\left(  \left(  x-a\right)  ^{-1}\right)  $ is
likewise locally distributionally integrable over $\mathbb{R},$ even at $x=a.$

Functions like $H\left(  x-a\right)  \left(  x-a\right)  ^{\alpha}J_{\nu
}\left(  \left(  x-a\right)  ^{-\beta}\right)  $ will be locally
distributionally integrable over $\mathbb{R}$ for all $\alpha\in\mathbb{C},$
$\beta>0,$ and $\nu\in\mathbb{R}.$

In particular, if $h$ is a locally distributionally integrable function,
periodic, and with zero mean, then $\mathsf{h}\in\mathcal{K}^{\prime}\left(
\mathbb{R}\right)  ,$ where $h\leftrightarrow\mathsf{h.}$ Thus functions like
$\left\vert x-a\right\vert ^{\alpha}h\left(  \left\vert x-a\right\vert
^{-\beta}\right)  $ are locally distributionally integrable over $\mathbb{R}$
for all $\alpha\in\mathbb{C}$ and $\beta>0.$ If $h\left(  x\right)  $ is $\sin
x$ or $\cos x$ we recover the previous examples; another example is
$\left\vert x\right\vert ^{\alpha}\left(  \left\{  \left\vert x\right\vert
^{-\beta}\right\}  -1/2 \right)  ,$ where $\left\{  x\right\}  $ is the
fractional part of $x.$\smallskip
\end{example}

\begin{example}
Let $\left\{  c_{n}\right\}  _{n=1}^{\infty}$ be a numerical sequence, and
define the function%
\begin{equation}
f\left(  x\right)  =\left\{
\begin{array}
[c]{cc}%
0\,, & \text{if \ }x\leq0\text{ \ or \ }x\geq1\,,\\
& \\
c_{n}\,, & \text{if \ }\frac{1}{n+1}\leq x<\frac{1}{n}\,.
\end{array}
\right.  \label{Exa.3}%
\end{equation}
Let $a_{n}=c_{n}\left(  \frac{1}{n}-\frac{1}{n+1}\right)  .$ Then $f$ is
Lebesgue integrable at $x=0$ if and only if the series $\sum_{n=1}^{\infty
}a_{n}$ is absolutely convergent, while $f$ is Denjoy-Perron-Henstock-Kurzweil
integrable if and only if the series is convergent \cite{Bartle}. We now have
that $f$ is distributionally integrable at $x=0$ if and only if $\sum
_{n=1}^{\infty}a_{n}$ is Ces\`{a}ro summable, and in that case%
\begin{equation}
\left(  \mathfrak{dist}\right)  \int_{0}^{1}f\left(  x\right)  \,\mathrm{d}%
x=\sum_{n=1}^{\infty}a_{n}\ \ \ \ \ \left(  \text{C}\right)  \,. \label{Exa.4}%
\end{equation}
In particular, if $c_{n}=\left(  -1\right)  ^{n}n(n+1),$ we obtain $\left(
\mathfrak{dist}\right)  \int_{0}^{1}f\left(  x\right)  \,\mathrm{d}%
x=-1/2 .$\smallskip
\end{example}

\begin{example}
In general the sum of a Lebesgue integrable function and a \L ojasiewicz
function is neither Lebesgue integrable nor a \L ojasiewicz function, but it
is certainly distributionally integrable. For instance, if $F$ is a closed set
with empty interior and positive Lebesgue measure in $\left[  a,b\right]  $
then $\chi_{F}\left(  x\right)  +J_{\nu}\left(  \left(  x^{2}%
-(a+b)x+ab\right)  ^{-1}\right)  $ is a distributionally integrable function
that is not Lebesgue integrable nor a distributionally regulated function in
$\left[  a,b\right]  .$ It is worth pointing out that decompositions of
distributions as sums of terms involving a \L ojasiewicz distribution have
shown to be of great importance in the study of distributional composition
operations \cite{AKS}. \smallskip
\end{example}

\begin{example}
If $a<b$ denote by $f_{a,b}$ the function
\begin{equation}
f_{a,b}\left(  x\right)  =\left[  \chi_{\left[  a,b\right]  }\left(  x\right)
\sin(\left(  x^{2}-(a+b)x+ab\right)  ^{-1})\right]  ^{\prime}, \label{Exa.4m}%
\end{equation}
Let $\left\{  \left(  a_{n},b_{n}\right)  \right\}  _{n=1}^{\infty}$ be a
sequence of mutually disjoint open intervals and let $\sum_{n=1}^{\infty}%
M_{n}$ be an absolutely convergent series. Let
\[
f=\sum_{n=1}^{\infty}M_{n}f_{a_{n},b_{n}}\,.\label{Exa.5}%
\]
Then $f$ is a locally distributionally integrable function and
\begin{equation}
\left(  \mathfrak{dist}\right)  \int_{c}^{d}f\left(  x\right)  \,\mathrm{d}%
x=F\left(  d\right)  -F\left(  c\right)  \,, \label{Exa.6}%
\end{equation}
where $F\left(  x\right)  =M_{n}\sin(\left(  x^{2}-(a_{n}+b_{n})x+a_{n}%
b_{n}\right)  ^{-1})$ if $x\in\left(  a_{n},b_{n}\right)  ,$ while $F\left(
x\right)  =0$ if $x\notin\bigcup_{n=1}^{\infty}\left(  a_{n},b_{n}\right)
.$\smallskip
\end{example}

\begin{example}
It is well known that there are continuous functions whose derivatives do not
exist at any point. Actually, one can show in many cases that the
distributional derivative does not have values at any point. For instance
\cite{VEFourier}\ if $\left\{  a_{n}\right\}  _{n\in\mathbb{Z}}$ is a lacunary
sequence such that $a_{n}\neq o(1)$ but $a_{n}=O(1),$ $\left\vert n\right\vert
\rightarrow\infty,$ then
\begin{equation}
G(x)=\sum_{n=-\infty}^{\infty}\frac{a_{n}}{n}e^{inx}, \label{Exa.7}%
\end{equation}
is continuous, but if $G\leftrightarrow\mathsf{G,}$ then $\mathsf{g=G}%
^{\prime}$ does not have distributional point values at any point. It follows
that $G$ is not the indefinite integral of a distributionally integrable
function in any interval.\smallskip
\end{example}

\begin{example}
The Heaviside function $H\left(  x\right)  $ does not have a value at the
origin, of course, and thus it is not the indefinite integral of a
distributionally integrable function. Naturally, if $H\leftrightarrow
\mathsf{H,}$ then $\mathsf{H}^{\prime}\left(  x\right)  =\delta\left(
x\right)  $ is \emph{not }a function. Similarly, the continuous increasing
Cantor function defined in $\left[  0,1\right]  $ is not the indefinite
integral of a distributionally integrable function in any interval that meets
the Cantor set, since its distributional derivative is not a function but
rather a measure concentrated on the Cantor set.\smallskip
\end{example}

\begin{example}
Let $f$ be a locally distributionally integrable function such that if
$f\leftrightarrow\mathsf{f,}$ then $\mathsf{f}\in\mathcal{D}^{\prime}\left(
\mathbb{R}\right)  $ satisfies
\begin{equation}
\mathsf{f}\left(  x\right)  =O\left(  \left\vert x\right\vert ^{\beta}\right)
\ \ \ \ \left(  \text{C}\right)  \,,\ \ \ \ \left\vert x\right\vert
\rightarrow\infty\,, \label{Exa.8}%
\end{equation}
for some $\beta<1.$ Let%
\begin{equation}
F\left(  x,y\right)  =\frac{y}{\pi}\left(  \mathfrak{dist}\right)
\int_{-\infty}^{\infty}\frac{f\left(  \xi\right)  \,\mathrm{d}\xi}{\left(
x-\xi\right)  ^{2}+y^{2}}  \ \ \ (\mathrm{C})\,, \label{Exa.9}%
\end{equation}
for $x\in\mathbb{R}$ and $y>0.$ The function $F$ is the $\phi-$transform of
$\mathsf{f}$ with respect to the function%
\begin{equation}
\phi\left(  x\right)  =\frac{1}{\pi\left(  x^{2}+1\right)  }\,. \label{BB.5}%
\end{equation}
Naturally, $F\left(  x,y\right)  $ is the Poisson integral\ of $f,$ which is
the harmonic function with $F\left(  x,0^{+}\right)  =\mathsf{f}\left(
x\right)  $ that satisfies $F\left(  x,y\right)  =O(\left\vert x\right\vert
^{\beta})$ $\left(  \text{C}\right)  ,$ $\left\vert x\right\vert
\rightarrow\infty,$ for each fixed $y>0.$

The boundary behavior of $F$ is as follows: $F\left(  x,y\right)  \rightarrow
f\left(  w\right)  $ as $\left(  x,y\right)  \rightarrow\left(  w,0\right)  $
in any sector $y\geq m\left\vert x-w\right\vert $ for $m>0,$ almost everywhere
with respect to $w\in\mathbb{R};$ this holds for all $w\in\mathbb{R}$ whenever
$f$ is a \L ojasiewicz function.\smallskip
\end{example}

\begin{example}
Let us now consider the Fourier transform of tempered locally integrable
distributions. The characterization of the Fourier series of those periodic
distributions that have a distributional point value was given in \cite{est1}:
If $\mathsf{f}\left(  \theta\right)  =\sum_{n=-\infty}^{\infty}a_{n}%
e^{in\theta}$ in the space $\mathcal{D}^{\prime}\left(  \mathbb{R}\right)  $
then%
\begin{equation}
\mathsf{f}\left(  \theta_{0}\right)  =\gamma
\,,\ \ \ \text{distributionally\ ,} \label{T.8}%
\end{equation}
if and only if there exists $k$ such that%
\begin{equation}
\lim_{x\rightarrow\infty}\sum_{-x\leq n\leq ax}a_{n}e^{in\theta_{0}}%
=\gamma\ \ \ \left(  \mathrm{C},k\right)  \,,\ \ \ \forall a>0\,. \label{T.9}%
\end{equation}
Therefore, if $f$ is a periodic locally distributionally integrable function,
of period $2\pi,$ then the coefficients
\begin{equation}
a_{n}=\frac{1}{2\pi} \left(  \mathfrak{dist}\right)  \int_{0}^{2\pi}f\left(
\theta\right)  e^{-in\theta}\,\mathrm{d}\theta\,,
\end{equation}
are well defined for all $n\in\mathbb{Z},$ and
\begin{equation}
\lim_{x\rightarrow\infty}\sum_{-x\leq n\leq ax}a_{n}e^{in\theta}=f\left(
\theta\right)  \ \ \ \left(  \mathrm{C},k\right)  \,,\ \ \ \forall a>0\,,
\label{T.9p}%
\end{equation}
almost everywhere with respect to $\theta.$ If $f$ is a \L ojasiewicz function
then (\ref{T.9p}) holds for all $\theta\in\mathbb{R}.$

The characterization of the values of general Fourier transforms
\cite{VEStudia, VEFourier} is as follows: Let $\mathsf{f}\in\mathcal{S}%
^{\prime}\left(  \mathbb{R}\right)  ,$ and let $x_{0}\in\mathbb{R},$ then%
\begin{equation}
\mathsf{f}\left(  x_{0}\right)  =\gamma\,,\ \ \ \text{distributionally\ ,}
\label{T.15}%
\end{equation}
if and only if%
\begin{equation}
\mathrm{e.v.}\left\langle \widehat{\mathsf{f}}\left(  u\right)  ,e^{-iux_{0}%
}\right\rangle =2\pi\gamma\ \ \ \ \left(  \mathrm{C}\right)  \,. \label{T.16}%
\end{equation}
We have chosen the constants in the Fourier transform in such a way that
\begin{equation}
\widehat{f}\left(  u\right)  =\int_{-\infty}^{\infty}f\left(  x\right)
e^{ixu}\mathrm{d}x\,, \label{T.16p}%
\end{equation}
if the integral makes sense. In case $\widehat{f}$ is locally distributionally
integrable this means that%
\begin{equation}
\mathrm{e.v.}\left(  \mathfrak{dist}\right)  \int_{-\infty}^{\infty}%
\widehat{f}\left(  u\right)  e^{-iux_{0}}\mathrm{d}u=2\pi\gamma\ \ \ \ \left(
\mathrm{C}\right)  \,. \label{T.17}%
\end{equation}

Suppose now that $\mathsf{f}$ is a locally integrable tempered distribution,
$f\leftrightarrow\mathsf{f}.$ Then
\begin{equation}
\mathrm{e.v.}\left\langle \widehat{\mathsf{f}}\left(  u\right)  ,e^{-iux_{0}%
}\right\rangle =2\pi f\left(  x_{0}\right)  \ \ \ \ \left(  \mathrm{C}\right)
\,, \label{t.18}%
\end{equation}
almost everywhere with respect to $x_{0},$ and actually everywhere if $f$ is a
\L ojasiewicz function. When $\widehat{\mathsf{f}}$ is also locally
integrable, $\widehat{f}\leftrightarrow\widehat{\mathsf{f}},$ then
(\ref{t.18}) becomes%
\[
\mathrm{e.v.}\left(  \mathfrak{dist}\right)  \int_{-\infty}^{\infty}%
\widehat{f}\left(  u\right)  e^{-iux_{0}}\mathrm{d}u=2\pi f\left(
x_{0}\right)  \ \ \ \ \left(  \mathrm{C}\right)  \,.
\]

\end{example}

\begin{example}
Let $\mathsf{f}$ be a distribution defined in the complement of the origin,
$\mathsf{f}\in\mathcal{D}^{\prime}\left(  \mathbb{R}\setminus\left\{
0\right\}  \right)  .$ There may or may not be a distribution $\mathsf{g}%
_{0}\in\mathcal{D}^{\prime}\left(  \mathbb{R}\right)  $ whose restriction to
$\mathbb{R}\setminus\left\{  0\right\}  $ is $\mathsf{f,}$ what we call a
regularization of $\mathsf{f,}$ but if one regularization exists then there
are infinitely many regularizations, since $\mathsf{g}\left(  x\right)
=\mathsf{g}_{0}\left(  x\right)  +\sum_{j=0}^{m}a_{j}\delta^{\left(  j\right)
}\left(  x\right)  $ is also a regularization for any constants $a_{0}%
,\ldots,a_{m}.$ It is known \cite{est2003}\ that there is no continuous way to
choose the regularization $\mathsf{g.}$ Suppose now that $\mathsf{f}$
corresponds to a function $f,$ locally distributionally integrable in
$\mathbb{R}\setminus\left\{  0\right\}  ;$ if $f$ is distributionally
integrable at $x=0$ then it has an associated distribution $\mathsf{g}_{0}%
\in\mathcal{D}^{\prime}\left(  \mathbb{R}\right)  ,$ and then $\mathsf{g}_{0}$
is the \emph{canonical} regularization of the distribution
$\mathsf{f.\smallskip}$
\end{example}

\begin{example}
Consider now an analytic function $F\left(  z\right)  $ defined in the upper
half plane $\Im m\,z>0,$ which we assume to vanish at $\infty,$ $F\left(
z\right)  \rightarrow0$ as $z\rightarrow\infty$ angularly in the half plane.
Suppose that the distributional limit $\mathsf{g}\left(  x\right)  =F\left(
x+i0\right)  $ exists in $\mathcal{D}^{\prime}\left(  \mathbb{R}\right)  $
\cite{beltrami, bremermann}, and let $\mathsf{f}$ be its restriction to
$\mathbb{R}\setminus\left\{  0\right\}  ,$ which we assume to correspond to a
locally distributionally integrable function $f$ in $\mathbb{R}\setminus
\left\{  0\right\}  .$ If $f$ is not distributionally\ integrable at $x=0$
then it does not define a canonical distribution in the whole real line, but
$\mathsf{f}$ will have at least a regularization $\mathsf{g}_{0}.$ However, if
$f$ is also distributionally integrable at $x=0$ then a corresponding
distribution $\mathsf{g}_{0}\in\mathcal{D}^{\prime}\left(  \mathbb{R}\right)
$ is defined by the association $f\leftrightarrow\mathsf{g}_{0}.$ Both
$\mathsf{g}$ and $\mathsf{g}_{0}$ are regularizations of $\mathsf{f;}$ in
general they do not coincide, but if $f$ is distributionally integrable at
$x=0$ then the results of \cite{EstradaComVar} immediately yield that
$\mathsf{g}=\mathsf{g}_{0},$ so that, if the integral converges at infinity,
we have the Cauchy representation%
\begin{equation}
F\left(  z\right)  =\left(  \frac{1}{2\pi i}\right)  \left(  \mathfrak{dist}%
\right)  \int_{-\infty}^{\infty}\frac{f\left(  \xi\right)  }{\xi
-z}\,\mathrm{d}\xi\,,\ \ \ \ \Im m\,z>0\,. \label{Exa.10}%
\end{equation}

Take $F\left(  z\right)  =1/z.$ In this case $f\left(  x\right)  =1/x$ is not
distributionally integrable at $x=0.$ The standard regularization of
$\mathsf{f}\in\mathcal{D}^{\prime}\left(  \mathbb{R}\setminus\left\{
0\right\}  \right)  ,$ $1/x\leftrightarrow\mathsf{f}$ is $\mathsf{g}%
_{0}=\left(  \ln\left\vert x\right\vert \right)  ^{\prime}=\mathrm{p.v.}%
\left(  1/x\right)  .$ However, $\mathsf{g}$ is another regularization,
namely, $\mathsf{g}\left(  x\right)  =\left(  x+i0\right)  ^{-1}%
=\mathsf{g}_{0}\left(  x\right)  -\pi i\delta\left(  x\right)  .$ Formula
(\ref{Exa.10}) does not even make sense in this case.

On the other hand, if $F\left(  z\right)  =e^{-i/z}/z,$ then $f\left(
x\right)  =e^{-i/x}/x$ is actually distributionally integrable at $x=0,$ and
thus has an associated distribution $\mathsf{g}_{0}\in\mathcal{D}^{\prime
}\left(  \mathbb{R}\right)  .$ In this case $\mathsf{g}=\mathsf{g}_{0},$ and
(\ref{Exa.10}) becomes%
\begin{equation}
\frac{e^{-i/z}}{z}=\left(  \frac{1}{2\pi i}\right)  \left(  \mathfrak{dist}%
\right)  \int_{-\infty}^{\infty}\frac{e^{-i/\xi}}{\xi(\xi-z)}\,\mathrm{d}%
\xi\,, \label{Exa.11}%
\end{equation}
in the half plane $\Im m\,z>0.$
\end{example}



\end{document}